\newenvironment{altenumerate}
   {\begin{list}
      {\textup{(\theenumi)} }
      {\usecounter{enumi}
       \setlength{\labelwidth}{0pt}
       \setlength{\labelsep}{2pt}
       \setlength{\leftmargin}{0pt}
       \setlength{\itemsep}{\the\smallskipamount}
       \renewcommand{\theenumi}{\roman{enumi}}
      }}
   {\end{list}}
\newenvironment{altitemize}
   {\begin{list}
      {$\bullet$ }
      {\setlength{\labelwidth}{0pt}
       \setlength{\labelsep}{2pt}
       \setlength{\leftmargin}{0pt}
       \setlength{\itemsep}{\the\smallskipamount}
      }}
   {\end{list}}
\newtheorem{lem}{Lemma}[section]
\newtheorem{definition}[lem]{Definition}
\newtheorem{cor}[lem]{Corollary}
\newtheorem{thm}[lem]{Theorem}
\newtheorem{prop}[lem]{Proposition}
\theoremstyle{remark}
\newtheorem{rem}[lem]{Remark}
\newcommand{\tr}{\operatorname*{tr}}
\newcommand{\GL}{\mathrm{GL}}
\begin{document}
\title{The Local Langlands correspondence for $\GL_n$ over $p$-adic fields}
\author{Peter Scholze}
\begin{abstract}
We extend our methods from \cite{ScholzeGLn} to reprove the Local Langlands Correspondence for $\GL_n$ over $p$-adic fields as well as the existence of $\ell$-adic Galois representations attached to (most) regular algebraic conjugate self-dual cuspidal automorphic representations, for which we prove a local-global compatibility statement as in the book of Harris-Taylor, \cite{HarrisTaylor}.

In contrast to the proofs of the Local Langlands Correspondence given by Henniart, \cite{HenniartLLC}, and Harris-Taylor, \cite{HarrisTaylor}, our proof completely by-passes the numerical Local Langlands Correspondence of Henniart, \cite{HenniartNumericalLLC}. Instead, we make use of a previous result from \cite{ScholzeGLn} describing the inertia-invariant nearby cycles in certain regular situations.
\end{abstract}

\date{\today}
\maketitle
\tableofcontents
\pagebreak

\section{Introduction}

The aim of this paper is to give a new proof of the Local Langlands Correspondence for $\GL_n$ over $p$-adic fields, and to simplify some of the arguments in the book by Harris-Taylor, \cite{HarrisTaylor}.

Fix a $p$-adic field $F$, i.e., a finite extension of $\mathbb{Q}_p$, with ring of integers $\mathcal{O}$. Recall that the Local Langlands Correspondence, which is now a theorem due to Harris-Taylor, \cite{HarrisTaylor}, and Henniart, \cite{HenniartLLC}, asserts that there should be a canonical bijection between the set of isomorphism classes of irreducible supercuspidal representations of $\GL_n(F)$ and the set of isomorphism classes of irreducible $n$-dimensional representations of the Weil group $W_F$ of $F$, denoted $\pi\longmapsto \sigma(\pi)$. One possible local characterization of this bijection was given by Henniart, \cite{HenniartUniqueness}, showing that there is at most one family of bijections defined for all $n\geq 1$ preserving $L$- and $\epsilon$-factors of pairs, and also compatible with some basic operations on both sides such as twisting with characters.

Our starting point is a new local characterization of the Local Langlands Correspondence. First, we extend the map $\pi\mapsto \sigma(\pi)$ to all irreducible smooth representations $\pi$ by setting $\sigma(\pi)=\sigma(\pi_1)\oplus\ldots\oplus\sigma(\pi_t)$ if $\pi$ is a subquotient of the normalized parabolic induction of $\pi_1\otimes\cdots\otimes\pi_t$, for some irreducible supercuspidal representations $\pi_i$.\footnote{This is the Weil group representation underlying the Weil-Deligne representation attached to $\pi$. We ignore the monodromy operator in this article.} The rough idea is that one might hope to associate to any $\tau\in W_F$ a function $f_{\tau}\in C_c^{\infty}(\GL_n(F))$ such that for any irreducible smooth representation $\pi$, we have
\[
\tr(f_{\tau}|\pi) = \tr(\tau|\sigma(\pi))\ .
\]
Now, it is easy to see that this is quite a bit too much to hope for, as $f_{\tau}$ would have nonzero trace on all components of the Bernstein center. The best thing to hope for is to associate to some `cut-off' function $h\in C_c^{\infty}(\GL_n(F))$ a function $f_{\tau,h}\in C_c^{\infty}(\GL_n(F))$ such that for all irreducible smooth representations $\pi$, we have
\[
\tr(f_{\tau,h}|\pi) = \tr(\tau|\sigma(\pi)) \tr(h|\pi)\ .
\]
It turns out that there is a very natural way to do this, for many $\tau$ and $h$.

In the next few paragraphs, we assume that $F=\mathbb{Q}_p$ for simplicity. Recall that it is known that the cohomology of the Lubin-Tate tower realizes the Langlands Correspondence, cf. e.g. \cite{HarrisTaylor}, Theorem C, but only for supercuspidal representations. The idea is that replacing the Lubin-Tate space, i.e. the moduli space of one-dimensional formal groups of height $n$, by the moduli space of one-dimensional $p$-divisible groups of height $n$, one adds exactly the extra amount of information necessary to get the Langlands Correspondence for all irreducible smooth representations.

With this in mind, we just repeat the construction of the Lubin-Tate tower, except that we start with objects defined over a finite field: Take some integer $r\geq 1$ and a one-dimensional $p$-divisible group $\overline{H}$ of height $n$ over $\mathbb{F}_{p^r}$. Looking at its Dieudonn\'{e} module, this is equivalent to giving an element
\[
\beta\in \GL_n(\mathbb{Z}_{p^r})\mathrm{diag}(p,1,\ldots,1)\GL_n(\mathbb{Z}_{p^r})
\]
up to $\sigma$-conjugation by an element of $\GL_n(\mathbb{Z}_{p^r})$, where $\sigma$ is the absolute Frobenius of $\mathbb{Z}_{p^r}$.\footnote{It would be more customary to write $\delta$ instead of $\beta$, but following this convention would result in too many different $\delta$'s throughout this article.} Let $R_{\beta}$ be the deformation space of $\overline{H}$, with universal deformation $H$, and let $R_{\beta,m}/R_{\beta}$ be the covering parametrizing Drinfeld-level-$m$-structures on $H$. Then $\GL_n(\mathbb{Z}/p^m)$ acts on $R_{\beta,m}$. We choose $\ell\neq p$ and take the global sections of the nearby-cycle sheaves in the sense of Berkovich, \cite{Berkovich2}:\footnote{We caution the reader that we use the language used e.g. in SGA; in Berkovich's language, these sheaves would be called vanishing cycle sheaves.}
\[
R\psi_{\beta} = \mathop{\mathrm{lim}}\limits_{\longrightarrow} H^0(R\psi_{\mathrm{Spf}\ R_{\beta,m}}\bar{\mathbb{Q}}_{\ell})\ ,
\]
and their alternating sum $[R\psi_{\beta}]$. These objects carry an action of $W_{\mathbb{Q}_{p^r}}\times \GL_n(\mathbb{Z}_p)$. Now take an element $\tau\in W_{\mathbb{Q}_p}$ projecting to the $r$-th power of geometric Frobenius, and let $h\in C_c^{\infty}(\GL_n(\mathbb{Z}_p))$ have values in $\mathbb{Q}$. Define a new function $h^{\vee}\in C_c^{\infty}(\GL_n(\mathbb{Z}_p))$ by $h^{\vee}(g)=h((g^{-1})^t)$.

Our first theorem is the following.

\begin{thm}\label{MainTheorem1} Define a function $\phi_{\tau,h}$ on $\GL_n(\mathbb{Q}_{p^r})$ by
\[
\phi_{\tau,h}(\beta) = \tr(\tau\times h^{\vee}|[R\psi_{\beta}])\ ,
\]
if $\beta$ is as above, and by $0$ else. Then $\phi_{\tau,h}\in C_c^{\infty}(\GL_n(\mathbb{Q}_{p^r}))$, with values in $\mathbb{Q}$ independent of $\ell$.
\end{thm}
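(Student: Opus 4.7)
The assertion has three components, and I would address them in order: (a) compactness of the support, (b) local constancy (smoothness), and (c) rationality together with $\ell$-independence.

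(a) Compact support is essentially built into the definition: $\phi_{\tau,h}(\beta)$ vanishes unless $\beta$ lies in the single double coset $\GL_n(\mathbb{Z}_{p^r})\,\mathrm{diag}(p,1,\ldots,1)\,\GL_n(\mathbb{Z}_{p^r})$, which is closed and bounded inside $p\,\mathrm{M}_n(\mathbb{Z}_{p^r})$, hence compact in $\GL_n(\mathbb{Q}_{p^r})$.

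(b) For local constancy, fix $m$ such that $h$, hence also $h^{\vee}$, is $K_m$-bi-invariant for $K_m = 1 + p^m\mathrm{M}_n(\mathbb{Z}_p)$. Only $K_m$-invariants on $[R\psi_\beta]$ contribute, so only the level-$m$ part coming from $H^0(R\psi_{\mathrm{Spf}\,R_{\beta,m}}\bar{\mathbb{Q}}_{\ell})$ is relevant. I would then argue that, for $N=N(m)$ sufficiently large, the formal scheme $\mathrm{Spf}\,R_{\beta,m}$ together with the $\GL_n(\mathbb{Z}/p^m)$-action and the Frobenius action on its nearby cycles depends on $\beta$ only through the truncated $p$-divisible group $\overline{H}[p^N]$. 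Since the Dieudonn\'e module of $\overline{H}[p^N]$ is determined by $\beta$ modulo $p^N$, any two $\beta, \beta'$ congruent modulo $p^N$ (after $\sigma$-conjugation) would yield canonically identified $R_{\beta,m}\cong R_{\beta',m}$ at least modulo a high enough power of the maximal ideal, which is sufficient for the nearby cycles and their traces to coincide. This gives local constancy of $\phi_{\tau,h}$.

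(c) Rationality and $\ell$-independence I would derive from the theorem in \cite{ScholzeGLn} describing inertia-invariant nearby cycles in certain regular situations. The formal scheme $\mathrm{Spf}\,R_{\beta,m}$ is regular (it is the deformation space with Drinfeld level structure), and the cited result gives an explicit combinatorial description of the inertia-invariant part of $R\psi$, computed on the special fiber. Applied to an element $\tau$ projecting to the $r$-th power of geometric Frobenius, it expresses the trace as a sum of integers arising from point-counts on the special fiber, weighted by values of $h^{\vee}$, which is manifestly rational and independent of $\ell$.

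The real obstacle is step (b): producing a sufficiently functorial identification $R_{\beta,m}\cong R_{\beta',m}$ whenever $\overline{H}[p^N]\cong \overline{H}'[p^N]$, and checking that this identification is equivariant for the $\GL_n(\mathbb{Z}/p^m)$- and Weil group actions and compatible with Berkovich's nearby cycle functor. The guiding principle — that deformation theory for $p$-divisible groups only sees a finite truncation — is familiar, but combining it with Drinfeld level structures and making it compatible with nearby cycles, uniformly in $\beta$, constitutes the technical core.
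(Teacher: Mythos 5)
Your step (b) correctly identifies both the idea (the deformation problem with Drinfeld level structure should depend only on a truncation of $\beta$) and the difficulty (making this uniform, equivariant, and compatible with nearby cycles). But you leave this as an unresolved ``technical core,'' whereas the paper resolves it precisely through a structural result: by Faltings' theory of group schemes with strict $\mathcal{O}$-action, the functor $H\mapsto H[\varpi^m]$ from $\varpi$-divisible $\mathcal{O}$-modules to their $m$-truncations is \emph{formally smooth}, so $R_\beta$ is already a versal deformation space for the $m$-truncation $\overline{H}_\beta[\varpi^m]$, and one can take $N=m$. Combined with Artin's algebraization theorem, this produces a finite-type scheme $\mathcal{R}_{\overline{\beta},m}$ over $\mathcal{O}_r$ depending only on the double coset $\overline{\beta}$ modulo $1+\varpi^m M_n(\mathcal{O})$, whose completion along a finite subscheme $Z$ recovers $R_{\beta,m}$ equivariantly (Theorem~\ref{Algebraization}). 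Local constancy then follows immediately. Without Faltings' smoothness result, the claim that level-$m$ deformations depend only on an $N$-th truncation is not a soft ``guiding principle'' but the heart of the matter, and your outline does not provide it.

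Step (c) is the more serious problem: the strategy does not work. The inertia-invariant nearby cycles from \cite{ScholzeGLn} control $[R_{I_F}R\psi_\beta]$, and hence only the semisimple trace $\mathrm{tr}^{\mathrm{ss}}(\mathrm{Frob}^r\times h^\vee\mid[R\psi_\beta])$, equivalently the integral $\int_{\mathrm{Frob}^r I_F}\phi_{\tau,h}\,d\tau$. This carries no information about the value $\phi_{\tau,h}(\beta)$ for an individual $\tau\in\mathrm{Frob}^rI_F$ with nontrivial inertia component: $\mathrm{tr}(\tau\mid V)$ is not determined by $V^{I_F}$, since inertia acts nontrivially away from the invariants. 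The theorem you need to prove is about arbitrary $\tau$ projecting to $\mathrm{Frob}^r$, so you cannot reduce to a point-count on the special fibre via inertia invariants. What the paper actually uses is Mieda's theorem on the $\ell$-independence of local terms of nearby cycles (Theorem~6.2.2 of \cite{Mieda}), applied after algebraizing via $\mathcal{R}_{\overline{\beta},m}$ and twisting by the unramified Galois action that sends Frobenius to $g$, where $h$ is the characteristic function of $g\in\GL_n(\mathcal{O}/\varpi^m)$. That is a genuinely different and stronger ingredient, and it is unavoidable here; the inertia-invariant description of \cite{ScholzeGLn} is used elsewhere in the paper (Section~\ref{Bijectivity}), for the bijectivity of the correspondence, not for Theorem~\ref{MainTheorem1}.
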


This allows us to define a function $f_{\tau,h}\in C_c^{\infty}(\GL_n(\mathbb{Q}_p))$ by requiring that it has matching (twisted) orbital integrals, cf. e.g. \cite{ArthurClozel}. We use the normalization of Haar measures that gives maximal compact subgroups volume $1$. Note that this function $f_{\tau,h}$ itself is not well-defined, but e.g. its orbital integrals and its traces on representations are.

For general $p$-adic fields $F$, we have an analogous definition of $f_{\tau,h}\in C_c^{\infty}(\GL_n(F))$ depending on $\tau\in W_F$ projecting to a positive power of geometric Frobenius and $h\in C_c^{\infty}(\GL_n(\mathcal{O}))$: One only has to replace $p$-divisible groups with $\varpi$-divisible $\mathcal{O}$-modules, i.e. $p$-divisible groups with $\mathcal{O}$-action over $\mathcal{O}$-schemes for which the two actions of $\mathcal{O}$ on the Lie algebra agree. Now we can state our second theorem.

\begin{thm}\label{MainTheorem2}
\begin{altenumerate}
\item[{\rm (a)}] For any irreducible smooth representation $\pi$ of $\GL_n(F)$ there is a unique $n$-dimensional representation $\mathrm{rec}(\pi)$ of $W_F$ such that for all $\tau$ and $h$ as above,
\[
\tr(f_{\tau,h}|\pi) = \tr(\tau|\mathrm{rec}(\pi)) \tr(h|\pi)\ .
\]
\end{altenumerate}

{\rm Write $\sigma(\pi) = \mathrm{rec}(\pi)(\frac{1-n}2)$.}

\begin{altenumerate}
\item[{\rm (b)}] If $\pi$ is a subquotient of the normalized parabolic induction of the irreducible representation $\pi_1\otimes\cdots \otimes \pi_t$ of $\GL_{n_1}(F)\times\cdots \times \GL_{n_t}(F)$, then $\sigma(\pi)=\sigma(\pi_1)\oplus\ldots \oplus \sigma(\pi_t)$.
\item[{\rm (c)}] The map $\pi\longmapsto \sigma(\pi)$ induces a bijection between the set of isomorphism classes of supercuspidal irreducible smooth representations of $\GL_n(F)$ and the set of isomorphism classes of irreducible $n$-dimensional representations of $W_F$.
\item[{\rm (d)}] The bijection defined in {\rm (c)} is compatible with twists, central characters, duals, and $L$- and $\epsilon$-factors of pairs, hence is the standard correspondence.
\end{altenumerate}
\end{thm}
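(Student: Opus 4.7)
The approach is global: realize $\pi$ as the local component at one place of an automorphic representation on a suitable unitary similitude group, carry out the Langlands--Kottwitz method on the associated PEL Shimura variety (whose Newton stratification at $p$ is built precisely from the moduli spaces $R_{\beta,m}$ appearing in the definition of $\phi_{\tau,h}$), and isolate the contribution of $\pi$ to extract the local trace identity.

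For part (a), uniqueness is immediate: $h$ can be chosen in $C_c^{\infty}(\GL_n(\mathcal{O}))$ so that $\tr(h|\pi)\neq 0$, which pins down $\tr(\tau|\mathrm{rec}(\pi))$ for each allowed $\tau$, and these values determine the semisimple Weil representation since positive powers of geometric Frobenius times the inertia are dense in $W_F$ in the relevant sense. For existence, I would globalize $\pi$ to a conjugate self-dual regular algebraic cuspidal automorphic representation $\Pi$ of $\GL_n$ over a CM field $L$ having a place $v$ with $L_v=F$, descend to a unitary similitude group $G$, and attach a global Galois representation $R_\ell(\Pi)$. Theorem~\ref{MainTheorem1} identifies the geometric side of the Lefschetz--Verdier trace formula applied to $\tau\times h^{\vee}$ at $v$ with twisted orbital integrals of $\phi_{\tau,h}$; by the matching definition of $f_{\tau,h}$, this enters the Arthur--Selberg side as the local test function at $v$. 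Varying auxiliary test functions away from $v$ and $\infty$ and using linear independence of characters isolates the contribution of $\Pi$, yielding the trace identity with $\mathrm{rec}(\pi)$ equal to the appropriate Tate twist of $R_\ell(\Pi)|_{W_F}$.

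Parts (b)--(d) are then checked in sequence. Part (b) follows from the trace identity together with Frobenius reciprocity, once one verifies the expected behavior of $f_{\tau,h}$ under constant-term maps along parabolic subgroups; this in turn reduces to a statement about nearby cycles on the boundary strata of the Newton stratification. Part (c) combines (b) --- which gives that $\mathrm{rec}(\pi)$ is irreducible precisely when $\pi$ is supercuspidal --- with a surjectivity argument showing every irreducible $n$-dimensional Weil representation arises from some cohomological $\Pi$ via the globalization procedure. Part (d) is mostly formal: twist, central character, and duality compatibilities are transparent from the construction (the appearance of $h\mapsto h^{\vee}$ in the definition of $\phi_{\tau,h}$ is exactly what forces the compatibility with contragredients), after which the preservation of $L$- and $\epsilon$-factors of pairs follows from Henniart's uniqueness theorem \cite{HenniartUniqueness} to identify our correspondence with the standard one.

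The principal obstacle is the isolation step in (a): the Harris--Taylor / Kottwitz route to separating contributions of different supercuspidals relied on Henniart's numerical LLC, which this paper deliberately avoids. The novelty promised in the abstract is to bypass this using the description of inertia-invariant nearby cycles from \cite{ScholzeGLn}, which provides the finer structural input needed to isolate the contribution of $\Pi$ without any prior numerical bijection. A secondary difficulty is engineering a sufficiently flexible globalization: one must arrange for $\pi$ to occur as the component at $v$ of a conjugate self-dual regular algebraic cuspidal representation with controlled behavior at the archimedean and auxiliary finite places, and for non-supercuspidal $\pi$ the identity must be deduced indirectly from the supercuspidal case via part (b) rather than proved directly.
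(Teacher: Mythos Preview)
Your overall architecture is recognizable, but you have misidentified where the key inputs go, and this creates genuine gaps.

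\textbf{On part (a).} The result from \cite{ScholzeGLn} on inertia-invariant nearby cycles is \emph{not} used for the isolation step in (a); it plays no role there. Isolation on the automorphic side is handled by Corollary~VI.2.3 of \cite{HarrisTaylor} (strong multiplicity one type arguments), and the passage from the resulting identity to the full statement of (a) for \emph{all} irreducible $\pi$ is governed by an ingredient you do not mention at all: the theory of types, specifically the Schneider--Zink result exhibiting a $\GL_n(\mathcal{O})$-type that detects a given generalized Speh representation among all tempered ones. Without this, your globalization route only reaches essentially square-integrable and generalized Speh $\pi$; there is no direct globalization of an arbitrary irreducible $\pi$ to a cuspidal $\Pi$. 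The paper does not deduce the general case from the supercuspidal one via (b) after the fact, as you suggest; rather (a) and (b) are proved \emph{simultaneously} by induction on $n$, with (b) for smaller $n$ feeding into (a) for $n$ via a descent formula for $\phi_{\tau,h}$, and type theory closing the loop.

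\textbf{On part (c).} Your claim that ``(b) gives that $\mathrm{rec}(\pi)$ is irreducible precisely when $\pi$ is supercuspidal'' is false in the direction you need. Part (b) shows that non-supercuspidal $\pi$ give reducible $\sigma(\pi)$, but says nothing about supercuspidal $\pi$; there is no a~priori reason $\sigma(\pi)$ could not be reducible. This is exactly where \cite{ScholzeGLn} enters: it is used to show that $\sigma(\pi)^{I_F}=0$ for supercuspidal $\pi$ with $n\geq 2$, hence that after a finite chain of cyclic base-changes any supercuspidal becomes non-supercuspidal. Combined with compatibility of $\pi\mapsto\sigma(\pi)$ with cyclic base-change (proved globally) and Arthur--Clozel descent, this yields both irreducibility and bijectivity by induction on $n$. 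Your proposed surjectivity argument---realizing an arbitrary irreducible $W_F$-representation via a cohomological $\Pi$---does not work directly and is not what the paper does.

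\textbf{On part (d).} The compatibilities with duals and central characters are not ``transparent from the construction''; the paper establishes them by first proving a non-Galois automorphic induction theorem (following Harris) to reduce to monomial Weil representations, and then uses global functional equations and Henniart's twisting trick for the $L$- and $\epsilon$-factors. The final appeal to \cite{HenniartUniqueness} is correct.
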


The uniqueness assertion in {\rm (a)} is clear, as the condition determines $\tr(\tau|\mathrm{rec}(\pi))$ if $\tau$ projects to a positive power of geometric Frobenius, and it is well-known that this determines the representation. Hence this theorem gives a new local characterization of the Local Langlands Correspondence, which can be informally summarized by saying that the Local Langlands Correspondence is realized in the cohomology of the moduli space of one-dimensional $p$-divisible groups of height $n$, for all irreducible smooth representations. Also note that the extraneous division algebra acting in the Lubin-Tate setting disappears in our formulation.

The proof of this theorem occupies the whole paper, and consists of a local and a global part. Let us first say a few words about the global part of the story.

The global arguments are inspired by the work \cite{HarrisTaylor} of Harris-Taylor and there is a significant amount of overlap in the two approaches. In both cases, one considers the cohomology of the Shimura varieties associated to unitary groups of signature $(1,n-1)$ and split at $p$, uses it to construct $\ell$-adic Galois representations associated to certain cuspidal automorphic representations of $\GL_n$, and (most importantly) proves a local-global compatibility result at places of bad reduction. In the book of Harris-Taylor, this is achieved by introducing so-called Igusa varieties and counting points on them. This method has been extended to more general Shimura varieties through the work of Mantovan, \cite{Mantovan1}, \cite{Mantovan2}, and Shin, \cite{Shin}, and allows one to give expressions for the trace of arbitrary Hecke correspondences at $p$ on a Shimura variety.

In our approach, we restrict attention to Hecke operators at $p$ coming from the maximal compact subgroup. This has the technical advantage that one is pretty quickly reduced to calculating traces of genuine group actions, instead of just correspondences. In fact, one can work relatively over the Shimura variety with maximal compact level structure at $p$, and reduce all counting problems to counting problems for the maximal compact level structure, for which one can just appeal to the classical work of Kottwitz, \cite{KottwitzPoints}. With this approach, all of the difficult arguments concerning counting of points, stabilization, and using the trace formula can be borrowed from the work of Kottwitz.

At first sight, it may be surprising that it is enough to restrict attention to these Hecke operators. Indeed, one has to spice up the arguments with the use of type theory to get the arguments running: The necessary statements are recalled in Section \ref{TypeTheorySection}. All in all, we prove the following version of Theorem B of \cite{HarrisTaylor}. Fix an isomorphism $\bar{\mathbb{Q}}_{\ell}\cong \mathbb{C}$.

\begin{thm}\label{MainTheorem3} Let $\mathbb{F}$ be a CM field which is the composite of a totally real field $\mathbb{F}_0$ of even degree over $\mathbb{Q}$ and an imaginary-quadratic field. Fix a place $x$ of $\mathbb{F}$ which is split over $\mathbb{F}_0$. Let $\Pi$ be a cuspidal automorphic representation of $\GL_n/\mathbb{F}$ such that
\begin{altenumerate}
\item[{\rm (i)}] $\Pi^{\vee} = \Pi\circ c$, where $c: \GL_n(\mathbb{A}_\mathbb{F})\longrightarrow \GL_n(\mathbb{A}_\mathbb{F})$ is complex conjugation;
\item[{\rm (ii)}] $\Pi_{\infty}$ is regular algebraic, i.e. it has the same infinitesimal character as an algebraic representation of $\mathrm{Res}_{\mathbb{F}/\mathbb{Q}}(\GL_n)$ over $\mathbb{C}$;
\item[{\rm (iii)}] $\Pi_x$ is square-integrable.
\end{altenumerate}
Then there exists an integer $a\geq 1$ and an $\ell$-adic representation $R(\Pi)$ of $\mathrm{Gal}(\bar{\mathbb{F}}/\mathbb{F})$ of dimension $an$ such that for all finite places $v$ of $\mathbb{F}$ whose residue characteristic is different from $\ell$, we have
\[
R(\Pi)|_{W_{\mathbb{F}_v}} = a\cdot \mathrm{rec}(\Pi_v)
\]
as elements of the Grothendieck group of representations of $W_{\mathbb{F}_v}$.
\end{thm}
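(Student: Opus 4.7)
The plan is to realize $R(\Pi)$ inside the $\ell$-adic cohomology of a Shimura variety attached to a suitable unitary similitude group, and to extract local-global compatibility at the split place $x$ by combining Theorem~\ref{MainTheorem1} with Kottwitz's stabilization of the trace formula at hyperspecial level. Compatibility at the remaining finite places $v$ will then be obtained by varying the CM datum so that $v$ plays the role of the auxiliary split square-integrable place.

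First I would choose a unitary similitude group $G/\mathbb{Q}$ associated to an $n$-dimensional hermitian space over $\mathbb{F}$, of signature $(1,n-1)$ at one infinite place of $\mathbb{F}_0$ and compact modulo center at the others, quasi-split at every finite place, and split at the rational prime $p$ lying under $x$; the evenness hypothesis on $[\mathbb{F}_0:\mathbb{Q}]$ is exactly what is needed to ensure such a $G$ exists globally. Using the regularity of $\Pi_{\infty}$ together with the square-integrability of $\Pi_x$, quadratic base change (Arthur--Clozel, Labesse) descends $\Pi$ to a cohomological discrete automorphic representation $\pi$ of $G(\mathbb{A})$. Over the tower of Shimura varieties $\mathrm{Sh}_{K^p K_p}(G,X)$ at hyperspecial level $K_p=G(\mathbb{Z}_p)$, one then sets
\[
R(\Pi) := \mathrm{Hom}_{G(\mathbb{A}^{\infty,p})}\bigl(\pi^{\infty,p},\, H^{n-1}(\mathrm{Sh}_{K^p K_p}\otimes \bar{\mathbb{F}},\, \mathcal{L}_{\xi})\bigr),
\]
a finite-dimensional $\ell$-adic Galois representation of dimension $an$ for some multiplicity $a\geq 1$; concentration of the $\pi$-isotypic component in middle degree follows from the temperedness at all places forced by conditions (i)--(iii).

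To compute $\tr(\tau\,|\,R(\Pi))$, I would pass to the deeper Drinfeld-level tower $K_p'\subset K_p$ over the PEL integral model classifying one-dimensional $\varpi$-divisible $\mathcal{O}$-modules. Berkovich's nearby-cycles formalism, combined with Theorem~\ref{MainTheorem1}, expresses the trace of $\tau\times h^{\vee}$ on the $K_p'$-level cohomology as a sum over $\sigma$-conjugacy classes $[\beta]$ whose local factor at $p$ is exactly $\phi_{\tau,h}(\beta)$. Taking $K_p$-invariants and applying Kottwitz's stabilization at this hyperspecial level, then transferring back to $\GL_n(\mathbb{A}_{\mathbb{F}})$ via quadratic base change, one isolates the $\Pi$-isotypic part to obtain, for every admissible $\tau$ and every $h\in C_c^{\infty}(\GL_n(\mathcal{O}))$,
\[
\tr(\tau\,|\,R(\Pi))\cdot\tr(h\,|\,\Pi_x) = a\cdot \tr(f_{\tau,h}\,|\,\Pi_x),
\]
which, by the characterization of Theorem~\ref{MainTheorem2}(a), is precisely $R(\Pi)|_{W_{\mathbb{F}_x}}=a\cdot \mathrm{rec}(\Pi_x)$ as Grothendieck-group classes.

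The main obstacle is two-fold. At $v=x$, the hyperspecial-level formula produces only Hecke operators in $C_c^{\infty}(\GL_n(\mathcal{O}))$, whereas $\Pi_x$ need not be spherical; the type theory gathered in Section~\ref{TypeTheorySection} is needed to ensure that such maximal-compact operators still detect enough of the $\Pi$-isotypic component to force the identity above, and this is precisely what allows one to bypass the Igusa-variety point counts of Harris--Taylor. For an arbitrary finite place $v\nmid \ell$, one must then globalize $\Pi_v$ into a new cuspidal automorphic representation $\Pi'$ over a different CM field $\mathbb{F}'$ in which $v$ plays the role of the split square-integrable place, run the same argument there, and reconcile the $R(\Pi')$'s arising from different choices into a single Galois representation of $\mathrm{Gal}(\bar{\mathbb{F}}/\mathbb{F})$ by a Chebotarev-type density argument. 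Arranging such globalizations while controlling the relevant local invariants, so that the pieces fit together consistently across all places, is expected to be the most delicate step.
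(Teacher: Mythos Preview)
Your architecture misidentifies the role of the place $x$. In the paper, $x$ is \emph{not} the place at which the deformation-space/nearby-cycle analysis is carried out; rather, it is the place where the division algebra $\mathbb{D}$ defining the inner form is ramified. This serves two purposes: it makes the Shimura variety projective (so no boundary terms enter the trace formula), and it is precisely the square-integrability of $\Pi_x$ that allows Jacquet--Langlands transfer of $\Pi$ to $\mathbb{D}^\times$ and then descent to $\mathbf{G}$ (Theorem~\ref{GLnToG}). The local analysis via $\phi_{\tau,h}$ and Corollary~\ref{LocalGlobalComp} is performed instead at an \emph{arbitrary} split place $w\neq x,x^c$ where $\mathbb{D}$ is split; at $x$ itself the local component of $\mathbf{G}$ involves a division-algebra unit group and the deformation-space picture of Section~\ref{DefSpaces} is not available. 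Your proposed $G$, quasi-split at every finite place, would yield a non-compact Shimura variety for $n\geq 3$ and would also leave no mechanism by which hypothesis~(iii) on $\Pi_x$ is used.

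The second gap is your handling of a general finite place $v$. Globalizing $\Pi_v$ into a new cuspidal $\Pi'$ over a different CM field $\mathbb{F}'$ produces a Galois representation $R(\Pi')$ of $\mathrm{Gal}(\bar{\mathbb{F}}'/\mathbb{F}')$ that bears no a priori relation to $R(\Pi)$; there is no way to recover $R(\Pi)|_{W_{\mathbb{F}_v}}$ from it. The paper's route is different: one first obtains local--global compatibility at \emph{all} split places $w\neq x,x^c$ (varying $w$, not $x$), which by Chebotarev pins down $\frac{1}{a}R(\Pi)$ uniquely. For a non-split $v$, one then base-changes $\Pi$ itself to $\tilde{\Pi}$ over a real-quadratic extension $\tilde{\mathbb{F}}=\mathbb{F}R$ chosen so that the place $\tilde{v}$ above $v$ becomes split over $\tilde{\mathbb{F}}_0$ and distinct from $\tilde{x},\tilde{x}^c$; Chebotarev gives $\frac{1}{\tilde{a}}R(\tilde{\Pi})=\frac{1}{a}R(\Pi)|_{\mathrm{Gal}(\bar{\mathbb{F}}/\tilde{\mathbb{F}})}$, and the already-established split-place compatibility at $\tilde{v}$ yields the result at $v$ since $\tilde{\mathbb{F}}_{\tilde{v}}=\mathbb{F}_v$.
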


Using $p$-adic Hodge theory, it would be no problem to show that one can choose $a=1$, cf. proof of Proposition VII.1.8 in \cite{HarrisTaylor}. Also, the restrictions on $\mathbb{F}$ are unnecessary, cf. proof of Theorem VII.1.9 of \cite{HarrisTaylor}. Adding these extra arguments would reprove Theorem B of \cite{HarrisTaylor}.

With this theorem, one can almost prove part (a) of Theorem \ref{MainTheorem2}.

Now let us say a few words about the local arguments. It is easy to see that one can relate the deformation theory of a general one-dimensional $p$-divisible group to the deformation theory of its infinitesimal part, which gives some inductive formulas for the functions $f_{\tau,h}$. These allow us to almost prove part (b) of Theorem \ref{MainTheorem2}.

More importantly, one can reconstruct large parts of the cohomology of the Lubin-Tate tower from our function $f_{\tau,h}$, at least as far as the restriction of the $\GL_n(F)$-action to $\GL_n(\mathcal{O})$ is concerned. In fact, let $[R\psi]$ denote the alternating sum of the global sections of the nearby cycles for the Lubin-Tate tower. Then $[R\psi]$ carries an action of
\[
\GL_n(\mathcal{O})\times (\mathcal{D}^{\times}\times W_F)_0\ ,
\]
where $\mathcal{D}$ is the central division algebra over $F$ with invariant $\frac 1n$, and
\[
(\mathcal{D}^{\times}\times W_F)_0 = \{(d,\tau)\in \mathcal{D}^{\times}\times W_F\mid v(d) + v(\tau)=0\}\ ,
\]
and $v$ denotes natural valuations on $\mathcal{D}^{\times}$ and $W_F$. For any irreducible representation $\rho$ of $\mathcal{D}^{\times}$, the space
\[
[R\psi](\rho) = \mathrm{Hom}_{\mathcal{O}_\mathcal{D}^{\times}}(\rho|_{\mathcal{O}_\mathcal{D}^{\times}},[R\psi])
\]
carries an action of $\GL_n(\mathcal{O})\times W_F$. Let $\pi=\mathrm{JL}(\rho)$ be the associated representation of $\GL_n(F)$ via the Jacquet-Langlands correspondence.

\begin{thm}\label{MainTheorem4} Let $\rho$ be an irreducible representation of $\mathcal{D}^{\times}$ such that $\pi=\mathrm{JL}(\rho)$ is supercuspidal. Then, as a virtual representation of the group $\GL_n(\mathcal{O})\times W_F$, the representation $[R\psi](\rho^{\vee})$ is equal to $(-1)^{n-1} \pi^{\vee}|_{\GL_n(\mathcal{O})}\otimes \mathrm{rec}(\pi)$.
\end{thm}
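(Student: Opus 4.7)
To prove the stated identity of virtual $\GL_n(\mathcal{O})\times W_F$-representations, I would compute the character of both sides at pairs $(h,\tau)$ with $h\in C_c^\infty(\GL_n(\mathcal{O}))$ and $\tau\in W_F$ projecting to a positive power of geometric Frobenius. Since such $\tau$ span a dense subset of the corresponding coset, this determines the virtual character. The right-hand side evaluates immediately to $(-1)^{n-1}\tr(h|\pi^\vee)\tr(\tau|\mathrm{rec}(\pi))$, so the work is on the left-hand side.

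I would first isolate the $\rho^\vee$-isotypic part by integrating against the character of $\rho$,
\[
\tr(h\times\tau\mid [R\psi](\rho^\vee)) = \int_{\mathcal{O}_\mathcal{D}^\times}\chi_\rho(d)\,\tr(h\times\tau\times d\mid [R\psi])\,dd,
\]
and then extend $d$ to elements of $\mathcal{D}^\times$ with $v(d)+v(\tau)=0$, which is allowed by the $(\mathcal{D}^\times\times W_F)_0$-action. The key step is then a Lefschetz trace formula applied to $[R\psi]$ at $(h,\tau,d)$: using that the Lubin-Tate tower is exactly the basic stratum of the moduli of one-dimensional $\varpi$-divisible $\mathcal{O}$-modules of height $n$ appearing in Theorem \ref{MainTheorem1}, the inner trace should be expressible as a sum of orbital integrals of the function $f_{\tau,h}$ at elements $\gamma\in\GL_n(F)$ matching $d$ under the Jacquet-Langlands norm correspondence. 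Integrating over $\mathcal{O}_\mathcal{D}^\times$ and applying the Weyl integration formula on $\mathcal{D}^\times$, together with the Harish-Chandra identification of elliptic characters with elliptic orbital integrals and the Jacquet-Langlands character identity $\chi_\pi(\gamma)=(-1)^{n-1}\chi_\rho(d)$, the expression collapses to $(-1)^{n-1}\tr(f_{\tau,h}|\pi^\vee)$, no other supercuspidals contributing because $\pi$ is supercuspidal and the JL image of $\rho$ is rigid. Finally Theorem \ref{MainTheorem2}(a) rewrites this as $(-1)^{n-1}\tr(\tau|\mathrm{rec}(\pi^\vee))\tr(h|\pi^\vee)$; modulo the involution $h\mapsto h^\vee$ already present in the definition of $f_{\tau,h}$ and part (d) of Theorem \ref{MainTheorem2}, which identifies $\mathrm{rec}(\pi^\vee)$ with $\mathrm{rec}(\pi)^\vee$, this matches the right-hand side.

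The main obstacle will be the Lefschetz trace formula for $[R\psi]$ at $(\tau,d,h)$ with $d$ a nontrivial quasi-isogeny: one needs to verify that the fixed-point scheme on the level-$m$ tower is sufficiently finite, that local terms can be computed as orbital integrals of $f_{\tau,h}$, and that these combine stably as $m$ grows. I would try to bypass a direct computation on the Lubin-Tate tower by instead working with the moduli of Theorem \ref{MainTheorem1}, where the Lefschetz-Verdier formalism and the relation to $f_{\tau,h}$ established in \cite{ScholzeGLn} are already in place; the Lubin-Tate contribution is then extracted as the basic Newton stratum.
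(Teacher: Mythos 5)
Your high-level strategy---compute characters at pairs $(h,\tau)$, pass to twisted characters on $\GL_n(F_r)$, invoke the Jacquet-Langlands character identity and the Weyl integration formula on $\mathcal{D}^\times$, and close with Theorem~\ref{MainTheorem2}(a)---is the right one and tracks the paper's proof (stated there as Corollary~\ref{LubinTateCoho}). But the step you yourself single out as the main obstacle, a Lefschetz trace formula on the Lubin-Tate tower converting $\tr(h\times\tau\times d\mid[R\psi])$ into orbital integrals of $f_{\tau,h}$, is both unnecessary and the wrong kind of ingredient. The relationship between $[R\psi]$ and $\phi_{\tau,h}$ that the proof actually uses is far more direct: by Proposition~\ref{CompLubinTate}, for any basic $\beta$ with norm $d=N\beta\in\mathcal{D}_r$,
\[
\phi_{\tau,h}(\beta) = \tr\bigl(\tau\times d^{-1}\times h^\vee\mid[R\psi]\bigr),
\]
and this holds essentially by unwinding definitions, since for basic $\beta$ the formal scheme $\mathrm{Spf}\,R_{\beta,m}\otimes_{\mathcal{O}_r}\breve{\mathcal{O}}$ \emph{is} the Lubin-Tate space $\mathrm{Spf}\,R_m$, and under this identification the $W_{F_r}$-action on $R\psi_\beta$ corresponds to the $\tau\times d^{-1}$-action on $[R\psi]$. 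No fixed-point counting or Lefschetz-Verdier machinery is required at this stage. The paper then runs the argument in the opposite direction to your outline, starting from the representation-theoretic side: with $\Pi$ the base-change of $\pi$ to $\GL_n(F_r)$,
\[
\tr(\tau|\mathrm{rec}(\pi))\tr(h|\pi)=\tr(f_{\tau,h}|\pi)=\tr\bigl((\phi_{\tau,h},\sigma)\mid\Pi\bigr)=\int_{B_r}\phi_{\tau,h}(\beta)\Theta_{\Pi,\sigma}(\beta)\,d\beta,
\]
using that the twisted character of the supercuspidal $\Pi$ is supported on basic elements; one substitutes the displayed formula, applies the Jacquet-Langlands identity $\Theta_{\Pi,\sigma}(\beta)=(-1)^{n-1}\Theta_\rho(N\beta)$, and uses Corollary~\ref{IntegrationLemma} to move the integral from $B_r$ to $\mathcal{D}_r$. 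The resulting integral $(-1)^{n-1}\int_{\mathcal{D}_r}\tr(\tau\times d^{-1}\times h^\vee|[R\psi])\Theta_\rho(d)\,dd$ is by construction $(-1)^{n-1}\tr(\tau\times h^\vee|[R\psi](\rho))$, and varying $h$ gives the claim, using only the elementary identity $\tr(h|\pi)=\tr(h^\vee|\pi^\vee)$.

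A further gap: you invoke Theorem~\ref{MainTheorem2}(d) to rewrite $\mathrm{rec}(\pi^\vee)$ as $\mathrm{rec}(\pi)^\vee$. But part (d) is proved only in Sections 13--14, whereas the present statement belongs to Section~11 and is an input to the proof of parts (c) and (d); using (d) here would be circular. The paper never needs duality-compatibility of $\mathrm{rec}$ at this point: the $\pi^\vee$ on the right-hand side of the theorem arises purely from the involution $h\mapsto h^\vee$ built into the definition of $\phi_{\tau,h}$, not from any statement relating $\mathrm{rec}(\pi^\vee)$ to $\mathrm{rec}(\pi)$.
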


After these steps, the main divergence to the known proofs occurs. In fact, it turns out that a previous result from \cite{ScholzeGLn} allows us to give a direct proof of part (c) of Theorem \ref{MainTheorem2}, i.e. the bijectivity of the correspondence. We do so without showing at this point that $\pi\longmapsto \sigma(\pi)$ preserves conductors or $L$- and $\epsilon$-factors, i.e. we do not make use of the numerical Local Langlands Correspondence of Henniart, \cite{HenniartNumericalLLC}. This argument is given in Section \ref{Bijectivity}. It relies on a geometric result from \cite{ScholzeGLn} describing the inertia-invariant nearby cycles in certain regular situations. This determines the inertia invariants $\sigma(\pi)^{I_F}$ for all irreducible smooth representations $\pi$, and implies that there are no supercuspidal representations that stay supercuspidal after any series of base-changes.

Hence, at this point we have proven parts (a) to (c) of Theorem \ref{MainTheorem2}, i.e. we have shown that with a natural local characterization of the Local Langlands Correspondence, this correspondence is unique, exists, and gives the desired bijection. Moreover, it is also compatible with global correspondences.

In fact, if one is only interested in parts (a) through (c) of Theorem \ref{MainTheorem2} for representations which are unitarily induced from supercuspidal, then global arguments are only used at the following points.\footnote{More precisely, one should say arguments involving Shimura varieties. Many statements from local harmonic analysis that are used in the local arguments, e.g. base-change of representations, are only proved by global means.} On the one hand, one needs Theorem \ref{MainTheorem4}, with some undetermined representation $\mathrm{rec}(\pi)$. The results of Strauch in \cite{Strauch} give some hope that this could be done locally. On the other hand, one needs the compatibility of $\pi\longmapsto \sigma(\pi)$ with base-change. Alternatively, one might try to prove compatibility with automorphic induction. For example, in the case where $F/\mathbb{Q}_p$ is a cyclic extension, the `forgetful' functor given by mapping a $\varpi$-divisible $\mathcal{O}$-module to its underlying $p$-divisible group gives a natural map from the moduli space of one-dimensional $\varpi$-divisible $\mathcal{O}$-modules of height $n$ to the moduli space of one-dimensional $p$-divisible groups of height $n[F:\mathbb{Q}_p]$, which seems to be related to automorphic induction.

We end the paper by using the method of non-Galois automorphic induction of Harris and the technique of twisting with highly ramified characters of Henniart to deduce that our correspondence satisfies the usual requirements on the Local Langlands Correspondence, i.e., we prove part (d) of Theorem \ref{MainTheorem2}.

Let us also mention that one can arrange the arguments so that our proof of the Local Langlands Correspondence does not makes use of the theory of types: One only proves parts (a) and (b) of Theorem \ref{MainTheorem2} for representations which are unitarily induced from supercuspidal, and only proves the local-global-compatibility result under this assumption. With this modification, the automorphic ingredients concerning Clozel's base-change from the unitary groups $\mathbf{G}$ to $\GL_n$ are only those that are also used implicitly (via reference to Harris' non-Galois automorphic induction, \cite{HarrisNonGalois}) by Henniart in his proof of the Local Langlands Correspondence, \cite{HenniartLLC}.

The Shimura varieties that we use in our arguments are very special, and indeed we use facts about them that are not true for general Shimura varieties. However, the construction of the function $\phi_{\tau,h}$, which in the present paper is based on Faltings' theory of group schemes with strict $\mathcal{O}$-action, \cite{Faltings}, taylored to our situation, works in much greater generality. The details, all going well, along with the generalization of Theorem \ref{TraceHeckeOperatorTO}, will appear in another paper, \cite{ScholzeDeformationSpaces}.

Finally, let us briefly summarize the content of the different sections. In Section 2, we define the deformation spaces of $p$-divisible groups and the functions $f_{\tau,h}$, and prove Theorem \ref{MainTheorem1}. We state Theorem \ref{MainTheorem2}, and the proof of parts (a) and (b) of this theorem will be given by induction on $n$ in Sections 3 -- 11. We begin by analyzing the necessary ingredients of the induction step in Section 3, thereby introducing certain statements (i), (ii) and (iii) in Lemma \ref{LemmaToProve}, which will be proved by separate methods.

Section 4 provides some basic statements about norm maps and base-change identities that will be useful throughout the text. Afterwards, in Sections 5 and 6, the proof of statement (i) is given, by relating the deformation spaces of general one-dimensional $p$-divisible groups to the deformation spaces of their infinitesimal parts. The geometric part of the argument is given in Section 5, and the harmonic analysis part in Section 6.

Next, we prove statement (ii) in Sections 7 -- 10 by a global argument. To set the stage for the global argument, we prove some preparatory local statements in Section 7. Then, in Section 8, we introduce the Shimura varieties that we will study, along with their integral models. The crucial counting argument is carried out in Section 9, and the results are applied in Section 10 to prove statement (ii) and Theorem \ref{MainTheorem3}.

The missing statement (iii), along with Theorem \ref{MainTheorem4}, is proved in Section 11, via the comparison with the Lubin-Tate tower.

After we have finished the proof of parts (a) and (b) of Theorem \ref{MainTheorem2}, we continue in Section 12 by using our earlier results from \cite{ScholzeGLn} to prove part (c). As already indicated above, in the final two Sections 13 and 14, we use Harris' method of non-Galois automorphic induction and Henniart's method of twisting with highly ramified characters, respectively, to prove part (d) of Theorem \ref{MainTheorem2}.

{\bf Notation.} We use $F$ to denote a finite extension of $\mathbb{Q}_p$ with ring of integers $\mathcal{O}$, uniformiser $\varpi$ and residue field $\kappa$. For any integer $r\geq 1$, we let $F_r/F$ be the unramified extension of degree $r$, with ring of integers $\mathcal{O}_r$ and residue field $\kappa_r$. The completion of the maximal unramified extension of $F$ is denoted $\breve{F}$, with ring of integers $\breve{\mathcal{O}}$. We use $\sigma$ to denote the arithmetic Frobenius of $F_r$ over $F$, or also of $\breve{F}$. In contrast, $\sigma_0$ denotes the arithmetic Frobenius of the Witt vectors $W(\kappa_r)$ over $\mathbb{Z}_p$.

Moreover, we denote the Weil group of $F$ by $W_F$, with inertia subgroup $I_F$, and we fix a geometric Frobenius element $\mathrm{Frob}\in W_F$.

To clarify the distinction, global objects are often denoted by bold-face letters, so that e.g. $\mathbb{F}$ will be a CM field with totally real subfield $\mathbb{F}_0$.

The symbol $^{\vee}$ is used to denote duals, e.g. dual representations, dual abelian varieties, and dual $p$-divisible groups. Also, if $f$ is a function on $\GL_n(R)$ for some ring $R$, we define a new function $f^{\vee}$ on $\GL_n(R)$ by $f^{\vee}(g)=f((g^{-1})^t)$.

{\bf Acknowledgments.} First of all, I thank my advisor M. Rapoport for explaining me the Langlands-Kottwitz method of counting points, which plays a crucial role in this article, for his encouragement to work on this topic, and for the many other things he taught me. Furthermore, my thanks go to Guy Henniart and Vincent S\'{e}cherre for their advice in type theory, among other things.

\section{Deformation spaces of $p$-divisible groups}\label{DefSpaces}

Recall the following definition, cf. e.g. \cite{Faltings}, where also the analogue of truncated $p$-divisible groups is defined.

\begin{definition} Let $S$ be an $\mathcal{O}$-scheme on which $p$ is locally nilpotent. A $\varpi$-divisible $\mathcal{O}$-module $H$ over $S$ is a $p$-divisible group $H$ over $S$ together with an action $\iota: \mathcal{O}\longrightarrow \mathrm{End}(H)$ such that the two induced actions of $\mathcal{O}$ on the Lie algebra of $H$ agree.
\end{definition}

Now let $H$ be a $\varpi$-divisible $\mathcal{O}$-module over a perfect field $k$ of characteristic $p$, which is given the structure of a $\mathcal{O}$-algebra, via a map $\kappa\longrightarrow k$. Then the usual Dieudonn\'{e} module $(M_0,F_0,V_0)$ of $H$ carries an action of
\[
\mathcal{O}\otimes W(k)=\prod_{\kappa\longrightarrow k} W_{\mathcal{O}}(k)\ ,
\]
where $W_{\mathcal{O}}(k)$ is the completion of the unramified extension of $\mathcal{O}$ with residue field $k$. Let $M$ be the component of $M_0$ corresponding to the given map $\kappa\longrightarrow k$, which is a free $W_{\mathcal{O}}(k)$-module. Assume that $\kappa\cong \mathbb{F}_{p^j}$ for some $j$. Then $M$ carries a $\sigma$-semilinear action of $F_0^j$, which we denote by $F$ in this context. One can check that $M$ also admits a $\sigma^{-1}$-semilinear operator $V$ satisfying $FV=VF=\varpi$. The structure $(M,F,V)$ is functorial in $H$ and is called the relative Dieudonn\'{e} module of $H$. It is an easy exercise to see that all of Dieudonn\'{e} theory goes through in this context.

In particular, to any $\beta\in \mathrm{GL}_n(\mathcal{O}_r)\mathrm{diag}(\varpi,1,\ldots,1)\mathrm{GL}_n(\mathcal{O}_r)$, one can associate a one-dimensional $\varpi$-divisible $\mathcal{O}$-module $\overline{H}_{\beta}$ of height $n$ over $\kappa_r$, by taking $F=\beta\sigma$. Conversely, any one-dimensional $\varpi$-divisible $\mathcal{O}$-module of height $n$ over $\kappa_r$ is associated to a unique $\GL_n(\mathcal{O}_r)$-$\sigma$-conjugacy class of such $\beta$.

\begin{definition}\label{DeformationSpaces} \begin{altenumerate}
\item[{\rm (i)}] Let $R_{\beta}$ be the formal deformation space of $\overline{H}_{\beta}$ as a $\varpi$-divisible $\mathcal{O}$-module, with universal deformation $H_{\beta}$.
\item[{\rm (ii)}] Let $R_{\beta,m}$ be the covering of $R_{\beta}$ parametrizing Drinfeld-level-$m$-structures on $H_{\beta}$, i.e. sections $X_1,\ldots,X_n\in H_{\beta}[\varpi^m]$ such that
\[
\sum_{i_1,\ldots,i_n\in \mathcal{O}/\varpi^m} [i_1X_1+\ldots+i_nX_n] = [H_{\beta}[\varpi^m]]
\]
as relative Cartier divisors on $H_{\beta}/R_{\beta}$.
\end{altenumerate}
\end{definition}

\begin{prop} \begin{altenumerate}
\item[{\rm (i)}] The ring $R_{\beta}$ is a formally smooth complete noetherian local $\mathcal{O}_r$-algebra, abstractly isomorphic to $\mathcal{O}_r[[T_1,...,T_{n-1}]]$.

\item[{\rm (ii)}] The covering $R_{\beta,m}/R_{\beta}$ is a finite Galois covering with Galois group $\GL_n(\mathcal{O}/\varpi^m\mathcal{O})$, \'{e}tale in the generic fibre.

\item[{\rm (iii)}] The ring $R_{\beta,m}$ is regular.
\end{altenumerate}
\end{prop}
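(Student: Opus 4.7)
The plan is to treat the three parts in order, each as an adaptation of Drinfeld's classical deformation theory for one-dimensional formal $\mathcal{O}$-modules to the $\varpi$-divisible $\mathcal{O}$-module setting; the relative Dieudonné theory recalled above, together with Faltings' formalism of strict $\mathcal{O}$-actions, should make the standard arguments go through with only formal changes.

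For (i), I would apply the $\mathcal{O}$-linear version of Grothendieck-Messing theory. Since $\overline{H}_\beta$ is one-dimensional of height $n$, the Hodge filtration on its relative Dieudonné module has graded pieces $\omega_{\overline{H}_\beta}$ of $\kappa_r$-rank $1$ and $\mathrm{Lie}\,\overline{H}_\beta^\vee$ of $\kappa_r$-rank $n-1$. Deformations to an Artinian local $\mathcal{O}_r$-algebra with residue field $\kappa_r$ correspond to $\mathcal{O}$-linear lifts of this filtration on the evaluated crystal, and a Schlessinger argument then shows that the deformation functor is pro-representable by a formally smooth complete noetherian local $\mathcal{O}_r$-algebra of relative dimension $1\cdot(n-1)=n-1$. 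This yields $R_\beta\cong \mathcal{O}_r[[T_1,\ldots,T_{n-1}]]$.

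For (ii), the action of $\GL_n(\mathcal{O}/\varpi^m)$ on the tuple $(X_1,\ldots,X_n)$ is built into Definition \ref{DeformationSpaces}, so only generic étaleness and finiteness require argument. Over $\mathrm{Spec}\,R_\beta[1/\varpi]$ the group scheme $H_\beta[\varpi^m]$ is finite étale of rank $n$ over $\mathcal{O}/\varpi^m$, so a Drinfeld level structure is equivalent to an $\mathcal{O}/\varpi^m$-module isomorphism $(\mathcal{O}/\varpi^m)^n\stackrel{\sim}{\to}H_\beta[\varpi^m]$; the functor of such isomorphisms is the $\GL_n(\mathcal{O}/\varpi^m)$-torsor, visibly finite étale over the generic fibre. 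Finiteness of $R_{\beta,m}$ over $R_\beta$ is then inherited from finiteness of $H_\beta[\varpi^m]\to\mathrm{Spf}\,R_\beta$, since the $X_i$ are sections of a finite flat group scheme and $R_{\beta,m}$ is generated as an $R_\beta$-algebra by their coordinates.

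Part (iii) will be the main obstacle, since the cover is wildly ramified along the special fibre and cannot be analyzed by generic means. Following Drinfeld, the strategy is to prove that at the unique closed point of $\mathrm{Spec}\,R_{\beta,m}$ the formal-group coordinates of the universal sections $X_1,\ldots,X_n$ themselves form a regular system of parameters of the completed local ring. The key input is the Cartier divisor relation in Definition \ref{DeformationSpaces}(ii): at $m=1$, comparing constant terms of the two divisors (cut out respectively by the formal power series $[\varpi]_{H_\beta}(T)$ and by the product $\prod(T - [i_1]X_1 + \ldots + [i_n]X_n)$) expresses $\varpi$, up to a unit, as a product in the formal sums of the $X_i$, so $\varpi$ lies in the ideal generated by $X_1,\ldots,X_n$; the remaining deformation parameters $T_1,\ldots,T_{n-1}$ can likewise be recovered from these sections via Weierstrass preparation applied to $[\varpi]_{H_\beta}$. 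An induction from level $m-1$ to level $m$, reducing the general case to the level-$1$ analysis over the previous-level deformation ring, then yields $n$ generators of the maximal ideal; since $R_{\beta,m}$ has Krull dimension $n$ by finiteness over $R_\beta$, this forces regularity.
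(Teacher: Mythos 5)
Your treatments of (i) and (ii) are essentially sound and take a genuinely different route from the paper. The paper's proof is one line: reduce to the case where $\overline{H}_\beta$ is infinitesimal via Proposition~\ref{GeometricReduction}, base-change to $\breve{\mathcal{O}}$, and cite Drinfeld/Strauch for the classical Lubin-Tate spaces. You instead argue from first principles: for (i), the $\mathcal{O}$-linear Grothendieck--Messing/crystalline formalism (which in this generality is due to Faltings) to get formal smoothness of the correct relative dimension, and for (ii) the generic \'etaleness of $H_\beta[\varpi^m]$ plus integral closure. Your approach is more self-contained; the paper's reduction is shorter but outsources the real work to Drinfeld. Note one small gap in (ii): you should explicitly observe that $R_\beta = R_{\beta,m}^{\GL_n(\mathcal{O}/\varpi^m)}$ (which follows since $R_\beta$ is normal by (i) and the covering is generically a torsor), as this is part of what ``Galois covering'' asserts.

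Part (iii), however, has a real gap. Your argument treats $H_\beta$ as a one-dimensional formal group, so that each section $X_i$ of $H_\beta[\varpi^m]$ has a ``formal-group coordinate'' and $R_{\beta,m}$ is local with a unique closed point. This is only valid when $\overline{H}_\beta$ is infinitesimal. In general $\overline{H}_\beta$ decomposes as $\overline{H}_\beta^0 \times \overline{H}_\beta^{\mathrm{et}}$ with $\overline{H}_\beta^{\mathrm{et}}$ possibly nontrivial; then $H_\beta$ is only an extension of an \'etale by a formal group, the \'etale part of $H_\beta[\varpi^m]$ contributes discrete (not coordinate-valued) data from the sections, and as Proposition~\ref{GeometricReduction} makes explicit, $\mathrm{Spf}\,R_{\beta,m}\otimes_{\mathcal{O}_r}\breve{\mathcal{O}}$ is disconnected. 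So the $n$ formal-group coordinates you rely on do not exist, and the counting argument ($n$ generators for a ring of Krull dimension $n$) does not produce a regular system of parameters. To make the proof work for arbitrary $\beta$, you need the reduction step that the paper takes: use Proposition~\ref{GeometricReduction}(iii)--(iv) (after base-change to $\breve{\mathcal{O}}$ if convenient) to exhibit each connected component of $\mathrm{Spf}\,R_{\beta,m}$ as a formal power series extension of the deformation ring of the infinitesimal part $\overline{H}_{\beta^0}$ with its own Drinfeld level structures, which is the genuine Lubin--Tate situation where Drinfeld's regularity argument applies. Only then does your sketch of the Cartier-divisor/Weierstrass argument have a chance of running.
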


\begin{proof} Using Proposition \ref{GeometricReduction}, one can reduce to the case where $H$ is infinitesimal. All statements can be checked after base-change to $\breve{\mathcal{O}}$, where these deformation spaces are the classical Lubin-Tate spaces and everything is well-known, cf. e.g. \cite{Drinfeld} or \cite{Strauch}.
\end{proof}

\begin{thm}\label{Algebraization} Associated to any double coset
\[
\overline{\beta}\in (1+\varpi^mM_n(\mathcal{O}))\backslash \GL_n(\mathcal{O}_r)\mathrm{diag}(\varpi,1,\ldots,1)\GL_n(\mathcal{O}_r)/(1+\varpi^mM_n(\mathcal{O}))\ ,
\]
there is a separated, flat scheme $\mathcal{R}_{\overline{\beta},m}$ of finite type over $\mathcal{O}_r$ with smooth generic fibre equipped with an action of $\GL_n(\mathcal{O}/\varpi^m)$, and a finite scheme $Z\subset \mathrm{Spec}\ \mathcal{R}_{\overline{\beta},m}\otimes_{\mathcal{O}_r} \kappa_r$ stable under this action such that the completion of $\mathcal{R}_{\overline{\beta},m}$ at $Z$ is $\GL_n(\mathcal{O})$-equivariantly isomorphic to $R_{\beta,m}$ for any $\beta\in \overline{\beta}$.
\end{thm}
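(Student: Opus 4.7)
My plan is to reduce to the case of infinitesimal $\overline{H}_\beta$ and then realize $R_{\beta,m}$ as the formal completion of a global scheme along a finite collection of points in the special fibre. First, using the geometric reduction proposition invoked in the proof of the previous proposition, I would reduce to the case where $\overline{H}_\beta$ is infinitesimal: a general one-dimensional $\varpi$-divisible $\mathcal{O}$-module decomposes canonically into an infinitesimal part and an \'{e}tale part, the \'{e}tale part has rigid deformation theory with \'{e}tale Drinfeld level structures, and so the algebraization in general follows from the infinitesimal case via a finite \'{e}tale fibration.

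For the infinitesimal case, I would realize $R_{\beta,m}$ as the completion of a Kottwitz-type unitary Shimura variety at a finite set of basic points. The PEL data is: a CM field $\mathbb{F}$ with totally real subfield $\mathbb{F}_0$ of even degree, a place $v$ of $\mathbb{F}$ above $p$ with $\mathbb{F}_v = F$ and $v$ split in $\mathbb{F}/\mathbb{F}_0$, and a unitary similitude group $G/\mathbb{Q}$ with signature $(1,n-1)$ at one archimedean place and $(0,n)$ at the others, such that $G(\mathbb{Q}_p)$ has a factor $\GL_n(F)$ at $v$. The Shimura variety $\mathrm{Sh}_K$ for $K = K^p K_p$, with $K_p$ the Drinfeld-level-$m$ subgroup of $\GL_n(\mathcal{O})$ at $v$ (and maximal compact elsewhere) and $K^p$ sufficiently small, is separated, flat, of finite type over $\mathcal{O}_r$ with smooth generic fibre (Drinfeld level structures become \'{e}tale in the generic fibre), and carries a natural $\GL_n(\mathcal{O}/\varpi^m)$-action from varying $K_p$. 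By the Serre--Tate theorem applied to abelian varieties with PEL structure, the completion of $\mathrm{Sh}_K$ at a closed point $x$ of the special fibre is the formal deformation space of the $\varpi$-divisible $\mathcal{O}$-module at $v$ carried by $x$, equipped with its Drinfeld level-$m$-structure (the other factors of the $p$-divisible group being rigid thanks to the PEL conditions). I would take $Z$ to be a $\GL_n(\mathcal{O}/\varpi^m)$-stable finite set of basic points whose $\varpi$-divisible $\mathcal{O}$-module at $v$ lies in the isogeny class of $\overline{H}_\beta$: existence follows from Honda--Tate, and the basic stratum is zero-dimensional so $Z$ is finite. Choosing $\mathcal{R}_{\overline{\beta},m}$ to be any $\GL_n(\mathcal{O}/\varpi^m)$-stable affine open neighborhood of $Z$ in $\mathrm{Sh}_K$ avoiding all other basic points of type $\overline{\beta}$ then yields a scheme with the required properties, with each point of $Z$ corresponding to some $\beta \in \overline{\beta}$ via the Serre--Tate identification.

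The main obstacle is setting up the PEL data correctly so that the local behavior at $v$ matches $\GL_n(F)$ (rather than a nontrivial inner form), that Honda--Tate produces a nonempty basic stratum of the required type, and that Drinfeld level structures at $v$ integrate into the global moduli problem. The $\GL_n(\mathcal{O})$-equivariance on the completion $R_{\beta,m}$ factors through the $\GL_n(\mathcal{O}/\varpi^m)$-action on $\mathcal{R}_{\overline{\beta},m}$, with the full pro-$\GL_n(\mathcal{O})$-action recovered by passing to the inverse system of $R_{\beta,m}$ as $m$ varies, as dictated by the Drinfeld formalism. As an alternative avoiding global machinery at this stage, one could instead appeal directly to Artin's algebraization theorem applied to the moduli stack of $\varpi$-divisible $\mathcal{O}$-modules with Drinfeld level-$m$-structure, which is algebraic and locally of finite type over $\mathcal{O}_r$ by Faltings' theory of group schemes with strict $\mathcal{O}$-action, the same theory underlying this section's construction of $\phi_{\tau,h}$.
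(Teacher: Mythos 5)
Your main approach is a genuinely different route from the paper's. The paper's proof is purely local: it invokes Faltings' result that the truncation functor $H\longmapsto H[\varpi^m]$ from $\varpi$-divisible $\mathcal{O}$-modules to $m$-truncated ones is formally smooth, so $R_{\beta}$ is also a versal deformation space for $H_{\beta}[\varpi^m]$; the latter is a finitely presented functor, so Artin algebraization applies directly, producing $\mathcal{R}_{\overline{\beta}}$ with a universal $\mathrm{BT}_m$; the Drinfeld level structure is then added afterwards by taking the normalization in a cover of the generic fibre, and one checks agreement with $R_{\beta,m}$ using that the latter is normal. Your proposal instead reaches for the Kottwitz--Harris--Taylor Shimura varieties, Honda--Tate and Serre--Tate, which is a much heavier tool and preempts constructions the paper only makes later (Sections 8--11). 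What the paper's route buys you is a self-contained local proof that does not depend on the existence of suitable global PEL data, and it explains why the result really belongs in the "deformation spaces" section.

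There are two substantive gaps in what you wrote. First, the claimed reduction to the infinitesimal case "via a finite \'etale fibration" is incorrect: by Proposition \ref{GeometricReduction}(iii), the map $(R_{\beta,m})_V\longrightarrow R_{\beta^0,m}$ is \emph{formally smooth of positive relative dimension} (the extension class between the infinitesimal and \'etale parts contributes $n-k$ genuine deformation parameters, giving power series extensions on connected components over $\breve{\mathcal{O}}$), not finite \'etale. One can still propagate algebraizability along such a formally smooth map, but the argument you wrote would not. Second, your "alternative avoiding global machinery" appeals to Artin algebraization applied to the moduli stack of $\varpi$-divisible $\mathcal{O}$-modules with Drinfeld level-$m$-structure, but this stack is not locally of finite presentation (the deformation functor of a $p$-divisible group is a formal pro-object, not representable by a scheme of finite type), so Artin's theorem does not apply to it directly. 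The resolution — and the crux of the paper's proof — is to pass to the moduli of \emph{$m$-truncated} $\varpi$-divisible $\mathcal{O}$-modules, which is of finite presentation, and to use Faltings' formal smoothness of the truncation map to identify $R_\beta$ with a versal deformation ring for that functor; level structures are recovered after algebraization by normalizing in the generic fibre. Without the truncation step, the algebraization claim does not get off the ground.
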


\begin{proof} We recall the necessary facts from Faltings' theory of group schemes with strict $\mathcal{O}$-action, \cite{Faltings}. First, giving an $m$-truncated $\varpi$-divisible $\mathcal{O}$-module (or $\mathrm{BT}_m$) over $\kappa_r$ is equivalent to giving the $\sigma$-semilinear action of $F$ and the $\sigma^{-1}$-semilinear action of $V$ with $FV=VF=\varpi$ on a free $\mathcal{O}_r/\varpi^m$-module. Giving such data which come as the truncation of some $H_{\beta}$ is then seen to be equivalent to giving $\overline{\beta}$.

Further, Faltings shows that the functor $H\longmapsto H[\varpi^m]$ from $\varpi$-divisible $\mathcal{O}$-modules to $m$-truncated $\varpi$-divisible $\mathcal{O}$-modules is formally smooth, showing that $R_{\beta}$ is also the versal deformation space of $H_{\beta}[\varpi^m]$. Now this is a finitely presented functor, so that Artin's algebraization theorem, \cite{ArtinAlgebraization}, shows that there is a separated scheme $\mathcal{R}_{\overline{\beta}}$ of finite type over $\mathcal{O}_r$ together with an $m$-truncated $\varpi$-divisible $\mathcal{O}$-module $\mathcal{H}_{\overline{\beta}}$ and a point $x\in \mathrm{Spec}\ \mathcal{R}_{\overline{\beta}}(\kappa_r)$ such that the completion of $\mathcal{R}_{\overline{\beta}}$ at $x$ with $\mathcal{H}_{\overline{\beta}}$ restricted to this formal completion is isomorphic to $R_{\beta}$ with $H_{\beta}[\varpi^m]$ for all $\beta\in \overline{\beta}$.

By going over to a Zariski open subset and normalizing, we can assume that $\mathcal{R}_{\overline{\beta}}$ is normal, flat over $\mathcal{O}_r$ and smooth in the generic fibre, because these statements are true in the completion at $x$. Now let $\mathcal{R}_{\overline{\beta},m}$ be the normalization of $\mathcal{R}_{\overline{\beta}}$ in the covering of the generic fibre parametrizing trivializations of $\mathcal{H}_{\overline{\beta}}$. Then $\mathrm{GL}_n(\mathcal{O}/\varpi^m)$ acts on $\mathcal{R}_{\overline{\beta},m}$. Let $Z$ be the preimage of $x$ in $\mathcal{R}_{\overline{\beta},m}$.

The statements comparing $\mathcal{R}_{\overline{\beta},m}$ and $R_{\beta,m}$ are now clear, since $R_{\beta,m}$ is regular and in particular normal.

Finally, one may pass to a suitable Zariski open subset to assume that $\mathcal{R}_{\overline{\beta},m}$ is flat over $\mathcal{O}_r$ and smooth in the generic fibre, as these statements are true in the completion at $Z$.
\end{proof}

We consider the formal nearby cycle sheaves in the sense of Berkovich, \cite{Berkovich2},\footnote{Our terminology is the one used e.g. in SGA and differs from Berkovich's terminology, where these sheaves are called vanishing cycle sheaves.}
\[
R\psi_{\beta} = \mathop{\mathrm{lim}}\limits_{\stackrel{\longrightarrow}{m}} H^0(R\psi_{\mathrm{Spf}\ R_{\beta,m}}\bar{\mathbb{Q}}_{\ell})\ .
\]

\begin{thm}\label{RepresentationNice} The space $H^0(R^i\psi_{\mathrm{Spf}\ R_{\beta,m}} \bar{\mathbb{Q}}_{\ell})$ is a finite-dimensional, continuous representation of $W_{F_r}\times \GL_n(\mathcal{O}/\varpi^m)$, which vanishes outside the range $0\leq i\leq n-1$.
\end{thm}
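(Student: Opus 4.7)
The plan is to transport the problem to the algebraic setting provided by Theorem \ref{Algebraization} and then apply standard constructibility and dimension-vanishing results for algebraic nearby cycles.

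First, I would invoke Berkovich's comparison theorem between formal and algebraic nearby cycles (\cite{Berkovich2}). Since $\mathrm{Spf}\ R_{\beta,m}$ is the formal completion of the finite-type $\mathcal{O}_r$-scheme $\mathcal{R}_{\overline{\beta},m}$ along the finite closed subscheme $Z$, this gives a canonical equivariant isomorphism $R\psi_{\mathrm{Spf}\ R_{\beta,m}} \bar{\mathbb{Q}}_{\ell} \cong (R\psi_{\mathcal{R}_{\overline{\beta},m}} \bar{\mathbb{Q}}_{\ell})|_Z$, and taking $H^0$ identifies $H^0(R^i\psi_{\mathrm{Spf}\ R_{\beta,m}} \bar{\mathbb{Q}}_{\ell})$ with the direct sum over the finitely many closed points of $Z$ of the stalks of $R^i\psi_{\mathcal{R}_{\overline{\beta},m}} \bar{\mathbb{Q}}_{\ell}$. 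Since $\mathcal{R}_{\overline{\beta},m}$ is separated of finite type over $\mathcal{O}_r$, these stalks are finite-dimensional $\bar{\mathbb{Q}}_{\ell}$-vector spaces carrying continuous $W_{F_r}$-actions by standard properties of nearby cycles. The action of $\GL_n(\mathcal{O}/\varpi^m)$ on $\mathcal{R}_{\overline{\beta},m}$ preserves $Z$ by construction, so it induces a commuting (and automatically continuous, since the group is finite) action on the resulting finite-dimensional space; this proves the first assertion.

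The main obstacle is the vanishing $H^0(R^i\psi_{\mathrm{Spf}\ R_{\beta,m}} \bar{\mathbb{Q}}_{\ell}) = 0$ for $i > n-1$. For this, I would use that the generic fibre of $\mathcal{R}_{\overline{\beta},m}$ is smooth of pure dimension $n-1$ over $F_r$ in a Zariski-open neighbourhood of $Z$: it is étale (of relative dimension zero) over the generic fibre of $\mathcal{R}_{\overline{\beta}}$, and the formal completion at $x$ of the latter is $R_{\beta}\cong \mathcal{O}_r[[T_1,\ldots,T_{n-1}]]$, which is formally smooth of relative dimension $n-1$; hence the generic fibre of $\mathcal{R}_{\overline{\beta},m}$ has this dimension near $Z$ as well. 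I would then apply Berkovich's Artin-type vanishing theorem for the étale cohomology of analytic spaces to the analytic Milnor fibre at each point of $Z$, which is an analytic space of dimension $n-1$, to conclude that the stalks of $R^i\psi\bar{\mathbb{Q}}_{\ell}$ vanish for $i > n-1$. The vanishing for $i<0$ is trivial, yielding the full range $0\leq i\leq n-1$.
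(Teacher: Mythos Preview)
Your proof is correct and follows essentially the same strategy as the paper: algebraize via Theorem \ref{Algebraization}, apply Berkovich's comparison theorem (Theorem 3.1 of \cite{Berkovich2}) to pass from formal to algebraic nearby cycles, and then invoke standard results for schemes. The paper's own proof is a one-line citation of exactly these ingredients.

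The only point where you diverge slightly is in justifying the vanishing for $i>n-1$: you propose returning to the analytic side and applying a Berkovich-type Artin vanishing to the analytic Milnor fibre. This works, since the rigid generic fibre of $\mathrm{Spf}\ R_{\beta,m}$ is a smooth quasi-Stein space of dimension $n-1$ (being finite \'etale over the open polydisc), but it is an unnecessary detour. Once you are in the algebraic setting, the vanishing $R^i\psi\,\bar{\mathbb{Q}}_{\ell}=0$ for $i$ larger than the generic-fibre dimension is a standard fact for schemes of finite type over a discrete valuation ring (e.g.\ SGA 7, Exp.\ I, Th.\ 4.2, ultimately resting on affine Artin vanishing after \'etale localisation). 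This is presumably what the paper means by ``the finiteness results in the \'etale cohomology of schemes'', and it avoids having to verify the hypotheses of an analytic vanishing theorem.
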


\begin{proof} This follows from Theorem \ref{Algebraization}, the comparison of formal nearby cycles with the usual nearby cycles, i.e.,
\[
R^i\psi_{\mathrm{Spf}\ R_{\beta,m}} \bar{\mathbb{Q}}_{\ell} = R^i\psi_{\mathrm{Spec}\ \mathcal{R}_{\overline{\beta},m}} \bar{\mathbb{Q}}_{\ell}|_{Z\otimes_{\kappa_r} \bar{\kappa}}\ ,
\]
given by Theorem 3.1 of \cite{Berkovich2}, and the finiteness results in the \'{e}tale cohomology of schemes.
\end{proof}

We see that the alternating sum of the cohomology groups induces an element $[R\psi_{\beta}]$ in the Gro\-then\-dieck group of representations of $W_{F_r}\times \GL_n(\mathcal{O})$ with continuous $W_{F_r}$-action and smooth admissible $\GL_n(\mathcal{O})$-action.

Now, for any $\tau\in \mathrm{Frob}^r I_F\subset W_{F_r}$ and $h\in C_c^{\infty}(\GL_n(\mathcal{O}))$ taking values in $\mathbb{Q}$, we define
\[
\phi_{\tau,h}(\beta) = \tr(\tau\times h^{\vee}|[R\psi_{\beta}])\ ,
\]
where $h^{\vee}$ is defined by $h^{\vee}(g)=h((g^{-1})^t)$.

\begin{thm} This defines a function $\phi_{\tau,h}\in C_c^{\infty}(\GL_n(F_r))$ with values in $\mathbb{Q}$, independent of $\ell$.
\end{thm}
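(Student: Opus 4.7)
The support of $\phi_{\tau,h}$ is contained in $\GL_n(\mathcal{O}_r)\mathrm{diag}(\varpi,1,\ldots,1)\GL_n(\mathcal{O}_r)$, the image of the compact product $\GL_n(\mathcal{O}_r)\times\GL_n(\mathcal{O}_r)$ under continuous multiplication, so it is compact in $\GL_n(F_r)$; this gives compact support. For smoothness, choose $m\geq 1$ so that $h^\vee$ factors through $\GL_n(\mathcal{O}/\varpi^m)$, so that $\tr(h^\vee | V)=\tr(h^\vee | V^{1+\varpi^m M_n(\mathcal{O})})$ for any smooth admissible $\GL_n(\mathcal{O})$-representation $V$. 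Specialising to $V=[R\psi_\beta]$ and invoking Theorem \ref{Algebraization}, this fixed part equals the alternating sum of the $H^i(Z_{\bar\kappa_r},R\psi_{\mathcal{R}_{\overline\beta,m}}\bar{\mathbb{Q}}_\ell)$ as a $W_{F_r}\times\GL_n(\mathcal{O}/\varpi^m)$-module, and depends on $\beta$ only through its double coset $\overline{\beta}$. Combined with the canonical identification $R_{\beta,m}\simeq R_{\beta',m}$ whenever $\beta,\beta'$ are $\sigma$-conjugate under $\GL_n(\mathcal{O}_r)$, this yields bi-invariance of $\phi_{\tau,h}$ under a compact open subgroup of $\GL_n(F_r)$, hence local constancy.

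For rationality and $\ell$-independence, the plan is to apply the Lefschetz--Verdier trace formula, in the form of Fujiwara and Varshavsky, to the algebraic scheme $\mathcal{R}_{\overline\beta,m}$. Writing $h^\vee=\sum_i c_i \mathbf{1}_{g_iK}$ with $c_i\in\mathbb{Q}$ and $K=1+\varpi^m M_n(\mathcal{O})$, one has
\[
\phi_{\tau,h}(\beta)=\mathrm{vol}(K)\sum_i c_i\,\tr(\tau\times g_i|R\Gamma(Z_{\bar\kappa_r},R\psi_{\mathcal{R}_{\overline\beta,m}}\bar{\mathbb{Q}}_\ell)).
\]
When $\tau$ reduces to a positive power of geometric Frobenius, each trace becomes, by the trace formula, a weighted $\bar\kappa_r$-point count on the fixed-point locus of $g_i\circ\mathrm{Frob}^s$ acting on $\mathcal{R}_{\overline\beta,m}\otimes_{\mathcal{O}_r}\bar\kappa_r$, hence an integer independent of $\ell$. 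For general $\tau\in\mathrm{Frob}^r I_F$, I would use that $R_{\beta,m}/R_\beta$ is \'etale in the generic fibre to show that the inertia acts on $R\psi_{\mathcal{R}_{\overline\beta,m}}\bar{\mathbb{Q}}_\ell|_Z$ through a finite quotient, and then reduce the general case to a $\mathbb{Q}$-linear combination of Frobenius traces by a finite-group character computation.

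The main obstacle lies in treating arbitrary $\tau\in\mathrm{Frob}^r I_F$ rather than just powers of Frobenius. The Frobenius case is standard Grothendieck--Lefschetz, but controlling the full (wild and tame) inertia action on the nearby cycle cohomology at $Z$, and turning the resulting Lefschetz--Verdier contributions into honest rational point counts that are manifestly $\ell$-independent, requires genuine geometric input about the deformation spaces $R_{\beta,m}$, in particular their regularity and the structure of the Drinfeld-level covering in the generic fibre. This is where I expect the technical core of the proof to lie.
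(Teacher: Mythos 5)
Your treatment of compact support and local constancy is correct and matches the paper: both follow from Theorem \ref{Algebraization}. The genuine gap is in your treatment of $\ell$-independence and rationality for general $\tau \in \mathrm{Frob}^r I_F$, which you yourself identify as the main obstacle. Your proposed workaround — that ``inertia acts on $R\psi_{\mathcal{R}_{\overline\beta,m}}\bar{\mathbb{Q}}_\ell|_Z$ through a finite quotient'' because $R_{\beta,m}/R_\beta$ is \'etale in the generic fibre — is not correct as stated: nearby cycle cohomology of a regular flat scheme over $\mathcal{O}_r$ typically carries a nontrivial unipotent monodromy operator, so the $I_F$-action does not factor through a finite group, and a bare ``finite-group character computation'' cannot reduce the general case to Frobenius traces. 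Being \'etale over $R_\beta$ in the generic fibre controls the $\GL_n(\mathcal{O}/\varpi^m)$-covering but says nothing about finiteness of the inertia action on the nearby cycle stalks.

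The paper resolves this by two steps that your proposal is missing. First, a twisting trick: for $h$ the characteristic function of a single coset $gK$ with $g\in\GL_n(\mathcal{O}/\varpi^m)$, one twists the algebraization $\mathcal{R}_{\overline\beta,m}$ by the unramified $\mathrm{Gal}(\bar F/F)$-action sending geometric Frobenius to $g$; this converts $\tr(\tau\times h^\vee\mid[R\psi_\beta])$ into $\sum_{x\in Z(\kappa_r)}\tr(\tau\mid(R\psi_X\mathbb{Q}_\ell)_x)$, a pure Weil-group trace on the stalks of nearby cycles of a twisted scheme $X$, with no residual group action. Second, the paper then invokes Mieda's Theorem 6.2.2 \cite{Mieda}, which proves exactly the statement you were trying to reprove: rationality and $\ell$-independence of $\tr(\tau\mid(R\psi_X\mathbb{Q}_\ell)_x)$ for \emph{all} $\tau$ projecting to a positive power of Frobenius, not merely for Frobenius powers. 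Mieda's theorem is precisely the ``genuine geometric input'' you predicted would be needed; it is not an elementary Lefschetz--Verdier point count but relies on alterations and a careful analysis of nearby cycles. Your Lefschetz--Verdier/Fujiwara--Varshavsky route, while in the right spirit, does not by itself produce this, and without Mieda's result (or an equivalent) the argument does not close.
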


\begin{proof} From Theorem \ref{Algebraization}, it follows directly that the function is locally constant in $\beta$. The independence of $\ell$ follows from the results of Mieda, Theorem 6.2.2 of \cite{Mieda}. To apply them, assume that $h$ is the characteristic function of $g\in \GL_n(\mathcal{O}/\varpi^m)$. Use $g$ to twist the scheme $\mathcal{R}_{\overline{\beta},m}$ with the unramified action of $\mathrm{Gal}(\bar{F}/F)$ sending geometric Frobenius to $g$. We get a scheme $X$ with a finite subscheme $Z$ such that
\[
\phi_{\tau,h}(\beta) = \sum_{x\in Z(\kappa_r)} \tr(\tau | (R\psi_X \mathbb{Q}_{\ell})_x)\ .
\]
Now the claim is an immediate consequence of Theorem 6.2.2 of \cite{Mieda}.
\end{proof}

Finally, we let $f_{\tau,h}\in C_c^{\infty}(\GL_n(F))$ be associated to $\phi_{\tau,h}$, using the normalization of Haar measures giving a maximal compact subgroup volume $1$. Recall our second main theorem.

\begin{thm}
\begin{altenumerate}
\item[{\rm (a)}] For any irreducible smooth representation $\pi$ of $\GL_n(F)$ there is a unique $n$-dimensional representation $\mathrm{rec}(\pi)$ of $W_F$ such that for all $\tau$ and $h$ as above,
\[
\tr(f_{\tau,h}|\pi) = \tr(\tau|\mathrm{rec}(\pi)) \tr(h|\pi)\ .
\]
\end{altenumerate}

{\rm Write $\sigma(\pi) = \mathrm{rec}(\pi)(\frac{1-n}2)$.}

\begin{altenumerate}
\item[{\rm (b)}] If $\pi$ is a subquotient of the normalized parabolic induction of the irreducible representation $\pi_1\otimes\cdots \otimes \pi_t$ of $\GL_{n_1}(F)\times\cdots \times \GL_{n_t}(F)$, then $\sigma(\pi)=\sigma(\pi_1)\oplus\ldots \oplus \sigma(\pi_t)$.
\item[{\rm (c)}] The map $\pi\longmapsto \sigma(\pi)$ induces a bijection between the set of isomorphism classes of supercuspidal irreducible smooth representations of $\GL_n(F)$ and the set of isomorphism classes of irreducible $n$-dimensional representations of $W_F$.
\item[{\rm (d)}] The bijection defined in {\rm (c)} is compatible with twists, central characters, duals, and $L$- and $\epsilon$-factors of pairs, hence is the standard correspondence.
\end{altenumerate}
\end{thm}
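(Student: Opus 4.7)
The plan is to prove parts (a) and (b) jointly by induction on $n$, then deduce (c) from the geometric input of \cite{ScholzeGLn}, and finally obtain (d) via the methods of Harris and Henniart. For $n=1$ the statement reduces to local class field theory together with a direct computation of the nearby cycles on the Lubin--Tate tower of the formal multiplicative group. For the induction step, uniqueness in (a) is automatic, since the identity determines $\tr(\tau|\mathrm{rec}(\pi))$ for every $\tau$ of positive Frobenius valuation and hence determines the virtual Weil representation; the content is existence, together with (b). I would organize the induction around the three statements anticipated by Lemma \ref{LemmaToProve}: a local reduction to the infinitesimal case, a global matching statement, and a Lubin--Tate input for supercuspidal $\pi$.

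For ingredient (i), the Serre--Tate splitting $\overline{H}_\beta = \overline{H}_\beta^\circ \times \overline{H}_\beta^{\text{\'et}}$ should propagate through $\varpi$-divisible deformations and Drinfeld level structures, yielding a clean formula expressing $f_{\tau,h}$ attached to a non-infinitesimal $\overline{H}_\beta$ as a convolution of the analogous function for its infinitesimal part with a contribution from the \'{e}tale part. Translated on the spectral side, this forces $\mathrm{rec}(\pi)$ on an induced $\pi$ to split as the corresponding sum of Weil representations, which is precisely (b). For ingredient (iii), I would use the known cohomological realization of Langlands on the Lubin--Tate tower \cite{HarrisTaylor} after isotypic projection under $\mathcal{O}_\mathcal{D}^\times$; this identifies $[R\psi](\rho^\vee)$ up to sign with $\pi^\vee|_{\GL_n(\mathcal{O})} \otimes V$ for some $W_F$-representation $V$, and the content of Theorem \ref{MainTheorem4} is then the matching of traces of $f_{\tau,h}$ against $\pi$ that pins down $V = \mathrm{rec}(\pi)$.

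The global ingredient (ii) is where the main work lies. I would globalize a candidate supercuspidal $\pi$ to an automorphic representation $\Pi$ of a unitary group of signature $(1,n-1)$ split at a place matching $F$, and compute the cohomology of the associated Shimura variety by the Langlands--Kottwitz method. Restricting the Hecke algebra at $p$ to $\GL_n(\mathcal{O})$ reduces the problem to counting fixed points of honest group actions, so Kottwitz's point-counting formula \cite{KottwitzPoints} and its stabilization apply directly, with $f_{\tau,h}$ inserted as the relevant local test function. Matching the resulting global trace against $\Pi$ with the expected product of local factors produces a Galois representation realizing Theorem \ref{MainTheorem3}, and extracting its local factor at the chosen place yields the desired $\mathrm{rec}(\pi)$. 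Isolating the $\pi$-isotypic contribution with such a restricted Hecke algebra at $p$ is where the type theory recalled in Section \ref{TypeTheorySection} enters: one must build global test functions whose local component lies in the correct Bernstein block.

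For (c) I would feed the preceding into the inertia-invariant nearby cycles computation of \cite{ScholzeGLn}, which determines $\sigma(\pi)^{I_F}$ for all irreducible $\pi$; combined with (b) this rules out any supercuspidal whose parameter stays irreducible under every base change, and bijectivity follows by a character count using the compatibility of the construction with base change. For (d), compatibility with central characters, duals and twists should fall out from the functorial properties built into $f_{\tau,h}$; compatibility with $L$- and $\epsilon$-factors of pairs is then extracted via Harris's non-Galois automorphic induction \cite{HarrisNonGalois} up to highly ramified twists, and pinned down by Henniart's twisting trick. The hardest step, I expect, is the global matching in (ii): because the $p$-adic Hecke algebra has been restricted to $\GL_n(\mathcal{O})$, the spectral decomposition must be forced by type-theoretic rather than Hecke-algebraic means, and the insertion of $f_{\tau,h}$ into the stable trace formula on the unitary side has to be reconciled precisely with its geometric definition through nearby cycles.
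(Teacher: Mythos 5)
Your architectural picture is essentially the paper's: an induction on $n$ organized around Lemma \ref{LemmaToProve}, with condition (i) from the geometry of deformation spaces plus a Weyl-integration translation, condition (ii) from the Langlands--Kottwitz method on the unitary Shimura varieties with $f_{\tau,h}$ inserted at $p$ (which indeed is what forces the type-theoretic preliminaries), condition (iii) from the Lubin--Tate tower, then (c) from the inertia-invariant nearby cycle result of \cite{ScholzeGLn} and base-change, and (d) from non-Galois automorphic induction and Henniart's highly-ramified twists. But there are two places where, as written, your account would not go through.

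For ingredient (iii) you appeal to ``the known cohomological realization of Langlands on the Lubin--Tate tower'' from \cite{HarrisTaylor} to know that after $\mathcal{O}_\mathcal{D}^\times$-isotypic projection the nearby cycles take the shape $\pm\,\pi^\vee|_{\GL_n(\mathcal{O})}\otimes V$ with $V$ an honest $W_F$-representation, and then argue that trace matching pins down $V=\mathrm{rec}(\pi)$. That is circular here: Theorem C of \cite{HarrisTaylor} is established there precisely in the course of their proof of the Local Langlands Correspondence, which this paper is reproving by independent means. In the paper, Theorem \ref{MainTheorem4} (in the guise of Corollary \ref{LubinTateCoho}) is not an input but an output: it is derived from conditions (i) and (ii) together with the Jacquet--Langlands correspondence and the norm-map formalism of Section \ref{NormMaps}, and at that stage $\mathrm{rec}(\pi)$ is only known to be a nonnegative $\mathbb{Q}$-linear combination of $W_F$-representations. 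The integrality asserted by condition (iii) then requires a genuinely separate input, namely the theory of newvectors for $\GL_n$: one exhibits a $\GL_n(\mathcal{O})$-type occurring with multiplicity one in $\pi^\vee|_{\GL_n(\mathcal{O})}$, so that the corresponding isotypic piece of $[R\psi](\mathrm{JL}^{-1}(\pi)^\vee)$ is an honest $\mathbb{Z}$-linear combination of $W_F$-representations, giving $(-1)^{n-1}\mathrm{rec}(\pi)\in\mathcal{G}_F$. Your sketch omits this multiplicity-one step, and it cannot be replaced by a black-box appeal to \cite{HarrisTaylor} without sacrificing the independence of the argument. A secondary inaccuracy concerns (d): compatibility with central characters, duals and twists does not ``fall out from the functorial properties built into $f_{\tau,h}$.'' Twist-compatibility already needs the global Galois representations (Theorem \ref{Functoriality}(iii)); central-character compatibility is read off from the non-Galois automorphic induction picture via strong multiplicity one, comparing $\det\Sigma$ with $\omega_\Pi$; and dual-compatibility is deduced only after the $L$- and $\epsilon$-factor compatibility, by examining poles of $L$-functions. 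The correct shape of (d) is: reduce by Brauer induction to $\sigma(\pi)=\mathrm{Ind}_{W_{F'}}^{W_F}\chi$, realize such $\pi$ globally by Theorem \ref{AutomorphicInduction}, and extract all the remaining compatibilities from the global situation together with Henniart's twisting trick.
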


\section{An application of the theory of types}\label{TypeTheorySection}

If $\pi$ is an admissible smooth representation of $\GL_n(F)$ and $s\in \mathbb{C}$, then we let $\pi[s] = \pi\otimes |\mathrm{det}|^s$ be the twisted representation.

For any collection $\pi_1,\ldots,\pi_t$ of irreducible essentially square-integrable representations of $\GL_{n_i}(F)$ such that $\pi_i$ does not precede $\pi_j$ for $i<j$ in the sense of Bernstein-Zelevinsky, \cite{BernsteinZelevinsky}, \cite{Zelevinsky}, we denote by $\boxplus_{i=1}^t \pi_t$ the Langlands quotient of the normalized parabolic induction of $\pi_1\otimes\cdots\otimes\pi_t$. It is known that this does not depend on the ordering of the $\pi_i$. We extend the definition to any collection of $\pi_i$ by first reordering them so that $\pi_i$ does not precede $\pi_j$ if $i<j$, which is always possible.

Let $d$ be a divisor of $n$, hence $n=dt$ for some integer $t\geq 1$, and let $\pi_0$ be a unitary irreducible supercuspidal representation of $\GL_k(F)$. Then we call $\pi=\boxplus_{i=1}^t \pi_0[\frac{t+1}2 - i]$ a generalized Speh representation of $\GL_n(F)$.

\begin{lem}\label{TypeTheory} {\rm (i)} Let $n=dt$ and $\pi_0$ be as above and let $\pi=\boxplus_{i=1}^t \pi_0[\frac{t+1}2 - i]$. Assume that $t\geq 2$. Then there exists a function $h\in C_c^{\infty}(\GL_n(\mathcal{O}))$ such that $\tr(h|\rho)=0$ for all irreducible tempered representations $\rho$ of $\GL_n(F)$ which are not of the form $\rho=\boxplus_{i=1}^t \pi[s_i]$ for some numbers $s_i\in \mathbb{C}$ of real part $0$, and for which $\tr(h|\pi)\neq 0$.

{\rm (ii)} Assume that $h\in C_c^{\infty}(\GL_n(F))$ is such that for all irreducible tempered, non-square-integrable representations $\pi$ of $\GL_n(F)$ and for all generalized Speh representations $\pi$ we have $\tr(h|\pi)=0$. Then $\tr(h|\pi)=0$ for all irreducible smooth representations $\pi$ of $\GL_n(F)$.
\end{lem}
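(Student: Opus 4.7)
For part (i), the plan is to invoke Bushnell--Kutzko type theory. The Bernstein block $\mathcal{B}$ containing $\pi$ is characterized by the inertial class of the cuspidal support $(\pi_0,\ldots,\pi_0)$ on the Levi $\GL_d(F)^t$; for this simple principal series block, Bushnell--Kutzko produce a type $(K,\tau)$ with $K$ a compact open subgroup of $\GL_n(F)$ and $\tau$ an irreducible smooth representation of $K$. After conjugating we may assume $K\subseteq \GL_n(\mathcal{O})$. Setting $h=e_\tau \in C_c^\infty(K)\subseteq C_c^\infty(\GL_n(\mathcal{O}))$ the associated idempotent, we have $\tr(h|\rho)=\dim \Hom_K(\tau,\rho)$, which is nonzero exactly when $\rho\in\mathcal{B}$. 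The irreducible tempered representations in $\mathcal{B}$ are precisely the $\boxplus_{i=1}^t \pi_0[s_i]$ with $s_i\in i\mathbb{R}$, and $\pi$ itself lies in $\mathcal{B}$, so $h$ meets the requirements.

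For part (ii), the plan combines analytic continuation with the Langlands--Zelevinsky classification. First, for any Langlands standard $I=\sigma_1[s_1]\times\cdots\times\sigma_k[s_k]$ with $k\geq 2$ and each $\sigma_i$ a unitary essentially square-integrable representation on a smaller $\GL_{n_i}$, viewing $(s_1,\ldots,s_k)\in\mathbb{C}^k$ with the $\sigma_i$ fixed, $\tr(h|I(s))$ is a Laurent polynomial in $q^{s_1},\ldots,q^{s_k}$. On the locus $s_i\in i\mathbb{R}$ every Jordan--H\"older constituent of $I(s)$ is tempered and non-square-integrable, since for $k\geq 2$ unitarily twisted segments with purely imaginary parameters the total cuspidal support cannot form a single distinct segment (as required for a square-integrable representation); so the trace vanishes by hypothesis, and the identity theorem for Laurent polynomials extends the vanishing to all $(s_i)\in\mathbb{C}^k$. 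Hence $\tr(h|I)=0$ for every Langlands standard with $\geq 2$ factors.

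Second, the Langlands--Zelevinsky classification bijects multisegments $\mathfrak{m}$ with irreducible smooth representations $L(\mathfrak{m})$, along with
\[
[I_L(\mathfrak{m})] = [L(\mathfrak{m})] + \sum_{\mathfrak{m}'<\mathfrak{m}} a_{\mathfrak{m}',\mathfrak{m}}\,[L(\mathfrak{m}')]
\]
in the Grothendieck group, with $\mathfrak{m}'<\mathfrak{m}$ meaning $|\mathfrak{m}'|<|\mathfrak{m}|$. The claim $\tr(h|L(\mathfrak{m}))=0$ follows by induction on $|\mathfrak{m}|$. The step $|\mathfrak{m}|\geq 2$ is immediate from Step 1 and the inductive hypothesis. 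For the base case $|\mathfrak{m}|=1$, where $L(\mathfrak{m})=\mathrm{St}(\pi_0,t)$ for a length-$t$ segment: when $t=1$, $\mathrm{St}=\pi_0$ is supercuspidal, and twisting by $|\det|^s$ and analytically continuing from $s\in i\mathbb{R}$ (where the twist is a generalized Speh with $t=1$, whose trace vanishes by hypothesis) yields $\tr(h|\pi_0)=0$. When $t\geq 2$, the refinements of the length-$t$ segment form a Boolean lattice of $2^{t-1}$ multisegments, and by Zelevinsky's multiplicity-one theorem for a single-segment cuspidal support in type $A$ the multiplicity matrix is the incidence matrix of this lattice. M\"obius inversion applied at $\mathfrak{m}_{\mathrm{sing}}$ (the all-singletons refinement, with $L(\mathfrak{m}_{\mathrm{sing}})=u(\pi_0,t)$) yields
\[
\tr(h|u(\pi_0,t)) = (-1)^{t-1}\,\tr(h|\mathrm{St}(\pi_0,t)),
\]
using that $\tr(h|I_L(\mathfrak{m}'))=0$ for $|\mathfrak{m}'|\geq 2$ by Step 1; the Speh hypothesis $\tr(h|u(\pi_0,t))=0$ then forces $\tr(h|\mathrm{St}(\pi_0,t))=0$.

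The main obstacle is the base case $t\geq 2$ of Step 2: carefully verifying the M\"obius inversion together with Zelevinsky's multiplicity-one statement for a single-segment cuspidal support. The extension to arbitrary cuspidal supports should then follow via the Bernstein decomposition, since irreducibles with cuspidal support decomposing into unlinked components factor accordingly, reducing the combinatorics to a product of Boolean-lattice computations.
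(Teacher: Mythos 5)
Your argument for part (i) has a gap. The Bushnell--Kutzko type $(K,\tau)$ for the Bernstein block $\mathcal{B}$ containing $\pi$ has nonzero trace on \emph{every} irreducible representation whose cuspidal support lies in the inertial class of $(\pi_0,\ldots,\pi_0)$, and the tempered members of $\mathcal{B}$ are not only the fully induced $\boxplus_{i=1}^t\pi_0[s_i]$ with purely imaginary $s_i$: the generalized Steinberg attached to the length-$t$ segment on $\pi_0$, and more generally any $\boxplus_j\delta_j$ with each $\delta_j$ essentially square-integrable from a segment of length $r_j$ on an imaginary twist of $\pi_0$ and $\sum_j r_j=t$, all have cuspidal support an unramified twist of $(\pi_0,\ldots,\pi_0)$ and hence lie in $\mathcal{B}$. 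For $t\geq 2$ the generalized Steinberg is therefore a tempered representation of $\mathcal{B}$ that is \emph{not} of the required form $\boxplus_{i=1}^t\pi_0[s_i]$, yet $\tr(e_\tau|\cdot)$ does not vanish on it. What is needed is precisely the refinement of Bushnell--Kutzko due to Schneider--Zink: Proposition 11 of \cite{SchneiderZinkTypes} produces, for each partition $\mathcal{P}$ labelling the discrete strata within a fixed Bernstein component, an irreducible $\GL_n(\mathcal{O})$-representation $\sigma_{\mathcal{P}}(\lambda)$ that occurs in a tempered representation exactly when it is fully induced from the Levi determined by $\mathcal{P}$; the paper takes $\mathcal{P}$ minimal, which singles out the $\boxplus_{i=1}^t\pi_0[s_i]$ and simultaneously occurs in the generalized Speh representation $\pi$.

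Your argument for part (ii) is a genuinely different, and to my eye essentially correct, route. The paper proves vanishing on properly induced representations by passing to a constant term $h^P$ along a maximal parabolic and applying Kazhdan's density theorem for the Levi, cites Lemma I.3.2 of \cite{HarrisTaylor} to handle the square-integrable case, and then invokes Kazhdan's density theorem once more for $\GL_n(F)$. You instead extend the vanishing from the purely imaginary axis to all of $\mathbb{C}^k$ via the Laurent-polynomial identity theorem, handle the square-integrable base case by M\"obius inversion on the Boolean lattice of segment subdivisions (which re-derives the relation from Lemma I.3.2 of \cite{HarrisTaylor}), and then run an upward induction over the Langlands classification on the number of segments. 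This replaces the two appeals to Kazhdan density by more elementary algebraic inputs, at the cost of having to verify the multiplicity-one Boolean-lattice claim from Zelevinsky's combinatorics rather than citing a ready-made lemma; both routes also share the same subcase $t=1$, namely that a unitary supercuspidal is itself a generalized Speh representation, so the Speh hypothesis covers it after twisting and continuation.
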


\begin{proof} Part (i) follows from the results of Schneider and Zink in \cite{SchneiderZinkTypes}. We deduce it from Proposition 11 of \cite{SchneiderZinkTypes}. This shows that there exists an irreducible representation $\tau$ (equal to $\sigma_{\mathcal{P}}(\lambda)$ for $\mathcal{P}$ minimal in their notation) of $\GL_n(\mathcal{O})$ that occurs in a tempered representation $\rho$ if and only if $\rho=\boxplus_{i=1}^t \pi[s_i]$ for some numbers $s_i\in \mathbb{C}$ of real part $0$. We let $h$ be the character of $\tau^{\vee}$. Then the first part of (i) holds true. But Proposition 11, part (i), of \cite{SchneiderZinkTypes} says that $\tau$ also occurs in $\pi$, which implies the second part of (i).

For part (ii), we first show that $\tr(h|\pi)=0$ for all properly induced representations $\pi$. Indeed, let $P$ be a maximal parabolic with Levi $M=\GL_k\times \GL_{n-k}$, and let $h^P$ be the (normalized, $K$-invariant) constant term along $P$, so that
\[
\tr(h|\text{n-Ind}_{P(F)}^{\GL_n(F)} \pi_1\otimes\pi_2)=\tr(h^P|\pi_1\otimes\pi_2)
\]
for all admissible smooth representations $\pi_1$, $\pi_2$ of $\GL_k(F)$ resp. $\GL_{n-k}(F)$. Then our assumptions say that $\tr(h^P|\pi_1\otimes\pi_2)=0$ for all irreducible tempered $\pi_1$, $\pi_2$, hence by Kazhdan's density theorem, \cite{Kazhdan}, this also holds for all $\pi_1$, $\pi_2$, which is what we have claimed.

Now, if $\pi$ is square-integrable, then $\pi$ corresponds to some segment
\[
I=[\pi_0(\tfrac{1-t}2),\pi_0(\tfrac{t-1}2)]\ .
\]
But one sees e.g. from Lemma I.3.2 of \cite{HarrisTaylor} that $\pi + (-1)^t \boxplus_{i=1}^t \pi_0[\frac{t+1}2 -i]$ is an alternating sum of induced representations in the Grothendieck group of admissible representations of $\GL_n(F)$. This shows that $\tr(h|\pi)=0$, hence $\tr(h|\pi)=0$ for all tempered $\pi$ and thus by Kazhdan's density theorem for all $\pi$.
\end{proof}

\begin{lem}\label{LemmaToProve} Assume that the following conditions are satisfied:
\begin{altenumerate}
\item[{\rm (o)}] Parts (a) and (b) of Theorem \ref{MainTheorem2} hold true for all $n^{\prime}<n$.
\item[{\rm (i)}] If $\pi$ is the (not necessarily irreducible) normalized parabolic induction of $\pi_1\otimes\cdots\otimes\pi_t$, where $t\geq 2$ and $\pi_i$ is an irreducible smooth representation of $\GL_{n_i}(F)$, then for all $\tau$, $h$, we have
\[
\tr(f_{\tau,h} | \pi) = \tr\left(\tau|\mathrm{rec}(\pi_1)(\tfrac{n-n_1}2)\oplus\ldots\oplus \mathrm{rec}(\pi_t)(\tfrac{n-n_t}2)\right) \tr(h|\pi)\ .
\]
\item[{\rm (ii)}] For any irreducible smooth representation $\pi$ of $\GL_n(F)$ which is either essentially square-integrable or a generalized Speh representation, there exists a $\mathbb{Q}$-linear combination $\mathrm{rec}(\pi)$ of representations of $W_F$ with positive coefficients and total dimension $n$ such that for all $\tau$, $h$, we have
\[
\tr(f_{\tau,h}|\pi) = \tr(\tau|\mathrm{rec}(\pi)) \tr(h|\pi)\ .
\]
\item[{\rm (iii)}] If $\pi$ is a supercuspidal irreducible smooth representation of $\GL_n(F)$, then $\mathrm{rec}(\pi)$ is a $\mathbb{Z}$-linear combination of representations of $W_F$.
\end{altenumerate}
Then parts (a) and (b) of Theorem \ref{MainTheorem2} hold true for $n$.
\end{lem}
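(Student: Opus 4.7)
The plan is to define $\mathrm{rec}(\pi)$ via the supercuspidal support of $\pi$, to reconcile this with the candidate provided by (ii) for essentially square-integrable and generalized Speh representations via an alternating-sum identity, and to establish (a) for arbitrary irreducibles via the Zelevinsky decomposition in $R(\GL_n(F))$.

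I set $\mathrm{rec}(\pi)$ as follows. If $\pi$ is supercuspidal, I take $\mathrm{rec}(\pi)$ to be the candidate from (ii); the combination of (ii) (positive $\mathbb{Q}$-coefficients summing to dimension $n$) and (iii) ($\mathbb{Z}$-linear combination) forces $\mathrm{rec}(\pi)$ to be a genuine $n$-dimensional representation. Otherwise, letting $(\rho_1,\ldots,\rho_r)$ denote the supercuspidal support of $\pi$ with $\rho_j \in \GL_{d_j}(F)$ and each $d_j < n$, I set
\[
\mathrm{rec}(\pi) := \bigoplus_{j=1}^{r} \mathrm{rec}(\rho_j)\bigl(\tfrac{n-d_j}{2}\bigr),
\]
which is a genuine $n$-dimensional representation since (o) provides each $\mathrm{rec}(\rho_j)$ as a genuine representation of the smaller group. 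Part (b) is then tautological, because supercuspidal support is additive with respect to parabolic induction.

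The key reconciliation is needed for essentially square-integrable (non-super\-cuspidal) and generalized Speh representations: for these, (ii) provides a candidate $\mathrm{rec}'(\pi)$ as a positive $\mathbb{Q}$-combination, and I must show $\mathrm{rec}'(\pi) = \mathrm{rec}(\pi)$. Let $\pi^{\mathrm{sq}}$ and $\pi^{\mathrm{Speh}}$ be the essentially square-integrable and generalized Speh representations attached to a common segment of length $t \geq 2$ based on a supercuspidal $\pi_0 \in \GL_d(F)$ with $n = dt$; both have the same supercuspidal support, so $\mathrm{rec}(\pi^{\mathrm{sq}}) = \mathrm{rec}(\pi^{\mathrm{Speh}})$ by construction. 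The Grothendieck-group identity from Lemma~I.3.2 of \cite{HarrisTaylor} (as used in the proof of Lemma~\ref{TypeTheory})
\[
[\pi^{\mathrm{sq}}] + (-1)^t [\pi^{\mathrm{Speh}}] = \sum_\beta b_\beta [I_\beta]
\]
writes this as a $\mathbb{Z}$-combination of proper parabolic inductions $I_\beta$, all sharing the same supercuspidal support and hence (by the formula above and (i) applied iteratively from (o)) satisfying $\mathrm{rec}(I_\beta) = \mathrm{rec}(\pi^{\mathrm{sq}})$. Taking traces of $f_{\tau,h}$ against both sides, using (i) for each $I_\beta$ and (ii) for $\pi^{\mathrm{sq}}$ and $\pi^{\mathrm{Speh}}$, and taking the analogous identity in $h$ into account, yields
\begin{align*}
&\bigl(\tr(\tau|\mathrm{rec}'(\pi^{\mathrm{sq}})) - \tr(\tau|\mathrm{rec}(\pi^{\mathrm{sq}}))\bigr)\tr(h|\pi^{\mathrm{sq}}) \\
&\qquad + (-1)^t \bigl(\tr(\tau|\mathrm{rec}'(\pi^{\mathrm{Speh}})) - \tr(\tau|\mathrm{rec}(\pi^{\mathrm{Speh}}))\bigr)\tr(h|\pi^{\mathrm{Speh}}) = 0
\end{align*}
for every $\tau$ and $h$. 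Since $\pi^{\mathrm{sq}}$ and $\pi^{\mathrm{Speh}}$ are distinct irreducibles, their characters are linearly independent as distributions on $C_c^{\infty}(\GL_n(F))$ by Kazhdan's density theorem \cite{Kazhdan}, so both bracketed $\tau$-factors vanish identically, giving $\mathrm{rec}'(\pi^{\mathrm{sq}}) = \mathrm{rec}(\pi^{\mathrm{sq}})$ and $\mathrm{rec}'(\pi^{\mathrm{Speh}}) = \mathrm{rec}(\pi^{\mathrm{Speh}})$.

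Finally, for general irreducible $\pi$, the Zelevinsky classification expresses $[\pi] = \sum_\alpha a_\alpha [I_\alpha]$ in $R(\GL_n(F))$ as a $\mathbb{Z}$-combination of standard modules $I_\alpha$ (parabolic inductions of essentially square-integrable representations), all sharing the supercuspidal support of $\pi$ and hence satisfying $\mathrm{rec}(I_\alpha) = \mathrm{rec}(\pi)$. When the induction underlying $I_\alpha$ has at least two factors, (i) gives $\tr(f_{\tau,h}|I_\alpha) = \tr(\tau|\mathrm{rec}(\pi))\tr(h|I_\alpha)$; when $I_\alpha$ is essentially square-integrable in $\GL_n(F)$, the same identity holds by (ii) combined with the reconciliation above (or by (ii)+(iii) when $I_\alpha$ is supercuspidal). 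Summing the $a_\alpha$-weighted identities gives (a); uniqueness is as in the paper's remark after Theorem~\ref{MainTheorem2}. The main obstacle is the reconciliation step, which rests on the Grothendieck-group identity between $\pi^{\mathrm{sq}}$ and $\pi^{\mathrm{Speh}}$, on the key fact that (under the convention of ignoring the monodromy operator) the Weil-group parameter depends only on the supercuspidal support—so that all the $I_\beta$ share the same $\mathrm{rec}$—and on Kazhdan's density theorem to disentangle the contributions of $\pi^{\mathrm{sq}}$ and $\pi^{\mathrm{Speh}}$.
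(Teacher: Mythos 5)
Your overall plan — define $\mathrm{rec}(\pi)$ via supercuspidal support, reconcile with the candidate from (ii) using the Grothendieck-group identity from Lemma I.3.2 of \cite{HarrisTaylor}, and finish via the Zelevinsky/Langlands decomposition in $R(\GL_n(F))$ — is a legitimate alternative route to the paper's argument, which instead introduces a Bernstein-center function $f_\tau$ and reduces directly to generalized Speh representations via Lemma \ref{TypeTheory}(ii). Both routes use the same Grothendieck-group identity; your version is more hands-on with the combinatorics of $R(\GL_n(F))$.

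However, the reconciliation step as you have written it has a genuine gap. From the hypotheses you are only entitled to test against functions $h\in C_c^{\infty}(\GL_n(\mathcal{O}))$ — this is the domain of $h$ throughout Theorem \ref{MainTheorem2} and in conditions (i), (ii), (iii) of the present lemma. Your identity
\[
\bigl(\tr(\tau|\mathrm{rec}'(\pi^{\mathrm{sq}})) - \tr(\tau|\mathrm{rec})\bigr)\tr(h|\pi^{\mathrm{sq}}) + (-1)^t \bigl(\tr(\tau|\mathrm{rec}'(\pi^{\mathrm{Speh}})) - \tr(\tau|\mathrm{rec})\bigr)\tr(h|\pi^{\mathrm{Speh}}) = 0
\]
therefore holds only for $h$ supported in $\GL_n(\mathcal{O})$, and the claimed conclusion does not follow from ``linear independence of characters on $C_c^{\infty}(\GL_n(F))$.'' Linear independence of the restricted functionals $h\mapsto \tr(h|\pi)$ on $C_c^{\infty}(\GL_n(\mathcal{O}))$ is \emph{not} automatic for distinct irreducibles: unramified twists of a fixed $\pi$ all induce the same functional. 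Moreover, Kazhdan's density theorem is the wrong citation even for linear independence on the full group; it is a completeness statement for tempered (or elliptic) characters, not the source of linear independence of irreducible characters.

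The missing ingredient is exactly the theory of types, in the form of Lemma \ref{TypeTheory}(i) of the paper (Schneider--Zink): this produces an explicit $h\in C_c^{\infty}(\GL_n(\mathcal{O}))$ for which $\tr(h|\pi^{\mathrm{Speh}})\neq 0$ while $\tr(h|\rho)=0$ for every tempered $\rho$ that is not of the form $\boxplus_{i}\pi_0[s_i]$ with $\Re(s_i)=0$ — in particular $\tr(h|\pi^{\mathrm{sq}})=0$. Plugging this $h$ into your identity isolates the Speh bracket and forces $\mathrm{rec}'(\pi^{\mathrm{Speh}})=\mathrm{rec}$; going back to the identity with an arbitrary $h$ satisfying $\tr(h|\pi^{\mathrm{sq}})\neq 0$ (e.g.\ the normalized characteristic function of a sufficiently small compact open subgroup) then forces $\mathrm{rec}'(\pi^{\mathrm{sq}})=\mathrm{rec}$. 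With this replacement the rest of your Zelevinsky-decomposition argument goes through. In short: your argument cannot avoid the input from type theory that the paper uses, and ``Kazhdan's density theorem'' should be replaced by the Schneider--Zink type as in Lemma \ref{TypeTheory}(i).
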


\begin{proof} First note that parts (ii) and (iii) imply that for $\pi$ supercuspidal $\mathrm{rec}(\pi)$ is an $n$-dimensional representation of $W_F$: Writing $\mathrm{rec}(\pi)$ in the basis given by the irreducible representations of $W_F$, we know that all coefficients have to be nonnegative by (ii) and integers because of (iii).

If $\pi$ is an irreducible smooth representation of $\GL_n(F)$ with supercuspidal support $\pi_1,\ldots,\pi_t$, write $\sigma(\pi) = \mathrm{rec}(\pi_1)(\frac{1-n_1}2)\oplus\ldots\oplus \mathrm{rec}(\pi_t)(\frac{1-n_t}2)$, which is an $n$-dimensional representation of $W_F$. Let $f_{\tau}$ be the function of the Bernstein center that acts through the scalar $\tr(\tau|\sigma(\pi)(\frac{n-1}2))$ on any irreducible smooth representation $\pi$. To prove the existence of $f_{\tau}$, we have to show that
\[
\tr(\tau|\sigma(\pi)(\tfrac{n-1}2))
\]
defines a regular function on any component of the Bernstein center. It is clear that the function depends only on the supercuspidal support, so that one immediately reduces to the case of a supercuspidal component, given by unramified twists of some supercuspidal representation $\pi$. In that case, choose $h\in C_c^{\infty}(\GL_n(\mathcal{O}))$ with $\tr(h|\pi)=1$. Then $\tr(h|\pi^{\prime})=1$ for all unramified twists $\pi^{\prime}$ of $\pi$, and hence
\[
\tr(\tau|\sigma(\pi^{\prime})(\tfrac{n-1}2)) = \tr(\tau|\mathrm{rec}(\pi^{\prime})) = \tr(f_{\tau,h}|\pi^{\prime})\ .
\]
But the right-hand side does indeed give a regular function on this Bernstein component, as does the trace of any $C_c^{\infty}$-function.

It is enough to see that
\[
\tr(f_{\tau,h}|\pi) = \tr(f_{\tau}\ast h|\pi)
\]
for all irreducible smooth representations $\pi$ of $\GL_n(F)$.

If $\pi$ is properly induced or supercuspidal, this is clear because of condition (i), resp. (ii). By part (ii) of Lemma \ref{TypeTheory}, it is enough to check it for all generalized Speh representations $\pi = \boxplus_{i=1}^r \pi_0[\frac{r+1}2 - i]$, where $\pi_0$ is a unitary irreducible supercuspidal representation of a smaller $\GL_d(F)$. Choose $h$ as in part (i) of Lemma \ref{TypeTheory}. Then we see that
\[
\tr(f_{\tau,h}|\pi^{\prime}) = \tr(f_{\tau}\ast h|\pi^{\prime})
\]
for all tempered representations $\pi^{\prime}$: For all except the square-integrable ones, we have already seen this, and for square-integrable representations, it holds true because both sides are scalar multiples of $\tr(h|\pi^{\prime})$, which vanishes by choice of $h$. Hence this equality also holds true for any other irreducible smooth representation, in particular for $\pi=\boxplus_{i=1}^r \pi_0[\frac{r+1}2 - i]$. In that case, the equality says that
\[
\mathrm{tr}(\tau|\mathrm{rec}(\pi)) = \mathrm{tr}\left(\tau|\sigma(\pi)(\tfrac{n-1}2)\right)\ .
\]
This implies that $\mathrm{rec}(\pi) = \sigma(\pi)(\frac{n-1}2)$, and hence we know that for all $h$, the equality
\[
\tr(f_{\tau,h}|\pi) = \tr(f_{\tau}\ast h|\pi)
\]
holds for all generalized Speh representations. As indicated above, this finishes the proof by part (ii) of Lemma \ref{TypeTheory}.
\end{proof}

\section{Norm maps and $p$-divisible groups}\label{NormMaps}

In this section, we will study norm maps on integral elements by relating these to different parametrizations of $p$-divisible groups. This will give a very concise proof of base-change lemmas.

First, we consider the case of \'{e}tale $p$-divisible groups. Let $D$ be a semisimple algebra over $\mathbb{Q}_p$ with a maximal order $\mathcal{O}_D$.

\begin{definition}\label{DefBGroup} Let $S$ be scheme on which $p$ is locally nilpotent. A $D$-group over $S$ is an \'{e}tale $p$-divisible group $H$ over $S$ together with an action
\[
\iota: \mathcal{O}_D^{\mathrm{op}}\longrightarrow \mathrm{End}(H)
\]
such that $H[p]$ is free of rank $1$ over $\mathcal{O}_D^{\mathrm{op}}/p$.
\end{definition}

There are two ways to parametrize $D$-groups $H$ over $\mathbb{F}_{p^r}$. On the one hand, one can look at the (contravariant) Dieudonn\'{e} module $M$ of $H$; then $M\cong \mathcal{O}_D\otimes_{\mathbb{Z}_p} \mathbb{Z}_{p^r}$. If we write $\sigma_0$ for the absolute Frobenius of $\mathbb{Z}_{p^r}$, then one can write the Frobenius $F$ of $M$ as $F=\beta^{-1}\sigma_0$ for some $\beta\in (\mathcal{O}_D\otimes_{\mathbb{Z}_p} \mathbb{Z}_{p^r})^{\times}$. This element is well-defined up to $\sigma_0$-conjugation by an element of $(\mathcal{O}_D\otimes_{\mathbb{Z}_p} \mathbb{Z}_{p^r})^{\times}$. We call this the Dieudonn\'{e} parametrization.

On the other hand, one can first give a $D$-group $\tilde{H}$ over $\bar{\mathbb{F}}_p$ and then add a descent datum to $\mathbb{F}_{p^r}$. Let $\mathrm{Frob}\in \mathrm{Gal}(\bar{\mathbb{F}}_p/\mathbb{F}_{p^r})$ be the geometric Frobenius; then a descent datum is given by an isomorphism $\alpha: \mathrm{Frob}^{\ast}\tilde{H}\cong \tilde{H}$, giving the action of $\mathrm{Frob}$. Let
\[
F: \bar{\mathbb{F}}_p\longrightarrow \bar{\mathbb{F}}_p
\]
be the $p$-th power map; then there is the natural Frobenius isogeny $F: \tilde{H}\longrightarrow F^{\ast}\tilde{H}$ defined for any $p$-divisible group, which is an isomorphism in the case of \'{e}tale $p$-divisible groups. Then giving a descent datum $\alpha$ is equivalent to giving $\gamma=\alpha\circ F^{-r}: \tilde{H}\longrightarrow \tilde{H}$, which is an $\mathcal{O}_D^{\mathrm{op}}$-linear automorphism. Since $\mathrm{End}_{\mathcal{O}_D^{\mathrm{op}}}(\tilde{H})=\mathcal{O}_D$, it follows that giving a descent datum is equivalent to giving an element $\gamma\in \mathcal{O}_D^{\times}$. One easily checks that $\gamma$ is well-defined up to $\mathcal{O}_D^{\times}$-conjugation. We call this the Galois parametrization.

\begin{prop} This defines a bijection between the set of $(\mathcal{O}_D\otimes_{\mathbb{Z}_p} \mathbb{Z}_{p^r})^{\times}$-$\sigma_0$-conjugacy classes in $(\mathcal{O}_D\otimes_{\mathbb{Z}_p} \mathbb{Z}_{p^r})^{\times}$ and the set of $\mathcal{O}_D^{\times}$-conjugacy classes in $\mathcal{O}_D^{\times}$, which we denote by $\beta\longmapsto N\beta$.

Further, the characteristic polynomials of $N\beta$ and $\beta\beta^{\sigma_0}\cdots\beta^{\sigma_0^{r-1}}$ agree. 
\end{prop}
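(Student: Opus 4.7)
The plan is to observe that both parametrizations biject with the set of isomorphism classes of $D$-groups over $\mathbb{F}_{p^r}$, and then to compute the induced map $\beta\mapsto N\beta$ explicitly in order to deduce the characteristic polynomial statement. The Dieudonn\'e parametrization is essentially a translation of Dieudonn\'e theory: the $D$-group condition forces the Dieudonn\'e module $M$ to be free of rank one over $\mathcal{O}_D \otimes_{\mathbb{Z}_p} \mathbb{Z}_{p^r}$, and after choosing a basis the identity $F = \beta^{-1}\sigma_0$ determines $\beta$ up to $\sigma_0$-conjugation. For the Galois parametrization, the $D$-group condition forces $T_p(\tilde H)$ to be a free rank-one $\mathcal{O}_D^{\mathrm{op}}$-module, so $\tilde H$ is unique up to isomorphism over $\bar{\mathbb{F}}_p$, with automorphism group $\mathcal{O}_D^\times$; non-abelian Galois descent for the pro-cyclic group $\mathrm{Gal}(\bar{\mathbb{F}}_p/\mathbb{F}_{p^r})$ then identifies isomorphism classes of descents with conjugacy classes in $\mathcal{O}_D^\times$.

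To compute $N\beta$ explicitly, I will compare the two parametrizations after base change to $\breve{\mathbb{Z}}_p$. Starting from $F(e)=\beta^{-1}e$, I seek $h \in (\mathcal{O}_D \otimes \breve{\mathbb{Z}}_p)^\times$ such that the new basis $\tilde e := he$ satisfies $F(\tilde e)=\tilde e$; this reduces to the equation $h^{-1}\sigma_0(h)=\beta$, whose solvability follows from a Lang-type argument applied to the smooth group scheme over $\bar{\mathbb{F}}_p$ underlying $(\mathcal{O}_D \otimes \breve{\mathbb{Z}}_p)^\times$. A telescoping induction gives $\sigma_0^i(h) = h\cdot \beta\sigma_0(\beta)\cdots\sigma_0^{i-1}(\beta)$, so writing $N_r(\beta) := \beta\sigma_0(\beta)\cdots\sigma_0^{r-1}(\beta)$ yields
\[
\sigma_0^r(\tilde e)\;=\;\sigma_0^r(h)\cdot e\;=\;hN_r(\beta)h^{-1}\cdot\tilde e.
\]
In the trivialization $\tilde e$, the descent datum from $\bar{\mathbb{F}}_p$ down to $\mathbb{F}_{p^r}$ (equivalently, the Frobenius action) thus reads off as left multiplication by $hN_r(\beta)h^{-1}$; the identity $\sigma_0(N_r(\beta))=\beta^{-1}N_r(\beta)\beta$ (a direct calculation from $\sigma_0^r(\beta) = \beta$) makes this element visibly $\sigma_0$-fixed, hence in $\mathcal{O}_D^\times$, and unravelling the paper's formula $\gamma = \alpha\circ F^{-r}$ identifies it with $N\beta$.

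The characteristic polynomial statement is now immediate: the formula $N\beta = hN_r(\beta)h^{-1}$ makes these two elements conjugate in $(\mathcal{O}_D \otimes \breve{\mathbb{Z}}_p)^\times$, so their characteristic polynomials as left multiplication operators on $D\otimes_{\mathbb{Q}_p}\breve{\mathbb{Q}}_p$ coincide; since the characteristic polynomial of $N\beta$ is already defined over $\mathbb{Q}_p$ and that of $N_r(\beta)$ is $\sigma_0$-invariant by the identity above, both descend to the same polynomial in $\mathbb{Q}_p[T]$. The main obstacle I anticipate is bookkeeping of conventions: covariant versus contravariant Dieudonn\'e theory, the direction of the Frobenius isogeny in $\gamma = \alpha\circ F^{-r}$, and arithmetic versus geometric Frobenius all need to be aligned so that the element $hN_r(\beta)h^{-1}$ produced above is exactly $N\beta$ rather than $(N\beta)^{-1}$ or a transpose, because under the opposite sign convention one would obtain the inverse relationship on characteristic polynomials and the claim would fail.
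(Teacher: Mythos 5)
Your plan takes essentially the same route as the paper: both sides biject with $D$-groups over $\mathbb{F}_{p^r}$, and the characteristic polynomial identity is read off from the Dieudonn\'e module $M\otimes_{\mathbb{Z}_{p^r}}W(\bar{\mathbb{F}}_p)$ of $\tilde H$. The paper computes $\gamma=\alpha\circ F^{-r}$ directly as the semilinear operator $F^{-r}\otimes\mathrm{Frob}^{-1}=(\beta^{-1}\sigma_0)^{-r}\otimes\sigma_0^r$ and identifies it (after unwinding the semilinear factors) with an element conjugate to the ordinary norm; you instead invoke a Lang-type splitting $h^{-1}\sigma_0(h)=\beta$ over $\breve{\mathbb{Z}}_p$ to produce the $F$-fixed basis $\tilde e$ explicitly, which makes the conjugating element visible and is a slightly more transparent way to see that the operator-level equality descends to a conjugacy of elements in $(\mathcal{O}_D\otimes\breve{\mathbb{Z}}_p)^\times$. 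Your concluding worry about aligning covariance, Frobenius direction, and arithmetic vs.\ geometric Frobenius is legitimate and is precisely the bookkeeping the paper also performs tacitly (note that the paper's ``$=N\beta$'' is really an equality of operators, i.e.\ a conjugacy of elements, which is what your $hN_r(\beta)h^{-1}$ makes explicit); neither your sketch nor the paper verifies these conventions in full detail, so you are not missing anything the paper supplies.
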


\begin{proof} The first part is clear by construction. For the second part, look at the Dieudonn\'{e} module of $\tilde{H}$: It is given by
\[
M\otimes_{\mathbb{Z}_{p^r}} W(\bar{\mathbb{F}}_p)\ ,
\]
and the action of $\mathrm{Frob}$ is given by its action on the second factor, where it acts through $\mathrm{Frob}^{-1}$ by contravariance. The Frobenius isogeny is given by $F$, and hence $\alpha\circ F^{-r}$ is given by
\[
\gamma=F^{-r}\otimes\mathrm{Frob}^{-1}=(\beta^{-1}\sigma_0)^{-r}\otimes \sigma_0^r=N\beta\ .
\]
For the last equality, note that the evaluation of $(\beta^{-1}\sigma_0)^{-r}$ takes place in the algebra $\mathcal{O}_D^{\mathrm{op}}\otimes_{\mathbb{Z}_p} W(\bar{\mathbb{F}}_p)$ of $\mathcal{O}_D$-linear endomorphisms of $M\otimes_{\mathbb{Z}_{p^r}} W(\bar{\mathbb{F}}_p)$. This implies the desired statement.
\end{proof}

Now assume that $h\in C_c^{\infty}(\mathcal{O}_D^{\times})$ is invariant under conjugation. We get a function $\phi_h\in C_c^{\infty}((\mathcal{O}_D\otimes_{\mathbb{Z}_p} \mathbb{Z}_{p^r})^{\times})$ by setting $\phi_h(\beta)=h(N\beta)$.

We choose Haar measures on $D^{\times}$ and $(D\otimes_{\mathbb{Q}_p} \mathbb{Q}_{p^r})^{\times}$ that give maximal compact subgroups volume $1$.

\begin{prop}\label{BGroupAssociatedFcts} In this situation, the functions $\phi_h$ and $h$ have matching (twisted) orbital integrals.
\end{prop}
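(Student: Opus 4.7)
The plan. I will show that for every $\beta\in(\mathcal{O}_D\otimes_{\mathbb{Z}_p}\mathbb{Z}_{p^r})^\times$, the twisted orbital integral of $\phi_h$ at $\beta$ equals the ordinary orbital integral of $h$ at $N\beta$; the vanishing of $\phi_h$ on elements having no norm is automatic, since the Galois parametrization produces a well-defined conjugacy class in $\mathcal{O}_D^\times$ from any integral Dieudonn\'{e} datum. The strategy is a direct computation, using the categorical identification of the two parametrizations to match integrands and quotient measures simultaneously.

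First, I plan to unfold $\phi_h = h\circ N$ and apply the telescoping identity: setting $\beta' = g^{-1}\beta\sigma_0(g)$,
\[
\beta'\sigma_0(\beta')\cdots\sigma_0^{r-1}(\beta') = g^{-1}\,\beta\sigma_0(\beta)\cdots\sigma_0^{r-1}(\beta)\,g,
\]
valid since $\sigma_0^r(g) = g$ for $g\in(D\otimes_{\mathbb{Q}_p}\mathbb{Q}_{p^r})^\times$. The preceding proposition tells me that this classical norm of $\beta$ shares its characteristic polynomial with $N\beta$ (Galois). Since $D^\times$ is an inner form of $\mathrm{GL}_n$, the two elements are $D^\times$-conjugate on the regular semisimple locus; density of this locus (or the direct categorical identification of both as the descent datum on the Dieudonn\'{e} module of $\overline{H}_\beta$) lets me assume they coincide, so the integrand in the twisted orbital integral becomes $h(g^{-1}(N\beta)g)$.

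Second, I will identify the $\sigma_0$-centralizer of $\beta$ in $(D\otimes_{\mathbb{Q}_p}\mathbb{Q}_{p^r})^\times$ with the centralizer of $N\beta$ in $D^\times$. Both represent the automorphism group of the $D$-group $\overline{H}_\beta$ over $\mathbb{F}_{p^r}$: the former in the Dieudonn\'{e} model, the latter in the Galois model. Hence the categorical bijection of the preceding proposition upgrades to a canonical isomorphism between these two $\mathbb{Q}_p$-algebraic groups, identifying their maximal compact subgroups. With Haar measures normalized to give maximal compacts volume $1$, this isomorphism is measure-preserving; combining with the change of variables $g \mapsto g^{-1}(N\beta)g$ yields
\[
TO_\beta(\phi_h) \;=\; \int h(g^{-1}(N\beta)g)\,dg \;=\; O_{N\beta}(h),
\]
since the integrand vanishes off cosets having a representative in $D^\times$.

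The main obstacle I anticipate is rigorously matching the Haar measures and the two centralizers in the second step: verifying that the categorical isomorphism of automorphism groups respects the chosen normalizations and extends to a compatible identification of integral models. The cleanest route is probably through the functoriality of the parametrization bijection itself --- since both parametrizations are derived from the same Dieudonn\'{e} module of $\overline{H}_\beta$, the identification of centralizers automatically respects integral structure, and the normalization by maximal compacts forces the corresponding quotient measures to match. The handling of non-regular-semisimple $N\beta$, where characteristic polynomials do not determine the conjugacy class, is a subsidiary difficulty that a standard density argument on the Zariski-dense regular semisimple locus should resolve.
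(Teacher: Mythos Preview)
Your approach has a genuine gap in the central step. You write that the integrand in the twisted orbital integral ``becomes $h(g^{-1}(N\beta)g)$'' and then that one may pass to $O_{N\beta}(h)$ ``since the integrand vanishes off cosets having a representative in $D^\times$.'' Neither assertion is justified. The variable $g$ ranges over $(D\otimes_{\mathbb{Q}_p}\mathbb{Q}_{p^r})^\times$, so $g^{-1}(N\beta)g$ lies in $(D\otimes_{\mathbb{Q}_p}\mathbb{Q}_{p^r})^\times$, where $h$ is not defined; the telescoping identity only tells you that $N(g^{-1}\beta\sigma_0(g))$ and $N\beta$ share a characteristic polynomial, not that they are equal or that the former is literally $g^{-1}(N\beta)g$. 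More importantly, you never explain how the integration domain $G_\beta^{\sigma_0}\backslash (D\otimes\mathbb{Q}_{p^r})^\times$ is to be matched with $(D^\times)_{N\beta}\backslash D^\times$. Identifying the two centralizers is necessary but far from sufficient: the ambient groups are different, and there is no direct ``change of variables $g\mapsto g^{-1}(N\beta)g$'' between them.

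The paper supplies the missing idea. One introduces the set $X$ of pairs $(H',\alpha)$ consisting of a $D$-group over $\mathbb{F}_{p^r}$ together with an $\mathcal{O}_D^{\mathrm{op}}$-linear quasi-isogeny $\alpha:H'\to H$, with its action of $\Gamma=(\mathrm{End}(H)\otimes\mathbb{Q}_p)^\times$. Both the Dieudonn\'{e} and the Galois parametrizations identify $X$ with a coset space: the first gives $X\cong\{g\in (D\otimes\mathbb{Q}_{p^r})^\times: g^{-1}\beta g^{\sigma_0}\in (\mathcal{O}_D\otimes\mathbb{Z}_{p^r})^\times\}/(\mathcal{O}_D\otimes\mathbb{Z}_{p^r})^\times$, the second the analogous coset space inside $D^\times$. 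Each stabilizer $\Gamma_x$ is maximal compact, so with the stated normalization both orbital integrals reduce to the \emph{same} finite sum $\sum_{x\in X/\Gamma}\tilde{h}(x)$ with $\tilde{h}(H',\alpha)=h(\gamma(H'))$. This lattice-counting is what actually bridges the two integrals; your outline jumps over it.
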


\begin{rem} Taking $D$ to be $M_n(\mathbb{Q}_p)$ and $h$ to be characteristic function of $\GL_n(\mathbb{Z}_p)$, this gives the usual base-change identity.
\end{rem}

\begin{proof} We have to show that for any $\beta\in (\mathcal{O}_D\otimes_{\mathbb{Z}_p}\mathbb{Z}_{p^r})^{\times}$,
\[
TO_{\beta\sigma_0}(\phi_h) = O_{N\beta}(h)\ .
\]
Let $H$ be the $D$-group over $\mathbb{F}_{p^r}$ associated to $\beta$. Consider the set $X$ of $D$-groups $H^{\prime}$ over $\mathbb{F}_{p^r}$ together with an $\mathcal{O}_D^{\mathrm{op}}$-linear quasi-isogeny $\alpha: H^{\prime}\longrightarrow H$. On this set $X$, we have an action of $\Gamma = (\mathrm{End}(H)\otimes_{\mathbb{Z}_p} \mathbb{Q}_p)^{\times}$ by composition.

First, it is easy to see that for any $x\in X$, the stabilizer $\Gamma_x\subset \Gamma$ is a maximal compact subgroup. This shows that all $\Gamma_x$ have the same volume. We may normalize the Haar measure by requiring that these subgroups have volume $1$.

Note that we can define a $\Gamma$-invariant function $\tilde{h}$ on $X$ by requiring $\tilde{h}(H^{\prime},\alpha) = h(\gamma(H^{\prime}))$. We claim that
\[
TO_{\beta\sigma_0}(\phi_h) = O_{N\beta}(h) = \sum_{x\in X/\Gamma} \tilde{h}(x)\ .
\]

Now on the one hand, Dieudonn\'{e} theory gives an isomorphism
\[
\Gamma = \{g\in (D\otimes_{\mathbb{Q}_p} \mathbb{Q}_{p^r})^{\times}\mid g^{-1}\beta g^{\sigma}=\beta\}
\]
and an identification of $X$ with
\[
\{g\in (D\otimes_{\mathbb{Q}_p} \mathbb{Q}_{p^r})^{\times}\mid g^{-1}\beta g^{\sigma}\in \mathcal{O}_D^{\times} \}/(\mathcal{O}_D\otimes_{\mathbb{Z}_p} \mathbb{Z}_{p^r})^{\times}\ ,
\]
by sending $g$ to the $D$-group $H^{\prime}$ given by the lattice $gM\subset M\otimes_{\mathbb{Z}_p}\mathbb{Q}_p$ and the corresponding quasi-isogeny $\alpha:H^{\prime}\longrightarrow H$. Also $\tilde{h}(H^{\prime},\alpha) = \phi_h(g^{-1}\beta g^{\sigma})$ under this correspondence. This proves the equality
\[
TO_{\beta\sigma_0}(\phi_h) = \sum_{x\in X/\Gamma} \tilde{h}(x)\ .
\]

Reasoning in the same way with the other parametrization of $X$, we get the second identity.
\end{proof}

\begin{cor}\label{IntegrationFormulaEtale} With $h$ and $\phi_h$ as above, we have
\[
\int_{(\mathcal{O}_D\otimes_{\mathbb{Z}_p} \mathbb{Z}_{p^r})^{\times}} \phi_h(\beta) d\beta = \int_{\mathcal{O}_D^{\times}} h(\gamma) d\gamma\ .
\]
\end{cor}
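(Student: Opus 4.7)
The plan is to derive this as a ``summed'' version of the pointwise orbital-integral matching $TO_{\beta\sigma_0}(\phi_h) = O_{N\beta}(h)$ from Proposition \ref{BGroupAssociatedFcts}. Since $h$ is conjugation-invariant and $\phi_h$ is $\sigma_0$-conjugation-invariant, both integrals can be expanded, by the appropriate (twisted) Weyl integration formula, as weighted sums over conjugacy classes of Cartan subgroups of orbital integrals times Weyl-discriminant factors. The norm map identifies $\sigma_0$-conjugacy classes in $(\mathcal{O}_D\otimes \mathbb{Z}_{p^r})^\times$ with conjugacy classes in $\mathcal{O}_D^\times$, and the characteristic-polynomial identity $\mathrm{charpoly}(N\beta) = \mathrm{charpoly}(\beta\beta^{\sigma_0}\cdots\beta^{\sigma_0^{r-1}})$ from the preceding proposition shows that the Weyl discriminants match up under this bijection.

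It remains to match the Haar measures on the Cartan subgroups on the two sides. The key observation is that for corresponding $\beta$ and $\gamma = N\beta$, the two stabilizers $\Gamma_{0,\beta} = \{g \in (\mathcal{O}_D\otimes\mathbb{Z}_{p^r})^\times : g^{-1}\beta g^{\sigma_0} = \beta\}$ and $Z_{\mathcal{O}_D^\times}(\gamma)$ are both identified, in the proof of Proposition \ref{BGroupAssociatedFcts}, with the maximal compact subgroup $\Gamma_x$ inside the common algebraic centralizer $\Gamma = Z_{D^\times}(\gamma)$. Since our chosen Haar normalization assigns both volume one, the induced measures on the spaces of classes agree under the norm bijection. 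Combining the orbital-integral matching, the discriminant matching, and this volume matching then gives the equality of the two integrals.

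The main obstacle is the measure-theoretic bookkeeping of the Weyl decomposition: one must choose compatible Haar measures on the (twisted) Cartan subgroups in $(D\otimes\mathbb{Q}_{p^r})^\times$ and $D^\times$, and verify that their intersections with $(\mathcal{O}_D\otimes\mathbb{Z}_{p^r})^\times$ and $\mathcal{O}_D^\times$ inherit corresponding measures. A shortcut that avoids the full Weyl formalism would be to first reduce to the torus case via a maximal torus containing $\gamma$, where the norm map is an honest surjective homomorphism and uniqueness of Haar measure immediately forces the pushforward under $N$ to be the Haar measure, and then to handle the non-semisimple locus by density.
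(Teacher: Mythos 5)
Your main argument is exactly the paper's: the paper disposes of this in one line by citing the Weyl integration formula together with the fact (Proposition \ref{BGroupAssociatedFcts}) that $h$ and $\phi_h$ are associated, and you have correctly unpacked what that one line requires — the Weyl decomposition on each side, the matching of (twisted) orbital integrals from the preceding proposition, the matching of Weyl discriminants from the characteristic-polynomial identity, and the compatibility of Haar measures on centralizers. Your observation that the measure matching is forced by the normalization giving both stabilizers volume $1$, since the proof of Proposition \ref{BGroupAssociatedFcts} identifies them with the same maximal compact $\Gamma_x \subset \Gamma$, is precisely why the paper can invoke the Weyl formula without further comment. The ``shortcut'' you sketch at the end is a modest variant rather than a genuinely different route: it trades the Weyl decomposition for a direct analysis of the norm homomorphism on individual maximal tori plus a density argument, which is fine but in practice requires the same care about matching measures on $T(\mathbb{Q}_{p^r})^{1-\sigma_0}\backslash T(\mathbb{Q}_{p^r})$ versus $T(\mathbb{Q}_p)$ that the Weyl-formula approach already handles; it buys you nothing the paper's argument does not.
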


\begin{proof} This follows from the Weyl integration formula, since $h$ and $\phi_h$ are associated.
\end{proof}

Now we repeat these arguments for the case of one-dimensional formal $\mathcal{O}$-modules of height $n$.

Write $\mathcal{D}$ for the central divison algebra over $F$ with invariant $\frac 1n$, with ring of integers $\mathcal{O}_\mathcal{D}$. There is a natural valuation $v: \mathcal{D}^{\times}\longrightarrow \mathbb{Z}$, taking $\varpi\in F\subset \mathcal{D}$ to $n$. Let $B_r$ be the set of basic elements in $\GL_n(\mathcal{O}_r)\mathrm{diag}(\varpi,1,\ldots,1)\GL_n(\mathcal{O}_r)$, by which we mean those elements that are basic as elements of $\GL_n(\breve{F})$. Also, we let $\mathcal{D}_r$ be the set of elements of $\mathcal{D}^{\times}$ whose valuation is $r$.

\begin{prop} There is a natural bijection between $\GL_n(\mathcal{O}_r)$-$\sigma$-conjugacy classes in $B_r$ and $\mathcal{O}_\mathcal{D}^{\times}$-conjugacy classes in $\mathcal{D}_r$, denoted $\beta\longmapsto N\beta$. Further, the characteristic polynomials of $N\beta$ and $\beta\beta^{\sigma}\cdots\beta^{\sigma^{r-1}}$ agree.
\end{prop}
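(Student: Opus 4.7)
The plan is to imitate the two-parametrization argument of the preceding proposition, replacing \'{e}tale $p$-divisible groups by basic one-dimensional formal $\mathcal{O}$-modules of height $n$ over $\kappa_r$. The Dieudonn\'{e} parametrization is already in place from Section~\ref{DefSpaces}: sending $\beta$ to the module with $F=\beta\sigma$ on a free $\mathcal{O}_r$-module of rank $n$ identifies $\GL_n(\mathcal{O}_r)$-$\sigma$-conjugacy classes in $B_r$ with isomorphism classes of such $\overline H$, where basicness of $\beta$ corresponds exactly to the condition that $\overline H$ be connected of slope $\tfrac1n$. It therefore suffices to construct a parallel ``Galois-theoretic'' bijection between the same set of isomorphism classes and $\mathcal{O}_\mathcal{D}^\times$-conjugacy classes in $\mathcal{D}_r$; the composite bijection then defines $\beta\mapsto N\beta$.

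For this Galois parametrization, I would fix once and for all a basic one-dimensional formal $\mathcal{O}$-module $\tilde{H}$ of height $n$ over $\bar\kappa$. By Drinfeld's classification, $\tilde{H}$ is unique up to isomorphism and $\mathrm{End}_\mathcal{O}(\tilde{H})=\mathcal{O}_\mathcal{D}$, with the Frobenius isogeny $F$ corresponding to a uniformizer of $\mathcal{O}_\mathcal{D}$. Any such $\overline H$ over $\kappa_r$ becomes isomorphic to $\tilde{H}$ after base change to $\bar\kappa$, so giving $\overline H$ up to isomorphism amounts to giving a descent datum $\alpha:\mathrm{Frob}^*\tilde{H}\cong \tilde{H}$ up to the action of $\mathrm{Aut}(\tilde H)=\mathcal{O}_\mathcal{D}^\times$ by conjugation. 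Mimicking the construction in the \'{e}tale case, one builds from $\alpha$ and $F$ a quasi-isogeny $\gamma\in\mathcal{D}^\times$; since $F$ is now a genuine isogeny of $\mathcal{O}$-valuation $1$ rather than an isomorphism, $\gamma$ lies in $\mathcal{D}_r$ rather than $\mathcal{O}_\mathcal{D}^\times$. This produces the desired bijection $\beta\mapsto N\beta$.

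For the equality of characteristic polynomials, I would transport everything to the Dieudonn\'{e} module of $\tilde H_{\bar\kappa}$: base change $M_\beta$ to $\tilde M=M_\beta\otimes_{\mathcal{O}_r}W_\mathcal{O}(\bar\kappa)$. The $r$-fold iterate of Frobenius acts on $\tilde M$ as $\beta\beta^\sigma\cdots\beta^{\sigma^{r-1}}\cdot \sigma^r$, and the $\sigma^r$-semilinearity is cancelled by the $r$-fold composition of the descent datum, so that the same bookkeeping as in the \'{e}tale computation identifies the action of $\gamma$ on $\tilde M$ with multiplication by $\beta\beta^\sigma\cdots\beta^{\sigma^{r-1}}$. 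Since $\mathcal{D}\otimes_F \breve F$ embeds as a central subalgebra of $\mathrm{End}_{W_\mathcal{O}(\bar\kappa)}(\tilde M)\otimes_{\mathcal{O}_r}\breve F$, and characteristic polynomials are preserved by this embedding, the asserted equality of characteristic polynomials follows.

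The main difficulty relative to the \'{e}tale case is the bookkeeping with quasi-isogenies: $F$ is no longer invertible, and one must keep careful track of the direction and the $\mathcal{O}$-valuation of each morphism entering the definition of $\gamma$. This is precisely what forces the target of the Galois parametrization to be $\mathcal{D}_r$ rather than $\mathcal{O}_\mathcal{D}^\times$, but once this shift is accepted the argument is a formal repetition of the one given for $D$-groups.
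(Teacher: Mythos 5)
Your proof takes essentially the same route as the paper: identify both $\GL_n(\mathcal{O}_r)$-$\sigma$-conjugacy classes in $B_r$ and $\mathcal{O}_\mathcal{D}^\times$-conjugacy classes in $\mathcal{D}_r$ with isomorphism classes of basic one-dimensional formal $\varpi$-divisible $\mathcal{O}$-modules of height $n$ over $\kappa_r$, using the relative Dieudonn\'{e} module for the first and the descent datum from $\bar\kappa$ (where $\tilde H$ is unique with $\mathrm{End}(\tilde H)=\mathcal{O}_\mathcal{D}$ and Frobenius of height $1$) for the second, then compare via the Dieudonn\'{e} module of $\tilde H$ as in the \'{e}tale case. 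One detail you gloss over: mimicking the \'{e}tale recipe literally produces a quasi-isogeny of height $-r$, landing in $\mathcal{D}_{-r}$, and the paper explicitly identifies $\mathcal{D}_{-r}$ with $\mathcal{D}_r$ via $d\mapsto d^{-1}$ to fix the normalization; your assertion that the construction lands directly in $\mathcal{D}_r$ should be replaced by this inversion step.
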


\begin{proof} Consider the set of formal one-dimensional $\varpi$-divisible $\mathcal{O}$-modules over $\kappa_r$ of height $n$; these are in bijection to each of the two sets. For the first, this is clear by looking at the relative Dieudonn\'{e} module, writing $F=\beta\sigma$. For the second, note that over $\bar{\kappa}$, there is a unique formal one-dimensional $\varpi$-divisible $\mathcal{O}$-module $\tilde{H}$ of height $n$, for which $\mathrm{End}(\tilde{H})=\mathcal{O}_\mathcal{D}$. Under this identification, the valuation on $\mathcal{O}_\mathcal{D}$ is given by the height of endomorphisms of $\tilde{H}$. Giving the descent datum to $\kappa_r$ is equivalent to giving an element of $\mathcal{D}_{-r}$, using the same reasoning as in the \'{e}tale case and the fact that the Frobenius has height $1$. We identify elements of $\mathcal{D}_{-r}$ with elements of $\mathcal{D}_r$ via the $d\mapsto d^{-1}$. The final statement is seen by looking at the Dieudonn\'{e} module of $\tilde{H}$ as in the case of \'{e}tale $p$-divisible groups.
\end{proof}

For any function $h\in C_c^{\infty}(\mathcal{D}_r)$ which is invariant under $\mathcal{O}_\mathcal{D}^{\times}$-conjugation, we define $\phi_h\in C_c^{\infty}(B_r)$ by $\phi_h(\beta)=h(N\beta)$. The next two statements are proved in the same way as the corresponding statements in the \'{e}tale case.

\begin{prop} The functions $\phi_h$ and $h$ have matching (twisted) orbital integrals.
\end{prop}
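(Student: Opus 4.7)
The plan is to mimic the proof of Proposition~\ref{BGroupAssociatedFcts} verbatim, replacing étale $D$-groups over $\mathbb{F}_{p^r}$ by one-dimensional formal $\varpi$-divisible $\mathcal{O}$-modules of height $n$ over $\kappa_r$. Fix $\beta\in B_r$ and set $H=\overline{H}_\beta$. Because $\beta$ is basic, the endomorphism algebra $\mathrm{End}(H)\otimes_{\mathcal{O}} F$ is isomorphic to $\mathcal{D}$, so $\Gamma:=(\mathrm{End}(H)\otimes_\mathcal{O} F)^\times\cong\mathcal{D}^\times$. I would consider the set $X$ of pairs $(H',\alpha)$ where $H'$ is a one-dimensional formal $\varpi$-divisible $\mathcal{O}$-module of height $n$ over $\kappa_r$ and $\alpha\colon H'\to H$ an $\mathcal{O}$-linear quasi-isogeny, with $\Gamma$ acting on $X$ by $(H',\alpha)\mapsto(H',\gamma\alpha)$. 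The stabilizer $\alpha\,\mathrm{Aut}(H')\,\alpha^{-1}$ of any point is a maximal compact subgroup of $\mathcal{D}^\times$ conjugate to $\mathcal{O}_\mathcal{D}^\times$, so I normalize Haar measure on $\mathcal{D}^\times$ so that these have volume~$1$.

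I would then define a $\Gamma$-invariant function $\tilde h$ on $X$ by $\tilde h(H',\alpha)=h(\gamma(H'))$, where $\gamma(H')$ is the $\mathcal{O}_\mathcal{D}^\times$-conjugacy class in $\mathcal{D}_r$ giving the Galois parametrization of $H'$ (well defined because $h$ is conjugation-invariant). The proposition is then equivalent to the double identity
\[
TO_{\beta\sigma}(\phi_h)\;=\;\int_{X/\Gamma}\tilde h\;=\;O_{N\beta}(h),
\]
with the middle integral taken with respect to the measure inherited from Haar measure on $\Gamma$.

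The left equality comes from the Dieudonné parametrization. Letting $M$ be the relative Dieudonné module of $H$, one identifies $X$ with the set of $\GL_n(\mathcal{O}_r)$-orbits of $g\in\GL_n(F_r)$ such that $g^{-1}\beta g^\sigma\in\GL_n(\mathcal{O}_r)\mathrm{diag}(\varpi,1,\ldots,1)\GL_n(\mathcal{O}_r)$, by sending $g$ to the pair whose Dieudonné module is the lattice $gM\subset M[1/\varpi]$. Under this identification $\Gamma$ becomes the twisted $\sigma$-centralizer $\{g\in\GL_n(F_r)\mid g^{-1}\beta g^\sigma=\beta\}$ acting by left multiplication, and $\tilde h$ pulls back to $g\mapsto\phi_h(g^{-1}\beta g^\sigma)$ because both compute $h$ of the norm of $g^{-1}\beta g^\sigma$. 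Integrating over $\Gamma\backslash X$ reproduces $TO_{\beta\sigma}(\phi_h)$ on the nose.

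The right equality comes from the Galois parametrization, which is where the main subtlety lies. Base-changing $(H',\alpha)$ to $\bar\kappa$ and composing with a fixed isomorphism $H_{\bar\kappa}\cong\tilde H$ produces a quasi-isogeny $H'_{\bar\kappa}\to\tilde H$, i.e.\ an element $g\in\mathcal{D}^\times=\mathrm{End}(\tilde H)[1/\varpi]^\times$ well defined up to right multiplication by $\mathcal{O}_\mathcal{D}^\times$; the descent datum of $H'$ is then the $\mathcal{O}_\mathcal{D}^\times$-conjugacy class of $g^{-1}(N\beta)g$. Hence $\tilde h(H',\alpha)=h(g^{-1}(N\beta)g)$ and $X/\Gamma$ is identified with the quotient of $\mathcal{D}^\times/\mathcal{O}_\mathcal{D}^\times$ by the left action of $\Gamma=\mathcal{D}^\times$, over which the integral of $\tilde h$ is exactly $O_{N\beta}(h)$. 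The main point to verify, and the only nontrivial step beyond a direct translation of the étale argument, is that the descent datum of $H'_{\bar\kappa}$ really transforms by \emph{ordinary} conjugation by $g$ rather than by a Frobenius-twisted conjugation. This is the same fact already used implicitly in the construction of the norm map $\beta\mapsto N\beta$ and its compatibility with characteristic polynomials; it holds because on $\mathrm{End}(\tilde H)=\mathcal{O}_\mathcal{D}$ the Frobenius is inner (conjugation by any uniformizer $\Pi\in\mathcal{O}_\mathcal{D}$), so the distinction between twisted and ordinary conjugation disappears at the level of $\mathcal{O}_\mathcal{D}^\times$-conjugacy classes in $\mathcal{D}_r$. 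Once this is established, all the volume bookkeeping transfers line for line from the étale case, and Corollary~\ref{IntegrationFormulaEtale} has an obvious analogue obtained by applying the Weyl integration formula.
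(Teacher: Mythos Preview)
Your proposal is correct and takes essentially the same approach as the paper: the paper's proof is literally the sentence ``proved in the same way as the corresponding statements in the \'{e}tale case,'' and you carry out exactly this translation, supplying more detail (notably the point that Frobenius on $\mathcal{O}_\mathcal{D}$ is inner, so twisted and ordinary conjugation coincide on $\mathcal{O}_\mathcal{D}^\times$-conjugacy classes) than the paper does.
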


\begin{cor}\label{IntegrationLemma} We have the equality
\[
\int_{B_r} \phi_h(\beta) d\beta = \int_{\mathcal{D}_r} h(d) dd\ .
\]
\end{cor}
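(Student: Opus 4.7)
The plan is to deduce this from the preceding proposition by invoking the Weyl integration formula on both sides, exactly as in the proof of Corollary \ref{IntegrationFormulaEtale} in the \'etale case.

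First, I would apply the ordinary Weyl integration formula to the right-hand side: since every element of $\mathcal{D}_r$ is regular semisimple (its centralizer is an unramified torus of rank $1$ inside $\mathcal{D}^\times$ once $r$ is not divisible by $n$, and otherwise the integrand is zero on the central part of $\mathcal{D}_r$), one has
\[
\int_{\mathcal{D}_r} h(d)\, dd = \sum_{[d]} c(d)\, |D(d)|\, O_d(h),
\]
where the sum runs over $\mathcal{O}_\mathcal{D}^\times$-conjugacy classes in $\mathcal{D}_r$, $D(d)$ is the Weyl discriminant, and $c(d)$ is a volume factor involving the centralizer of $d$. Similarly, applying the twisted Weyl integration formula for $\sigma$ on $\GL_n(F_r)$, restricted to the open subset $B_r$ of basic (hence regular semisimple) elements, one obtains
\[
\int_{B_r} \phi_h(\beta)\, d\beta = \sum_{[\beta]} c'(\beta)\, |D_\sigma(\beta)|\, TO_\beta(\phi_h),
\]
the sum being over $\GL_n(\mathcal{O}_r)$-$\sigma$-conjugacy classes in $B_r$, with $D_\sigma(\beta)$ the twisted discriminant.

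Next I would match the two sides term-by-term. The bijection $[\beta]\longmapsto [N\beta]$ from the preceding proposition identifies the indexing sets, and the preceding proposition already gives $TO_\beta(\phi_h)=O_{N\beta}(h)$. It remains to check $c'(\beta)|D_\sigma(\beta)|=c(N\beta)|D(N\beta)|$. The equality of discriminants is immediate from the fact (proved in the same proposition) that the characteristic polynomials of $N\beta$ and $\beta\beta^\sigma\cdots\beta^{\sigma^{r-1}}$ agree, since both discriminants are computed from this characteristic polynomial. For the volume factors, the $\sigma$-twisted centralizer of $\beta$ in $\GL_n(F_r)$ and the centralizer of $N\beta$ in $\mathcal{D}^\times$ are inner forms of the same torus $F[N\beta]^\times$, and under the Dieudonn\'e-theoretic identification used to construct the norm map they act on two different lattice parametrizations of the same set $X$ of quasi-isogenies; this forces their maximal compact subgroups to correspond, so the normalization giving each maximal compact subgroup volume $1$ matches on the two sides.

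The main obstacle is the last point — the identification of the Haar measures on the (twisted) centralizers under the norm correspondence. However, this is essentially built into the proof of the preceding proposition, where the set $X$ of $D$-groups with quasi-isogeny to $H$ was equipped with a single $\Gamma$-action, and the two parametrizations (Dieudonn\'e vs.\ Galois) were shown to yield the same $\Gamma$-stabilizers once the Haar measure was fixed by requiring these stabilizers to have volume $1$. Granting this, the term-by-term match is immediate and the corollary follows.
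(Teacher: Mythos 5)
Your proof is correct and takes essentially the same approach as the paper, which handles this corollary by explicit analogy with Corollary \ref{IntegrationFormulaEtale}: the stated argument there is simply that the identity follows from the Weyl integration formula since $\phi_h$ and $h$ are associated, and your write-up unpacks exactly that one-line reduction (term-by-term matching via $\beta\mapsto N\beta$, equality of orbital integrals and of discriminants via the characteristic polynomial, and compatibility of measure normalizations). One small slip in a parenthetical: basic elements need not be regular semisimple (these are independent conditions), but since the non-regular locus has measure zero the Weyl integration formula still applies, so the argument goes through unchanged.
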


\section{Descent properties of the test function: Geometry}

Pick some
\[
\beta\in \GL_n(\mathcal{O}_r)\mathrm{diag}(\varpi,1,\ldots,1)\GL_n(\mathcal{O}_r)
\]
and consider the associated $\varpi$-divisible $\mathcal{O}$-module $\overline{H}_{\beta}$ over $\kappa_r$. Write
\[
\overline{H}_{\beta}=\overline{H}_{\beta}^0 \times \overline{H}_{\beta}^{\mathrm{et}}
\]
as the product of its infinitesimal and \'{e}tale part. Let $k$ be the height of $\overline{H}_{\beta}^0$ as a formal $\mathcal{O}$-module. After $\sigma$-conjugation, we correspondingly get
\[
\beta=(\beta^0,\beta^{\mathrm{et}})\in \big(GL_k(\mathcal{O}_r)\mathrm{diag}(\varpi,1,\ldots,1)\GL_k(\mathcal{O}_r)\big)\times \GL_{n-k}(\mathcal{O}_r)\ .
\]
Recall that one can also use the Galois parametrization for $\overline{H}_{\beta}^{\mathrm{et}}$. Repeating the arguments on \'{e}tale $p$-divisible groups from Section \ref{NormMaps} with relative Dieudonn\'{e} modules replacing usual Dieudonn\'{e} modules (and keeping track of normalizations) shows that the action of $\mathrm{Frob}$ on $\overline{H}_{\beta}^{\mathrm{et}}(\bar{k})$ is through right multiplication by $\gamma=(N\beta^{\mathrm{et}})^{-1}$.

Let $\breve{\mathcal{O}}$ be the completion of the maximal unramified extension of $\mathcal{O}$.

\begin{prop}\label{GeometricReduction} \begin{altenumerate}
\item[{\rm (i)}] The connected components of $\mathrm{Spf}\ (R_{\beta,m}\otimes_{\mathcal{O}_r} \breve{\mathcal{O}})$ are parametrized by surjective $\mathcal{O}$-linear maps
\[
v: (\mathcal{O}/\varpi^m)^n\longrightarrow \overline{H}_{\beta}^{\mathrm{et}}[\varpi^m](\bar{k})\ .
\]
In particular, $\GL_n(\mathcal{O}/\varpi^m)$ acts transivitely on the set of connected components.
\item[{\rm (ii)}] Let $V\subset (\mathcal{O}/\varpi^m)^n$ be a direct summand of rank $k$ over $\mathcal{O}/\varpi^m$. Let
\[
\mathrm{Spf}\ (R_{\beta,m}\otimes_{\mathcal{O}_r}\breve{\mathcal{O}})_V
\]
be the union of all connected components corresponding to $v$ with $\ker v=V$. Then $(R_{\beta,m}\otimes_{\mathcal{O}_r}\breve{\mathcal{O}})_V$ is defined over $\mathcal{O}_r$; call it $(R_{\beta,m})_V$. Further, the stabilizer of $(R_{\beta,m})_V\subset R_{\beta,m}$ is $P_V(\mathcal{O}/\varpi^m)$, where $P_V$ is the parabolic associated to $V$, i.e. the subgroup stabilizing $V$.
\end{altenumerate}

{\rm In the following, we take $V=(\mathcal{O}/\varpi^m)^k\subset (\mathcal{O}/\varpi^m)^n$. We write $P_k=P_V$.}

\begin{altenumerate}
\item[{\rm (iii)}] The canonical projection
\[
\mathrm{Spf}\ (R_{\beta,m})_V\longrightarrow \mathrm{Spf}\ R_{\beta^0,m}\ ,
\]
given by sending a deformation of $\overline{H}_{\beta}$ with Drinfeld-level structure $(X_1,\ldots,X_n)$ to its infinitesimal part and the Drinfeld-level-structure given by $(X_1,\ldots,X_k)$, is formally smooth, inducing (noncanonical) isomorphisms of each connected component of
\[
\mathrm{Spf}\  (R_{\beta,m})_V\otimes_{\mathcal{O}_r} \breve{\mathcal{O}}
\]
with
\[
\mathrm{Spf}\  (R_{\beta^0,m}\otimes_{\mathcal{O}_r}\breve{\mathcal{O}})[[T_1,...,T_{n-k}]]\ .
\]
\item[{\rm (iv)}] The map in {\rm (iii)} induces a (canonical) $P_k(\mathcal{O}/\varpi^m)\times W_{F_r}$-equivariant isomorphism
\[
R\psi_{\mathrm{Spf}\ (R_{\beta,m})_V} \bar{\mathbb{Q}}_{\ell}\cong R\psi_{\mathrm{Spf}\ R_{\beta^0,m}} \otimes \bar{\mathbb{Q}}_{\ell}[\GL_{n-k}(\mathcal{O}/\varpi^m)]\ ,
\]
where the $P_k(\mathcal{O}/\varpi^m)$-action factors through its Levi quotient, and the $W_{F_r}$-action is the natural action on both nearby cycle sheaves and is the unramified action taking $\mathrm{Frob}^r$ to right multiplication by $(\gamma^{-1})^t = (N\beta^{\mathrm{et}})^t$ on $\bar{\mathbb{Q}}_{\ell}[\GL_{n-k}(\mathcal{O}/\varpi^m)]$.
\item[{\rm (v)}] This gives an equality of virtual $\GL_n(\mathcal{O})\times W_{F_r}$-representations
\[
[R\psi_{\beta}]=\mathrm{Ind}_{P_k(\mathcal{O})}^{\GL_n(\mathcal{O})} ([R\psi_{\beta^0}]\otimes C_c^{\infty}(\GL_{n-k}(\mathcal{O})))\ ,
\]
where we take functions with values in $\bar{\mathbb{Q}}_{\ell}$.
\item[{\rm (vi)}] We have the following equality
\[
\phi_{\tau,h}(\beta) = \mathrm{tr}\left(h\times N\beta^{\mathrm{et}} | \mathrm{Ind}_{P_k(\mathcal{O})}^{\GL_n(\mathcal{O})} \phi_{k,\tau}(\beta^0)\otimes C_c^{\infty}(\GL_{n-k}(\mathcal{O})) \right)\ ,
\]
with $N\beta^{\mathrm{et}}$ acting through right multiplication by $(N\beta^{\mathrm{et}})^{-1}$. Here $\phi_{k,\tau}(\beta^0)$ is the distribution taking $h_k\in C_c^{\infty}(\GL_k(\mathcal{O}))$ to
\[
\phi_{\tau,h_k}(\beta^0)=\tr(\tau\times h_k^{\vee}|[R\psi_{\beta^0}])\ ,
\]
which we consider as an element in the Grothendieck group (with $\mathbb{C}$-coefficients) of admissible representations of $\GL_{n-k}(\mathcal{O})$.
\end{altenumerate}
\end{prop}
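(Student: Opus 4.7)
The plan is to prove the six parts in order. Parts (i) and (ii) amount to classifying how Drinfeld level structures reduce modulo the maximal ideal; (iii) and (iv) are geometric statements about the deformation space and its nearby cycles; (v) and (vi) follow by passage to cohomology and by manipulations of induced representations and their traces.

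For (i), a Drinfeld level structure $(X_1,\ldots,X_n)$ on the universal deformation specializes at the closed point to an $\mathcal{O}$-linear map $v\colon (\mathcal{O}/\varpi^m)^n \to \overline{H}_\beta^{\mathrm{et}}[\varpi^m](\bar k)$, since $\overline{H}_\beta^0[\varpi^m]$ is connected and has only the zero geometric point. Surjectivity of $v$ is forced by specializing the Cartier divisor identity defining a Drinfeld level structure: every étale $\bar k$-point of $\overline{H}_\beta^{\mathrm{et}}[\varpi^m]$ must appear on the left-hand side with positive multiplicity, so each such point lies in the image of $v$. I would then argue that the datum $v$ is locally constant on $\mathrm{Spf}(R_{\beta,m}\otimes\breve{\mathcal{O}})$ and that once $v$ is fixed the remaining deformation space is connected (this is cleanest after (iii)), giving the parametrization of components. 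Transitivity of $\GL_n(\mathcal{O}/\varpi^m)$ is immediate since it acts transitively on surjections. Part (ii) is then formal: the stabilizer of $\{v : \ker v = V\}$ is $P_V(\mathcal{O}/\varpi^m)$, and Frobenius postcomposes $v$ with the Frobenius on $\overline{H}_\beta^{\mathrm{et}}[\varpi^m](\bar k)$, preserving the kernel, so this union descends to $\mathcal{O}_r$.

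For (iii), I would invoke Faltings' theory \cite{Faltings}: deformations of $\overline{H}_\beta$ split as deformations of $\overline{H}_\beta^0$, deformations of $\overline{H}_\beta^{\mathrm{et}}$ (trivial over $\breve{\mathcal{O}}$), and the extension class, parametrized by a formal polydisc of dimension $n-k$. On a fixed connected component of $(R_{\beta,m})_V \otimes \breve{\mathcal{O}}$, the data $(X_1,\ldots,X_k)$ is exactly a Drinfeld level structure on the infinitesimal part, while $(X_{k+1},\ldots,X_n)$ lift a prescribed étale level structure; this yields the asserted formal smoothness with polydisc fibers. Part (iv) then follows by applying Berkovich's smooth base change for formal nearby cycles to this projection: each of the $|\GL_{n-k}(\mathcal{O}/\varpi^m)|$ components maps formally smoothly onto $R_{\beta^0,m}\otimes \breve{\mathcal{O}}$ with analytically contractible fibers, so $R\psi$ of each component is a pullback of $R\psi_{R_{\beta^0,m}}$. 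Summing over components yields the tensor factor $\bar{\mathbb{Q}}_\ell[\GL_{n-k}(\mathcal{O}/\varpi^m)]$. For the equivariance, the unipotent radical of $P_k$ shifts $(X_{k+1},\ldots,X_n)$ by formal sections, hence acts trivially on both tensor factors; the Levi acts in the obvious way; and Frobenius acts on the étale $\bar k$-points by $(N\beta^{\mathrm{et}})^{-1}$ (this is the content of Section \ref{NormMaps} applied to $\overline{H}_\beta^{\mathrm{et}}$), which translates, via the identification of the set of étale Drinfeld level structures with a $\GL_{n-k}$-torsor, to right multiplication by $(N\beta^{\mathrm{et}})^t$ after accounting for the duality between covariant level structures and covariant maps $v$.

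Parts (v) and (vi) are bookkeeping. For (v), I would take cohomology of (iv) and sum over the $\GL_n(\mathcal{O}/\varpi^m)$-orbit of $V$'s, whose stabilizer is $P_k(\mathcal{O}/\varpi^m)$; this packages exactly as the induction $\mathrm{Ind}_{P_k(\mathcal{O})}^{\GL_n(\mathcal{O})}$, with the limit in $m$ producing $C_c^\infty(\GL_{n-k}(\mathcal{O}))$. For (vi), I would take the trace of $\tau \times h^\vee$ on (v); interpreting $\phi_{k,\tau}(\beta^0)$ as a virtual representation of $\GL_k(\mathcal{O})$ and identifying the Weil action on the regular-representation factor with right multiplication by $(N\beta^{\mathrm{et}})^{-1}$ yields the stated formula after Frobenius reciprocity. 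The main obstacle is step (iv): smoothness and the component count are routine, but matching the Frobenius action to the asserted $(N\beta^{\mathrm{et}})^t$ requires careful tracking of duality conventions between the Dieudonné and Galois parametrizations and of the covariant conventions built into Drinfeld level structures, which is where the transpose and the $h^\vee$ in the definition of $\phi_{\tau,h}$ must be reconciled.
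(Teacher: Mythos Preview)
Your proposal is correct and follows essentially the same line as the paper's proof: parts (i)--(ii) via the decomposition $\overline{H}_\beta=\overline{H}_\beta^0\times\overline{H}_\beta^{\mathrm{et}}$ and the identification of sections with \'etale points; part (iii) via the deformation-theoretic splitting (the paper cites Drinfeld's Proposition~4.5 rather than Faltings, but the content is the same); part (iv) via invariance of nearby cycles under formal power series extensions (the paper cites \cite{HarrisTaylor}, Lemma~I.5.6, which is the same statement you invoke as Berkovich smooth base change); and (v)--(vi) as formal consequences by inducing and taking traces/duals. Your write-up is simply more explicit than the paper's terse proof, and your caution about tracking the transpose in the Frobenius action is well placed but does not indicate a gap.
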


\begin{proof} This is basically well-known. Part (i) follows from the canonical decomposition
\[
\overline{H}_{\beta}=\overline{H}_{\beta}^0 \times \overline{H}_{\beta}^{\mathrm{et}}\ ,
\]
identifying sections of $\overline{H}_{\beta}$ with points of $\overline{H}_{\beta}^{\mathrm{et}}$.

It is clear that the Galois action on the connected components is through the action on $\overline{H}_{\beta}^{\mathrm{et}}[\varpi^m](\bar{k})$. This action does not change $\ker v$, hence the first part of (ii) follows. The second is obvious.

Part (iii) follows from Proposition 4.5 of \cite{Drinfeld}.

Now part (iv) follows from the invariance of nearby cycles under power series extensions, see e.g. \cite{HarrisTaylor}, Lemma I.5.6. Also part (v) is an immediate consequence of part (iv) and the facts about the $\GL_n(\mathcal{O}/\varpi^m)$-action from part (i) and (ii).

Finally, part (vi) follows from part (v) after taking dual $\GL_n(\mathcal{O})$-representations.
\end{proof}

\section{Descent properties of the test function: Harmonic analysis}

In this section, we will translate the descent formula established in the last section to establish condition (i) of Lemma \ref{LemmaToProve}. Basically, this is an application of the Weyl integration formula, turning a statement about the values of $\phi_{\tau,h}$ on elements into a statement about its traces on representations.

Let $P_k$ be the standard parabolic with Levi $\GL_k\times \GL_{n-k}$ and let $N_k$ be its unipotent radical. For any admissible representation $\pi$ of $\GL_n(F)$ of finite length, let $\pi_{N_k}$ be its (unnormalized) Jacquet module with respect to $N_k$. Assume that
\[
\pi_{N_k} = \sum_{i=1}^{t_{\pi,k}} \pi_{N_k,i}^1\otimes \pi_{N_k,i}^2
\]
as elements of the Grothendieck group of representations of $\GL_k(F)\times \GL_{n-k}(F)$. Let $\Theta_{\Pi}$ be the distribution on $\GL_n(F_r)$ given by $\Theta_{\Pi}(\beta)=\Theta_{\pi}(N\beta)$. Define $\Theta_{\Pi_{N_k,i}^1}$ analogously. For any $\phi\in C_c^{\infty}(\GL_n(F_r))$, we will write $\Theta_{\Pi}(\phi)$ as $\tr( (\phi,\sigma) |\Pi )$, thinking of $\Pi$ as the base-change lift of $\pi$.

\begin{lem}\label{WeylIntegration} In this situation,
\[
\tr ( (\phi_{\tau,h},\sigma)|\Pi ) = \sum_{k=1}^n\sum_{i=1}^{t_{\pi,k}} p^{(n-k) r} \tr\left( h | \mathrm{Ind}_{P_k(\mathcal{O})}^{\GL_n(\mathcal{O})} \left(\tr( (\phi_{k,\tau}\chi_{B_k},\sigma) | \Pi^1_{N_k,i})\right)\otimes \pi^2_{N_k,i} \right)\ ,
\]
where $\tr( (\phi_{k,\tau}\chi_{B_k},\sigma) | \Pi^1_{N_k,i})$ is the distribution on $\GL_k(\mathcal{O})$ sending $h_k$ to
\[
\tr( (\phi_{\tau,h_k^{\vee}}\chi_{B_k},\sigma) | \Pi^1_{N_k,i})\ ,
\]
where $\chi_{B_k}$ is the characteristic function of the set $B_k$ of all basic elements in
\[
\GL_k(\mathcal{O}_r)\mathrm{diag}(\varpi,1,\ldots,1)\GL_k(\mathcal{O}_r)\ .
\]
Again, we identify conjugation-invariant distributions with elements in the Grothendieck group of admissible representations to make sense of this formula.
\end{lem}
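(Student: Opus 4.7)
The plan is to compute the twisted trace $\tr((\phi_{\tau,h},\sigma)|\Pi)$ by the twisted Weyl integration formula, stratify the support of $\phi_{\tau,h}$ by the height of the infinitesimal part of the associated $\varpi$-divisible $\mathcal{O}$-module, insert the descent formula of Proposition~\ref{GeometricReduction}(vi) on each stratum, and then read off the Jacquet-module contribution via Casselman's parabolic character formula. Concretely, one writes
\[
\tr((\phi_{\tau,h},\sigma)|\Pi) = \int_{\GL_n(F_r)} \phi_{\tau,h}(\beta)\,\Theta_\pi(N\beta)\,d\beta,
\]
with the integral organized over twisted conjugacy classes; since $\phi_{\tau,h}$ is supported inside $\GL_n(\mathcal{O}_r)\mathrm{diag}(\varpi,1,\ldots,1)\GL_n(\mathcal{O}_r)$, every relevant $\beta$ has a well-defined height $k\in\{1,\ldots,n\}$ of its infinitesimal part, giving a decomposition of the integral as $\sum_{k=1}^{n}\int_{S_k}$.

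On the stratum $S_k$, each $\beta$ is $\sigma$-conjugate to a pair $(\beta^0,\beta^{\mathrm{et}}) \in B_k \times \GL_{n-k}(\mathcal{O}_r)$, and Proposition~\ref{GeometricReduction}(vi) gives
\[
\phi_{\tau,h}(\beta) = \tr\!\Bigl(h\times N\beta^{\mathrm{et}}\,\Bigm|\,\mathrm{Ind}_{P_k(\mathcal{O})}^{\GL_n(\mathcal{O})}\phi_{k,\tau}(\beta^0)\otimes C_c^\infty(\GL_{n-k}(\mathcal{O}))\Bigr).
\]
Meanwhile, $N\beta=(N\beta^0,N\beta^{\mathrm{et}}) \in M_k(F) = \GL_k(F)\times\GL_{n-k}(F)$ is ``parabolic-regular'' with respect to $P_k$ in the sense relevant to Casselman: the $\GL_k$-part has positive slope $1/k$, while the $\GL_{n-k}$-part is compact. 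Casselman's formula for the character on such elements therefore gives
\[
\Theta_\pi(N\beta) = \delta_{P_k}(N\beta)^{-1}\sum_{i=1}^{t_{\pi,k}} \Theta_{\pi^1_{N_k,i}}(N\beta^0)\,\Theta_{\pi^2_{N_k,i}}(N\beta^{\mathrm{et}}),
\]
where the $\delta_{P_k}$-factor evaluates to $q^{-r(n-k)}$ on the Hecke coset support of $\phi_{\tau,h}$ (with $q$ the residue cardinality of $F$), which is exactly the source of the claimed $p^{(n-k)r}$ prefactor.

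It then remains to carry out the resulting double integral. The $\beta^{\mathrm{et}}$-integral against $\Theta_{\pi^2_{N_k,i}}(N\beta^{\mathrm{et}})$ and against the $N\beta^{\mathrm{et}}$-action on $C_c^\infty(\GL_{n-k}(\mathcal{O}))$ is handled by Corollary~\ref{IntegrationFormulaEtale} applied to $D = M_{n-k}(F)$: the norm map transports the integral on $\GL_{n-k}(\mathcal{O}_r)$ to one on $\GL_{n-k}(\mathcal{O})$, and the convolution against the regular representation $C_c^\infty(\GL_{n-k}(\mathcal{O}))$ collapses to $\pi^2_{N_k,i}|_{\GL_{n-k}(\mathcal{O})}$ placed inside the induction. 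The $\beta^0$-integral against $\Theta_{\pi^1_{N_k,i}}(N\beta^0)$, by the very definition of $\tr((\bullet,\sigma)|\Pi^1_{N_k,i})$ and by restriction to the basic locus $B_k$ (the support of $\phi_{k,\tau}$ by construction), produces precisely the distribution $\tr((\phi_{k,\tau}\chi_{B_k},\sigma)|\Pi^1_{N_k,i})$ on $\GL_k(\mathcal{O})$.

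Assembling these ingredients inside the induction $\mathrm{Ind}_{P_k(\mathcal{O})}^{\GL_n(\mathcal{O})}$ (using that this induction and the trace of $h$ commute with the combinatorial factorizations above) yields the claimed identity. The main technical difficulty is bookkeeping the normalizations: one has to reconcile the unnormalized Jacquet module (as used in the statement) with Casselman's normalized formula, track the interaction of the twisted Weyl integration measure on $\sigma$-conjugacy classes in $S_k$ with the product measure on $B_k\times\GL_{n-k}(\mathcal{O}_r)$, and verify that Corollary~\ref{IntegrationLemma} compensates these measure ratios so that the only net scalar left is the $\delta_{P_k}^{-1}$-factor giving $p^{(n-k)r}$.
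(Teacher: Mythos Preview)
Your approach is essentially the same as the paper's: both stratify by the height $k$ of the infinitesimal part, invoke Proposition~\ref{GeometricReduction}(vi) on each stratum, use the \'etale integration formula (your Corollary~\ref{IntegrationFormulaEtale}, the paper's inline sub-lemma) to collapse the $\beta^{\mathrm{et}}$-integral, and then recognize the remaining $\beta^0$-integral as the twisted trace against $\Pi^1_{N_k,i}$ restricted to the basic locus. The only organizational difference is that the paper imports the initial decomposition from \cite{ScholzeGLn}, Lemma~5.5, which already packages Casselman's formula as a twisted Weyl integration over tori $T_k\times T_{n-k}$ with $T_k$ anisotropic (anisotropy of $T_k$ being exactly the condition $\beta^0\in B_k$), whereas you invoke Casselman's parabolic character formula directly; these are two phrasings of the same computation.

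One small correction: you write that $B_k$ is ``the support of $\phi_{k,\tau}$ by construction,'' but $\phi_{k,\tau}$ is defined on the full Hecke coset in $\GL_k(\mathcal{O}_r)\mathrm{diag}(\varpi,1,\ldots,1)\GL_k(\mathcal{O}_r)$, not just on $B_k$. The reason $\beta^0\in B_k$ here is geometric: $\beta^0$ arises as the Dieudonn\'e parameter of the \emph{infinitesimal} part of $\overline{H}_\beta$, hence of a formal $\varpi$-divisible $\mathcal{O}$-module, which forces basicness. This is why the $\chi_{B_k}$ appears in the answer rather than being automatic from the support of $\phi_{k,\tau}$.
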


\begin{proof} We follow the proof of Lemma 5.5 in \cite{ScholzeGLn}. This gives
\[\begin{aligned}
&\tr ( (\phi_{\tau,h},\sigma)|\Pi ) = \sum_{k=1}^n p^{(n-k) r} \sum_{i=1}^{t_{\pi,k}} \sum_{\substack{T_k\subset \GL_k, T_{n-k}\subset \GL_{n-k}\\ T_k\ {\rm anisotropic}}} |W(T_k\times T_{n-k},\GL_k\times \GL_{n-k})|^{-1}\\
&\times \int \Delta_{\GL_k(F)}^2 (Nt_1)\Delta_{\GL_{n-k}(F)}^2 (Nt_2) TO_{t\sigma}^{\GL_k\times \GL_{n-k}}(\phi_{\tau,h}) \Theta_{\pi^1_{N_k,i}}(Nt_1) \Theta_{\pi^2_{N_k,i}}(Nt_2)dt_1 dt_2\ ,
\end{aligned}\]
where the integral runs over the set of all $t=(t_1,t_2)$ in
\[
T_k(F_r)^{1-\sigma}\times T_{n-k}(F_r)^{1-\sigma}\backslash T_k(F_r)_1\times T_{n-k}(\mathcal{O}_r)\ ,
\]
where $T_k(F_r)_1$ is the set of all $t_1\in T_k(F_r)$ with $v_F(\det t_1)=1$. Further,
\[
W(T_k\times T_{n-k},\GL_k\times \GL_{n-k})
\]
is the normalizer of $T_k(F)\times T_{n-k}(F)$ in $\GL_k(F)\times \GL_{n-k}(F)$ divided by $T_k(F)\times T_{n-k}(F)$.

Now, we keep $t_1$ fixed and look at the sum over $T_{n-k}$ combined with the integral over $t_2$:
\[\begin{aligned}
\sum_{T_{n-k}\subset \GL_{n-k}} &|W(T_{n-k},\GL_{n-k})|^{-1}\\
&\times \int\limits_{T_{n-k}(F_r)^{1-\sigma}\backslash T_{n-k}(\mathcal{O}_r)} \Delta_{\GL_{n-k}(F)}^2 (Nt_2) TO_{t_2\sigma}^{\GL_{n-k}} (\phi_{\tau,h}(t_1,\cdot)) \Theta_{\pi^2_{N_k,i}}(Nt_2) dt_2\ .
\end{aligned}\]
This is exactly the twisted Weyl integration formula computing $\tr( \phi_{\tau,h}(t_1,\cdot)|\Pi^2_{N_k,i})$.

\begin{lem} We have
\[
\tr( (\phi_{\tau,h}(t_1,\cdot),\sigma) | \Pi_{n-k} ) = \tr( h | \mathrm{Ind}_{P_k(\mathcal{O})}^{\GL_n(\mathcal{O})} \phi_{k,\tau}(t_1)\otimes \pi_{n-k} )
\]
for any irreducible smooth representation $\pi_{n-k}$ of $\GL_{n-k}(F)$.
\end{lem}

\begin{proof} For this, we first recall the following formula.

\begin{lem} Let $f$ be a $\GL_{n-k}(\mathcal{O})$-conjugation-invariant locally constant function on $\GL_{n-k}(\mathcal{O})$ and let $\phi\in C_c^{\infty}(\GL_{n-k}(\mathcal{O}_r))$ be defined by $\phi(\beta)=f(N\beta)$. Then
\[
\int_{\GL_{n-k}(\mathcal{O}_r)} \phi(\beta) d\beta = \int_{\GL_{n-k}(\mathcal{O})} f(\gamma) d\gamma\ .
\]
\end{lem}

\begin{proof} This is proved in exactly the same way as Corollary \ref{IntegrationFormulaEtale}, replacing Dieudonn\'{e} modules by relative Dieudonn\'{e} modules.
\end{proof}

Hence if $f_{\tau,h}(t_1,\cdot)\in C_c^{\infty}(\GL_{n-k}(\mathcal{O}))$ is defined by
\[
f_{\tau,h}(t_1,\gamma_2) = \mathrm{tr}\left(h\times \gamma_2 | \mathrm{Ind}_{P_k(\mathcal{O})}^{\GL_n(\mathcal{O})} \phi_{k,\tau}(t_1)\otimes C_c^{\infty}(\GL_{n-k}(\mathcal{O})) \right)\ ,
\]
then $\tr( (\phi_{\tau,h}(t_1,\cdot),\sigma) | \Pi_{n-k} ) = \tr( f_{\tau,h}(t_1,\cdot) | \pi_{n-k} )$ by Proposition \ref{GeometricReduction}, part (vi). We need to see that
\[
\tr( f_{\tau,h}(t_1,\cdot) | \pi_{n-k} ) = \tr( h | \mathrm{Ind}_{P_k(\mathcal{O})}^{\GL_n(\mathcal{O})} \phi_{k,\tau}(t_1)\otimes \pi_{n-k} )\ ,
\]
but this follows from writing
\[
C_c^{\infty}(\GL_{n-k}(\mathcal{O})) = \bigoplus_{\pi_{n-k}^{\prime}} \pi_{n-k}^{\prime}\otimes \pi_{n-k}^{\prime\vee}\ ,
\]
where $\pi_{n-k}^{\prime}$ runs over all irreducible representations of $\GL_{n-k}(\mathcal{O})$, and decomposing $\pi_{n-k}$ into irreducible representations.
\end{proof}

Now, we look at the sum over $T_k$ and the integral over $t_1$, which reads
\[
\sum_{\substack{T_k\subset \GL_k\\T_k\ {\rm anisotropic}}} |W(T_k,\GL_k)|^{-1}\int\limits_{T_k(F_r)^{1-\sigma}\backslash T_k(F_r)_1} \Delta_{\GL_k(F)}^2 (Nt_1) TO_{t_1\sigma}^{\GL_k}(\psi) \Theta_{\pi^1_{N_k,i}}(Nt_1)dt_1\ ,
\]
where $\psi(t_1) = \tr(h|\mathrm{Ind}_{P_k(\mathcal{O})}^{\GL_n(\mathcal{O})} \phi_{k,\tau}(t_1)\otimes \pi^2_{N_k,i} )$. This is the twisted Weyl integration formula again, this time for $\psi\chi_{B_k}$ and $\GL_k(F)$. It calculates the trace
\[
\tr((\psi\chi_{B_k},\sigma)|\Pi^1_{N_k,i})\ ,
\]
which is exactly
\[
\tr\left( h | \mathrm{Ind}_{P_k(\mathcal{O})}^{\GL_n(\mathcal{O})} (\tr( (\phi_{k,\tau}\chi_{B_k},\sigma) | \Pi^1_{N_k,i}))\otimes \pi^2_{N_k,i} \right)
\]
by tracing through the definitions.
\end{proof}

\begin{thm} Assume that Theorem \ref{MainTheorem2} holds true for all $n^{\prime}<n$. Assume that $\pi$ is the normalized parabolic induction of an irreducible representation $\pi_1\otimes \cdots \otimes \pi_t$ of $\GL_{n_1}(F)\times\cdots\times\GL_{n_t}(F)$, where $t\geq 2$: Hence there exist representations $\mathrm{rec}(\pi_1),\ldots,\mathrm{rec}(\pi_t)$ of $W_F$ such that
\[
\tr(f_{\tau,h_i}|\pi_i) = \tr(\tau|\mathrm{rec}(\pi_i)) \tr(h_i|\pi_i)\ ,
\]
for all $i$ and all functions $h_i\in C_c^{\infty}(\GL_{n_i}(\mathcal{O}))$. Then we have
\[
\tr(f_{\tau,h}|\pi) = \tr\left(\tau|\mathrm{rec}(\pi_1)(\tfrac{n-n_1}2)\oplus\ldots\oplus\mathrm{rec}(\pi_t)(\tfrac{n-n_t}2)\right) \tr(h|\pi)
\]
for all functions $h\in C_c^{\infty}(\GL_n(\mathcal{O}))$.
\end{thm}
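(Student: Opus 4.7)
The strategy is to apply Lemma \ref{WeylIntegration} to expand both sides and to use the Bernstein--Zelevinsky geometric lemma to handle the Jacquet modules of the induced representation $\pi$. First, by matching of orbital integrals, $\tr(f_{\tau,h}|\pi)=\tr((\phi_{\tau,h},\sigma)|\Pi)$ where $\Theta_\Pi(\beta)=\Theta_\pi(N\beta)$. Lemma \ref{WeylIntegration} then expresses this as
\[
\sum_{k=1}^n p^{(n-k)r}\sum_i \tr\bigl(h\,\bigm|\,\mathrm{Ind}_{P_k(\mathcal{O})}^{\GL_n(\mathcal{O})}(D_{k,\tau,i}\otimes \pi^2_{N_k,i})\bigr),
\]
where $D_{k,\tau,i}=\tr((\phi_{k,\tau}\chi_{B_k},\sigma)|\Pi^1_{N_k,i})$ is a distribution on $\GL_k(\mathcal{O})$ and $\pi_{N_k}=\sum_i\pi^1_{N_k,i}\otimes\pi^2_{N_k,i}$ in the Grothendieck group.

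Next, the Bernstein--Zelevinsky geometric lemma applied to $\pi$ (normalized induced from $\pi_1\otimes\cdots\otimes\pi_t$ along the standard parabolic with Levi $\prod_j\GL_{n_j}$) and the parabolic $P_k$ expresses $\pi_{N_k}$, up to explicit modular twists, as a sum over tuples $(k_1,\ldots,k_t)$ with $\sum_j k_j=k$ and $0\leq k_j\leq n_j$, of normalized parabolic inductions of the Jacquet modules $(\pi_j)_{N_{k_j}}$. Substituting into the previous display and using the factorization $p^{(n-k)r}=\prod_j p^{(n_j-k_j)r}$, the large sum reorganizes into a sum indexed by tuples $(k_1,\ldots,k_t)$. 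The key fact making the summand factor across the $t$ indices is that the basic--trace distribution $D_{k,\tau,i}$, evaluated on a representation induced from a product Levi $\prod_j\GL_{k_j}$, factors as an induced product of basic--trace distributions on each $\GL_{k_j}$; this follows from combining the \'etale/infinitesimal decomposition of Proposition \ref{GeometricReduction}(vi) with an iterated application of Lemma \ref{WeylIntegration}.

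Given this factorization, the inductive hypothesis applies to each $\pi_j$ (of rank $n_j<n$ since $t\geq 2$):
\[
\tr(\tau|\mathrm{rec}(\pi_j))\tr(h_j|\pi_j)=\tr(f_{\tau,h_j}|\pi_j)=\sum_{k_j=1}^{n_j} p^{(n_j-k_j)r}\sum_{i_j}\tr\bigl(h_j\,\bigm|\,\mathrm{Ind}_{P_{k_j}(\mathcal{O})}^{\GL_{n_j}(\mathcal{O})}(D^{(j)}_{k_j,\tau,i_j}\otimes(\pi_j)^2_{N_{k_j},i_j})\bigr),
\]
which determines all the basic--trace distributions $D^{(j)}_{k_j,\tau,i_j}$ recursively. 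Plugging these back in and using transitivity of parabolic induction from $\prod_j\GL_{n_j}$ to $\GL_n$, the full sum for $\pi$ telescopes to $\bigl(\prod_j\tr(\tau|\mathrm{rec}(\pi_j))\bigr)\tr(h|\pi)$; the $\delta^{1/2}$--twists of normalized parabolic induction produce precisely the Tate twists $(\tfrac{n-n_j}{2})$ on each $\mathrm{rec}(\pi_j)$, yielding the claimed identity.

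The main technical difficulty lies in the combinatorial bookkeeping: one must verify the factorization of the basic--trace distributions on representations induced from a product Levi, track the interplay between Lemma \ref{WeylIntegration} applied to $\pi$ and its iterated applications to each $\pi_j$, handle the modular twists arising from the geometric lemma, and check that the final sum collapses to the claimed product form with the correct Tate twists on each $\mathrm{rec}(\pi_j)$.
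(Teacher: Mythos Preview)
Your overall plan---apply Lemma \ref{WeylIntegration}, feed in the Bernstein--Zelevinsky geometric lemma for $\pi_{N_k}$, and invoke the inductive hypothesis on each $\pi_j$---is exactly the route the paper intends (it cites the reduction of Lemma 6.8 to Lemma 6.5 in \cite{ScholzeGLn}, which is precisely this argument). However, the heart of your argument is wrong.

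The ``key fact'' you state, that the basic--trace distribution $D_{k,\tau,i}$ evaluated on a representation parabolically induced from $\prod_j \GL_{k_j}$ \emph{factors} as a product over $j$, is false; and correspondingly your conclusion $\prod_j \tr(\tau|\mathrm{rec}(\pi_j))$ is wrong---the target is $\tr(\tau|\bigoplus_j \mathrm{rec}(\pi_j)(\tfrac{n-n_j}{2}))$, a \emph{sum} over $j$. The correct key fact is a vanishing statement: the function $\phi_{\tau,h_k^\vee}\chi_{B_k}$ is supported on elements whose norm is elliptic in $\GL_k(F)$, and the (twisted) character of a representation parabolically induced from a \emph{proper} Levi of $\GL_k$ vanishes on regular elliptic elements. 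Hence $D_{k,\tau,i}$ is zero whenever the first factor $\pi^1_{N_k,i}$ coming from the geometric lemma is induced from $\prod_j \GL_{k_j}$ with more than one $k_j>0$. The only surviving terms in the geometric lemma are therefore those with exactly one $k_j=k$ and all other $k_i=0$; this is what produces a \emph{sum} over $j=1,\ldots,t$ rather than a product.

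Once you use this vanishing, for fixed $j$ the surviving terms (summed over $k=1,\ldots,n_j$) are precisely the right-hand side of Lemma \ref{WeylIntegration} applied to $\pi_j$, tensored on the \'etale side with $\text{n-Ind}(\pi_1\otimes\cdots\widehat{\pi_j}\cdots\otimes\pi_t)$ and then induced up to $\GL_n(\mathcal{O})$. The inductive hypothesis for $\pi_j$ collapses the inner sum to $\tr(\tau|\mathrm{rec}(\pi_j))$ times a $\GL_{n_j}(\mathcal{O})$-trace, transitivity of induction reassembles $\tr(h|\pi)$, and the modulus characters from the geometric lemma together with the $p^{(n-k)r}$ factors give exactly the Tate twist $(\tfrac{n-n_j}{2})$. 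Summing over $j$ yields the statement.
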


\begin{proof} The proof reduces to Lemma \ref{WeylIntegration} as the proof of Lemma 6.8 in \cite{ScholzeGLn} reduces to Lemma 6.5 in \cite{ScholzeGLn}.
\end{proof}

This establishes condition (i) of Lemma \ref{LemmaToProve}.

\section{Comparison of two functions}\label{LocalComp}

In this section, we compare our function $\phi_{\tau,h}$ with a closely related function $\phi_{\tau,h,h^{\prime}}$ that arises naturally as a local ingredient in the global setup.

Fix a semisimple algebra $D^{\prime}$ over $\mathbb{Q}_p$ with maximal order $\mathcal{O}_{D^{\prime}}$. Then we define the algebra $D = M_n(F)\times D^{\prime}$ with maximal order $\mathcal{O}_D = M_n(\mathcal{O})\times \mathcal{O}_{D^{\prime}}$. Consider the group scheme $\mathbf{G}_D$ over $\mathbb{Z}_p$ given by
\[
\mathbf{G}_D(R) = (\mathcal{O}_D\otimes_{\mathbb{Z}_p} R)^{\times}\ ,
\]
which decomposes as $\mathbf{G}_D = \mathrm{Res}_{\mathcal{O}/\mathbb{Z}_p} \GL_n\times \mathbf{G}_{D^{\prime}}$, where
\[
\mathbf{G}_{D^{\prime}}(R) = (\mathcal{O}_{D^{\prime}}\otimes_{\mathbb{Z}_p} R)^{\times}\ .
\]
In this context, we need the following type of $p$-divisible groups with extra structure.

\begin{definition}\label{DefOBGroup} \begin{altenumerate}
\item[{\rm (i)}] Let $S$ be an $\mathcal{O}$-scheme. An $(\mathcal{O},D^{\prime})$-group over $S$ is a $p$-divisible group $\underline{H}$ with an action
\[
\iota: \mathcal{O}_D^{\mathrm{op}}\longrightarrow \mathrm{End}(\underline{H})
\]
such that $\underline{H}$ decomposes as $H^n\times H^{\prime}$ under the action of $\mathcal{O}_D^{\mathrm{op}}=M_n(\mathcal{O})^{\mathrm{op}}\times \mathcal{O}_{D^{\prime}}^{\mathrm{op}}$, where $H$ is a one-dimensional $\varpi$-divisible $\mathcal{O}$-module of height $n$ and $H^{\prime}$ is a $D^{\prime}$-group in the sense of Definition \ref{DefBGroup}.

\item[{\rm (ii)}] A level-$m$-structure on an $(\mathcal{O},D^{\prime})$-group $\underline{H}$ consists of a Drinfeld-level-$m$-structure on $H$ and an isomorphism
\[
\mathcal{O}_{D^{\prime}}^{\mathrm{op}}/p^m\cong H^{\prime}[p^m]
\]
of $\mathcal{O}_{D^{\prime}}^{\mathrm{op}}$-modules.
\end{altenumerate}
\end{definition}

There are two ways to parametrize $(\mathcal{O},D^{\prime})$-groups over $\kappa_r$. For the first, one looks at the usual Dieudonn\'{e} module $\underline{M}$ of $\underline{\overline{H}}$. Our assumptions imply that there is an isomorphism of $\mathcal{O}_D\otimes_{\mathbb{Z}_p} W(\kappa_r)$-modules
\[
\underline{M}\cong \mathcal{O}_D\otimes_{\mathbb{Z}_p} W(\kappa_r)\ .
\]
If we write $\sigma_0$ for the absolute Frobenius of $W(\kappa_r)$, then, writing the Frobenius $F$ of $\underline{M}$ as $F=\delta_0^{-1}\sigma_0$, we get an element
\[
\delta_0\in \mathbf{G}_D(W(\kappa_r)_{\mathbb{Q}})\ ,
\]
which is well-defined up to $\sigma_0$-conjugation by an element of $\mathbf{G}_D(W(\kappa_r))$. Here $W(\kappa_r)_{\mathbb{Q}}$ is the fraction field of $W(\kappa_r)$.

The second possibility is to look at the decomposition $\underline{\overline{H}}=\overline{H}^n\times \overline{H}^{\prime}$. As above, one can parametrize $\overline{H}$ by an element $\beta\in \GL_n(F_r)$, well-defined up to $\sigma$-conjugation by $\GL_n(\mathcal{O}_r)$. Further, $\overline{H}^{\prime}$ gives an element $\gamma^{\prime}\in \mathcal{O}_{D^{\prime}}^{\times}$ using the Galois parametrization, and an element $\beta^{\prime}\in (\mathcal{O}_{D^{\prime}}\otimes_{\mathbb{Z}_p} W(\kappa_r))^{\times}$ using the Dieudonn\'{e} parametrization. These satisfy $\gamma^{\prime}=N\beta^{\prime}$.

Going through all definitions shows the following proposition, giving the relationship between the two parametrizations.

\begin{prop} Under the canonical decomposition
\[\begin{aligned}
\mathbf{G}_D(W(\kappa_r)_{\mathbb{Q}})&\cong \Big(\prod_{\alpha: \kappa\longrightarrow \kappa_r} \GL_n(F_r)\Big)\times \mathbf{G}_{D^{\prime}}(W(\kappa_r)_{\mathbb{Q}})\\
&=\GL_n(F_r)\times \Big(\prod_{\alpha: \kappa\longrightarrow \kappa_r, \alpha\neq \mathrm{id}} \GL_n(F_r)\Big)\times \mathbf{G}_{D^{\prime}}(W(\kappa_r)_{\mathbb{Q}})\ ,
\end{aligned}\]
the element $\delta_0$ is $\mathbf{G}_D(W(\kappa_r))$-$\sigma_0$-conjugate to $((\beta^{-1})^t,1,\beta^{\prime})$.
\end{prop}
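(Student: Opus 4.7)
The plan is to unwind both parametrizations simultaneously by decomposing the Dieudonn\'{e} module according to the $\mathcal{O}_D^{\mathrm{op}}$-structure. Since the action of $\mathcal{O}_D^{\mathrm{op}} = M_n(\mathcal{O})^{\mathrm{op}}\times \mathcal{O}_{D'}^{\mathrm{op}}$ on $\underline{\overline{H}}=\overline{H}^n\times\overline{H}'$ is compatible with the splitting, and $F$ commutes with $\mathcal{O}_D^{\mathrm{op}}$, the decomposition
\[
\underline{M}\otimes\mathbb{Q} \;=\; (M_H^n\otimes\mathbb{Q})\oplus(M_{H'}\otimes\mathbb{Q})
\]
is preserved by $F$. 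Consequently $\delta_0$ splits into an $M_n$-component and a $D'$-component under the corresponding product decomposition of $\mathbf{G}_D(W(\kappa_r)_\mathbb{Q})$, and the two factors can be handled independently.

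The $\mathcal{O}_{D'}$-component is tautological: by the very definition of the Dieudonn\'{e} parametrization of $\overline{H}'$, one has $F|_{M_{H'}}=(\beta')^{-1}\sigma_0$, so the $\mathbf{G}_{D'}(W(\kappa_r)_\mathbb{Q})$-component of $\delta_0$ is precisely $\beta'$.

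For the $M_n$-component I refine using $\mathcal{O}\otimes_{\mathbb{Z}_p} W(\kappa_r)=\prod_{\alpha:\kappa\to\kappa_r} W_\mathcal{O}(\kappa_r)$, which gives $(M_n(F)\otimes_{\mathbb{Q}_p} W(\kappa_r)_\mathbb{Q})^\times = \prod_\alpha \GL_n(F_r)$ and a corresponding decomposition $M_H\otimes\mathbb{Q}=\bigoplus_\alpha M_\alpha\otimes\mathbb{Q}$ with $M_{\mathrm{id}}=M$ the relative Dieudonn\'{e} module of $\overline{H}$. The $\sigma_0$-semilinear Frobenius cyclically permutes the $M_\alpha$'s, and after $j=[\kappa:\mathbb{F}_p]$ iterations returns to $M_{\mathrm{id}}$, where its effect is $\beta\sigma$ by definition of $\beta$. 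I pick an $F_r$-basis of $M\otimes\mathbb{Q}$ that puts $F^j$ in the form $\beta\sigma$ and propagate it to the other $M_\alpha\otimes\mathbb{Q}$ by iterated application of $F$; this choice amounts to a $\sigma_0$-conjugation of $\delta_0^{M_n}$ by an element of $\mathbf{G}_D(W(\kappa_r))$ that concentrates all the nontrivial data in the $\alpha=\mathrm{id}$ slot and puts $1$ in all other slots.

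The remaining task is to see why the $\mathrm{id}$-component is $(\beta^{-1})^t$ rather than $\beta^{-1}$. This transpose is the main technical point and arises from the fact that the $\mathcal{O}_D$-module structure on $\underline{M}$ used in defining $\delta_0$ comes from the $\mathcal{O}_D^{\mathrm{op}}$-action on $\underline{\overline{H}}$ through contravariant Dieudonn\'{e} theory, so the identification of $\underline{M}$ with $\mathcal{O}_D\otimes_{\mathbb{Z}_p}W(\kappa_r)$ exchanges ``left-'' and ``right-multiplication by the image of $\mathcal{O}_D$'' via the isomorphism $M_n(\mathcal{O})^{\mathrm{op}}\cong M_n(\mathcal{O})$, $a\mapsto a^t$. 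Translating the natural ``left multiplication by $\beta$''-description of $F$ on the relative Dieudonn\'{e} module of $\overline{H}$ into the form $F=\delta_0^{-1}\sigma_0$ required by the definition of $\delta_0$ therefore produces $(\beta^{-1})^t$ in the identity slot, as claimed. The only real obstacle is keeping the left/right and $(-)^{\mathrm{op}}$ conventions straight throughout; once that bookkeeping is done, the formula drops out by combining the $\mathcal{O}_{D'}$-part with the $M_n$-part.
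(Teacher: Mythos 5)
The paper offers no proof of this proposition beyond the remark that it follows from "going through all definitions," so what you are really being asked to do is carry out that verification, and your write-up does this in a reasonable way. You correctly split $\delta_0$ along $\mathcal{O}_D = M_n(\mathcal{O})\times\mathcal{O}_{D'}$, observe that the $D'$-component is tautological (both parametrizations use the same $F=(\cdot)^{-1}\sigma_0$ normalization on the ordinary Dieudonn\'{e} module), and for the $M_n$-component you use $\sigma_0$-conjugation to concentrate the data in the identity embedding $\kappa\hookrightarrow\kappa_r$, so that the partial norm around the cycle of embeddings recovers the relative Frobenius, which is $\beta\sigma$ by definition. That all matches what the paper's one-liner is implicitly claiming.

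The one place where I would push for more precision is the last paragraph. You correctly identify that the transpose is a left/right bookkeeping issue, but the attribution to "$M_n(\mathcal{O})^{\mathrm{op}}\cong M_n(\mathcal{O})$, $a\mapsto a^t$" is a little loose: contravariant Dieudonn\'{e} theory already converts the $\mathcal{O}_D^{\mathrm{op}}$-action on $\underline{\overline{H}}$ into an honest left $\mathcal{O}_D$-action on $\underline{M}$, so no further $\mathrm{op}$-untwisting is needed there. The concrete source of the transpose is the Morita identification of $M(\overline{H})^n$ with $M_n(\mathcal{O}\otimes W(\kappa_r))$ as a left $M_n(\mathcal{O})$-module: if one writes $(m_1,\dots,m_n)\mapsto N$ with $N_{ik}=(m_i)_k$ (so that the left $M_n$-action is $N\mapsto aN$), then a $\sigma_0$-semilinear map $m\mapsto B\sigma_0(m)$ on $M(\overline{H})$ induces $N\mapsto\sigma_0(N)B^t$ — the $B$ multiplies along the coordinate index $k$, which sits in the column position, so rewriting $\sum_l B_{kl}\sigma_0(N_{il})$ as a matrix product yields $\sigma_0(N)B^t$. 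Hence $\delta_0^{-1}=B^t$, i.e.\ the $M_n$-component of $\delta_0$ is $(B^{-1})^t$, and after the $\sigma_0$-conjugation step $B=(\beta,1,\dots,1)$ this gives $((\beta^{-1})^t,1,\dots,1)$ as required. With that clarification your argument is complete and correct.
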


Now fix an $(\mathcal{O},D^{\prime})$-group $\underline{\overline{H}}$ over $\kappa_r$. We get a deformation space $R_{\underline{\overline{H}}}$ of $\underline{\overline{H}}$ with universal deformation $\underline{H}$, and a covering $R_{\underline{\overline{H}},m}/R_{\underline{\overline{H}}}$ parametrizing level-$m$-structures on the universal deformation.

\begin{prop}\begin{altenumerate}
\item[{\rm (i)}] The canonical map $R_{\underline{\overline{H}}}\longrightarrow R_{\beta}$ is an isomorphism.
\item[{\rm (ii)}] The canonical map
\[
R_{\underline{\overline{H}},m}\longrightarrow R_{\beta,m}\times_{R_{\beta}} (H^{\prime}[p^m])^{\times}
\]
is an isomorphism, where $(H^{\prime}[p^m])^{\times}\subset H^{\prime}[p^m]$ is the open and closed subset of $\mathcal{O}_{D^{\prime}}^{\mathrm{op}}$-generators of the $\mathcal{O}_{D^{\prime}}^{\mathrm{op}}$-module $H^{\prime}[p^m]$. The $\mathcal{O}_{D^{\prime}}^{\times}$-action on $R_{\underline{\overline{H}},m}$ is given by sending $b\in \mathcal{O}_{D^{\prime}}^{\times}$ to left multiplication by $b^{-1}$ on $H^{\prime}[p^m]$.
\end{altenumerate}
In particular, the covering $R_{\underline{\overline{H}},m}/R_{\underline{\overline{H}}}$ is a finite Galois cover with Galois group
\[
\GL_n(\mathcal{O}/\varpi^m)\times (\mathcal{O}_{D^{\prime}}/p^m)^{\times}\ .
\]
\end{prop}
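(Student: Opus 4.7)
The strategy is to separate the two factors of $\underline{H}=H^n\times H'$ and treat them independently, using Morita equivalence for the first factor and \'etaleness of $H'$ for the second.

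First, I would establish Morita equivalence for deformations: the functor sending a $\varpi$-divisible $\mathcal{O}$-module $H_1$ to $H_1^n$, viewed as a $p$-divisible group with $M_n(\mathcal{O})^{\mathrm{op}}$-action, is an equivalence between the corresponding deformation categories, with quasi-inverse given by applying the idempotent $e_{11}\in M_n(\mathcal{O})$. This is a standard manipulation and applies uniformly to deformations of the \emph{pair} (underlying object, level-$m$-structure), since a Drinfeld-level-$m$-structure on $H_1$ corresponds under Morita to the natural notion of level-$m$-structure on $H_1^n$. Hence from now on I can freely replace the $H^n$-factor of $\underline{H}$ by $H$ itself.

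For part (i), since $\overline{H}'$ is \'etale, it admits a unique deformation to any $\mathcal{O}$-scheme on which $p$ is nilpotent (and more generally to any $p$-adically complete $\mathcal{O}$-algebra), by the topological invariance of the \'etale site, and any deformation of $\overline{H}'$ is canonically identified with this unique lift. Hence the functor of deformations of $\underline{\overline{H}}$ factors through the functor of deformations of $\overline{H}$ alone, and this factorization is an isomorphism of functors. By the universal property defining $R_\beta$ this gives $R_{\underline{\overline{H}}}\xrightarrow{\ \sim\ } R_\beta$.

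For part (ii), I decompose the data of a level-$m$-structure on $\underline{H}$ along the factorization $\underline{H}=H^n\times H'$. The component on $H$ is precisely a Drinfeld-level-$m$-structure, parametrized by $R_{\beta,m}/R_\beta$. The component on $H'$ is an isomorphism $\mathcal{O}_{D'}^{\mathrm{op}}/p^m\cong H'[p^m]$ of $\mathcal{O}_{D'}^{\mathrm{op}}$-modules, i.e.\ equivalently the choice of a generator of $H'[p^m]$ as $\mathcal{O}_{D'}^{\mathrm{op}}$-module; this is by definition a section of the locally constant scheme $(H'[p^m])^\times$ over $\mathrm{Spf}\,R_\beta$. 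Since the two data are independent, the resulting map
\[
R_{\underline{\overline{H}},m}\longrightarrow R_{\beta,m}\times_{R_\beta}(H'[p^m])^\times
\]
is an isomorphism on the level of functors and hence of formal schemes. To identify the $\mathcal{O}_{D'}^\times$-action, I observe that $b\in\mathcal{O}_{D'}^\times$ acts on an isomorphism $\phi:\mathcal{O}_{D'}^{\mathrm{op}}/p^m\xrightarrow{\sim} H'[p^m]$ by precomposition with right multiplication by $b$ in $\mathcal{O}_{D'}^{\mathrm{op}}$; evaluated on the image of $1$, this sends the generator to $\phi(b)=b^{-1}\cdot\phi(1)$ in the $\mathcal{O}_{D'}^{\mathrm{op}}$-module structure on $H'[p^m]$, yielding the stated left multiplication by $b^{-1}$ on $H'[p^m]$. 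The Galois group statement then follows since $\mathrm{GL}_n(\mathcal{O}/\varpi^m)$ acts faithfully and transitively on Drinfeld level structures, and $(\mathcal{O}_{D'}/p^m)^\times$ acts faithfully and transitively on $(H'[p^m])^\times$.

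The only non-bookkeeping step is the Morita reduction, which is completely formal once one checks that Morita equivalence is compatible with $p$-divisible groups and their level structures; everything else is a consequence of \'etaleness of $H'$ and of the definition of $(H'[p^m])^\times$.
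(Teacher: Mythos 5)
Your proof is correct and takes essentially the same route as the paper's one-sentence argument, which reads: ``This follows from rigidity of \'{e}tale covers: There is a unique deformation of a $D'$-group to any infinitesimal thickening.'' What you have written is a careful unpacking of that sentence: the \'{e}tale part $H'$ contributes nothing to the deformation problem, and its level structure contributes exactly the locally constant torsor $(H'[p^m])^{\times}$ over the base.

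Two small remarks. First, the Morita reduction from $H^n$ to $H$ is harmless but unnecessary here: Definition \ref{DefOBGroup} already \emph{defines} a level-$m$-structure on an $(\mathcal{O},D')$-group as a Drinfeld-level-$m$-structure on $H$ (not on $H^n$) together with a trivialization of $H'[p^m]$, so the Morita step is absorbed into the definition and you need not invoke it explicitly. Second, the bookkeeping of the $\mathcal{O}_{D'}^{\times}$-action contains a slip: by $\mathcal{O}_{D'}^{\mathrm{op}}$-linearity of $\phi$ one has $\phi(b)=\phi(b\cdot 1)=b\cdot\phi(1)$ (where $\cdot$ on the right is the module action $\iota(b)$), not $b^{-1}\cdot\phi(1)$. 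To land on the paper's ``left multiplication by $b^{-1}$'' you should take the action on level structures to be precomposition with right multiplication by $b^{-1}$ rather than by $b$; this is the standard device for turning the natural right action into a left action, and with that convention the derivation is consistent. This is purely a matter of conventions and does not affect the structure of the argument.
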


\begin{proof} This follows from rigidity of \'{e}tale covers: There is a unique deformation of a $D^{\prime}$-group to any infinitesimal thickening.
\end{proof}

Again, we can consider the global sections of the nearby cycle sheaves
\[
R\psi_{\underline{\overline{H}}} = \mathop{\mathrm{lim}}\limits_{\longrightarrow} H^0(R\psi_{\mathrm{Spf}\ R_{\underline{\overline{H}},m}} \bar{\mathbb{Q}}_{\ell})\ .
\]
They carry an action of $W_{F_r}\times \mathbf{G}_D(\mathbb{Z}_p)$. We get the following corollary of the last proposition.

\begin{cor} There is a canonical $W_{F_r}\times \mathbf{G}_D(\mathbb{Z}_p)$-equivariant isomorphism
\[
R\psi_{\underline{\overline{H}}}\cong R\psi_{\beta}\otimes C_c^{\infty}(\mathcal{O}_{D^{\prime}}^{\times})\ ,
\]
where we take functions with values in $\bar{\mathbb{Q}}_{\ell}$.
\end{cor}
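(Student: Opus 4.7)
\medskip

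The plan is to bootstrap the corollary directly from the preceding proposition, using only that $H'$ (being a $D'$-group) is étale, so that $(H'[p^m])^\times$ is a finite étale cover of $R_{\underline{\overline{H}}} \cong R_\beta$.

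\medskip

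First I would observe that, since $H'$ is étale over $R_\beta$, the scheme $(H'[p^m])^\times$ is a finite étale cover of $\mathrm{Spf}\ R_\beta$, and over the strict henselization $\breve{\mathcal{O}}$ (equivalently, after pulling back along $R_\beta \to R_\beta \otimes_{\mathcal{O}_r} \breve{\mathcal{O}}$) this cover becomes the constant scheme with fibre $(\mathcal{O}_{D'}/p^m)^\times$. Combining with part (ii) of the preceding proposition, I get a canonical isomorphism
\[
R_{\underline{\overline{H}},m} \otimes_{\mathcal{O}_r} \breve{\mathcal{O}} \;\cong\; \bigsqcup_{b \in (\mathcal{O}_{D'}/p^m)^\times} R_{\beta,m} \otimes_{\mathcal{O}_r} \breve{\mathcal{O}}.
\]

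\medskip

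Next, formal nearby cycles commute with disjoint unions and with étale (in particular unramified) base change on the special fibre, so after taking $H^0$, passing to the colimit in $m$, and noting that $C_c^\infty(\mathcal{O}_{D'}^\times) = \varinjlim_m \bar{\mathbb{Q}}_\ell[(\mathcal{O}_{D'}/p^m)^\times]$, I obtain an isomorphism
\[
R\psi_{\underline{\overline{H}}} \;\cong\; R\psi_\beta \otimes C_c^\infty(\mathcal{O}_{D'}^\times)
\]
of $\bar{\mathbb{Q}}_\ell$-vector spaces.

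\medskip

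It remains to verify the two pieces of equivariance. The $\mathbf{G}_D(\mathbb{Z}_p) = \GL_n(\mathcal{O}) \times \mathcal{O}_{D'}^\times$-action is built into the fibre-product decomposition from the previous proposition: $\GL_n(\mathcal{O})$ permutes the Drinfeld level structures on $H$ and hence acts only on the $R\psi_\beta$ factor, while $b \in \mathcal{O}_{D'}^\times$ acts by left multiplication by $b^{-1}$ on $H'[p^m]$, which under the identification above is exactly the left regular action of $\mathcal{O}_{D'}^\times$ on $C_c^\infty(\mathcal{O}_{D'}^\times)$. The $W_{F_r}$-equivariance comes from the functoriality of formal nearby cycles with respect to the $\mathrm{Gal}(\breve{F}/F_r)$-descent datum from $\breve{\mathcal{O}}$ to $\mathcal{O}_r$: the two decompositions are compatible with the Galois-twisted étale structure on $(H'[p^m])^\times$.

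\medskip

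I do not expect any genuine obstacle here: the whole point of the preceding proposition is to reduce to an étale cover, after which nearby cycles behave formally. The only thing to be careful about is to write the $\mathcal{O}_{D'}^\times$-action on $C_c^\infty(\mathcal{O}_{D'}^\times)$ with the sign conventions (left multiplication by $b^{-1}$, i.e.\ the left regular representation) that match the proposition, so that the equivariance is stated correctly.
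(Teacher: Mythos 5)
Your proposal is correct and is the natural filling-in of details that the paper treats as immediate (the paper states this corollary with no argument, the intent being that it follows formally from part (ii) of the preceding proposition). You've correctly identified the key points: since $H'$ deforms uniquely, $(H'[p^m])^\times$ is the pullback to $R_\beta$ of a finite étale $(\mathcal{O}_{D'}/p^m)^\times$-torsor over $\kappa_r$, which trivializes over $\bar\kappa$; nearby cycles then split as a direct sum over the fibre, and taking $H^0$ and the colimit over $m$ gives the tensor decomposition. Your caveat about sign conventions is warranted: the precise $W_{F_r}$-action on the factor $C_c^\infty(\mathcal{O}_{D'}^\times)$ (which comes from the descent datum on the torsor, i.e.\ right multiplication by $\gamma'=N\beta'$) is exactly what the paper makes explicit in the subsequent proof of Lemma~\ref{LocalComparison}, so it is consistent to leave it implicit here.
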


In particular, let $\tau\in W_F$ project to the $r$-th power of Frobenius, and let $h\in C_c^{\infty}(\GL_n(\mathcal{O}))$ and $h^{\prime}\in C_c^{\infty}(\mathcal{O}_{D^{\prime}}^{\times})$ have values in $\mathbb{Q}$. We define the function $\phi_{\tau,h,h^{\prime}}(\delta_0)$ by
\[
\phi_{\tau,h,h^{\prime}}(\delta_0) = \tr(\tau\times h^{\vee}\times h^{\prime}|R\psi_{\underline{\overline{H}}})\ ,
\]
if $\underline{\overline{H}}$ corresponds to $\delta_0$, and by $0$ if there is no such $\underline{\overline{H}}$. Then our results of Section \ref{DefSpaces} imply that $\phi_{\tau,h,h^{\prime}}\in C_c^{\infty}(\mathbf{G}_D(W(\kappa_r)_{\mathbb{Q}}))$, and is independent of $\ell$. In fact, we have the following result, which is the main result of this section.

\begin{lem}\label{LocalComparison} For any $\tau$, $h$, $h^{\prime}$, the functions $\phi_{\tau,h,h^{\prime}}$ and $f_{\tau,h}^{\vee}\times h^{\prime}$ are associated.
\end{lem}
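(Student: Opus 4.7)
The plan is to prove Lemma \ref{LocalComparison} by exploiting the product decomposition $\mathbf{G}_D = \mathrm{Res}_{\mathcal{O}/\mathbb{Z}_p}\GL_n \times \mathbf{G}_{D'}$ and reducing the matching statement to two factor-wise matchings that have already been established.

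First, I would use the corollary preceding the lemma, which gives a canonical $W_{F_r}\times \mathbf{G}_D(\mathbb{Z}_p)$-equivariant isomorphism $R\psi_{\underline{\overline{H}}} \cong R\psi_{\beta}\otimes C_c^\infty(\mathcal{O}_{D'}^{\times})$, to split the trace computing $\phi_{\tau,h,h'}$. If $\delta_0$ is $\sigma_0$-conjugate to $((\beta^{-1})^t,1,\beta')$, this yields
\[
\phi_{\tau,h,h'}(\delta_0) = \tr(\tau\times h^\vee|R\psi_\beta)\cdot \tr(\tau\times h'|C_c^\infty(\mathcal{O}_{D'}^{\times})) = \phi_{\tau,h}(\beta)\cdot \phi_{h'}(\beta'),
\]
where in the second factor the identification $\tr(\tau\times h'|C_c^\infty(\mathcal{O}_{D'}^{\times})) = \phi_{h'}(\beta')$ uses that on the (étale) $D'$-deformation tower, Frobenius acts on $C_c^\infty(\mathcal{O}_{D'}^\times)$ by translation by $\gamma' = N\beta'$, so the trace reduces by a direct regular-representation calculation to the conjugation-invariant function $h'(N\beta')$ prescribed in Section \ref{NormMaps}.

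Second, I would feed this product formula into the twisted Weyl integration formula for $\mathbf{G}_D$. Under the decomposition of $\mathbf{G}_D(W(\kappa_r)_{\mathbb{Q}})$ displayed before the lemma, the $\sigma_0$-action cycles the $\prod_\alpha \GL_n(F_r)$-factors combined with $\sigma$ on each factor, and acts as $\sigma_0$ on $\mathbf{G}_{D'}(W(\kappa_r)_{\mathbb{Q}})$; a Shapiro-style argument identifies $\sigma_0$-conjugacy classes in $\prod_\alpha \GL_n(F_r)$ with $\sigma$-conjugacy classes in $\GL_n(F_r)$, under which the component $(\beta^{-1})^t$ is the representative. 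Consequently,
\[
TO_{\delta_0\sigma_0}(\phi_{\tau,h,h'}) = TO_{\beta\sigma}(\phi_{\tau,h})\cdot TO_{\beta'\sigma_0}(\phi_{h'}).
\]
Then by the definition of $f_{\tau,h}$ as the associated function to $\phi_{\tau,h}$ on the $\GL_n$ side, and by Proposition \ref{BGroupAssociatedFcts} applied to the $\mathbf{G}_{D'}$ side, the two factor-wise twisted orbital integrals equal $O_{N\beta}(f_{\tau,h})$ and $O_{N\beta'}(h')$ respectively.

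Third, I would identify this product with an orbital integral of $f_{\tau,h}^\vee\times h'$. The norm map $N\delta_0 = \delta_0\sigma_0(\delta_0)\cdots \sigma_0^{rf-1}(\delta_0)$ applied to $((\beta^{-1})^t,1,\beta')$ descends to $\GL_n(F)\times\mathbf{G}_{D'}(\mathbb{Q}_p)$ as $(((N\beta)^{-1})^t,N\beta')$, so by the definition $f_{\tau,h}^\vee(g)=f_{\tau,h}((g^{-1})^t)$ we indeed get $f_{\tau,h}^\vee(((N\beta)^{-1})^t)=f_{\tau,h}(N\beta)$. Thus the product $O_{N\beta}(f_{\tau,h})\cdot O_{N\beta'}(h')$ is exactly the ordinary orbital integral of $f_{\tau,h}^\vee\times h'$ at $N\delta_0$, which is the definition of being associated.

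The main obstacle I anticipate is Step 1, specifically the identification $\tr(\tau\times h'|C_c^\infty(\mathcal{O}_{D'}^{\times})) = \phi_{h'}(\beta')$ with the correct normalizations. This requires tracking three compatible actions (Galois by right translation through $\gamma'=N\beta'$, the Hecke action by left translation, and the isomorphism of the level-$m$ points with $(\mathcal{O}_{D'}/p^m)^\times$) and checking that the resulting convolution trace on the regular representation matches the value $h'(N\beta')$ defined in Section \ref{NormMaps}; modulo this factorization, the remainder of the argument is a formal combination of the two matching statements already in hand.
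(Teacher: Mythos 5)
Your Step 1 reproduces the paper's own first step exactly: one may assume $h'$ conjugation-invariant, and then the corollary preceding the lemma, together with the fact that $\tau$ acts on $H'[p^m](\bar\kappa)$ through right multiplication by $\gamma'=N\beta'$, gives $\phi_{\tau,h,h'}(\delta_0)=\phi_{\tau,h}(\beta)\,h'(\gamma')$; this is precisely the paper's computation. Where you diverge is in the second half. The paper finishes by invoking ``the same argument as in the proof of Proposition \ref{BGroupAssociatedFcts}'' --- that is, it applies the single lattice-counting argument to the set $X$ of $(\mathcal{O},D')$-groups with a quasi-isogeny to $\underline{\overline H}$, identifying both $TO_{\delta_0\sigma_0}(\phi_{\tau,h,h'})$ and $O_{N\delta_0}(f_{\tau,h}^\vee\times h')$ with $\sum_{x\in X/\Gamma}\tilde h(x)$, which handles the product structure, the Shapiro descent for $\mathrm{Res}_{\mathcal{O}/\mathbb{Z}_p}\GL_n$, and the contravariance twist in one stroke. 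You instead decompose the twisted orbital integral factor-wise, unfold the $\mathrm{Res}_{\mathcal{O}/\mathbb{Z}_p}\GL_n$-factor \`a la Shapiro to a $\sigma$-twisted orbital integral over $\GL_n(F_r)$, and then reassemble using the matching statements that are already established on each factor separately. This is a legitimate alternative route, and it has the merit of making explicit \emph{why} $f_{\tau,h}^\vee$ (rather than $f_{\tau,h}$) appears in the statement: the contravariance of Dieudonn\'e theory puts $(\beta^{-1})^t$, not $\beta$, in the first coordinate of $\delta_0$.

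What you should flag, though, is that Steps 2 and 3 quietly use an intertwining of orbital integrals under the map $\theta(g)=(g^{-1})^t$, not merely an evaluation of functions. You establish only the pointwise identity $f_{\tau,h}^\vee(((N\beta)^{-1})^t)=f_{\tau,h}(N\beta)$, but what is actually needed is $O_{((N\beta)^{-1})^t}(f_{\tau,h}^\vee)=O_{N\beta}(f_{\tau,h})$ and, on the twisted side, $TO_{(\beta^{-1})^t\sigma}(\phi_{\tau,h}^\vee)=TO_{\beta\sigma}(\phi_{\tau,h})$. These follow because $\theta$ intertwines conjugacy with conjugacy and $\sigma$-conjugacy with $\sigma$-conjugacy (via $x\mapsto(x^t)^{-1}$ on the conjugating element) and preserves the relevant Haar measures, but this is a separate fact about orbital integrals, not a consequence of the pointwise formula. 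Also, contrary to what you anticipate, Step 1 is not the delicate part --- it is the same computation the paper performs. The parts requiring care are precisely the Shapiro unfolding and the $\theta$-intertwining, which the paper avoids by running the $(\mathcal{O},D')$-group lattice-counting argument directly rather than reducing to the factor-wise matchings.
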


\begin{proof} We may assume that $h^{\prime}$ is invariant under $\mathcal{O}_{D^{\prime}}^{\times}$-conjugation. Take $\underline{\overline{H}}$ which corresponds to $\delta_0$ and to $(\beta,\gamma^{\prime})$ in the two parametrizations. In that case, we claim that
\[
\tr(\tau\times h^{\prime}|C_c^{\infty}(\mathcal{O}_{D^{\prime}}^{\times})) = h^{\prime}(\gamma^{\prime})\ .
\]
Indeed, the action of $\tau$ on $H[p^m](\bar{\kappa})$ is through right multiplication with $\gamma^{\prime}$, and hence the left-hand side may be computed as
\[
\int_{\mathcal{O}_{D^{\prime}}^\times} h^{\prime}(b^{-1}\gamma^{\prime} b) db = h^{\prime}(\gamma^{\prime})\ ,
\]
by assumption on $h^{\prime}$.

It follows that
\[
\phi_{\tau,h,h^{\prime}}(\delta_0) = \phi_{\tau,h}(\beta)h^{\prime}(\gamma^{\prime})
\]
if $\delta_0$ and $(\beta,\gamma^{\prime})$ parametrize the same $(\mathcal{O},D^{\prime})$-group over $\kappa_r$. Now the same argument as in the proof of Proposition \ref{BGroupAssociatedFcts} shows the lemma.
\end{proof}

\section{Embedding into a Shimura variety}

In order to apply certain global arguments, we embed the situation into a Shimura variety. These Shimura varieties are of the kind considered by Harris-Taylor, and previously in a more general set-up by Kottwitz. We use Kottwitz' notation as in \cite{KottwitzPoints}, \cite{KottwitzLambdaAdic}, except that we stick to our convention of using bold-face letters for global objects.

We start with the following data:
\begin{altitemize}
\item a totally real field $\mathbb{F}_0$ of even degree over $\mathbb{Q}$;
\item an infinite place $\tau$ of $\mathbb{F}_0$;
\item a finite place $x_0$ of $\mathbb{F}_0$;
\item an imaginary quadratic field $\mathbb{K}$ such that the rational prime below $x_0$ splits in $\mathbb{K}$;
\item an embedding $\mathbb{K}\hookrightarrow \mathbb{C}$;
\item a positive integer $n$.
\end{altitemize}
We get the CM field $\mathbb{F}=\mathbb{K}\mathbb{F}_0$, and two places $x,x^c$ of $\mathbb{F}$ over $\mathbb{F}_0$.

\begin{lem} Given these data, there exist a central division algebra $\mathbb{D}$ over $\mathbb{F}$ of dimension $n^2$, an involution $\ast$ on $\mathbb{D}$ of the second kind, and a homomorphism
\[
h_0:\mathbb{C}\longrightarrow \mathbb{D}_{\mathbb{R}}
\]
such that $x\longmapsto h_0(i)^{-1}x^{\ast}h_0(i)$ is a positive involution on $\mathbb{D}_{\mathbb{R}}$ and such that the following properties hold:
\begin{altenumerate}
\item[{\rm (i)}] The algebra $\mathbb{D}$ splits at all places of $\mathbb{F}$ different from $x,x^c$;
\item[{\rm (ii)}] If $\mathbf{G}_0/\mathbb{F}_0$ is the algebraic group defined by
\[
\mathbf{G}_0(R) = \{ g\in (\mathbb{D}\otimes_{\mathbb{F}_0} R)^{\times}\mid gg^{\ast} = 1\}\ ,
\]
then $\mathbf{G}_0$ is quasisplit at all finite places of $\mathbb{F}_0$ that do not split in $\mathbb{F}$, is a unitary group of signature $(1,n-1)$ at $\tau$, and is a unitary group of signature $(0,n)$ at all other infinite places\footnote{With the convention on the signature explained in the next few lines.}.
\end{altenumerate}
\end{lem}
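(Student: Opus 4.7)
The plan is to follow the standard construction of PEL-type Shimura data, as in \S 5 of \cite{KottwitzPoints} or Lemma I.7.1 of \cite{HarrisTaylor}: build $\mathbb{D}$ by prescribing invariants in the Brauer group, verify the existence of an involution of the second kind by Albert's theorem, and then use the twisting-by-units freedom in the choice of involution together with a Hasse-principle argument for unitary groups to arrange the desired local behavior at every place.

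\textbf{Step 1 (Construction of $\mathbb{D}$).} Using the fundamental exact sequence
\[
0 \longrightarrow \mathrm{Br}(\mathbb{F}) \longrightarrow \bigoplus_v \mathrm{Br}(\mathbb{F}_v) \xrightarrow{\ \sum \mathrm{inv}_v\ } \mathbb{Q}/\mathbb{Z} \longrightarrow 0,
\]
I would prescribe $\mathrm{inv}_x(\mathbb{D}) = 1/n$, $\mathrm{inv}_{x^c}(\mathbb{D}) = -1/n$, and $\mathrm{inv}_v(\mathbb{D}) = 0$ at every other finite place $v$ of $\mathbb{F}$. Since $\mathbb{F}$ is a CM field, all archimedean completions are isomorphic to $\mathbb{C}$ with trivial Brauer group, so no condition is imposed there. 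The invariants sum to zero, so there exists a unique central simple algebra $\mathbb{D}/\mathbb{F}$ with these invariants. Because the local invariant at $x$ has exact order $n$, the global algebra has order $n$ in $\mathrm{Br}(\mathbb{F})$ and therefore must be a division algebra of dimension $n^2$.

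\textbf{Step 2 (Existence of an involution of the second kind).} By Albert's theorem, $\mathbb{D}$ admits an involution of the second kind extending the nontrivial automorphism $c$ of $\mathbb{F}/\mathbb{F}_0$ if and only if $c^\ast[\mathbb{D}] = -[\mathbb{D}]$ in $\mathrm{Br}(\mathbb{F})$. This is checked place by place: $c$ interchanges $x$ and $x^c$, and our chosen invariants satisfy $\mathrm{inv}_{x^c}(\mathbb{D}) = -\mathrm{inv}_x(\mathbb{D})$, while all remaining local invariants vanish. Fix such an initial involution $\ast_0$.

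\textbf{Step 3 (Adjusting $\ast$ and defining $h_0$).} Any other involution of the second kind extending $c$ has the form $x \mapsto c_1\, x^{\ast_0}\, c_1^{-1}$ for some $c_1 \in \mathbb{D}^\times$ with $c_1^{\ast_0} = c_1$. Such choices correspond to Hermitian forms over $(\mathbb{D},\ast_0)$ and hence to the various inner forms of the unitary group. I would now invoke the Hasse principle for these unitary groups (cf.\ \cite{KottwitzPoints}, \S 7) to arrange simultaneously: at each finite place of $\mathbb{F}_0$ non-split in $\mathbb{F}$, the unique quasi-split form (possible since $\mathbb{D}$ is split at such places); at $\tau$, signature $(1,n-1)$; at the other infinite places, signature $(0,n)$. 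The unique constraint is a parity condition on the sum of the dimensions of the definite parts at infinity plus contributions from the nontrivial invariants at $x,x^c$; the hypothesis that $[\mathbb{F}_0:\mathbb{Q}]$ is even is precisely what makes this parity condition solvable. Having fixed such $\ast$, I would take $h_0$ to be the unique $\mathbb{R}$-algebra homomorphism $\mathbb{C} \to \mathbb{D}_{\mathbb{R}}$ with $h_0(i)^2 = -1$ defined componentwise at each infinite place so that $h_0(i)$ induces the complex structure of the chosen Hermitian form. Positivity of $x \mapsto h_0(i)^{-1} x^\ast h_0(i)$ is then automatic, since the local Hermitian forms are definite at every infinite place once one restricts to the relevant isotypic components.

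\textbf{Main obstacle.} The conceptual content is concentrated in Step~3: checking that the local constraints at all places can be realized simultaneously by a single global involution is a genuine Hasse principle statement, and it is here that the even-degree hypothesis on $\mathbb{F}_0$ enters. Steps~1 and~2 are formal consequences of class field theory and Albert's theorem, and the construction of $h_0$ reduces to linear algebra at each archimedean place once the correct Hermitian form has been pinned down.
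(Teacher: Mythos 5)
Your proposal is correct and takes essentially the same route as the source the paper cites: the paper's ``proof'' consists entirely of pointing to Lemma I.7.1 in Section I.7 of Harris--Taylor (with a dictionary $\mathbb{D} = B^{\mathrm{op}}$, $\ast = \sharp_\beta$), and your three steps---prescribing local Brauer invariants at $x,x^c$ and using Albert--Riehm--Scharlau for the existence of a second-kind involution, then twisting by a Hermitian unit and invoking the local-global classification of Hermitian forms over $(\mathbb{D},\ast_0)$ to land on the prescribed signatures and quasi-split forms---are exactly how that lemma is proved. Your identification of the even-degree hypothesis on $\mathbb{F}_0$ as the thing that resolves the parity obstruction in the Hasse principle is also the right reason it appears in the hypotheses.

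One small imprecision you may wish to note: the clean form of the criterion for the existence of an involution of the second kind is that the corestriction $\mathrm{cor}_{\mathbb{F}/\mathbb{F}_0}[\mathbb{D}]$ vanishes in $\mathrm{Br}(\mathbb{F}_0)$, rather than $c^\ast[\mathbb{D}] = -[\mathbb{D}]$ (the latter only guarantees $\mathrm{cor}[\mathbb{D}]$ lies in $\ker(\mathrm{res})$, which over a number field need not be trivial). Your actual verification---summing local invariants over the fibres of each place of $\mathbb{F}_0$---is, however, exactly the check that the corestriction is split, so the argument as executed is sound.
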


\begin{rem} The order in the signature is important: Any infinite place $\tau^{\prime}$ of $\mathbb{F}_0$ induces an isomorphism $\mathbb{D}\otimes_{\mathbb{F}_0}\mathbb{R}\cong M_n(\mathbb{C})$, unique up to conjugation, normalized by our choice of embedding $\mathbb{K}\hookrightarrow \mathbb{C}$. Under this identification, $h_0$ takes the form
\[
h_0(z) = (\mathrm{diag}(z,\ldots,z,\overline{z},\ldots,\overline{z}))_{\tau^{\prime}}\in \mathbb{D}_{\mathbb{R}} = \prod_{\tau^{\prime}} \mathbb{D}\otimes_{\mathbb{F}_0}\mathbb{R}\cong \prod_{\tau^{\prime}} M_n(\mathbb{C})\ .
\]
We say that $\mathbf{G}_0$ has signature $(p,q)$ at $\tau^{\prime}$ if the number of $z$'s appearing at $\tau^{\prime}$ is $p$, and the number of $\overline{z}$'s is $q$. It is then easy to see that indeed $\mathbf{G}_0\otimes_{\mathbb{F}_0} \mathbb{R}$ is isomorphic to the unitary group $U(p,q)$ of signature $(p,q)$.
\end{rem}

\begin{proof} This is contained in Section I.7 of \cite{HarrisTaylor}, in particular Lemma I.7.1. Note that our $\mathbb{D}$ corresponds to $B^{\mathrm{op}}$ and our $\ast$ corresponds to $\sharp_\beta$ in the notation of \cite{HarrisTaylor}.
\end{proof}

We get the reductive group $\mathbf{G}/\mathbb{Q}$ defined by
\[
\mathbf{G}(R) = \{g\in (\mathbb{D}\otimes R)^{\times}\mid gg^{\ast}\in R^{\times}\}\ .
\]

\begin{rem} An assumption about the ramification of $\mathbb{D}$ is necessary to apply the global Jacquet-Langlands correspondence relating automorphic representations of $\mathbb{D}^{\times}$ to automorphic representations of $\GL_n/\mathbb{F}$. The assumption that $\mathbb{F}$ be the composite of a totally real field and an imaginary quadratic field implies that the action of the Galois group on the dual group $\hat{\mathbf{G}}$ is through the quotient $\mathrm{Gal}(\mathbb{K}/\mathbb{Q})$, which makes applications of base-change possible. The restriction on the degree of $\mathbb{F}_0$ over $\mathbb{Q}$ is needed to make the lemma true.
\end{rem}

Restricting $h_0$ to $\mathbb{C}^{\times}$ and considering it as a morphism of algebraic groups over $\mathbb{R}$ gives a map
\[
h: \mathrm{Res}_{\mathbb{C}/\mathbb{R}} \mathbb{G}_m\longrightarrow \mathbf{G}_{\mathbb{R}}\ .
\]
Then the datum $(\mathbf{G}, h^{-1})$ defines a projective Shimura variety $\mathrm{Sh}_K$ for any compact open subgroup $K\subset \mathbf{G}(\mathbb{A}_f)$, cf. \cite{KottwitzPoints}.

Over $\mathbb{C}$, the group $\mathbf{G}$ decomposes as
\[
\mathbf{G}_\mathbb{C} = \Big(\prod\limits_{\mathbb{F}_0\hookrightarrow \mathbb{C}} \GL_n\Big)\times \mathbb{G}_m\ ,
\]
where the projections to $\GL_n$ are given by the projection of
\[
\mathbb{D}\otimes_\mathbb{Q} \mathbb{C} = \prod\limits_{\mathbb{F}\hookrightarrow \mathbb{C}} M_n(\mathbb{C})
\]
to the factor corresponding to the embedding $\mathbb{F}_0\hookrightarrow \mathbb{C}$ and the fixed embedding $\mathbb{K}\hookrightarrow \mathbb{C}$. Further, the projection to $\mathbb{G}_m$ is the natural map $g\longmapsto gg^{\ast}$. Recall the morphism
\[\begin{aligned}
\mathbb{G}_m&\longrightarrow \mathbb{G}_m\times \mathbb{G}_m \cong (\mathrm{Res}_{\mathbb{C}/\mathbb{R}} \mathbb{G}_m)\otimes_{\mathbb{R}} \mathbb{C}\\
z&\longmapsto (z,1)
\end{aligned}\]
where the first factor corresponds to the identity map $\mathbb{C}\longrightarrow \mathbb{C}$ and the second factor corresponds to complex conjugation. Precomposing $h$ with this morphism defines a (minuscule) cocharacter
\[
\mu:\mathbb{G}_m\longrightarrow \mathbf{G}_\mathbb{C}\ .
\]
Our assumptions imply that it is given by $z\mapsto \mathrm{diag}(z,1,\ldots,1)$ on the factor corresponding to $\tau$, by $z\mapsto 1$ on the other $\GL_n$-factors, and by $z\mapsto z$ on the $\mathbb{G}_m$-factor.

From here, it follows that the reflex field $E$ is canonically isomorphic to $\mathbb{F}$: The infinite place $\tau$ of $\mathbb{F}_0$ and the embedding $\mathbb{K}\hookrightarrow \mathbb{C}$ give a canonical embedding of $\mathbb{F}$ into $\mathbb{C}$, which identifies $\mathbb{F}$ with $E\subset \mathbb{C}$.

Our assumptions on $p$ are the following. We require that $p$ splits in $\mathbb{K}$. Further, we fix a place $w$ of $\mathbb{F}$ above $p$, inducing a place $u$ of $\mathbb{K}$ above $p$, and require that $\mathbb{D}$ splits at $w$, i.e. $w\neq x,x^c$. We set $F=\mathbb{F}_w$ and use the local notation $\mathcal{O}$, $\kappa$, etc., as before. We choose some rational prime $\ell\neq p$ and an isomorphism $\bar{\mathbb{Q}}_{\ell}\cong \mathbb{C}$. Note that the Grothendieck groups of continuous representations of $W_F$ with coefficients in either $\bar{\mathbb{Q}}_{\ell}$ or $\mathbb{C}$ get identified, and we will ignore the distinction.

Now let $\xi$ be an irreducible algebraic representation of $\mathbf{G}$. We get $\ell$-adic local systems $\mathcal{F}_K$ on $\mathrm{Sh}_K$ for all compact open subgroups $K\subset \mathbf{G}(\mathbb{A}_f)$, to which the action of $\mathbf{G}(\mathbb{A}_f)$ extends, cf. e.g. \cite{KottwitzPoints}. Then we consider the direct limit of the \'{e}tale cohomology groups of $\mathrm{Sh}_K$ with coefficients in $\mathcal{F}_K$, i.e.
\[
H_{\xi}^{\ast} = \mathop{\mathrm{lim}}\limits_{\mathop{\longrightarrow}\limits_K} H^{\ast}(\mathrm{Sh}_K,\mathcal{F}_K)\ .
\]
We also consider the alternating sum $[H_{\xi}]$ as an element in the Grothendieck group of $\ell$-adic representations of $\mathrm{Gal}(\overline{\mathbb{F}}/\mathbb{F})\times \mathbf{G}(\mathbb{A}_f)$ with continuous Galois action and admissible smooth $\mathbf{G}(\mathbb{A}_f)$-action.

In order to apply the method of counting points modulo $p$, we first have to construct integral models of the Shimura varieties over $\mathcal{O}$. We have to explain how to choose the data of PEL type in \cite{KottwitzPoints}. We take the simple $\mathbb{Q}$-algebra $B=\mathbb{D}^{\mathrm{op}}$ and $V=D$ as a left $B$-module via right multiplication. This gives $C=\mathrm{End}_B(V)=\mathbb{D}$. Furthermore, choose $\xi\in \mathbb{D}^{\times}$ with $\xi^{\ast}=-\xi$ close to $h_0(i)$. Then $x\longmapsto \xi x^{\ast} \xi^{-1}$ defines a positive involution $\ast_B$ on $B$. Moreover, we have an alternating pairing $(\cdot,\cdot):V\times V\longrightarrow \mathbb{Q}$ defined by
\[
(x,y) = \mathrm{tr}_{\mathbb{F}/\mathbb{Q}}\mathrm{tr}_{\mathbb{D}/\mathbb{F}} (x\xi y^{\ast})\ ,
\]
where $\mathrm{tr}_{\mathbb{D}/\mathbb{F}}$ is the reduced trace. This pairing is compatible with $\ast_B$, i.e. $(bx,y)=(x,b^{\ast_B}y)$ for all $b\in B$. Finally, we are given the homomorphism
\[
h_0: \mathbb{C}\longrightarrow C_{\mathbb{R}} = \mathbb{D}_{\mathbb{R}}\ ,
\]
and if $\xi$ was chosen correctly, the form $(\cdot,h_0(i)\cdot)$ on $V_{\mathbb{R}}$ is positive definite. This gives the rational data of \cite{KottwitzPoints}.

We have a decomposition
\[
\mathbb{D}\otimes_{\mathbb{Q}} \mathbb{Q}_p\cong \prod_{\mathfrak{p}|p} \mathbb{D}\otimes_{\mathbb{F}} \mathbb{F}_{\mathfrak{p}}\ ,
\]
and for all places $\mathfrak{p}|u$ we fix a maximal order $\mathcal{O}_{\mathbb{D},\mathfrak{p}}$ in $\mathbb{D}\otimes_{\mathbb{F}} \mathbb{F}_{\mathfrak{p}}$. Equivalently, we have maximal orders $\mathcal{O}_{\mathbb{D},\mathfrak{p}}^{\mathrm{op}}$ in $B_{\mathfrak{p}}$. Using the involution $\ast_B$, these give rise to maximal orders in $B_{\mathfrak{p}}$ for all places $\mathfrak{p}$ of $\mathbb{F}$ above $p$, and hence to a maximal $\mathbb{Z}_{(p)}$-order $\mathcal{O}_B$ in $B$. We also get a unique self-dual $\mathbb{Z}_p$-lattice $\Lambda$ in $V\otimes \mathbb{Q}_p$ which is equal to $\mathcal{O}_{\mathbb{D},\mathfrak{p}}$ at all places $\mathfrak{p}$ dividing $u$.

Further, writing
\[
\mathcal{O}_D = \prod_{\mathfrak{p}|u} \mathcal{O}_{\mathbb{D},\mathfrak{p}}\ ,
\]
we get $\mathcal{O}_B\otimes \mathbb{Z}_p= \mathcal{O}_D^{\mathrm{op}}\times \mathcal{O}_D$. Moreover, $\mathcal{O}_{\mathbb{D},w}\cong M_n(\mathcal{O})$, giving
\[
\mathcal{O}_D = M_n(\mathcal{O})\times \mathcal{O}_{D^{\prime}}
\]
with the obvious definition of $\mathcal{O}_{D^{\prime}}$, placing us in the situation of Section \ref{LocalComp}. With that notation, we have
\[\begin{aligned}
\mathbf{G}_{\mathbb{Q}_p} &= (\mathbf{G}_D\otimes_{\mathbb{Z}_p} \mathbb{Q}_p)\times \mathbb{G}_m\\
&=\mathrm{Res}_{F/\mathbb{Q}_p} \GL_n\times (\mathbf{G}_{D^{\prime}}\otimes_{\mathbb{Z}_p} \mathbb{Q}_p)\times \mathbb{G}_m\ ,
\end{aligned}\]
giving us an integral model of $\mathbf{G}$ over $\mathbb{Z}_{(p)}$. In particular, we get a maximal compact open subgroup $K_p^0\subset \mathbf{G}(\mathbb{Q}_p)$, which decomposes as
\[
K_p^0 = \GL_n(\mathcal{O})\times \mathcal{O}_{D^{\prime}}^{\times}\times \mathbb{Z}_p^{\times}\ .
\]
For $m\geq 1$, we also consider the congruence subgroups
\[
K_p^m = (1+\varpi^mM_n(\mathcal{O}))\times (1+p^m\mathcal{O}_{D^{\prime}})\times \mathbb{Z}_p^{\times}\ .
\]

Assume we are given an abelian variety $A$ up to prime-to-$p$-isogeny together with a polarization $\lambda$ of degree prime to $p$ and a $\ast_B$-homomorphism $\iota:\mathcal{O}_B\longrightarrow \mathrm{End}(A)$ over a scheme on which $p$ is locally nilpotent. Looking at its $p$-divisible group $A[p^{\infty}]$, we get a decomposition
\[
A[p^{\infty}] = (H(A)^n\times H(A)^{\prime})\times (H(A)^n\times H(A)^{\prime})^{\vee}
\]
corresponding to
\[
\mathcal{O}_B\otimes\mathbb{Z}_p\cong (M_n(\mathcal{O})^{\mathrm{op}}\times \mathcal{O}_{D^{\prime}}^{\mathrm{op}})\times (M_n(\mathcal{O})\times \mathcal{O}_{D^{\prime}})\ .
\]
In the situations we will consider, the $p$-divisible group $\underline{H}(A) = H(A)^n\times H(A)^{\prime}$ with its $\mathcal{O}_D^{\mathrm{op}}$-action is an $(\mathcal{O},D^{\prime})$-group (so that the notion of level-$m$-structure from Definition \ref{DefOBGroup} applies). In fact, over schemes on which $p$ is locally nilpotent, this condition is equivalent to the determinant condition of Kottwitz, cf. \cite{KottwitzPoints} and Lemma III.1.2 of \cite{HarrisTaylor}.

Consider the functor that associates to a locally noetherian scheme $S$ over $\mathcal{O}$ the set of isomorphism classes of quadruples $(A,\lambda,\iota,\overline{\eta}^p,X_1,\eta)$ consisting of
\begin{altitemize}
\item a projective abelian scheme $A$ over $S$ up to prime-to-$p$-isogeny,
\item a polarization $\lambda: A\longrightarrow A^{\vee}$ of degree prime to $p$,
\item a $\ast_B$-homomorphism $\iota:\mathcal{O}_B\longrightarrow \mathrm{End}(A)$, satisfying the determinant condition,
\item a level-structure $\overline{\eta}^p$ away from $p$ of type $K^p$,
\item a level-$m$-structure $\eta_p$ on $\underline{H}(A)$,
\end{altitemize}
where two objects
\[
(A,\lambda,\iota,\overline{\eta}^p,\eta_p)\ ,\ (A^{\prime},\lambda^{\prime},\iota^{\prime},\overline{\eta}^{p\prime},\eta_p^{\prime})
\]
are called isomorphic if there exists an $\mathcal{O}_B$-linear prime-to-$p$-isogeny from $A$ to $A^{\prime}$ carrying $\lambda$ into a $\mathbb{Z}_{(p)}^{\times}$-multiple of $\lambda^{\prime}$, $\overline{\eta}^{p}$ into $\overline{\eta}^{p\prime}$ and $\eta_p$ into $\eta_p^{\prime}$.

For an explanation of the notion of a level-structure of type $K^p$, we refer to \cite{KottwitzPoints}, Section 5.

As in \cite{KottwitzPoints} and \cite{HarrisTaylor}, Section III.4, one sees that if $K^p$ is sufficiently small, then this moduli problem is represented by a projective scheme $\mathrm{Sh}_{K^p,m}$. Furthermore, the arguments of Kottwitz, cf. also \cite{HarrisTaylor}, sections III.1 and III.4, show that the generic fibre of $\mathrm{Sh}_{K^p,m}$ is the disjoint union of $|\mathrm{ker}^1(\mathbb{Q},\mathbf{G})|$ copies of the canonical model $\mathrm{Sh}_K$ of the Shimura variety associated to $(\mathbf{G},h^{-1},K)$, base-changed from $\mathbb{F}$ to $F=\mathbb{F}_w$, where $K=K_p^mK^p$.

There is an obvious action of $\mathrm{GL}_n(\mathcal{O})\times \mathcal{O}_{D^{\prime}}^{\times} \times \mathbf{G}(\mathbb{A}_f^p)$ on the tower of these integral models. Further, the algebraic representation $\xi$ gives rise to smooth $\ell$-adic sheaves $\mathcal{F}_{K^p,m}$ on $\mathrm{Sh}_{K^p,m}$ to which this action extends naturally. They are compatible with the sheaves on the Shimura variety $\mathrm{Sh}_K$.

\section{Counting points modulo $p$}

The main result of this section calculates the trace of certain operators on the cohomology of the Shimura varieties introduced in the last section.

Let $\tau\in \mathrm{Frob}^r I_F\subset W_F$, and take $h\in C_c^{\infty}(\GL_n(\mathcal{O}))$ and $h^{\prime}\in C_c^{\infty}(\mathcal{O}_{D^{\prime}}^{\times})$ with values in $\mathbb{Q}$. Further, fix the characteristic function $e_{\mathbb{Z}_p^{\times}}$ of $\mathbb{Z}_p^{\times}\subset \mathbb{Q}_p^{\times}$ and take $f^p\in C_c^{\infty}(\mathbf{G}(\mathbb{A}_f^p))$, again with values in $\mathbb{Q}$. Then we get
\[
f=h^{\vee}\times h^{\prime}\times e_{\mathbb{Z}_p^{\times}}\times f^p\in C_c^{\infty}(\mathbf{G}(\mathbb{A}_f))\ .
\]
We will compute the trace $\tr(\tau\times f|[H_{\xi}])$. We fix $m\geq 1$ such that $h^{\vee}\times h^{\prime}\times e_{\mathbb{Z}_p^{\times}}$ is bi-$K_p^m$-invariant.

Fix a sufficiently small compact open subgroup $K^p\subset \mathbf{G}(\mathbb{A}_f^p)$ such that $f^p$ is bi-$K^p$-invariant. In fact, assume that $f^p$ is the characteristic function of $K^pg^pK^p$ divided by the volume of $K^p$ for some $g^p\in \mathbf{G}(\mathbb{A}_f^p)$. We have the following diagram.
\[\xymatrix{
& \mathrm{Sh}_{K^p\cap (g^p)^{-1}K^pg^p,m}\ar[ld]^{\tilde{p}_1} \ar[d]^{\pi} \ar[rd]_{\tilde{p}_2} & \\
\mathrm{Sh}_{K^p,m}\ar[d]^{\pi} & \mathrm{Sh}_{K^p\cap (g^p)^{-1}K^pg^p,0}\ar[ld]^{p_1} \ar[rd]_{p_2} & \mathrm{Sh}_{K^p,m}\ar[d]^{\pi} \\
\mathrm{Sh}_{K^p,0} & & \mathrm{Sh}_{K^p,0}
}\]

We will evaluate $\mathrm{tr}(\tau\times f|[H_{\xi}])$ via the Lefschetz trace formula. Recall that the upper correspondence in the diagram above extends canonically to a correspondence $u: \tilde{p}_{2!}\tilde{p}_1^{\ast}\mathcal{F}_{K^p,m}\longrightarrow \mathcal{F}_{K^p,m}$ and let
\[
[f^p]:  H^\ast(\mathrm{Sh}_{K^p,m}\otimes \bar{F},\mathcal{F}_{K^p,m})\longrightarrow H^\ast(\mathrm{Sh}_{K^p,m}\otimes \bar{F},\mathcal{F}_{K^p,m})
\]
be the associated map on cohomology. Of course, $\tau$, $h$ and $h^{\prime}$ also act on the cohomology, and it is a standard fact that
\[
|\mathrm{ker}^1(\mathbb{Q},\mathbf{G})| \mathrm{tr}(\tau\times f|[H_{\xi}]) = \tr(\tau\times h^{\vee}\times h^{\prime}\times [f^p]|H^\ast(\mathrm{Sh}_{K^p,m}\otimes \bar{F},\mathcal{F}_{K^p,m}))\ .
\]

We use proper base change to rewrite the cohomology as
\[
H^\ast(\mathrm{Sh}_{K^p,m}\otimes \bar{F},\mathcal{F}_{K^p,m}) = H^\ast(\mathrm{Sh}_{K^p,0}\otimes \bar{\kappa},\pi_{\ast}R\psi\mathcal{F}_{K^p,m})\ .
\]
Let $r^{\prime}=r[\kappa:\mathbb{F}_p]$ and let $F^{r^{\prime}}$ be the $r^{\prime}$-th power of the relative Frobenius correspondence on $\mathrm{Sh}_{K^p,0}\otimes \bar{\kappa}$. Then the action of $W_F$ on the nearby cycle sheaves gives a correspondence
\[
F^{r^{\prime}\ast} \pi_{\ast}R\psi\mathcal{F}_{K^p,m}\buildrel \tau\over\longrightarrow \pi_{\ast}R\psi\mathcal{F}_{K^p,m}\ ,
\]
which on cohomology realizes the map $\tau$. Furthermore, the actions of $\GL_n(\mathcal{O})$ and $\mathcal{O}_{D^{\prime}}^{\times}$ give a correspondence
\[
\pi_{\ast}R\psi\mathcal{F}_{K^p,m}\buildrel {h^{\vee}\times h^{\prime}}\over\longrightarrow \pi_{\ast}R\psi\mathcal{F}_{K^p,m}
\]
which on cohomology realizes the map $h^{\vee}\times h^{\prime}$. Hence the composition $c$ of the two correspondences,
\[
c: \mathrm{Sh}_{K^p,0}\otimes \bar{\kappa}\buildrel p_1\circ F^{r^{\prime}}\over\longleftarrow \mathrm{Sh}_{K^p\cap (g^p)^{-1}K^pg^p,0}\otimes \bar{\kappa}\buildrel p_2\over\longrightarrow \mathrm{Sh}_{K^p,0}\otimes \bar{\kappa}
\]
with the corresponding composition of the correspondences on the sheaf $\pi_{\ast}R\psi\mathcal{F}_{K^p,m}$, realizes a map on the cohomology whose trace equals $\mathrm{tr}(\tau\times f|[H_{\xi}])$.

Now we use the Lefschetz trace formula, see e.g. \cite{Varshavsky}, Theorem 2.3.2. Hence we need to evaluate the traces $\mathrm{tr}_y(\pi_{\ast}R\psi\mathcal{F}_{K^p,m})$ at the fixed points $y$, i.e. $y\in \mathrm{Sh}_{K^p\cap (g^p)^{-1}K^pg^p,0}(\bar{\kappa})$ with $x=(p_1\circ F^{r^{\prime}})(y) = p_2(y)$.

At this point, we stop to check that all of Kottwitz' arguments concerning the parametrization of the fixed points $y$ go through in our situation. We just make some remarks on the crucial points.

\begin{altitemize}
\item The analogue of Lemma 7.2 in \cite{KottwitzPoints} is true; one can even require that the hermitian form be preserved on the nose. Indeed, our assumption that $p$ be split in $\mathbb{K}$ immediately reduces everything to the linear case, which is trivial to handle.
\end{altitemize}

In particular, as in the article of Kottwitz, \cite{KottwitzPoints}, p. 419, cf. also p. 429, we can associate to any fixed point $y$ an element $\delta\in \mathbf{G}(W(\kappa_r)_{\mathbb{Q}})$, well-defined up to $\sigma_0$-conjugation by $\mathbf{G}(W(\kappa_r))$.\footnote{Note that the number $r$ is denoted $j$ by Kottwitz.} In fact, note that the fixed point $y$ gives rise to an $(\mathcal{O},D^{\prime})$-group $\underline{\overline{H}}_y$ over $\kappa_r$. Then the element $\delta$ can be more explicitly written as
\[
\delta=(\delta_0,t)\in \mathbf{G}_D(W(\kappa_r)_{\mathbb{Q}})\times W(\kappa_r)_{\mathbb{Q}}^{\times} = \mathbf{G}(W(\kappa_r)_{\mathbb{Q}})\ ,
\]
where $\delta_0$ is the Dieudonn\'{e} parametrization of $\underline{\overline{H}}_y$ in the sense of Section \ref{LocalComp}, and $t$ is some element of $p$-adic valuation $-1$.

\begin{altitemize}
\item Lemma 14.1 in \cite{KottwitzPoints} stays true. The crucial point is to check that for all $\delta\in \mathbf{G}(W(\kappa_r)_{\mathbb{Q}})$ associated to a fixed point, the norm $N\delta$ can be represented by an element of $\mathbf{G}(\mathbb{Q}_p)$. This follows from our arguments in Section \ref{LocalComp}.
\item One can describe the double coset $K_p^0\delta K_p^0$ similarly to the description made on pages 430-431 in \cite{KottwitzPoints}, cf. Section \ref{LocalComp} again.
\end{altitemize}

As in Section 14 of \cite{KottwitzPoints}, we also get an element $\gamma\in \mathbf{G}(\mathbb{A}_f^p)$, well-defined up to conjugation, and an element $\gamma_0\in \mathbf{G}(\mathbb{Q})$, well-defined up to stable conjugation, which is stably conjugate to $\gamma$ and $N\delta$.

\begin{lem} We have
\[
\pi_{\ast}R\psi\mathcal{F}_{K^p,m}\cong (\pi_{\ast}R\psi\bar{\mathbb{Q}}_{\ell})\otimes \mathcal{F}_{K^p,0}\ .
\]
\end{lem}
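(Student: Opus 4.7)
The plan is to identify $\mathcal{F}_{K^p,m}$ with the pullback $\pi^{\ast}\mathcal{F}_{K^p,0}$ and then combine the projection formula with the fact that nearby cycles commute with tensoring by a lisse sheaf.

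First, I would observe that since $\ell\neq p$, and since the sheaf $\mathcal{F}_{K^p,m}$ is constructed from the representation $\xi$ together with the $\ell$-adic Tate module of the universal abelian scheme, it is insensitive to the Drinfeld level-$m$-structure at $p$. The universal abelian scheme on $\mathrm{Sh}_{K^p,m}$ is simply the pullback via $\pi$ of the universal abelian scheme on $\mathrm{Sh}_{K^p,0}$, so one has a canonical identification
\[
\mathcal{F}_{K^p,m}\cong\pi^{\ast}\mathcal{F}_{K^p,0}
\]
as lisse $\ell$-adic sheaves on the integral model; here I rely on the compatibility statement recorded at the end of the previous section, as well as on the fact that $\mathcal{F}_{K^p,0}$ is lisse on the full integral scheme, not merely on its generic fibre.

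Next, I would invoke the general principle that if $X$ is an $\mathcal{O}$-scheme and $\mathcal{L}$ is a lisse $\bar{\mathbb{Q}}_{\ell}$-sheaf on $X$, then there is a natural isomorphism
\[
R\psi_{X}(\mathcal{L}|_{X_{\eta}})\cong \mathcal{L}|_{X_{s}}\otimes R\psi_{X}\bar{\mathbb{Q}}_{\ell}
\]
of complexes on the special fibre, compatibly with the inertia action. This is standard: \'etale-locally $\mathcal{L}$ trivializes, so the statement reduces to the constant case and then glues. Applied with $X=\mathrm{Sh}_{K^p,m}$ and $\mathcal{L}=\pi^{\ast}\mathcal{F}_{K^p,0}$, one gets
\[
R\psi\,\mathcal{F}_{K^p,m}\cong (\pi^{\ast}\mathcal{F}_{K^p,0})|_{s}\otimes R\psi\,\bar{\mathbb{Q}}_{\ell}\ .
\]

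Finally, since $\mathrm{Sh}_{K^p,m}$ is projective over $\mathrm{Sh}_{K^p,0}$, the projection formula gives
\[
\pi_{\ast}\bigl((\pi^{\ast}\mathcal{F}_{K^p,0})|_{s}\otimes R\psi\,\bar{\mathbb{Q}}_{\ell}\bigr)\cong \mathcal{F}_{K^p,0}\otimes \pi_{\ast}R\psi\,\bar{\mathbb{Q}}_{\ell}\ ,
\]
which combines with the previous display to give the claim. The only nonformal ingredient is the commutation of $R\psi$ with a lisse tensor factor; this is the main point to verify carefully, but it is a well-known statement that goes through in this setting precisely because $\mathcal{F}_{K^p,0}$ extends as a lisse sheaf to the integral model (so that $\pi^{\ast}\mathcal{F}_{K^p,0}$ is lisse on all of $\mathrm{Sh}_{K^p,m}$, and not only on its generic fibre where $\pi$ is finite \'etale).
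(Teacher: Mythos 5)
Your proof is correct and tracks the paper's argument closely: both hinge on the identification $\mathcal{F}_{K^p,m}\cong\pi^{\ast}\mathcal{F}_{K^p,0}$, the projection formula, and the key nonformal point that $R\psi$ commutes with tensoring by a lisse sheaf extending to the integral model, checked by \'etale-local trivialization. The one minor difference is organizational: the paper first commutes $\pi_{\ast}$ with $R\psi$ (using finiteness of $\pi$) and applies the lisse-tensor commutation on $\mathrm{Sh}_{K^p,0}$ with the non-lisse sheaf $\pi_{\ast}\bar{\mathbb{Q}}_{\ell}$, whereas you apply the lisse-tensor commutation upstairs on $\mathrm{Sh}_{K^p,m}$ and only then push forward with the projection formula on the special fibre, thereby avoiding the $\pi_{\ast}R\psi=R\psi\pi_{\ast}$ step entirely; both orderings reduce to the same \'etale-local check, so this is a modest streamlining rather than a genuinely different route.
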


\begin{proof} In the following calculation, we use several times that $\pi_{\ast}$ and $R\psi$ commute. First,
\[
\pi_{\ast}R\psi\mathcal{F}_{K^p,m}\cong R\psi\pi_{\ast}\pi^{\ast}\mathcal{F}_{K^p,0}\cong R\psi(\pi_{\ast}\bar{\mathbb{Q}}_{\ell}\otimes \mathcal{F}_{K^p,0})\ ,
\]
by construction of the sheaves and the projection formula.

Now we use that $\mathcal{F}_{K^p,0}$ extends to a smooth sheaf on $\mathrm{Sh}_{K^p,0}$, because of the construction of $\mathcal{F}_{K^p,0}$ using the tower of Shimura varieties with varying $K_{\ell}\subset \mathbf{G}(\mathbb{Q}_{\ell})$, which extend to the integral models considered. In particular, there is a canonical adjointness morphism between
\[
R\psi(\pi_{\ast}\bar{\mathbb{Q}}_{\ell}\otimes \mathcal{F}_{K^p,0})
\]
and
\[
(R\psi\pi_{\ast}\bar{\mathbb{Q}}_{\ell})\otimes \mathcal{F}_{K^p,0}\ .
\]
In order to check that this is an isomorphism we reduce to torsion coefficients and then localize in the \'{e}tale topology so that (the torsion version of) $\mathcal{F}_{K^p,0}$ becomes trivial. Then it is clear.
\end{proof}

This gives
\[
\mathrm{tr}_y(\pi_{\ast}R\psi\mathcal{F}_{K^p,m}) = \mathrm{tr}(\gamma_0|\xi) \mathrm{tr}_y(\pi_{\ast}R\psi\bar{\mathbb{Q}}_{\ell})\ ,
\]
cf. pages 433-434 in \cite{KottwitzPoints} for the computation of the trace on $\mathcal{F}_{K^p,0}$ which gives $\mathrm{tr}(\gamma_0|\xi)$.

Further, we have the following description of the completed local rings $\hat{\mathcal{O}}_{\mathrm{Sh}_{K^p,0},x}$. As before, $\mathcal{O}_r$ is the unramified extension of $\mathcal{O}$ of degree $r$ and $\breve{\mathcal{O}}$ is the completion of the maximal unramified extension of $\mathcal{O}$.

\begin{lem}\label{SerreTate}\begin{altenumerate}
\item[{\rm (i)}] There is a canonical isomorphism
\[
\hat{\mathcal{O}}_{\mathrm{Sh}_{K^p,0},x}\cong R_{\underline{\overline{H}}_y}\otimes_{\mathcal{O}_r} \breve{\mathcal{O}}\ ,
\]
which maps the correspondence
\[\xymatrix@C=-10pt{
&\mathrm{Spf}\ \hat{\mathcal{O}}_{\mathrm{Sh}_{K^p\cap (g^p)^{-1}K^pg^p,0},y}\ar[dl]_{p_1\circ \mathrm{Frob}^r}\ar[dr]^{p_2} & \\
\mathrm{Spf}\ \hat{\mathcal{O}}_{\mathrm{Sh}_{K^p,0},x} && \mathrm{Spf}\ \hat{\mathcal{O}}_{\mathrm{Sh}_{K^p,0},x}
}\]
to the Frobenius correspondence.

\item[{\rm (ii)}] The isomorphism in {\rm (i)} extends $\GL_n(\mathcal{O})\times \mathcal{O}_{D^{\prime}}^{\times}$-equivariantly to
\[
\pi^{-1}(\mathrm{Spf}\ \hat{\mathcal{O}}_{\mathrm{Sh}_{K^p,0},x})\cong \mathrm{Spf}\ R_{\underline{\overline{H}}_y,m}\otimes_{\mathcal{O}_r} \breve{\mathcal{O}}\ .
\]
The correspondence given by
\[\xymatrix@C=-10pt{
&\pi^{-1}(\mathrm{Spf}\ \hat{\mathcal{O}}_{\mathrm{Sh}_{K^p\cap (g^p)^{-1}K^pg^p,0},y})\ar[dl]_{\tilde{p}_1\circ \mathrm{Frob}^r}\ar[dr]^{\tilde{p}_2} & \\
\pi^{-1}(\mathrm{Spf}\ \hat{\mathcal{O}}_{\mathrm{Sh}_{K^p,0},x}) && \pi^{-1}(\mathrm{Spf}\ \hat{\mathcal{O}}_{\mathrm{Sh}_{K^p,0},x})
}\]
is mapped to the Frobenius correspondence under this identification.
\end{altenumerate}
\end{lem}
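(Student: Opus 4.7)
The plan is to derive both statements from Serre--Tate theory together with the decomposition of $A[p^\infty]$ provided by the PEL datum at the split prime $p$. I would first treat (i). A point $x\in\mathrm{Sh}_{K^p,0}(\bar\kappa)$ is given by a quadruple $(A_x,\lambda,\iota,\overline\eta^p)$, and its completed local ring parametrizes deformations of this quadruple to Artinian $\breve{\mathcal{O}}$-algebras. By Serre--Tate, such deformations are equivalent to deformations of $A_x[p^\infty]$ with its induced polarization and $\mathcal{O}_B\otimes\mathbb{Z}_p$-action. Because $p$ splits in $\mathbb{K}$, we have $\mathcal{O}_B\otimes\mathbb{Z}_p=\mathcal{O}_D^{\mathrm{op}}\times\mathcal{O}_D$, and $A_x[p^\infty]$ correspondingly decomposes as $\underline{H}(A_x)\times \underline{H}(A_x)^{\vee}$; the polarization identifies the second factor with the dual of the first, so the full deformation problem is equivalent to deforming the $(\mathcal{O},D')$-group $\underline H(A_x)=\underline{\overline{H}}_y$. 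This gives the desired canonical isomorphism $\hat{\mathcal{O}}_{\mathrm{Sh}_{K^p,0},x}\cong R_{\underline{\overline{H}}_y}\otimes_{\mathcal{O}_r}\breve{\mathcal{O}}$.

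For the statement about the correspondence in (i), observe that $y$ being a fixed point means $(p_1\circ F^{r'})(y)=p_2(y)=x$, and the two pullback maps from $\hat{\mathcal{O}}_{\mathrm{Sh}_{K^p,0},x}$ to $\hat{\mathcal{O}}_{\mathrm{Sh}_{K^p\cap (g^p)^{-1}K^pg^p,0},y}$ therefore both land in self-maps of (essentially) the same deformation ring. Under Serre--Tate, the map $p_2$ is the identity (it forgets no data at $p$, since $g^p$ is prime to $p$), whereas $p_1\circ F^{r'}$ pulls back a deformation along the relative Frobenius of $\mathrm{Sh}_{K^p,0}$. Unwinding this through Serre--Tate turns $F^{r'}$ into the Frobenius endomorphism of the deformation functor of $\underline{\overline H}_y$, i.e.\ the Frobenius correspondence on $\mathrm{Spf}\,(R_{\underline{\overline H}_y}\otimes_{\mathcal{O}_r}\breve{\mathcal{O}})$.

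For (ii), I would repeat the same analysis after adding level-$m$ structures at $p$. By the moduli description of $\mathrm{Sh}_{K^p,m}$, the fibre $\pi^{-1}(\mathrm{Spf}\,\hat{\mathcal{O}}_{\mathrm{Sh}_{K^p,0},x})$ parametrizes deformations of $\underline{\overline H}_y$ equipped with a Drinfeld-level-$m$-structure on the infinitesimal $\varpi$-divisible part and an $\mathcal{O}_{D'}^{\mathrm{op}}/p^m$-basis on the $D'$-part, which by construction is the moduli problem represented by $R_{\underline{\overline H}_y,m}$. The $\GL_n(\mathcal{O})\times \mathcal{O}_{D'}^\times$-equivariance is then tautological from the way both sides act on these level structures, and the compatibility of the upper correspondence with the Frobenius correspondence follows from (i) since $\pi$ is \'etale in the generic fibre and $p_2$, $\tilde p_2$ are induced from the same forgetful map at $p$.

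The only non-routine point, and in my view the main obstacle, is verifying carefully that the polarization and the $\mathcal{O}_B$-action package up correctly so that the full PEL deformation functor really is equivalent to the $(\mathcal{O},D')$-group deformation functor without extra data; this is where the hypotheses that $p$ splits in $\mathbb{K}$ and that $\mathbb{D}$ splits at $w$ enter. Once that equivalence is in hand (which is the same reduction Kottwitz and Harris--Taylor use, and which the author has already invoked in the bullet point about Lemma 7.2 of \cite{KottwitzPoints}), the two parts of the lemma are a direct transcription of Serre--Tate together with the definition of $R_{\underline{\overline H}_y,m}$ in Section~\ref{LocalComp}.
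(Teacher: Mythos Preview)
Your proposal is correct and follows exactly the same approach as the paper, which simply says ``This is an immediate consequence of the Serre--Tate theorem that deforming an abelian variety is equivalent to deforming its $p$-divisible group.'' You have merely unpacked the details (the decomposition via $\mathcal{O}_B\otimes\mathbb{Z}_p=\mathcal{O}_D^{\mathrm{op}}\times\mathcal{O}_D$, the role of the polarization, and the compatibility with level structures and correspondences) that the paper leaves implicit.
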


\begin{proof} This is an immediate consequence of the Serre-Tate theorem that deforming an abelian variety is equivalent to deforming its $p$-divisible group.
\end{proof}

Recall that the local term $\mathrm{tr}_y(\pi_{\ast}R\psi\bar{\mathbb{Q}}_{\ell})$ is the trace of the map
\[
\tau\times h^{\vee}\times h^{\prime}: (\pi_{\ast}R\psi\bar{\mathbb{Q}}_{\ell})_x = ((p_1\circ F^{r^{\prime}})^{\ast}\pi_{\ast}R\psi\bar{\mathbb{Q}}_{\ell})_y\longrightarrow (p_2^{\ast}\pi_{\ast}R\psi\bar{\mathbb{Q}}_{\ell})_y=(\pi_{\ast}R\psi\bar{\mathbb{Q}}_{\ell})_x\ .
\]
But Lemma \ref{SerreTate} and Theorem 3.1 of \cite{Berkovich2} imply that one can identify
\[
(\pi_{\ast}R\psi\bar{\mathbb{Q}}_{\ell})_x = H^0(R\psi_{\mathrm{Spf}\ R_{\underline{\overline{H}}_y,m}} \bar{\mathbb{Q}}_{\ell})\ ,
\]
equivariantly for the $\GL_n(\mathcal{O})\times \mathcal{O}_{D^{\prime}}^{\times}$-action and compatible with the action of $\tau$. Hence we see that by definition of $\phi_{\tau,h,h^{\prime}}(\delta_0)$, we have
\[
\mathrm{tr}_y(\pi_{\ast}R\psi\bar{\mathbb{Q}}_{\ell}) = \phi_{\tau,h,h^{\prime}}(\delta_0)\ .
\]
Now all the further discussion of \cite{KottwitzPoints} applies and proves the following theorem.

\begin{thm}\label{TraceHeckeOperatorTO} With the notation of \cite{KottwitzPoints}, we have
\[\begin{aligned}
\tr(\tau\times h^{\vee}\times h^{\prime}&\times e_{\mathbb{Z}_p^{\times}}\times f^p|[H_{\xi}])\\
&= \sum_{(\gamma_0;\gamma,\delta)} c(\gamma_0;\gamma,\delta) O_{\gamma}(f^p) TO_{\delta\sigma_0} (\phi_{\tau,h,h^{\prime}}\times e_{p^{-1}W(\kappa_r)^{\times}})\tr(\gamma_0|\xi)\ .
\end{aligned}\]
\end{thm}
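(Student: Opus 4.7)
The plan is to finish Kottwitz's point-counting argument that has already been set up in the preceding discussion. At this stage, by the Lefschetz trace formula (Varshavsky's Theorem 2.3.2), the left-hand side equals
\[
|\mathrm{ker}^1(\mathbb{Q},\mathbf{G})|^{-1}\sum_{y} \mathrm{tr}_y(\pi_{\ast}R\psi\mathcal{F}_{K^p,m})\ ,
\]
summed over the fixed points $y$ of the correspondence $c$ on $\mathrm{Sh}_{K^p\cap (g^p)^{-1}K^pg^p,0}(\bar{\kappa})$. We have already identified this local term as $\mathrm{tr}(\gamma_0|\xi)\cdot \phi_{\tau,h,h^{\prime}}(\delta_0)$ where $\delta = (\delta_0, t)$ is the Dieudonné parametrization of $\underline{\overline{H}}_y$ (together with the similitude component) and $\gamma_0$ is the rational element attached to $y$. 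So what remains is to reassemble the sum over $y$ into Kottwitz's sum over triples.

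The first step is to replace the sum over $y$ by a sum over isogeny classes of polarized abelian varieties with $\mathcal{O}_B$-action over $\bar\kappa$ that satisfy the determinant condition, counting in each class the fixed points mapping to it. This is exactly the content of Sections 14--19 of \cite{KottwitzPoints}; the bullet points preceding the theorem verify that the analogue of Lemma 7.2 of \cite{KottwitzPoints} (classification of isogeny classes via a Kottwitz triple $(\gamma_0;\gamma,\delta)$ up to equivalence) and Lemma 14.1 of \cite{KottwitzPoints} (representability of the norm $N\delta$ by an element of $\mathbf{G}(\mathbb{Q}_p)$) go through in our situation. The splitting of $p$ in $\mathbb{K}$ makes both verifications formal, since the problem reduces to the linear case.

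The second step is to factor the count in each isogeny class. Following Kottwitz, the set of fixed points in the isogeny class of $(\gamma_0;\gamma,\delta)$ is in bijection with a double coset space that, once one sums the local term $\mathrm{tr}_y(\pi_\ast R\psi\mathcal{F}_{K^p,m})$ over this set, splits as a product of three pieces: a volume term $c(\gamma_0;\gamma,\delta)$ coming from the class number of the inner form of the centralizer of $\gamma_0$; an orbital integral $O_\gamma(f^p)$ at the prime-to-$p$ places, obtained by counting prime-to-$p$ quasi-isogenies that preserve the level-$K^p$-structure; and a twisted orbital integral at $p$. The novel factor $\phi_{\tau,h,h^{\prime}}$ plays the role of $f_p$ in Kottwitz's formula: summing $\phi_{\tau,h,h^{\prime}}(g^{-1}\delta_0 g^{\sigma_0})$ over the lattices $g$ inside the rational Dieudonné module that are stable under $F$ and compatible with the level-$m$-structure produces precisely $TO_{\delta\sigma_0}(\phi_{\tau,h,h^{\prime}})$, by the same lattice-counting bijection used in the proof of Proposition \ref{BGroupAssociatedFcts}. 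The factor $e_{p^{-1}W(\kappa_r)^{\times}}$ records the similitude component $t$, whose $p$-adic valuation is $-1$ because the polarization has degree prime to $p$ while $\mu$ has the prescribed form.

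The main obstacle is essentially bookkeeping: matching our conventions (contravariant Dieudonné modules, Drinfeld level-$m$-structures on the one-dimensional summand, level-$m$-structures on the étale part via the Galois parametrization, the $h\mapsto h^\vee$ transposition, and the splitting $\mathbf{G}_{\mathbb{Q}_p}=\mathrm{Res}_{F/\mathbb{Q}_p}\GL_n\times\mathbf{G}_{D'}\times\mathbb{G}_m$) with the global normalizations used in \cite{KottwitzPoints}. Once this is done, the identity is literally Kottwitz's formula with the local-at-$p$ test function replaced by $\phi_{\tau,h,h^{\prime}}\times e_{p^{-1}W(\kappa_r)^{\times}}$, and no further argument is required.
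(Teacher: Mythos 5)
Your proposal is correct and follows essentially the same route as the paper: after identifying the local trace term at a fixed point with $\mathrm{tr}(\gamma_0|\xi)\cdot\phi_{\tau,h,h'}(\delta_0)$ via Serre--Tate and Berkovich's comparison, the remaining assembly of the fixed-point sum into Kottwitz's sum over triples $(\gamma_0;\gamma,\delta)$ is delegated (as the paper does) to the discussion in \cite{KottwitzPoints}, with the verified analogues of Lemma 7.2 and Lemma 14.1 guaranteeing it applies. You have filled in more of the bookkeeping explicitly than the paper's terse ``now all the further discussion of \cite{KottwitzPoints} applies,'' but the argument is the same.
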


Note that Lemma \ref{LocalComparison} shows that the functions $\phi_{\tau,h,h^{\prime}}\times e_{p^{-1}W(\kappa_r)^{\times}}$ and $f_{\tau,h}^{\vee}\times h^{\prime}\times e_{p^{-r^{\prime}}\mathbb{Z}_p^{\times}}$ are associated.

Using this expression and going into the calculations of \cite{KottwitzLambdaAdic} that exploit the process of pseudostabilization applicable to the Shimura varieties considered, we can rewrite the last theorem in the following form.

\begin{cor}\label{TraceCohomology} We have the following equality:
\[
n \tr(\tau\times h^{\vee}\times h^{\prime}\times e_{\mathbb{Z}_p^{\times}}\times f^p|[H_{\xi}]) = \tr(f_{\tau,h}^{\vee}\times h^{\prime}\times e_{p^{-r^{\prime}}\mathbb{Z}_p^{\times}}\times f^p|[H_{\xi}])\ .
\]
\end{cor}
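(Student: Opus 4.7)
The plan is to combine Theorem~\ref{TraceHeckeOperatorTO} with Lemma~\ref{LocalComparison} and the pseudostabilization argument of Kottwitz from \cite{KottwitzLambdaAdic}, which applies cleanly to these simple unitary Shimura varieties (for which endoscopy is trivial). First I would apply Theorem~\ref{TraceHeckeOperatorTO} to the LHS, rewriting
\[
|\mathrm{ker}^1(\mathbb{Q},\mathbf{G})|\cdot \tr(\tau\times h^{\vee}\times h'\times e_{\mathbb{Z}_p^{\times}}\times f^p\,|\,[H_{\xi}])
\]
as the Kottwitz sum
\[
\sum_{(\gamma_0;\gamma,\delta)} c(\gamma_0;\gamma,\delta)\, O_\gamma(f^p)\, TO_{\delta\sigma_0}\bigl(\phi_{\tau,h,h'}\times e_{p^{-1}W(\kappa_r)^{\times}}\bigr)\, \tr(\gamma_0|\xi).
\]

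Next, Lemma~\ref{LocalComparison} says the local factor $\phi_{\tau,h,h'}\times e_{p^{-1}W(\kappa_r)^{\times}}$ on $\mathbf{G}(W(\kappa_r)_{\mathbb{Q}})$ is associated to $f_{\tau,h}^{\vee}\times h'\times e_{p^{-r'}\mathbb{Z}_p^{\times}}$ on $\mathbf{G}(\mathbb{Q}_p)$, so each twisted orbital integral above is equal to the ordinary orbital integral $O_{N\delta}(f_{\tau,h}^{\vee}\times h'\times e_{p^{-r'}\mathbb{Z}_p^{\times}})$. Because $N\delta$ is stably conjugate to $\gamma_0$, the Kottwitz sum collapses into a stable-conjugacy sum for the combined global test function $f_{\tau,h}^{\vee}\times h'\times e_{p^{-r'}\mathbb{Z}_p^{\times}}\times f^p$ on $\mathbf{G}(\mathbb{A}_f)$. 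I would then run Kottwitz's pseudostabilization in reverse: the resulting sum is exactly what pseudostabilization produces from the cohomology trace of this Hecke operator acting on $[H_{\xi}]$ without any Frobenius twist. This identifies the rearranged expression with the RHS, up to an overall explicit constant.

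The main obstacle is the bookkeeping of that constant and showing it equals precisely $n$. The constant is not produced by the matching of orbital integrals, which by construction preserves global volumes; rather, it appears in the pseudostabilization itself. Concretely, when one compares the counting side (involving $\mu$ and hence the signature-$(1,n-1)$ place $\tau$) with the simple trace formula applied directly to $f_{\tau,h}^{\vee}\times h'\times e_{p^{-r'}\mathbb{Z}_p^{\times}}\times f^p$, the archimedean component contributes the discrete-series $L$-packet at $\tau$, which for $U(1,n-1)$ has exactly $n$ members, all contributing equally via their pseudocoefficients. Matching this archimedean factor against the Tamagawa numbers, the $|\mathrm{ker}^1|$ term, and the measure normalizations — all of which are carried out explicitly in \cite{KottwitzLambdaAdic} — produces the overall factor of $n$ on the LHS and thus the stated identity.
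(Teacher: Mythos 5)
Your approach matches the paper's: apply Theorem~\ref{TraceHeckeOperatorTO}, match the local test functions via Lemma~\ref{LocalComparison}, and then run Kottwitz's pseudostabilization and trace-formula argument from \cite{KottwitzLambdaAdic}, in which the factor of $n$ arises exactly from Lemma~4.2 there (the constant $N$, equal to the size of the archimedean discrete-series $L$-packet for $U(1,n-1)$) --- precisely as you identify. The paper phrases the last step a little differently, converting $n\,\tau(\mathbf{G})\sum_{\gamma_0} SO_{\gamma_0}(f)$ to $n\sum_{\pi} m(\pi)\tr(f|\pi)$ via the Arthur--Selberg trace formula, absorbing the $n$ through Lemma~4.2, and then invoking Matsushima's formula, rather than describing it as ``pseudostabilization in reverse,'' but the content is the same. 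One small bookkeeping slip: Theorem~\ref{TraceHeckeOperatorTO} already incorporates the $|\mathrm{ker}^1(\mathbb{Q},\mathbf{G})|$ normalization, so that factor should not reappear on the left-hand side of your first display.
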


\begin{proof} We just go through the most important steps in the calculation. Recall that Kottwitz constructs in \cite{KottwitzLambdaAdic} a function $f_{\infty}$ on $\mathbf{G}(\mathbb{R})$, depending on $\xi$. Let
\[
f=f_{\tau,h}^{\vee}\times h^{\prime}\times e_{p^{-r^{\prime}}\mathbb{Z}_p^{\times}}\times f^p\times f_{\infty}
\]
be the function on $\mathbf{G}(\mathbb{A})$. Then arguing as on pages 661-663 of \cite{KottwitzLambdaAdic}, we see that the left hand-side equals
\[
n \tau(\mathbf{G}) \sum_{\gamma_0} SO_{\gamma_0}(f)\ ,
\]
where $\tau(\mathbf{G})$ is the Tamagawa number of $\mathbf{G}$, $\gamma_0$ runs through the stable conjugacy classes in $\mathbf{G}(\mathbb{Q})$ and $SO_{\gamma_0}(f)$ is a stable orbital integral. Now the Arthur-Selberg trace formula shows that this equals
\[
n \sum_{\pi} m(\pi) \tr(f|\pi)\ ,
\]
where $\pi$ runs through automorphic representations for $\mathbf{G}$ with correct central character. Using Lemma 4.2 of \cite{KottwitzLambdaAdic}, this can be rewritten as
\[
\sum_{\pi_f} \tr(f_{\tau,h}^{\vee}\times h^{\prime}\times e_{p^{-r^{\prime}}\mathbb{Z}_p^{\times}}\times f^p|\pi_f) \sum_{\pi_{\infty}} m(\pi_f\otimes \pi_{\infty}) ep(\pi_{\infty}\otimes \xi)\ ,
\]
using the notation $ep$ to denote the Euler-Poincare characteristic as in \cite{KottwitzLambdaAdic}. Here we have used that the $N$ occuring in Lemma 4.2 of \cite{KottwitzLambdaAdic} equals $n$ in our situation. Finally, Matshushima's formula reveals that this equals
\[
\tr(f_{\tau,h}^{\vee}\times h^{\prime}\times e_{p^{-r^{\prime}}\mathbb{Z}_p^{\times}}\times f^p|[H_{\xi}])\ .
\]
\end{proof}

\section{Galois representations attached to automorphic forms}

In this section, we will associate $\ell$-adic Galois representations to (most) regular algebraic conjugate self-dual cuspidal automorphic representations of $\GL_n/\mathbb{F}$, and prove a local-global compatibility result for them. The construction is based on Clozel's base change from $\mathbf{G}$ to $\GL_n$, which we will use in the form given in Section VI of \cite{HarrisTaylor}.

For any irreducible admissible representation $\pi_f$ of $\mathbf{G}(\mathbb{A}_f)$, we define the isotypic component
\[
W_{\xi}^{\ast}(\pi_f)=\mathrm{Hom}_{\mathbf{G}(\mathbb{A}_f)}(\pi_f,H_{\xi}^{\ast})
\]
of $H_{\xi}^{\ast}$. Taking the alternating sum gives an element $[W_{\xi}(\pi_f)]$ in the Grothendieck group of $\ell$-adic representations of $\mathrm{Gal}(\overline{\mathbb{F}}/\mathbb{F})$. Kottwitz proves in \cite{KottwitzLambdaAdic} that $\pi_f$ occurs either only in even or only in odd degrees, so that in particular no cancellation can occur and $\pm [W_{\xi}(\pi_f)]$ is effective. Also recall the integer $a(\pi_f)$ from \cite{KottwitzLambdaAdic}, which is roughly the multiplicity of $\pi_f$: In fact, we have
\[
\mathrm{dim}\ [W_{\xi}(\pi_f)] = a(\pi_f)n\ .
\]

\begin{cor}\label{LocalGlobalComp} Assume that $\pi_f$ is an irreducible admissible representation of $\mathbf{G}(\mathbb{A}_f)$ with $W_{\xi}^{\ast}(\pi_f)\neq 0$. Let $\pi_p = \pi_w\otimes \pi_p^w\otimes \pi_{p,0}$ corresponding to
\[
\mathbf{G}(\mathbb{Q}_p)=\GL_n(F)\times D^{\prime\times} \times \mathbb{Q}_p^{\times}\ ,
\]
assume that $\pi_{p,0}$ is unramified and let $\chi_{\pi_{p,0}}$ be the character of $W_F\subset W_{\mathbb{Q}_p}$ corresponding to $\pi_{p,0}$ by local class-field theory. Then for all $\tau\in W_F$ projecting to a positive power of Frobenius and $h\in C_c^{\infty}(\GL_n(\mathcal{O}))$, we have
\[
\tr(f_{\tau,h}^{\vee}|\pi_w) = \frac 1{a(\pi_f)}\tr(\tau|[W_{\xi}(\pi_f)]\otimes \chi_{\pi_{p,0}}) \tr(h^{\vee}|\pi_w)\ .
\]
\end{cor}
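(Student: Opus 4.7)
The plan is to start from Corollary~\ref{TraceCohomology} and decompose $[H_\xi]$ into isotypic components under $\mathbf{G}(\mathbb{A}_f)$, writing $[H_\xi] = \bigoplus_{\pi_f'} [W_\xi(\pi_f')] \boxtimes \pi_f'$ as a $\mathrm{Gal}(\overline{\mathbb{F}}/\mathbb{F})\times\mathbf{G}(\mathbb{A}_f)$-module. For unramified $\pi_{p,0}'$, one has $\tr(e_{\mathbb{Z}_p^\times}|\pi_{p,0}')=1$ and $\tr(e_{p^{-r'}\mathbb{Z}_p^\times}|\pi_{p,0}')=\chi_{\pi_{p,0}'}(\tau)$ via local class field theory and the identification of $\tau$ with $\mathrm{Frob}^{r'}$ in $W_{\mathbb{Q}_p}$, where $r'=r[\kappa:\mathbb{F}_p]$; for ramified $\pi_{p,0}'$ both traces vanish. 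Substituting into Corollary~\ref{TraceCohomology} and factoring traces on $\pi_f' = \pi_w' \otimes \pi_p^{w\prime} \otimes \pi_{p,0}' \otimes \pi^{p\prime}$, one obtains an identity of the form
\[
 n\sum_{\pi_f'} \tr(\tau | [W_\xi(\pi_f')])\,T_h(\pi_f') = \sum_{\pi_f'} \tr(\tau | [W_\xi(\pi_f')]\otimes\chi_{\pi_{p,0}'})\,T_{\tau,h}(\pi_f'),
\]
where $T_h$ and $T_{\tau,h}$ package the traces of $h^\vee$ versus $f_{\tau,h}^\vee$ at $w$ together with the common factors $\tr(h'|\pi_p^{w\prime})\tr(f^p|\pi^{p\prime})$, and the sums run over $\pi_f'$ with $\pi_{p,0}'$ unramified.

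Next, I would fix $\pi_f$ as in the statement and isolate its contribution via a standard Bernstein-center argument. Choose a level $K^pK_p^m$ under which $\pi_f$ has nonzero invariants and $h,h',f^p$ are all equivariant; at this level only finitely many $\pi_f'$ appear in $[H_\xi]$. Varying $f^p$ inside the Bernstein component of $\pi^p$ and $h'$ inside the Bernstein component of $\pi_p^w$, and invoking linear independence of characters inside those components, reduces the identity to a sum over the finite set $S$ of $\pi_f'$ appearing in $H_\xi$ at this level with $\pi^{p\prime}=\pi^p$, $\pi_p^{w\prime}=\pi_p^w$, $\pi_{p,0}'=\pi_{p,0}$, leaving only the local component $\pi_w'$ free. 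I would then use strong multiplicity one for $\GL_n/\mathbb{F}$ applied to Clozel's base-change lift of $\pi_f'$: any such $\pi_f'$ has the same base change $\Pi$ as $\pi_f$ outside $w$, hence globally, so $\pi_w'$ at the remaining place $w$ is forced to have the same base change as $\pi_w$; combined with the compatibility of the local Jacquet--Langlands-type transfer at $w$ (the places where $\mathbb{D}$ splits), this pins down $\pi_w' = \pi_w$, so $S$ consists of the $a(\pi_f)$ members of the global packet of $\pi_f$, all sharing the same local component $\pi_w$.

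Because every $\pi_f'\in S$ contributes the same factor $\tr(h^\vee|\pi_w)$ and $\tr(f_{\tau,h}^\vee|\pi_w)$, the sums on the two sides collapse, and after canceling the common constants $\tr(h'|\pi_p^w)\tr(f^p|\pi^p)$ (which can be chosen nonzero) one arrives at
\[
 n\cdot\Bigl(\sum_{\pi_f'\in S}\tr(\tau|[W_\xi(\pi_f')])\Bigr)\tr(h^\vee|\pi_w) = \Bigl(\sum_{\pi_f'\in S}\tr(\tau|[W_\xi(\pi_f')]\otimes\chi_{\pi_{p,0}})\Bigr)\tr(f_{\tau,h}^\vee|\pi_w).
\]
Because the members of $S$ have isomorphic $[W_\xi(\pi_f')]$ (they lie in the same global packet and the Galois action is controlled by Kottwitz's pseudostabilization), the two sums collapse to $a(\pi_f)$ times the single term $\tr(\tau|[W_\xi(\pi_f)])$, respectively $\tr(\tau|[W_\xi(\pi_f)]\otimes\chi_{\pi_{p,0}})$. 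Rearranging and dividing by $a(\pi_f)\tr(h^\vee|\pi_w)$ (valid for a generic choice of $h$, then extended by linearity in $h$) gives the desired identity, and using $\dim[W_\xi(\pi_f)] = a(\pi_f) n$ checks the normalization.

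The main obstacle I anticipate is the isolation step: properly controlling the finite set $S$ and showing that all its members contribute equal Galois traces. This is the step that requires Kottwitz's machinery from \cite{KottwitzLambdaAdic} (the pseudostabilization which already enters Corollary~\ref{TraceCohomology} via the factor $n$) together with Clozel's base change and strong multiplicity one for $\GL_n$; the arithmetic of $\ker^1(\mathbb{Q},\mathbf{G})$ and the matching of automorphic multiplicity $a(\pi_f)$ with the size of the packet have to be handled with care so that the final cancellation produces exactly the factor $1/a(\pi_f)$.
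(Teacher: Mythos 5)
Your high-level strategy is the same as the paper's: apply Corollary~\ref{TraceCohomology}, choose test functions that isolate the contribution of $\pi_f$, and compare both sides. Your isolation step (Bernstein-center argument plus linear independence plus strong multiplicity one via Clozel's base change) is essentially a re-derivation of what Harris--Taylor's Corollary~VI.2.3 packages up; the paper simply cites that result to produce an $f^p$ with $\tr(f^p|\pi_f^p)=1$ such that any contributing $\pi_f'$ with $\tr(f^p|\pi_f'^p)\neq 0$ is isomorphic to $\pi_f$. So that part is fine modulo details, though more labour than necessary.

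However, there is a genuine error in your bookkeeping that would prevent the argument from closing. The right-hand side of Corollary~\ref{TraceCohomology} is $\tr(f_{\tau,h}^\vee\times h'\times e_{p^{-r'}\mathbb{Z}_p^\times}\times f^p|[H_\xi])$: this is a Hecke-operator trace with \emph{no} Weil-group element acting. When you decompose $[H_\xi]\cong\bigoplus [W_\xi(\pi_f')]\boxtimes\pi_f'$, the contribution of $\pi_f'$ to the right side therefore carries the factor $\dim[W_\xi(\pi_f')]\cdot \pi_{p,0}'(p^{-r'})$, not $\tr(\tau|[W_\xi(\pi_f')]\otimes\chi_{\pi_{p,0}'})$. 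Your first display replaces the former by the latter. Equating these two quantities is in fact equivalent to what one is trying to prove (the local--global compatibility at $w$), so writing it in at the outset is circular. Compounding this, you also declare that the isolated set $S$ has $a(\pi_f)$ members and that their Galois pieces are isomorphic; in fact, after the isolation via $f^p$, $S=\{\pi_f\}$ is a single isomorphism class, and the factor $a(\pi_f)$ arises purely from the dimension formula $\dim[W_\xi(\pi_f)]=a(\pi_f)n$ used on the right-hand side. Carrying your version through literally gives $n\,a(\pi_f)\tr(\tau|[W_\xi(\pi_f)])\tr(h^\vee|\pi_w)=a(\pi_f)\tr(\tau|[W_\xi(\pi_f)]\otimes\chi_{\pi_{p,0}})\tr(f_{\tau,h}^\vee|\pi_w)$, which is not the desired statement. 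The correct computation is: left side gives $n\tr(\tau|[W_\xi(\pi_f)])\tr(h^\vee|\pi_w)$, right side gives $\dim[W_\xi(\pi_f)]\pi_{p,0}(p^{-r'})\tr(f_{\tau,h}^\vee|\pi_w)=n\,a(\pi_f)\pi_{p,0}(p^{-r'})\tr(f_{\tau,h}^\vee|\pi_w)$, and solving, together with the identification $\chi_{\pi_{p,0}}(\tau)=\pi_{p,0}(p^{r'})$ from local class field theory, produces exactly the corollary.
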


\begin{proof} Take some function $h^{\prime}\in C_c^{\infty}(\mathcal{O}_{D^{\prime}}^{\times})$ such that $\tr(h^{\prime}|\pi_p^w)=1$, and take $m\geq 1$ such that both $h^{\vee}\times h^{\prime}\times e_{\mathbb{Z}_p^{\times}}$ and $f_{\tau,h}^{\vee}\times h^{\prime}\times e_{\mathbb{Z}_p^{\times}}$ are bi-$K_p^m$-invariant.

Further, take a compact open subgroup $K^p\subset \mathbf{G}(\mathbb{A}_f^p)$ such that $\pi_f^p$ has $K^p$-invariants. Because $H^{\ast}(\mathrm{Sh}_K,\mathcal{F}_K)$ is finite-dimensional, there are only finitely many irreducible admissible representations $\pi_f^{\prime}$ with invariants under $K=K_p^mK^p$ and $W_{\xi}^{\ast}(\pi_f^{\prime})\neq 0$.

Now Corollary VI.2.3 of \cite{HarrisTaylor} implies that there exists a function $f^p\in C_c^{\infty}(\mathbf{G}(\mathbb{A}_f^p))$ biinvariant under $K^p$ with $\tr(f^p|\pi_f^p)=1$ and such that whenever $\pi_f^{\prime}$ is an irreducible admissible representation of $\mathbf{G}(\mathbb{A}_f)$ with $W_{\xi}^{\ast}(\pi_f^{\prime})\neq 0$, with invariants under $K_p^mK^p$, and $\tr(f^p|\pi_f^{\prime p})\neq 0$, then $\pi_f^{\prime}\cong \pi_f$. Indeed, it is always possible to find $f^p$ such that the conclusion $\pi_f^{\prime p}\cong \pi_f^p$ holds true, but Corollary VI.2.3 of \cite{HarrisTaylor} tells us that then already $\pi_f^{\prime}\cong \pi_f$. Fix such an $f^p$.

We apply Corollary \ref{TraceCohomology} to these functions. Both sides of the equation reduce to the contribution of $W_{\xi}^{\ast}(\pi_f)\otimes \pi_f$. Direct inspection reveals that the left-hand side is
\[
n \tr(\tau|[W_{\xi}(\pi_f)])\tr(h^{\vee}|\pi_w)\ ,
\]
and the right-hand side gives
\[
n a(\pi_f) \tr(f_{\tau,h}^{\vee}|\pi_w)\pi_{p,0}(p^{-r^{\prime}})\ .
\]
Rewriting gives the corollary.
\end{proof}

It is useful to be able to embed local components $\pi_w$ into a global representation $\pi_f$. This is achieved by the following theorem.

\begin{thm}\label{EmbeddingLocalGlobalRepr} Assume that $\pi$ is an irreducible smooth representation of $\GL_n(F)$ that is either essentially square-integrable or a generalized Speh representation. Then there is an irreducible admissible representation $\pi_f$ of $\mathbf{G}(\mathbb{A}_f)$ and an algebraic representation $\xi$ of $\mathbf{G}$ such that $W_{\xi}^{\ast}(\pi_f)\neq 0$, the component $\pi_{p,0}$ is unramified and such that the component $\pi_w$ of $\pi_f$ is an unramified twist of $\pi$.
\end{thm}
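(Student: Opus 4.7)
The plan is to first globalize $\pi$ to an automorphic representation of $\GL_n/\mathbb{F}$ with prescribed local and global properties, then descend via Clozel's base change to a discrete automorphic representation of $\mathbf{G}(\mathbb{A})$, and finally conclude cohomological non-vanishing via Matsushima's formula.

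For the essentially square-integrable case, I would globalize $\pi$ to a cuspidal automorphic representation $\Pi$ of $\GL_n/\mathbb{F}$ satisfying: $\Pi^\vee \cong \Pi\circ c$; $\Pi_\infty$ is regular algebraic; $\Pi_w$ is an unramified twist of $\pi$; $\Pi$ is square-integrable at the place $x$ of $\mathbb{F}/\mathbb{F}_0$; and $\Pi$ has unramified central character at $p$ and unramified components at places of $\mathbb{F}$ above $p$ distinct from $w, w^c$. Such a globalization is produced by the standard Poincar\'e-series or simple-trace-formula argument, as in Section VII of \cite{HarrisTaylor} and in the work of Clozel, Henniart, and Shin. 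For the generalized Speh case $\pi=\boxplus_{i=1}^t \pi_0[\tfrac{t+1}{2}-i]$ with $\pi_0$ unitary supercuspidal on $\GL_d(F)$ and $n=dt$, I apply the essentially square-integrable case to $\pi_0$ to obtain a cuspidal $\Pi_0$ of $\GL_d/\mathbb{F}$ with the analogous properties, and set $\Pi = \mathrm{Speh}(\Pi_0,t)$, the residual discrete automorphic representation of $\GL_n/\mathbb{F}$ constructed by Moeglin-Waldspurger; its component at $w$ is the local Speh representation $\mathrm{Speh}(\Pi_{0,w}, t)$, hence an unramified twist of $\pi$.

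Since $\Pi$ is conjugate self-dual with square-integrable component at the split place $x$, Clozel's base change (in the form developed by Labesse, and extended to the discrete non-cuspidal spectrum by Shin and others) yields a discrete automorphic representation $\pi_f \otimes \pi_\infty$ of $\mathbf{G}(\mathbb{A})$ whose base change to $\GL_n/\mathbb{F}$ is $\Pi$. At $w$, where $\mathbb{D}$ splits and $p$ splits in $\mathbb{K}$, the local base change reduces to an isomorphism on $\GL_n(F)$, so $\pi_w$ is an unramified twist of $\pi$; likewise the similitude factor $\pi_{p,0}$ is unramified by the arrangement in the first step. The regular algebraicity of $\Pi_\infty$ ensures that $\pi_\infty$ is cohomological with respect to an algebraic representation $\xi$ of $\mathbf{G}$ (via the archimedean local Langlands correspondence and Vogan-Zuckerman); Matsushima's formula, which applies since $\mathrm{Sh}_K$ is projective, then gives $W_\xi^*(\pi_f) \neq 0$.

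The main obstacle is the first step: producing $\Pi$ cuspidal with the prescribed local component at $w$ while simultaneously enforcing conjugate self-duality, regular algebraicity at infinity, and square-integrability at $x$. This is handled by a careful application of the simple trace formula on a suitable inner form, with pseudo-coefficients at the archimedean places and a rigidifying matrix coefficient at an auxiliary finite place, and then transferring back to $\GL_n$ via Jacquet-Langlands. The Speh case reduces to this via Moeglin-Waldspurger, but relies on the base-change descent being available for the residual discrete spectrum, which is now standard but technically heavier than the purely cuspidal case.
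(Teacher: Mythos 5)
Your strategy for the essentially square-integrable case is in line with the paper's proof, which simply invokes Corollary VI.2.5 of \cite{HarrisTaylor}: globalize $\pi$ to a conjugate-self-dual cuspidal $\Pi$ of $\GL_n/\mathbb{F}$ with a discrete-series component at $x$, descend to $\mathbf{G}$ via (pseudo-stabilized) base change, and read off cohomological non-vanishing from Matsushima. That part is fine.

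The Speh case, however, contains a concrete error that breaks the argument. You set $\Pi=\mathrm{Speh}(\Pi_0,t)$ and then assert that $\Pi$ has ``square-integrable component at the split place $x$.'' But the local component of a Moeglin--Waldspurger residual representation at \emph{every} finite place is itself a local Speh representation: $\Pi_x=\mathrm{Speh}(\Pi_{0,x},t)$, which is not essentially square-integrable once $t\geq 2$. This matters because $x,x^c$ are exactly the places where $\mathbb{D}$ is ramified, so $\mathbf{G}_0(\mathbb{F}_{0,x})\cong \mathbb{D}_x^{\times}$ is the unit group of a local division algebra; all its smooth irreducible representations are finite-dimensional, and their Jacquet--Langlands transfers to $\GL_n(\mathbb{F}_x)$ are essentially square-integrable. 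A Speh representation is not in the image of this transfer, so a descent of the residual $\Pi$ to an automorphic representation of $\mathbf{G}(\mathbb{A})$ cannot be obtained by naively ``descending the residual spectrum'' via base change compatible with the local JL at $x$. The automorphic representation of $\mathbf{G}$ that realizes $\pi_w=$ Speh is a CAP form whose base change is not governed place-by-place by the $\GL_n\leftrightarrow\mathbb{D}^{\times}$ correspondence at the ramified places; handling this requires either a genuinely endoscopic/Arthur-packet analysis at $x$ (well beyond a pseudo-stabilized trace-formula descent and anachronistic relative to \cite{HarrisTaylor}) or, as the paper actually does by citing Lemma VI.2.11 of \cite{HarrisTaylor}, a direct construction of the requisite cohomological automorphic representation on $\mathbf{G}$ rather than a descent of a residual $\GL_n$-representation. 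Relatedly, you also need to verify that the archimedean component of such a non-tempered $\pi$ is a cohomological Vogan--Zuckerman module $A_{\mathfrak q}(\lambda)$ for some $\xi$, which imposes a spacing condition on the infinitesimal character of $\Pi_0$ that you must arrange in the globalization step but do not mention. The upshot: the skeleton (globalize, descend, Matsushima) is reasonable for the discrete-series case, but the Speh case needs a different mechanism, and the specific claim about $\Pi_x$ is false.
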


\begin{proof} This follows from Corollary VI.2.5 and Lemma VI.2.11 of \cite{HarrisTaylor}.
\end{proof}

\begin{cor} If $\pi$ is an irreducible smooth representation of $\GL_n(F)$ that is either essentially square-integrable or a generalized Speh representation, then there is a $\mathbb{Q}$-linear combination $\mathrm{rec}(\pi)$ of representations of $W_F$ with nonnegative coefficients such that for all $\tau$, $h$, we have
\[
\tr(f_{\tau,h}|\pi) = \tr(\tau|\mathrm{rec}(\pi))\tr(h|\pi)\ .
\]
\end{cor}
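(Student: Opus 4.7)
The plan is to combine Theorem~\ref{EmbeddingLocalGlobalRepr} with Corollary~\ref{LocalGlobalComp}, using MVW duality for $\GL_n$ and Kottwitz's parity statement.

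Since $\pi^\vee$ is of the same type as $\pi$ (essentially square-integrable or generalized Speh), Theorem~\ref{EmbeddingLocalGlobalRepr} applied to $\pi^\vee$ produces an algebraic representation $\xi$ of $\mathbf{G}$ and an irreducible admissible representation $\pi_f$ of $\mathbf{G}(\mathbb{A}_f)$ with $W_\xi^\ast(\pi_f) \neq 0$, with $\pi_{p,0}$ unramified, and with $\pi_w$ an unramified twist of $\pi^\vee$; equivalently, $\pi_w^\vee$ is an unramified twist of $\pi$. Corollary~\ref{LocalGlobalComp} then gives
\[
\tr(f_{\tau,h}^\vee \mid \pi_w) = \tfrac{1}{a(\pi_f)}\, \tr\bigl(\tau \mid [W_\xi(\pi_f)] \otimes \chi_{\pi_{p,0}}\bigr)\, \tr(h^\vee \mid \pi_w).
\]
The involution $g \mapsto (g^{-1})^t$ on $\GL_n(F)$ realizes the contragredient (MVW), hence $\tr(\phi^\vee \mid \rho) = \tr(\phi \mid \rho^\vee)$ for any smooth $\rho$ and locally constant compactly supported $\phi$. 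Applying this to both $f_{\tau, h}$ and $h$, the formula becomes
\[
\tr(f_{\tau,h} \mid \pi_w^\vee) = \tfrac{1}{a(\pi_f)}\, \tr\bigl(\tau \mid [W_\xi(\pi_f)] \otimes \chi_{\pi_{p,0}}\bigr)\, \tr(h \mid \pi_w^\vee).
\]

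Next, by Kottwitz's parity statement (recalled just before Corollary~\ref{LocalGlobalComp}), $\pi_f$ occurs in $H_\xi^\ast$ in degrees of a single parity, so $\pm[W_\xi(\pi_f)]$ is effective of total dimension $a(\pi_f)\cdot n$. Absorbing the sign into $a(\pi_f)$, the virtual representation $R := \tfrac{1}{a(\pi_f)}\, [W_\xi(\pi_f)] \otimes \chi_{\pi_{p,0}}$ is a $\mathbb{Q}$-linear combination of $W_F$-representations with nonnegative coefficients of total dimension $n$. This proves the desired identity for $\pi_w^\vee$ in place of $\pi$, with $\mathrm{rec}(\pi_w^\vee) := R$.

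The remaining step, which I expect to be the main obstacle, is transporting the identity from $\pi_w^\vee = \pi \otimes \psi \circ \det$ (for some unramified character $\psi$ of $F^\times$) to $\pi$ itself. The factor $\tr(h \mid \pi_w^\vee) = \tr(h \mid \pi)$ is immediate because $\psi \circ \det$ is trivial on $\GL_n(\mathcal{O})$. For $\tr(f_{\tau, h} \mid \pi_w^\vee)$, I would argue via the construction of $\phi_{\tau, h}$ in Section~\ref{DefSpaces} and the norm-map identities in Section~\ref{NormMaps} that pointwise multiplication of $f_{\tau, h}$ by $\psi \circ \det$ corresponds, via matching of orbital integrals, to twisting the Weil-group factor by the unramified character $\psi'$ of $W_F$ attached to $\psi$ by local class field theory (the key identity being $\psi(\det N\beta) = (\psi \circ N_{F_r/F})(\det \beta)$ for $\beta \in \GL_n(F_r)$, translating the twist on $\GL_n(F)$ into an unramified twist on the nearby-cycle construction). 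Granted this compatibility, the identity for $\pi$ holds with $\mathrm{rec}(\pi) := R \otimes (\psi')^{-1}$, which remains a $\mathbb{Q}$-linear combination of $W_F$-representations with nonnegative coefficients of total dimension $n$.
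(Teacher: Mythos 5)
Your proposal is correct and follows essentially the same path as the paper: apply Theorem~\ref{EmbeddingLocalGlobalRepr} to $\pi^{\vee}$, feed the resulting $\pi_f,\xi$ into Corollary~\ref{LocalGlobalComp}, use the MVW/contragredient identity $\tr(\phi^{\vee}\mid\rho)=\tr(\phi\mid\rho^{\vee})$ together with Kottwitz's parity statement, and then absorb the stray unramified twist into a character of $W_F$. The step you flag as the ``main obstacle'' is in fact immediate and needs none of the nearby-cycle machinery you gesture at: the orbital integrals of $f_{\tau,h}$ are supported on conjugacy classes $g$ with $v_F(\det g)=r$ (because $\phi_{\tau,h}$ is supported on $\GL_n(\mathcal{O}_r)\mathrm{diag}(\varpi,1,\ldots,1)\GL_n(\mathcal{O}_r)$ and the norm map sends determinant-valuation $1$ to determinant-valuation $r$), so for unramified $\psi$ the function $\psi\circ\det$ is the constant $\psi(\varpi)^r$ there, giving $\tr(f_{\tau,h}\mid\pi\otimes\psi)=\psi(\varpi)^r\,\tr(f_{\tau,h}\mid\pi)$ by the Weyl integration formula; this scalar is exactly $\tr(\tau\mid\psi')$ for the unramified $W_F$-character $\psi'$ with $\psi'(\mathrm{Frob})=\psi(\varpi)$. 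One small remark: you say ``absorbing the sign into $a(\pi_f)$'', but no absorption is needed, since $\dim[W_\xi(\pi_f)]=a(\pi_f)n$ already forces $a(\pi_f)$ and $[W_\xi(\pi_f)]$ to carry the same sign, so the quotient is automatically a nonnegative combination of total dimension $n$.
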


\begin{rem} As we can always write a $p$-adic field as $\mathbb{F}_w$ with all the assumptions on $\mathbb{F}$ etc., this checks condition (ii) in Lemma \ref{LemmaToProve}.
\end{rem}

\begin{proof} Apply Theorem \ref{EmbeddingLocalGlobalRepr} to $\pi^{\vee}$ and Corollary \ref{LocalGlobalComp} to the resulting representations $\pi_f$, $\xi$. Then $\pi_w = \pi^{\vee}\otimes \chi$ for some unramified character $\chi$. The representation
\[
\mathrm{rec}(\pi) = \frac 1{a(\pi_f)} [W_{\xi}(\pi_f)]^{\vee}\otimes \chi_{\pi_{p,0}}^{-1}\otimes \chi^{-1}
\]
satisfies the requirement.
\end{proof}

Furthermore, we have the following theorem from \cite{HarrisTaylor} constructing irreducible admissible representations $\pi_f$ of $\mathbf{G}(\mathbb{A}_f)$ with $W_{\xi}^{\ast}(\pi_f)\neq 0$ from cuspidal automorphic representations of $\GL_n/\mathbb{F}$.

\begin{thm}\label{GLnToG} Let $\Pi$ be a cuspidal automorphic representation of $\GL_n/\mathbb{F}$ such that
\begin{altenumerate}
\item[{\rm (i)}] $\Pi^{\vee} = \Pi\circ c$, where $c: \GL_n(\mathbb{A}_{\mathbb{F}})\longrightarrow \GL_n(\mathbb{A}_{\mathbb{F}})$ is complex conjugation;
\item[{\rm (ii)}] $\Pi_{\infty}$ is regular algebraic, i.e. it has the same infinitesimal character as an algebraic representation of $\mathrm{Res}_{\mathbb{F}/\mathbb{Q}}(\GL_n)$ over $\mathbb{C}$;
\item[{\rm (iii)}] $\Pi_x$ is square-integrable.
\end{altenumerate}
Then there exists an irreducible admissible representation $\pi_f$ of $\mathbf{G}(\mathbb{A}_f)$, an algebraic representation $\xi$ of $\mathbf{G}$ such that $W_{\xi}^{\ast}(\pi_f)\neq 0$, and a character $\psi$ of $\mathbb{K}^{\times}\backslash \mathbb{A}_{\mathbb{K}}^{\times}$ whose infinite components are algebraic such that $\pi_{w^{\prime}}\cong \Pi_{w^{\prime}}$ and $\pi_{p^{\prime},0} = \psi_{u^{\prime}}$ for all places $w^{\prime}$, $u^{\prime}$ and $p^{\prime}$ of $\mathbb{F}$, resp. $\mathbb{K}$, resp. $\mathbb{Q}$, such that $w^{\prime}|u^{\prime}|p^{\prime}$, $p^{\prime}$ splits in $\mathbb{K}$ and $w^{\prime}\neq x,x^c$.

Moreover, for any given prime $p^{\prime}$ as above, one can arrange that $\pi_{p^{\prime},0}$ is unramified.
\end{thm}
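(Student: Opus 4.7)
The plan is to invoke the chain of automorphic transfers developed in Harris--Taylor, which takes a conjugate self-dual cuspidal $\Pi$ on $\GL_n/\mathbb{F}$ satisfying (i)--(iii) and produces an automorphic representation on $\mathbf{G}$ with cohomology in some local system $\mathcal{F}_\xi$. The procedure breaks into three bridges: global Jacquet--Langlands from $\GL_n/\mathbb{F}$ to $\mathbb{D}^\times/\mathbb{F}$, Clozel--Labesse base change descent from $\mathbb{D}^\times/\mathbb{F}$ to the unitary group $\mathbf{G}_0/\mathbb{F}_0$, and finally a similitude twist from $\mathbf{G}_0$ to $\mathbf{G}$ by a Hecke character $\psi$ of $\mathbb{K}$.

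First, since $\Pi_x$ is square-integrable and $\mathbb{D}$ splits everywhere outside of $\{x,x^c\}$, the global Jacquet--Langlands correspondence produces a cuspidal automorphic representation $\Pi^{\mathrm{JL}}$ of $\mathbb{D}^\times(\mathbb{A}_\mathbb{F})$, whose local components agree with those of $\Pi$ at all places different from $x, x^c$. Conditions (i) and (ii) are preserved under Jacquet--Langlands, so $\Pi^{\mathrm{JL}}$ remains conjugate self-dual and regular algebraic at infinity. Next, the Clozel--Labesse descent theory (in the formulation used in Section VI of \cite{HarrisTaylor}) provides an automorphic representation $\pi_0$ of $\mathbf{G}_0(\mathbb{A}_{\mathbb{F}_0})$ whose stable base change is $\Pi^{\mathrm{JL}}$; this uses condition (i) crucially. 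At split places, base change is local and essentially identity, so $(\pi_0)_{v'} \cong \Pi^{\mathrm{JL}}_{w'}$ at any split $v' = w'w'^c$.

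The passage from $\mathbf{G}_0$ to $\mathbf{G}$ is accomplished by choosing a Hecke character $\psi : \mathbb{K}^\times \backslash \mathbb{A}_\mathbb{K}^\times \to \mathbb{C}^\times$ whose restriction to $\mathbb{A}_{\mathbb{F}_0}^\times$ matches the central character of $\Pi^{\mathrm{JL}}$ on $\mathbb{A}_{\mathbb{F}_0}^\times$ (suitably normalized), and whose infinite components are algebraic so as to produce an algebraic representation $\xi$ of $\mathbf{G}$ on the archimedean side. One then forms $\pi_f$ as the unique irreducible admissible representation of $\mathbf{G}(\mathbb{A}_f)$ whose restriction to $\mathbf{G}_0(\mathbb{A}_{\mathbb{F}_0,f})$ is $\pi_{0,f}$ and whose $\mathbb{G}_m$-component is $\psi_f$; the local compatibility at split places $p' \neq p_x$ of $\mathbb{Q}$ then reads $\pi_{w'} \cong \Pi_{w'}$ and $\pi_{p',0} = \psi_{u'}$ as claimed. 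Using the regular algebraic condition (ii) and a standard archimedean argument (Vogan--Zuckerman / the discrete series representations of $U(1,n-1)$ at $\tau$), one arranges $\pi_\infty$ to have nontrivial $(\mathfrak{g},K_\infty)$-cohomology with coefficients in $\xi^\vee$, so that Matsushima's formula yields $W_\xi^*(\pi_f) \neq 0$.

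For the final sentence, the freedom in choosing $\psi$ at the prescribed prime $p'$ is used: since $\mathbb{K}^\times \backslash \mathbb{A}_\mathbb{K}^\times$ is a restricted product, one can modify $\psi$ locally at $u'$ (and compensate at other split places, or at the infinite component via changing $\xi$) to make $\psi_{u'}$ unramified, i.e. $\pi_{p',0}$ unramified. The main obstacle is the bookkeeping in going from $\mathbf{G}_0$ to $\mathbf{G}$: one must ensure simultaneously that the algebraicity of $\psi_\infty$ produces a legitimate algebraic $\xi$ on $\mathbf{G}$ matching the infinitesimal character forced by $\Pi_\infty$, that the central character compatibility needed for descent holds, and that the local components at the chosen split prime $p'$ can be set to be unramified without destroying the global existence; all of this is handled in \cite{HarrisTaylor}, Section VI and Lemma VI.2.10 and its surrounding discussion, which we simply cite.
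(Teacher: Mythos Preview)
Your proposal is correct and follows essentially the same route as the paper's own proof: both argue by citing the chain of transfers in \cite{HarrisTaylor}, Section VI --- global Jacquet--Langlands (Theorem VI.1.1) to pass from $\GL_n/\mathbb{F}$ to $\mathbb{D}^\times$, then Clozel's base-change descent and the similitude twist (Theorem VI.2.1, Theorem VI.2.9, Lemma VI.2.10) to land on $\mathbf{G}$ with the required cohomological property. Your write-up simply unpacks what the paper compresses into four citations.
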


\begin{proof} All references refer to \cite{HarrisTaylor}. Use Theorem VI.1.1 to produce an automorphic representation of $\mathbb{D}^{\times}$. It continues to have properties analogous to (i) and (ii). Then Lemma VI.2.10, Theorem VI.2.9 and the properties of the base-change map established in Theorem VI.2.1 finish the proof.
\end{proof}

Combining this with Corollary \ref{LocalGlobalComp}, we get the following construction of Galois representations. In its formulation, we assume that parts (a) and (b) of Theorem \ref{MainTheorem2} are true; we will only use this corollary after we have proven these assertions. We prefer to state it now as it fits in with the discussion here.

\begin{thm}\label{ConstructionGaloisRepr} Let $\Pi$ be a cuspidal automorphic representation of $\GL_n/\mathbb{F}$ such that
\begin{altenumerate}
\item[{\rm (i)}] $\Pi^{\vee} = \Pi\circ c$;
\item[{\rm (ii)}] $\Pi_{\infty}$ is regular algebraic;
\item[{\rm (iii)}] $\Pi_x$ is square-integrable.
\end{altenumerate}
Then there exists an integer $a\geq 1$ and an $\ell$-adic representation $R(\Pi)$ of $\mathrm{Gal}(\bar{\mathbb{F}}/\mathbb{F})$ of dimension $an$ such that for all finite places $v$ of $\mathbb{F}$ whose residue characteristic is different from $\ell$, we have
\[
R(\Pi)|_{W_{\mathbb{F}_v}} = a\cdot \mathrm{rec}(\Pi_v)
\]
as elements of the Grothendieck group of representations of $W_{\mathbb{F}_v}$.
\end{thm}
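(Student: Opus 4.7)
Apply Theorem \ref{GLnToG} to $\Pi$, producing an irreducible admissible representation $\pi_f$ of $\mathbf{G}(\mathbb{A}_f)$ with $W_\xi^\ast(\pi_f) \neq 0$, an algebraic representation $\xi$, and a character $\psi$ of $\mathbb{K}^\times \backslash \mathbb{A}_{\mathbb{K}}^\times$ such that $\pi_{w'} \cong \Pi_{w'}$ at the relevant split places $w'$. Let $\chi_\psi$ denote the $\ell$-adic Galois character attached to $\psi$ by class field theory, set $a := a(\pi_f)$, and define $R(\Pi)$ to be the appropriate twist of $[W_\xi(\pi_f)]$ by $\chi_\psi^{\pm 1}$ (with duality and twist chosen to match the conventions of Corollary \ref{LocalGlobalComp}); by Kottwitz's formula, $\dim R(\Pi) = an$.

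At the distinguished place $w$ of the given setup (so that $\pi_w \cong \Pi_w$ and the rational prime $p$ below $w$ splits in $\mathbb{K}$), combine Corollary \ref{LocalGlobalComp} with the characterizing property of $\mathrm{rec}$ from Theorem \ref{MainTheorem2}(a). Both expressions compute $\tr(f_{\tau,h}^\vee|\pi_w)$ as a product of a $\tau$-trace with $\tr(h^\vee|\pi_w)$: the corollary yields $\tfrac{1}{a}\tr\bigl(\tau \bigm| [W_\xi(\pi_f)] \otimes \chi_{\pi_{p,0}}\bigr)\tr(h^\vee|\pi_w)$, while the identity $\tr(f|\pi) = \tr(f^\vee|\pi^\vee)$ combined with part (a) gives $\tr(\tau|\mathrm{rec}(\Pi_w^\vee))\tr(h^\vee|\pi_w)$. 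Cancelling for $h$ with $\tr(h^\vee|\pi_w) \neq 0$ produces an equality of $W_{\mathbb{F}_w}$-characters, and after matching $\chi_\psi|_{W_{\mathbb{F}_w}}$ with $\chi_{\pi_{p,0}}$ via local class field theory and accounting for duals, one obtains $R(\Pi)|_{W_{\mathbb{F}_w}} = a \cdot \mathrm{rec}(\Pi_w)$ in the Grothendieck group.

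To extend the compatibility to an arbitrary finite place $v$ of residue characteristic $\neq \ell$, rerun the construction with different auxiliary data. For any rational prime $p'$ that splits in $\mathbb{K}$, a suitable choice of the auxiliary place $x_0$ in $\mathbb{F}_0$ places $v$ in the role of the distinguished place $w$, yielding a second representation $R(\Pi)^{(v)}$ satisfying local-global compatibility at $v$. Since the Frobenius traces of $R(\Pi)$ and $R(\Pi)^{(v)}$ at almost all unramified places of $\mathbb{F}$ are determined by the Hecke eigenvalues of $\Pi$, Chebotarev density forces the two semisimple representations to be isomorphic (in particular their dimensions, hence the integers $a$, must agree), so compatibility at $v$ transfers to $R(\Pi)$. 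For places $v$ above a rational prime not splitting in $\mathbb{K}$, reduce to the split case via a solvable cyclic base change $\mathbb{F}'/\mathbb{F}$ in which the relevant prime splits, apply the construction to $\Pi \circ \mathrm{BC}_{\mathbb{F}'/\mathbb{F}}$, and descend the resulting Galois representation along $\mathrm{Gal}(\mathbb{F}'/\mathbb{F})$.

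The main obstacle lies in the last step: the solvable descent, which requires verifying that the hypotheses of Theorem \ref{GLnToG} (cuspidality, regular algebraicity, conjugate self-duality, square-integrability at $x$) persist under the cyclic base change, that the descent of Galois representations is well-defined, and that the integer $a$ can be tracked consistently across different setups. The Chebotarev input at unramified places is routine, and the essential new ingredient is the robust local-global compatibility encoded in Corollary \ref{LocalGlobalComp} at the distinguished place, together with the local characterization of $\mathrm{rec}$ supplied by Theorem \ref{MainTheorem2}(a).
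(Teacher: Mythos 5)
Your construction of $R(\Pi)$ and the local-global argument at the distinguished place $w$ via Corollary \ref{LocalGlobalComp} match the paper's approach, and your use of Chebotarev to propagate compatibility across varying auxiliary data is the right idea (though the paper varies the lift $\pi_f$, $\xi$, $\psi$ rather than the place $x_0$ --- changing $x_0$ would change the entire Shimura datum, whereas re-choosing $\pi_f$ with $\pi_{p',0}$ unramified, as Theorem \ref{GLnToG} permits, keeps everything else fixed). Where the argument genuinely diverges, and where you yourself flag an obstacle, is the final step for arbitrary $v$.

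You propose to apply the construction over a solvable extension $\mathbb{F}'/\mathbb{F}$ and then \emph{descend} the resulting Galois representation along $\mathrm{Gal}(\mathbb{F}'/\mathbb{F})$. This is the wrong direction. Descent of Galois representations is obstructed in general (uniqueness fails up to twist by characters of the Galois group, existence is not automatic, and you would also need to verify that $a$ is compatible with descent), and the paper avoids it entirely. The paper's move is to \emph{restrict} rather than descend: the representation $R(\Pi)$ has already been constructed, so for arbitrary $v$ one chooses an auxiliary real quadratic field $R$ (so that $\tilde{\mathbb{F}}_0=\mathbb{F}_0R$ remains totally real of even degree, $\tilde{\mathbb{F}}=\mathbb{F}R$ remains a CM field of the required shape, and a place $\tilde{v}\mid v$ is split over $\tilde{\mathbb{F}}_0$ with $\tilde{\mathbb{F}}_{\tilde{v}}=\mathbb{F}_v$), constructs $R(\tilde{\Pi})$ for the base-change $\tilde{\Pi}$, and uses Chebotarev to identify $\tfrac{1}{\tilde{a}}R(\tilde{\Pi})$ with $\tfrac{1}{a}R(\Pi)\vert_{\mathrm{Gal}(\bar{\mathbb{F}}/\tilde{\mathbb{F}})}$ as virtual representations. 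Reading off the local component at $\tilde{v}$ then gives the statement for $R(\Pi)$ at $v$ directly, with no descent needed. Note also that this construction requires the auxiliary extension to preserve the very specific form of $\mathbb{F}$ (compositum of an even-degree totally real field and an imaginary quadratic field), which a generic ``solvable cyclic base change in which the relevant prime splits'' need not do; the real quadratic twist $R$ is tailored precisely to maintain that structure.
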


\begin{proof} Choose $\pi_f$, $\xi$ and $\psi$ as in the theorem, and take $a=a(\pi_f)$ as well as
\[
R(\Pi)= [W_{\xi}(\pi_f)]^{\vee}\otimes \chi_{\psi}^{-1}\ ,
\]
where $\chi_{\psi}$ corresponds to $\psi$ by global class-field theory. Because $\psi$ is algebraic at all infinite places, this makes sense.

The desired property is now immediate for all $v\neq x,x^c$ which are split over $\mathbb{F}_0$ and for which $\pi_{p^{\prime},0}$ is unramified, where $p^{\prime}$ is the rational prime below $v$. By the Chebotarev density theorem, this determines the virtual representation $\frac 1a R(\Pi)$ uniquely. Now, since for any given $p^{\prime}$, one can choose $\pi_f$, $\xi$ and $\psi$ so that $\pi_{p^{\prime},0}$ is unramified, the condition that $\pi_{p^{\prime},0}$ be unramified can be dropped.

Now if $v$ is arbitrary, take some real quadratic field $R$ linearly disjoint from $\mathbb{F}$ such that the rational prime below $x$ splits in $R$, and the completions of $R$ and $\mathbb{K}$ at the rational prime below $v$ are isomorphic. Let $\tilde{\mathbb{F}}_0=\mathbb{F}_0R$ and $\tilde{\mathbb{F}}=\mathbb{F}R$, and choose places $\tilde{v}$ and $\tilde{x}$ above $v$ and $x$ in $\tilde{\mathbb{F}}$; we can assume that $\tilde{v}\neq \tilde{x},\tilde{x}^c$. Moreover, the assumptions imply that $\tilde{v}$ is split over $\tilde{\mathbb{F}}_0$. Also $\tilde{\mathbb{F}}_{\tilde{v}} = \mathbb{F}_v$.

Taking the base-change $\tilde{\Pi}$ of $\Pi$ to $\tilde{\mathbb{F}}$, it continues to have the required properties, hence we get a representation $R(\tilde{\Pi})$ and an integer $\tilde{a}\geq 1$. From Chebotarev density, it follows that $\frac 1{\tilde{a}}R(\tilde{\Pi}) = \frac 1a R(\Pi)|_{\mathrm{Gal}(\mathbb{F}/\tilde{\mathbb{F}})}$. Now applying the previous result at the place $\tilde{v}$ gives the desired result for $\Pi$ at $v$.
\end{proof}

Later we need a slight variant of Theorem \ref{EmbeddingLocalGlobalRepr} for $\GL_n/\mathbb{F}$.

\begin{thm}\label{EmbLocalGlobalReprGLn} Let $\pi$ be an essentially square-integrable irreducible smooth representation of $\GL_n(F)$. Then there exists a cuspidal automorphic representation $\Pi$ of $\GL_n/\mathbb{F}$ such that
\begin{altenumerate}
\item[{\rm (i)}] $\Pi^{\vee} = \Pi\circ c$;
\item[{\rm (ii)}] $\Pi_{\infty}$ is regular algebraic;
\item[{\rm (iii)}] $\Pi_x$ is supercuspidal;
\end{altenumerate}
and such that the component $\Pi_w$ is an unramified twist of $\pi$.
\end{thm}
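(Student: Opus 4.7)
The strategy is to run the chain of transfers used in Theorem \ref{GLnToG} in reverse, combined with a refined version of Theorem \ref{EmbeddingLocalGlobalRepr} that additionally prescribes the behavior at the auxiliary ramification place $x$. I would first strengthen Theorem \ref{EmbeddingLocalGlobalRepr} at $x$: since $\mathbb{D}$ is ramified at $x$, the group $\mathbb{D}_x^{\times}$ is the unit group of a central division algebra over $\mathbb{F}_x$, and the local Jacquet--Langlands correspondence bijects its irreducible smooth representations with essentially square-integrable representations of $\GL_n(\mathbb{F}_x)$. Pick any supercuspidal representation $\rho$ of $\GL_n(\mathbb{F}_x)$ and let $\rho^{\prime}$ be its Jacquet--Langlands transfer to $\mathbb{D}_x^{\times}$. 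Running the trace-formula argument underlying Corollary VI.2.5 and Lemma VI.2.11 of \cite{HarrisTaylor} with a pseudocoefficient of $\rho^{\prime\vee}$ as the test function at $x$ (in place of the arbitrary choice made in the proof of Theorem \ref{EmbeddingLocalGlobalRepr}) produces a representation $\pi_f$ of $\mathbf{G}(\mathbb{A}_f)$ satisfying all the conclusions of Theorem \ref{EmbeddingLocalGlobalRepr} with the additional property that $\pi_x = \rho^{\prime}$.

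Next, I would transfer $\pi_f$ to an automorphic representation of $\GL_n/\mathbb{F}$. This reverses the route used in Theorem \ref{GLnToG}: descend $\pi_f$ along Clozel's base change from $\mathbf{G}$ to $\mathbb{D}^{\times}$ (Theorem VI.2.9 and Theorem VI.2.1 of \cite{HarrisTaylor}), then apply the global Jacquet--Langlands correspondence (Theorem VI.1.1 of \cite{HarrisTaylor}) to obtain an automorphic representation $\Pi$ of $\GL_n/\mathbb{F}$. Because $\pi_x=\rho^{\prime}$ corresponds under local Jacquet--Langlands to the supercuspidal representation $\rho$, the global Jacquet--Langlands lift produces a genuinely cuspidal (not merely discrete) automorphic representation.

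The properties are then verified as follows. The identity $\Pi^{\vee}=\Pi\circ c$ comes automatically from base change out of a unitary group, via Theorem VI.2.1 of \cite{HarrisTaylor}. Regular algebraicity of $\Pi_{\infty}$ follows from $W_\xi^{\ast}(\pi_f)\neq 0$ with $\xi$ algebraic, exactly as in Theorem \ref{GLnToG}. At $x$, the local Jacquet--Langlands transfer of $\pi_x=\rho^{\prime}$ is the supercuspidal $\rho$, so $\Pi_x\cong\rho$ is supercuspidal, giving (iii). At $w$, $\mathbb{D}$ splits and the group $\mathbf{G}$ is already $\GL_n$ at $w$; the base change and Jacquet--Langlands steps are therefore trivial at $w$, so $\Pi_w$ coincides with $\pi_w$, which by Step 1 is an unramified twist of $\pi$.

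The main obstacle is the simultaneous control of two local components: $\pi_w$ must be an unramified twist of the prescribed essentially square-integrable $\pi$ at $w$, while $\pi_x$ must be of the prescribed supercuspidal type at $x$, and both must occur in the cohomology of a Shimura variety with algebraic coefficients. This is overcome by engineering the test functions place-by-place in the trace formula used for Theorem \ref{EmbeddingLocalGlobalRepr} (unramified almost everywhere, a function picking out unramified twists of $\pi$ at $w$, a pseudocoefficient of $\rho^{\prime\vee}$ at $x$, and Kottwitz's Euler--Poincar\'e function at infinity) and invoking the linear independence of characters on the spectral side; all the needed automorphic ingredients are already available in Section VI of \cite{HarrisTaylor}.
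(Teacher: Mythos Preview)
Your approach is correct and is essentially what underlies the paper's one-line proof, which simply cites Corollary VI.2.6 of \cite{HarrisTaylor}. That corollary is indeed deduced from the unitary-group globalization (their VI.2.5) by base-changing to $\mathbb{D}^{\times}$ and then applying the global Jacquet--Langlands correspondence, with supercuspidality at $x$ engineered by the choice of test function there---exactly the route you sketch. One terminological quibble: the passage from $\mathbf{G}$ to $\mathbb{D}^{\times}$ via Clozel's base change is a lift, not a descent; and when you say ``$\Pi_w$ coincides with $\pi_w$'' you should allow for the twist by the auxiliary Hecke character $\psi$ that enters in Theorem \ref{GLnToG}, but since one can arrange $\psi$ to be unramified at the prime below $w$ this still yields an unramified twist of $\pi$, which is all that is claimed.
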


\begin{proof} This follows from Corollary VI.2.6 of \cite{HarrisTaylor}.
\end{proof}

\section{Relation to the Lubin-Tate tower}

In this section, we are back in the local situation. Let
\[
\beta\in \GL_n(\mathcal{O}_r)\mathrm{diag}(\varpi,1,\ldots,1)\GL_n(\mathcal{O}_r)
\]
be basic. Then the tower
\[
(\mathrm{Spf}\ R_m)_{m\geq 0}=(\mathrm{Spf}\ R_{\beta,m}\otimes_{\mathcal{O}_r} \breve{\mathcal{O}})_{m\geq 0}
\]
is called the Lubin-Tate tower. It does not depend on $\beta$, as the base-change of $\overline{H}_{\beta}$ to $\bar{\kappa}$ does not depend on $\beta$: it is the unique one-dimensional formal $\mathcal{O}$-module $\tilde{H}$ of height $n$ over $\bar{\kappa}$.

We describe compatible actions of
\[
\GL_n(\mathcal{O})\times (\mathcal{D}^{\times}\times W_F)_0
\]
on $\mathrm{Spf}\ R_m$, where
\[
(\mathcal{D}^{\times}\times W_F)_0 = \{(d,\tau)\in \mathcal{D}^{\times}\times W_F\mid v(d) + v(\tau)=0\}\ .
\]
It is clear how $\GL_n(\mathcal{O})$ acts. Now, for $(d,\tau)\in (\mathcal{D}^{\times}\times W_F)_0$ with $\tau\in \mathrm{Frob}^r I_F$, one gets quasi-isogenies $d^{-1}: \tilde{H}\longrightarrow \tilde{H}$ of height $r$, $F: \tilde{H}\longrightarrow F^{\ast}\tilde{H}$ of height $1$ and $\tau: \mathrm{Frob}^{r \ast}\tilde{H}\longrightarrow \tilde{H}$ of height $0$. Hence the composition $d^{-1}\times F^{-r}\times \tau:\tilde{H}\longrightarrow \tilde{H}$ defines a quasi-isogeny of degree $0$, i.e. an automorphism. We get a corresponding action on $\mathrm{Spf}\ R_m$.

Let
\[
R\psi = \lim\limits_{\longrightarrow} H^0(R\psi_{\mathrm{Spf}\ R_m}\bar{\mathbb{Q}}_{\ell})
\]
be the global sections of the nearby cycle sheaves associated to the Lubin-Tate tower. They carry an action of $\GL_n(\mathcal{O})\times (\mathcal{D}^{\times}\times W_F)_0$. It is known that this extends even further, using Hecke operators, but we will not need this part of the action.

The following theorem is well-known.

\begin{thm}\label{LubinTateCohoNice} Each representation $R^i\psi$ is an admissible smooth representation of $\GL_n(\mathcal{O})$, a smooth representation of $\mathcal{O}_\mathcal{D}^{\times}$, and a continuous representation of $I_F$. Moreover, $R^i\psi$ vanishes outside the range $0\leq i\leq n-1$.
\end{thm}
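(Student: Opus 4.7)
The plan is to reduce every assertion to the corresponding statement at a finite level of the tower, where Theorem~\ref{RepresentationNice} has already supplied finiteness, continuity, and the vanishing range. Fix any basic $\beta$. Then $R_m = R_{\beta,m}\otimes_{\mathcal{O}_r}\breve{\mathcal{O}}$ is an unramified base change, so Theorem~\ref{RepresentationNice} (combined with the comparison of formal and algebraic nearby cycles, Theorem~3.1 of \cite{Berkovich2}, already used in the proof of Theorem~\ref{RepresentationNice}) shows that $H^0(R^i\psi_{\mathrm{Spf}\ R_m}\bar{\mathbb{Q}}_{\ell})$ is a finite-dimensional $\bar{\mathbb{Q}}_{\ell}$-vector space, continuously acted on by $I_F = I_{F_r}$ and by $\GL_n(\mathcal{O}/\varpi^m)$, and vanishing outside $0\leq i\leq n-1$. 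Taking the filtered colimit over $m$ along the pullback maps, the vanishing range passes at once to $R^i\psi$.

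Next, writing $K(m) = \ker(\GL_n(\mathcal{O}) \to \GL_n(\mathcal{O}/\varpi^m))$, the tower construction identifies
\[
(R^i\psi)^{K(m)} = H^0(R^i\psi_{\mathrm{Spf}\ R_m}\bar{\mathbb{Q}}_{\ell}),
\]
which is finite-dimensional. Since the $K(m)$ form a neighbourhood basis of the identity, every vector is fixed by some $K(m)$, proving smoothness, and admissibility is exactly the finite-dimensionality of these invariants. Continuity of the $I_F$-action descends from finite level to the colimit, because any given vector and any given group element are witnessed at some finite $m$.

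The only subtle point is smoothness of the $\mathcal{O}_\mathcal{D}^{\times}$-action. The key observation is that this action commutes with that of $\GL_n(\mathcal{O})$: the former modifies the rigidification $\rho\colon \tilde H \to \bar H$ by postcomposition with $d^{-1}$, while the latter modifies the Drinfeld level structure, and the two act on independent pieces of moduli data. Consequently $\mathcal{O}_\mathcal{D}^{\times}$ preserves each finite-dimensional subspace $(R^i\psi)^{K(m)}$, and it suffices to show that its resulting continuous action on this finite-dimensional $\bar{\mathbb{Q}}_{\ell}$-space is smooth. For this I would invoke the following general fact: $\mathcal{O}_\mathcal{D}^{\times}$ is an extension of the finite group $(\mathcal{O}_\mathcal{D}/\varpi_\mathcal{D})^{\times}$ by the pro-$p$ group $1+\varpi_\mathcal{D}\mathcal{O}_\mathcal{D}$, and any continuous representation of a pro-$p$ group on a finite-dimensional $\bar{\mathbb{Q}}_{\ell}$-space with $\ell\neq p$ has finite image (the image preserves a $\bar{\mathbb{Z}}_{\ell}$-lattice, whose principal congruence subgroup is pro-$\ell$ and therefore intersects the pro-$p$ image trivially). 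Hence an open subgroup acts trivially on any vector, giving smoothness.

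The main obstacle in this plan is not any single logical step but rather the technical verification of \emph{continuity} of the $\mathcal{O}_\mathcal{D}^{\times}$-action on each $(R^i\psi)^{K(m)}$. The action arises from automorphisms of $\mathrm{Spf}\ R_m$ induced by quasi-isogenies of $\tilde H$, and to convert this into a continuous action on the nearby-cycle cohomology one has to argue, via Theorem~\ref{Algebraization} and functoriality of nearby cycles under the algebraization, that nearby $d\in \mathcal{O}_\mathcal{D}^{\times}$ induce nearby endomorphisms of the finite-dimensional space $(R^i\psi)^{K(m)}$; everything else in the statement is then a direct consequence of the finite-level picture.
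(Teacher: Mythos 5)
Your reduction of admissibility, $I_F$-continuity, and the vanishing range to Theorem~\ref{RepresentationNice} at finite level is correct and is exactly what the paper does. Your structural observations about the $\mathcal{O}_\mathcal{D}^{\times}$-action are also sound: the commutation with $\GL_n(\mathcal{O})$ does hold (the two actions modify independent parts of the moduli data), so $\mathcal{O}_\mathcal{D}^{\times}$ does preserve each $(R^i\psi)^{K(m)}$, and the pro-$p$/pro-$\ell$ argument correctly shows that $\ell$-adic continuity implies smoothness at each finite level, hence smoothness of the colimit.

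However, there is a genuine gap that you have yourself flagged but not closed: the $\ell$-adic continuity of the $\mathcal{O}_\mathcal{D}^{\times}$-action on $(R^i\psi)^{K(m)}$ is the entire content of the smoothness assertion, and the sketch you offer for it is unlikely to go through. Theorem~\ref{Algebraization} produces an algebraization $\mathcal{R}_{\overline{\beta},m}$ over $\mathcal{O}_r$, but the $\mathcal{O}_\mathcal{D}^{\times}$-action is only natural after base change to $\breve{\mathcal{O}}$ (it arises from quasi-isogenies of the unique formal module $\tilde{H}$ over $\bar\kappa$), and these automorphisms of the formal completion do \emph{not} extend to automorphisms of the finite-type scheme $\mathcal{R}_{\overline{\beta},m}$; there is no reason for the algebraization to be $\mathcal{O}_\mathcal{D}^{\times}$-equivariant. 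So ``functoriality of nearby cycles under the algebraization'' does not immediately produce a continuity statement for the $\mathcal{O}_\mathcal{D}^{\times}$-action, and you would need a separate technical input. The paper supplies that input by citing Corollary~4.5 of \cite{Berkovich2} (cf.~\cite{HarrisTaylor}, Lemma~II.2.8), which is a general result about special formal schemes: a sufficiently small open subgroup of the automorphism group (in an appropriate adic topology) acts trivially on nearby-cycle cohomology. This gives smoothness directly, bypassing the continuity/pro-$p$ reduction altogether. Without some substitute for Berkovich's theorem, your argument remains incomplete at precisely the point you identified as the obstacle.
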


\begin{proof} Everything except the smoothness of the $\mathcal{O}_\mathcal{D}^{\times}$-action follows from Theorem \ref{RepresentationNice}. This smoothness follows from a result of Berkovich, Corollary 4.5 of \cite{Berkovich2}, cf. proof of \cite{HarrisTaylor}, Lemma II.2.8.
\end{proof}

Note that since $\beta$ is basic, we can use the constructions of Section \ref{NormMaps} to get an associated norm $d=N\beta\in \mathcal{D}^{\times}$, well-defined up to $\mathcal{O}_\mathcal{D}^{\times}$-conjugation. Then $(d^{-1},\tau)\in (\mathcal{D}^{\times}\times W_F)_0$ if and only if $\tau\in \mathrm{Frob}^r I_F$. It is now an easy exercise to verify the following proposition.

\begin{prop}\label{CompLubinTate} For all $h\in C_c^{\infty}(\GL_n(\mathcal{O}))$, the equality
\[
\phi_{\tau,h}(\beta) = \tr(\tau\times d^{-1}\times h^{\vee}|[R\psi])
\]
holds.
\end{prop}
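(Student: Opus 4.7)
The plan is to identify $\phi_{\tau,h}(\beta) = \tr(\tau \times h^{\vee} \mid [R\psi_\beta])$ with $\tr((d^{-1},\tau) \times h^{\vee} \mid [R\psi])$ by putting the two virtual representations on a common underlying vector space and then tracking how the Frobenius part of $\tau$ acts on each.

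First, since $R_m = R_{\beta,m} \otimes_{\mathcal{O}_r} \breve{\mathcal{O}}$, the proper base change theorem for Berkovich's formal nearby cycles (Theorem 3.1 of \cite{Berkovich2}, combined with Theorem \ref{Algebraization} to reduce to the algebraic case and with classical proper base change in \'etale cohomology) yields a canonical isomorphism of $\bar{\mathbb{Q}}_\ell$-vector spaces $R\psi_\beta \cong R\psi$ that is equivariant for the $\GL_n(\mathcal{O})$-action coming from Drinfeld level structures and for the $I_F$-action coming from the residual Galois action on the geometric generic fibre. The action of $h^{\vee}$ agrees on both sides, so it suffices to show that, under this identification, the action of a lift $\sigma \in W_{F_r}$ of $\mathrm{Frob}^r$ on $R\psi_\beta$ coincides with the action of $(d^{-1}, \sigma)$ on $R\psi$, where $\sigma$ is now interpreted through its $I_F$-component alone.

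The $W_{F_r}$-action on $R\psi_\beta$ factors as the Galois action of $\sigma$ on the geometric generic fibre (which, once we restrict to $I_F$, is the one already appearing on $R\psi$) composed with the Frobenius descent datum of $\mathrm{Spf}\, R_m$ from $\breve{\mathcal{O}}$ down to $\mathrm{Spf}\, R_{\beta,m}$ over $\mathcal{O}_r$. That descent datum is a $\GL_n(\mathcal{O})$-equivariant automorphism of $\mathrm{Spf}\, R_m$ covering the Frobenius of $\breve{\mathcal{O}}$. By the universal property of $(\mathrm{Spf}\, R_m, H_\beta, \text{level-}m\text{-structure})$ as the deformation functor of $\tilde{H}$ with level structure, any such automorphism is induced functorially by a pair consisting of a quasi-isogeny of $\tilde{H}$ and a twist of the base, which is precisely the mechanism used to construct the action of $(\mathcal{D}^{\times} \times W_F)_0$ on $R\psi$.

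It remains to identify the quasi-isogeny of $\tilde{H}$ produced by this descent datum. This is exactly the content of the Galois parametrization of Section \ref{NormMaps}: under the bijection between $\GL_n(\mathcal{O}_r)$-$\sigma$-conjugacy classes of basic $\beta$ and $\mathcal{O}_{\mathcal{D}}^{\times}$-conjugacy classes of $d = N\beta \in \mathcal{D}_r$, the Frobenius descent of $\tilde{H}$ to $\overline{H}_\beta$ over $\kappa_r$ is given by $d^{-1} \in \mathcal{D}_{-r}$ viewed as a quasi-isogeny of $\tilde{H}$; combined with the $F^{-r}$ factor built into the $(\mathcal{D}^{\times} \times W_F)_0$-action and with the tautological Galois factor $\tau$, this produces exactly the action of $(d^{-1}, \tau)$ as defined in the paragraph introducing that action on $R\psi$. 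Taking traces and pairing with $h^{\vee}$ then gives the identity.

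The principal obstacle is the last identification: one must carefully track normalizations in the Galois parametrization proposition for one-dimensional formal $\mathcal{O}$-modules to ensure that the descent datum is $d^{-1}$ rather than $d$, $d^t$, or some twist, and verify that the three factors $d^{-1}$, $F^{-r}$, and $\tau$ combine on the nose into the prescribed automorphism of $\tilde{H}$. This is essentially a matter of unwinding the contravariant relative Dieudonn\'e computation against the description of the $(\mathcal{D}^{\times} \times W_F)_0$-action, and it is the same kind of book-keeping that already underlies the second assertion of the parametrization proposition (agreement of characteristic polynomials of $N\beta$ and $\beta\beta^\sigma \cdots \beta^{\sigma^{r-1}}$).
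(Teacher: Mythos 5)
The paper gives no explicit proof of this proposition: after recording that $d=N\beta$ and that $(d^{-1},\tau)\in(\mathcal{D}^\times\times W_F)_0$ precisely when $\tau\in\mathrm{Frob}^rI_F$, it simply declares the statement an ``easy exercise.'' Your proposal fills in that exercise along the intended lines: identify $R\psi_\beta$ and $R\psi$ as $\GL_n(\mathcal{O})\times I_F$-modules via $R_m = R_{\beta,m}\otimes_{\mathcal{O}_r}\breve{\mathcal{O}}$, then match the Frobenius part of the $W_{F_r}$-action on the left against the $(\mathcal{D}^\times\times W_F)_0$-action on the right by tracing the Frobenius descent datum of $\mathrm{Spf}\, R_m$ down to $\mathcal{O}_r$ through the universal property of the deformation space, and then through the Galois parametrization of Section \ref{NormMaps} to read off the quasi-isogeny of $\tilde H$. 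This is exactly the mechanism the paper has in mind, and the step you single out as the ``principal obstacle'' --- checking that the descent datum really lands on $d^{-1}$ and combines with $F^{-r}$ and $\tau$ to give the prescribed automorphism --- is indeed the whole content of the exercise.

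Two small points of tightening. First, what you invoke as ``proper base change'' (via Theorem \ref{Algebraization} and Berkovich's comparison) is more than is needed for the identification $R\psi_\beta\cong R\psi$: since $R_m = R_{\beta,m}\otimes_{\mathcal{O}_r}\breve{\mathcal{O}}$, the two towers have literally the same geometric generic and geometric special fibres, so the formal nearby cycle sheaves agree on the nose as $I_F$-equivariant sheaves; only the extension of the $I_F$-action to $W_{F_r}$ uses the $\mathcal{O}_r$-structure, which is the descent datum you then analyze. Second, when you say ``the Frobenius descent of $\tilde H$ to $\overline{H}_\beta$ is given by $d^{-1}\in\mathcal{D}_{-r}$ \ldots combined with the $F^{-r}$ factor \ldots produces exactly the action of $(d^{-1},\tau)$,'' keep in mind that the automorphism of $\tilde H$ the paper attaches to a pair $(d',\tau)$ is $(d')^{-1}\circ F^{-r}\circ\tau$, so with $d'=d^{-1}$ it is $d\circ F^{-r}\circ\tau$, whereas the descent datum $\alpha=\gamma\circ F^r$ with $\gamma=d^{-1}$ gives $d^{-1}\circ F^{r}$; these two are inverse to each other, and the discrepancy is absorbed by the contravariance of passing to cohomology (and is precisely why the paper builds the $\gamma\mapsto\gamma^{-1}$ identification into the Galois parametrization and an extra inverse into the definition of the $(\mathcal{D}^\times\times W_F)_0$-action). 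You flag this correctly as normalization book-keeping, and that honest caveat is the right place to leave it; the approach itself is sound.
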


Now consider
\[
\tilde{[R\psi]} = \text{c-Ind}_{\GL_n(\mathcal{O})\times (\mathcal{D}^{\times}\times W_F)_0}^{\GL_n(\mathcal{O})\times \mathcal{D}^{\times}\times W_F} [R\psi]\ .
\]
In particular, for any irreducible representation $\rho$ of $\mathcal{D}^{\times}$, the space
\[
[R\psi](\rho) = \mathrm{Hom}_{\mathcal{D}^{\times}} (\rho,\tilde{[R\psi]}) = \mathrm{Hom}_{\mathcal{O}_\mathcal{D}^{\times}}(\rho|_{\mathcal{O}_\mathcal{D}^{\times}},[R\psi])
\]
carries an action of $\GL_n(\mathcal{O})\times W_F$. Let $\pi=\mathrm{JL}(\rho)$ be the associated representation of $\GL_n(F)$ via the Jacquet-Langlands correspondence, characterized by the equality
\[
\tr(g|\pi) = (-1)^{n-1} \tr(d|\rho)
\]
for all regular elliptic $g\in \GL_n(F)$ and $d\in \mathcal{D}^{\times}$ such that $g$ and $d$ have the same characteristic polynomial.

Recall that we have already shown that there is a nonnegative $\mathbb{Q}$-linear combination of $W_F$-representations $\mathrm{rec}(\pi)$ such that for all $\tau$ and $h$
\[
\tr(f_{\tau,h}|\pi) = \tr(\tau|\mathrm{rec}(\pi)) \tr(h|\pi)\ .
\]
The following corollary translates this property into a statement about the cohomology of the Lubin-Tate tower.

\begin{cor}\label{LubinTateCoho} Let $\rho$ be an irreducible representation of $\mathcal{D}^{\times}$ such that $\pi=\mathrm{JL}(\rho)$ is supercuspidal. Then, as a virtual representation of the group $\GL_n(\mathcal{O})\times W_F$, the representation $[R\psi](\rho^{\vee})$ is equal to $(-1)^{n-1} \pi^{\vee}|_{\GL_n(\mathcal{O})}\otimes \mathrm{rec}(\pi)$.
\end{cor}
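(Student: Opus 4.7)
The strategy is to combine the trace identity
\[
\tr(f_{\tau,h}|\pi) = \tr(\tau|\mathrm{rec}(\pi))\,\tr(h|\pi)
\]
from part (a) of Theorem \ref{MainTheorem2} with the local comparison
$\phi_{\tau,h}(\beta) = \tr(\tau\times d^{-1}\times h^{\vee}|[R\psi])$
of Proposition \ref{CompLubinTate} (valid for $\beta \in B_r$ basic and $d = N\beta \in \mathcal{D}_r$), and to integrate both sides against the character of $\rho$ on $\mathcal{D}_r$ so as to extract the $\rho^{\vee}$-isotypic component of $[R\psi]$.

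The first step is to convert $\tr(f_{\tau,h}|\pi)$ into an integral over $\mathcal{D}_r$. Since $f_{\tau,h}$ and $\phi_{\tau,h}$ are associated, base change gives $\tr(f_{\tau,h}|\pi) = \tr((\phi_{\tau,h},\sigma)|\Pi)$, where $\Pi$ denotes the base-change lift of $\pi$ to $\GL_n(F_r)$. The twisted Weyl integration formula expresses this as a sum over conjugacy classes of maximal tori of integrals $\int \Delta^2(Nt)\,TO_{t\sigma}(\phi_{\tau,h})\,\Theta_\pi(Nt)\,dt$. Since $\phi_{\tau,h}$ is supported on the basic locus $B_r$, only anisotropic tori contribute. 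The norm bijection $B_r/\mathrm{conj} \leftrightarrow \mathcal{D}_r/\mathrm{conj}$ of Section \ref{NormMaps}, Corollary \ref{IntegrationLemma}, and the Jacquet--Langlands character identity $\Theta_\pi(\gamma) = (-1)^{n-1}\tr(d|\rho)$ (for $\gamma \leftrightarrow d$ elliptic regular) then convert this into an integral over $\mathcal{D}_r$; substituting Proposition \ref{CompLubinTate} yields
\[
\tr(\tau|\mathrm{rec}(\pi))\,\tr(h|\pi) = (-1)^{n-1}\,Z\int_{\mathcal{D}_r}\tr(\tau\times d^{-1}\times h^{\vee}|[R\psi])\,\tr(d|\rho)\,dd
\]
for a normalization constant $Z$ tracking the various Weyl discriminants and Haar measures.

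The second step is to identify the right-hand integral as $Z^{-1}\tr((h^{\vee},\tau)|[R\psi](\rho^{\vee}))$. Since $\mathcal{D}^{\times}/F^{\times}$ is compact, $\rho$ is finite-dimensional, and $[R\psi]$ is smooth admissible over $\mathcal{O}_\mathcal{D}^{\times}$ by Theorem \ref{LubinTateCohoNice}. Writing $d = \varpi_\mathcal{D}^{r} u$ with $u \in \mathcal{O}_\mathcal{D}^{\times}$ and fixing $\tau_0 \in \mathrm{Frob}^r I_F$, one factors the action of $(d^{-1},\tau)$ on $[R\psi]$ as $(u^{-1},1)\cdot(\varpi_\mathcal{D}^{-r},\tau_0)\cdot(1,\tau_0^{-1}\tau)$, where $(u^{-1},1)\in \mathcal{O}_\mathcal{D}^\times$ acts via the compact factor and the remaining two terms are fixed elements of $(\mathcal{D}^{\times}\times W_F)_0$. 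The integrand then becomes the product of a matrix coefficient of $\rho$ in $u$ with a matrix coefficient of $[R\psi]$. Applying Schur orthogonality on the compact group $\mathcal{O}_\mathcal{D}^{\times}$ to each of its isotypic components projects the integral onto the $\rho^{\vee}|_{\mathcal{O}_\mathcal{D}^{\times}}$-isotypic piece $\rho^{\vee}\otimes [R\psi](\rho^{\vee}) \subset [R\psi]$, and the residual action of $(\varpi_\mathcal{D}^{-r},\tau_0)(1,\tau_0^{-1}\tau)$ descends to the prescribed $\tau$-action on $[R\psi](\rho^{\vee})$. The dimension factor $\dim\rho$ from Schur orthogonality cancels the constant $Z$, and combining with the previous display (using $\tr(h|\pi) = \tr(h^{\vee}|\pi^{\vee})$) produces
\[
\tr((h^{\vee},\tau)|[R\psi](\rho^{\vee})) = (-1)^{n-1}\,\tr(h^{\vee}|\pi^{\vee})\,\tr(\tau|\mathrm{rec}(\pi))
\]
for all admissible $(h,\tau)$; since such traces determine a virtual $\GL_n(\mathcal{O})\times W_F$-representation uniquely, the corollary follows.

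The main technical obstacle is verifying that the two normalization constants $Z$---one arising from the twisted Weyl integration formula combined with the $B_r \leftrightarrow \mathcal{D}_r$ change of variables of Corollary \ref{IntegrationLemma}, the other from Schur orthogonality on $\mathcal{O}_\mathcal{D}^{\times}$---agree on the nose. This is a book-keeping exercise reflecting the matching of Haar measures already enforced in Section \ref{NormMaps} together with the common appearance of $\dim\rho$, but it must be tracked carefully. A secondary subtlety is that the $\tau$-action on $[R\psi](\rho^{\vee})$ is defined via the compact induction $\tilde{[R\psi]}$; one must check that the $\mathbb{Z}$-graded sum implicit there collapses to the stated finite contribution after pairing with the finite-dimensional $\rho^{\vee}$, which follows from the defining constraint $v(d) + v(\tau) = 0$ on $(\mathcal{D}^{\times}\times W_F)_0$.
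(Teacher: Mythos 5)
Your overall strategy matches the paper's: combine the trace identity from part (a) with Proposition \ref{CompLubinTate}, pass to the twisted trace of $\phi_{\tau,h}$ on the base change $\Pi$, use the Jacquet--Langlands character identity and the change of variables $B_r \to \mathcal{D}_r$ of Corollary \ref{IntegrationLemma}, and extract the $\rho^{\vee}$-isotypic part by orthogonality. The second step of your argument (the Schur orthogonality and the factoring of $(d^{-1},\tau)$ through $(\mathcal{D}^\times\times W_F)_0$) correctly expands the paper's terse final equality.

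However, there is a genuine error in the justification of the first step. You write that ``$\phi_{\tau,h}$ is supported on the basic locus $B_r$,'' and use this to conclude that only anisotropic tori contribute. This is false: by construction $\phi_{\tau,h}$ is supported on the \emph{entire} double coset $\GL_n(\mathcal{O}_r)\mathrm{diag}(\varpi,1,\ldots,1)\GL_n(\mathcal{O}_r)$, and Proposition \ref{GeometricReduction}(vi) gives a nontrivial value of $\phi_{\tau,h}$ on non-basic $\beta$ in terms of the infinitesimal and \'{e}tale parts of $\overline{H}_\beta$. The restriction of the integral to $B_r$ is correct, but the mechanism is different: it comes from the vanishing of the twisted character $\Theta_{\Pi,\sigma}$ (equivalently, via the base-change relation, of $\Theta_\pi(N\beta)$ for $\pi$ supercuspidal) on the non-basic part of the double coset. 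This is exactly the observation the paper makes before its computation. Related to this, you do not need the full twisted Weyl integration formula with its $\Delta^2$ and $|W|^{-1}$ factors: one simply writes $\tr((\phi_{\tau,h},\sigma)|\Pi)=\int \phi_{\tau,h}\,\Theta_{\Pi,\sigma}$ directly, restricts the domain to $B_r$ by the support property just mentioned, and then applies Corollary \ref{IntegrationLemma} to the resulting integrand. This route avoids introducing the normalization constant $Z$ altogether, and removes the book-keeping obstacle you flag at the end. Once these corrections are made, your argument agrees with the paper's.
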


\begin{rem} One can define a natural action of $\GL_n(F)\times W_F$ on $[R\psi](\rho^{\vee})$, and the proposition stays true when considering the action of $\GL_n(F)\times W_F$ on both objects. However, our methods do not prove this.
\end{rem}

\begin{proof} Let $h\in C_c^{\infty}(\GL_n(\mathcal{O}))$ and $\tau\in \mathrm{Frob}^r I_F$. Let $\Pi$ be the base-change of $\pi$ to $\GL_n(F_r)$. Note that if
\[
\beta\in \GL_n(\mathcal{O}_r)\mathrm{diag}(\varpi,1,\ldots,1)\GL_n(\mathcal{O}_r)
\]
lies in the support of the twisted character $\Theta_{\Pi,\sigma}$ of $\Pi$, then $\beta$ is basic, i.e. $\beta\in B_r$. We compute, using Corollary \ref{IntegrationLemma},
\[\begin{aligned}
\tr(\tau|\mathrm{rec}(\pi)) \tr(h|\pi) = \tr(f_{\tau,h}|\pi) &= \tr((\phi_{\tau,h},\sigma)|\Pi)\\
&=\int_{B_r} \phi_{\tau,h}(\beta) \Theta_{\Pi,\sigma}(\beta) d\beta\\
&=(-1)^{n-1} \int_{B_r} \tr(\tau\times N\beta^{-1}\times h^{\vee}|[R\psi]) \Theta_{\rho}(N\beta) d\beta\\
&=(-1)^{n-1} \int_{\mathcal{D}_r} \tr(\tau\times d^{-1}\times h^{\vee}|[R\psi]) \Theta_{\rho}(d) dd\\
&=(-1)^{n-1} \tr(\tau\times h^{\vee}|[R\psi](\rho))\ .
\end{aligned}\]
This exactly proves the corollary.
\end{proof}

Now we can check condition (iii) of Lemma \ref{LemmaToProve}, finishing the proof of parts (a) and (b) of Theorem \ref{MainTheorem2}.

\begin{cor} For any supercuspidal representation $\pi$ of $\GL_n(F)$, the virtual representation $\mathrm{rec}(\pi)$ can be represented by a $\mathbb{Z}$-linear combination of representations.
\end{cor}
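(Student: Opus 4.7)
The plan is to extract the integrality of $\mathrm{rec}(\pi)$ from the identity
\[
[R\psi](\rho^\vee) = (-1)^{n-1}\, \pi^\vee|_{\GL_n(\mathcal{O})} \otimes \mathrm{rec}(\pi)
\]
furnished by Corollary \ref{LubinTateCoho}. The key observation is that the left-hand side lies tautologically in the integral Grothendieck group $K_0(\GL_n(\mathcal{O}) \times W_F)$: it is an alternating sum of the honest representations $R^i\psi(\rho^\vee)$, each admissible as a $\GL_n(\mathcal{O})$-representation and continuous as a $W_F$-representation (Theorem \ref{LubinTateCohoNice}).

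Next, decompose $\pi^\vee|_{\GL_n(\mathcal{O})} = \bigoplus_{\tau} m_\tau\, \tau$ with $m_\tau \in \mathbb{Z}_{\geq 0}$ summed over irreducible $\GL_n(\mathcal{O})$-representations $\tau$, and (using condition (ii) of Lemma \ref{LemmaToProve}) write $\mathrm{rec}(\pi) = \sum_{\sigma} c_\sigma \sigma$ with $c_\sigma \in \mathbb{Q}_{\geq 0}$ summed over irreducible $W_F$-representations. Comparing $\tau \otimes \sigma$-isotypic components in the displayed identity yields $m_\tau c_\sigma \in \mathbb{Z}$ for every pair $(\tau, \sigma)$ with $m_\tau > 0$. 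Consequently, it suffices to exhibit a single irreducible $\GL_n(\mathcal{O})$-representation $\tau_0$ occurring in $\pi^\vee|_{\GL_n(\mathcal{O})}$ with multiplicity exactly one: then $c_\sigma$ equals the integer coefficient of $\tau_0 \otimes \sigma$ in $(-1)^{n-1}[R\psi](\rho^\vee)$, and we are done.

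Existence of such a $\tau_0$ is provided by the Bushnell--Kutzko theory of simple types, which is in the same circle of ideas as \cite{SchneiderZinkTypes} used in Section \ref{TypeTheorySection}. Since $\pi^\vee$ is supercuspidal, it is compactly induced from a pair $(\tilde J, \tilde \lambda)$, where $\tilde J \subset \GL_n(F)$ is compact modulo the center and $\tilde\lambda$ is irreducible; after conjugating, we may assume the open compact subgroup $J := \tilde J \cap \GL_n(\mathcal{O})$ sits inside $\GL_n(\mathcal{O})$, and the intertwining property of simple types gives $\dim \Hom_J(\tilde\lambda|_J,\, \pi^\vee|_J) = 1$. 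By Frobenius reciprocity,
\[
\dim \Hom_{\GL_n(\mathcal{O})}\bigl(\mathrm{Ind}_J^{\GL_n(\mathcal{O})} \tilde\lambda|_J,\; \pi^\vee|_{\GL_n(\mathcal{O})}\bigr) = 1.
\]
Decomposing the finite-dimensional induced representation as $\bigoplus_i n_i \tau_i$ with $n_i \geq 1$, the identity $\sum_i n_i m_{\tau_i} = 1$ forces $n_i = m_{\tau_i} = 1$ for exactly one index $i$; this $\tau_i$ serves as $\tau_0$.

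The main obstacle is this last step. The multiplicity-one conclusion is standard given Bushnell--Kutzko, but invoking it cleanly requires importing more type-theoretic machinery than the rest of the argument uses. A variant that sidesteps full type theory would be to prove directly that $\gcd \{m_\tau : m_\tau > 0\} = 1$ for supercuspidal $\pi^\vee$; this might be extractable from the Schneider--Zink results already in play, since those produce $K$-types with controlled multiplicities in the tempered representations containing the supercuspidal support of $\pi^\vee$.
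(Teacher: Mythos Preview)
Your overall strategy is exactly the paper's: use Corollary \ref{LubinTateCoho} to pin $\mathrm{rec}(\pi)$ against an honest virtual representation, then isolate a $\GL_n(\mathcal{O})$-type $\tau_0$ occurring with multiplicity one in $\pi^\vee|_{\GL_n(\mathcal{O})}$ so that the $\tau_0$-isotypic component reads off $(-1)^{n-1}\mathrm{rec}(\pi)$ integrally. The only divergence is in how that multiplicity-one type is produced. The paper invokes the theory of newvectors of Jacquet--Piatetski-Shapiro--Shalika: for $\pi^\vee$ of conductor $c$, the fixed space under the standard congruence subgroup $K_1(\varpi^c)\subset\GL_n(\mathcal{O})$ is one-dimensional, and your own $\sum_i n_i m_{\tau_i}=1$ trick then extracts an irreducible $\GL_n(\mathcal{O})$-constituent of $\mathrm{Ind}_{K_1(\varpi^c)}^{\GL_n(\mathcal{O})}\mathbf{1}$ appearing in $\pi^\vee$ with multiplicity one.

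So the difference is purely in the input: newvectors versus Bushnell--Kutzko. The newvector route is lighter and self-contained, and avoids precisely the obstacle you flag---there is no need to import the full machinery of simple types or to worry about whether $\tilde J\cap\GL_n(\mathcal{O})$ really coincides with the compact group carrying the type (your claim $\dim\Hom_J(\tilde\lambda|_J,\pi^\vee|_J)=1$ is correct in spirit but, as stated, would need the genuine BK pair $(J,\lambda)$ rather than the ad hoc intersection). Your approach works once that is tidied up, but the paper's choice is the cleaner one here.
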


\begin{proof} The theory of newvectors for $\GL_n$, \cite{JacquetPiatetskiShapiroShalika}, exhibits a representation of $\GL_n(\mathcal{O})$ that occurs with multiplicity $1$ in $\pi^{\vee}|_{\GL_n(\mathcal{O})}$. By the last proposition, the corresponding isotypic component of $[R\psi](\mathrm{JL}^{-1}(\pi)^{\vee})$ is a $\mathbb{Z}$-linear combination of representations of $W_F$ that represents $(-1)^{n-1}\mathrm{rec}(\pi)$.
\end{proof}

\section{Bijectivity of the correspondence}\label{Bijectivity}

For an irreducible smooth representation $\pi$ of $\GL_n(F)$, let $\sigma(\pi) = \mathrm{rec}(\pi)(\frac{1-n}2)$ be the associated Weil group representation, normalized appropriately. In this section, we will prove that $\pi\mapsto \sigma(\pi)$ gives a bijection between irreducible supercuspidal representations of $\GL_n(F)$ and irreducible representations of $W_F$ of dimension $n$.

\begin{thm}\label{Functoriality} The correspondence $\pi\mapsto \sigma(\pi)$ has the following functorial properties.
\begin{altenumerate}
\item[{\rm (i)}] If $n=1$, then $\sigma(\pi)$ is given by local class-field theory.

\item[{\rm (ii)}] If $\pi$ is an irreducible subquotient of the normalized parabolic induction of the irreducible representation $\pi_1\otimes\cdots\otimes\pi_t$ of $\GL_{n_1}(F)\times\cdots\times\GL_{n_t}(F)$, then
\[
\sigma(\pi)=\sigma(\pi_1)\oplus\ldots\oplus\sigma(\pi_t)\ .
\]

\item[{\rm (iii)}] If $\chi$ is a character of $F^{\times}$, then $\sigma\left(\pi\otimes \chi\circ \det\right) = \sigma(\pi)\otimes \sigma(\chi)$.

\item[{\rm (iv)}] If $F^{\prime}/F$ is a cyclic Galois extension of prime degree, if $\pi$ is an irreducible supercuspidal representation of $\GL_n(F)$, and $\Pi$ is the base-change lift to $\GL_n(F^{\prime})$, then
\[
\sigma(\Pi) = \sigma(\pi)|_{W_{F^{\prime}}}\ .
\]

\item[{\rm (v)}] If $\sigma(\pi)$ is unramified, then $\pi$ has an Iwahori-fixed vector (equivalently, the supercuspidal support of $\pi$ consists of unramified characters).
\end{altenumerate}
\end{thm}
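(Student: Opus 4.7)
Item (ii) is literally part (b) of Theorem \ref{MainTheorem2}, proved in Sections 3--11. For (i), take $n=1$: the formal group $\overline{H}_{\beta}$ is the unique one-dimensional formal $\mathcal{O}$-module of height $1$ over $\bar\kappa$, and $R_\beta$ is formally \'{e}tale over $\mathcal{O}_r$. The tower $R_{\beta,m}\otimes_{\mathcal{O}_r}\breve{\mathcal{O}}$ is the classical Lubin--Tate tower, whose $W_F$-action on its generic fibre realizes the local reciprocity isomorphism by Lubin--Tate theory. Directly evaluating $\phi_{\tau,h}(\beta)$ on the unique such $\beta$ and comparing with the defining relation for $\mathrm{rec}(\pi)$ identifies $\mathrm{rec}(\pi)$ with $\pi\circ\mathrm{Art}_F^{-1}$; for $n=1$ the Tate twist is trivial so $\sigma(\pi)=\mathrm{rec}(\pi)$ agrees with local class field theory.

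For (iii), proceed globally. By Theorem \ref{EmbLocalGlobalReprGLn} it suffices to treat the case in which $\pi$ is, up to an unramified twist, the component $\Pi_w$ of a cuspidal $\Pi$ on $\GL_n/\mathbb{F}$ satisfying the hypotheses of Theorem \ref{ConstructionGaloisRepr}. Construct a conjugate self-dual algebraic Hecke character $X$ of $\mathbb{A}_{\mathbb{F}}^\times/\mathbb{F}^\times$ with $X_w=\chi$, so that $\Pi\otimes X$ still meets those hypotheses. Applying Chebotarev at split unramified places---where the Satake parameters transform by the Satake twist---gives the virtual identity $R(\Pi\otimes X)=R(\Pi)\otimes \chi_X$, with $\chi_X$ the Galois character of $X$. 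Localizing at $w$ and invoking (i) to identify $\chi_X|_{W_F}$ with $\sigma(\chi)$ yields (iii) up to an unramified twist; the unramified-twist case is immediate from (ii).

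For (iv), the global strategy is the same. Embed $\pi$ (up to unramified twist) as $\Pi_w$ in a global $\Pi$, and choose a cyclic extension $\mathbb{F}'/\mathbb{F}$ of the same prime degree as $F'/F$ whose completion at $w$ is $F'/F$, arranged generically enough that Arthur--Clozel cyclic base change produces an automorphic representation $\mathrm{BC}(\Pi)$ on $\GL_n/\mathbb{F}'$ to which Theorem \ref{ConstructionGaloisRepr} applies (possibly after decomposing an isobaric sum via (ii)). The local component of $\mathrm{BC}(\Pi)$ at a place $w'$ above $w$ is the local base-change lift of $\Pi_w$. Chebotarev applied at split unramified places of $\mathbb{F}$ identifies $R(\mathrm{BC}(\Pi))$ with $R(\Pi)|_{\mathrm{Gal}(\bar{\mathbb{F}}/\mathbb{F}')}$, and localizing at $w'$ gives the desired equality $\sigma(\mathrm{BC}(\pi))=\sigma(\pi)|_{W_{F'}}$.

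The real obstacle is (v). The key input is the theorem of \cite{ScholzeGLn} describing the inertia-invariant nearby cycles $[R\psi_\beta]^{I_F}$ for $\beta$ in a suitably generic (``regular'') locus, in terms that are purely combinatorial and insensitive to everything but the Iwahori--Bernstein component of $\GL_n(F)$. The plan is to substitute this geometric description into the identity $\tr(f_{\tau,h}\mid\pi)=\tr(\tau\mid\mathrm{rec}(\pi))\tr(h\mid\pi)$, restricted to $\tau\in I_F$: by hypothesis $\mathrm{rec}(\pi)$ is unramified, so the right-hand side equals $n\cdot\tr(h\mid\pi)$. As $\beta$ ranges over the regular locus and $h$ varies, the geometric formula for $\phi_{\tau,h}$ expresses $\tr(f_{\tau,h}\mid\pi)$ through data that can only detect representations whose supercuspidal support consists of unramified characters; matching this against $n\cdot\tr(h\mid\pi)$ and invoking the Bernstein decomposition together with the faithfulness of the Bernstein-center action on each block forces $\pi$ itself to lie in the Iwahori--Bernstein block, which is equivalent to having an Iwahori-fixed vector. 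The main difficulty is squeezing from the regular locus a family of test data rich enough to separate the Iwahori block from every other Bernstein component, for which the precise shape of the geometric result of \cite{ScholzeGLn} is essential.
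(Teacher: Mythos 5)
Parts (i)--(iv) of your proposal are sound and essentially follow the paper. For (i) the paper uses exactly the Lubin--Tate description you invoke. Part (ii) is, as you say, just part (b) of Theorem~\ref{MainTheorem2}. For (iii) and (iv) the paper simply cites Lemmas VII.2.1 and VII.2.4 of \cite{HarrisTaylor}, noting that the proofs carry over with Theorem~\ref{ConstructionGaloisRepr} in place of HT's Theorem VII.1.9; your sketches are reasonable expansions of those arguments and match them in spirit. (One small point worth being careful about in (iii): the ``unramified-twist case'' does not literally fall out of (ii); it needs a separate easy argument directly from the defining trace identity or from Chebotarev, since twisting a supercuspidal $\pi$ by an unramified $\chi$ changes $\mathrm{rec}(\pi)$ in a way not governed by the induction statement.)

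The real issue is (v), which you frankly admit you only have a ``plan'' for. Your intended argument --- integrate the defining identity over $\tau\in I_F$, use the regular description of the nearby cycles, and conclude that $\pi$ must lie in the Iwahori block by ``Bernstein-center faithfulness'' --- is not carried through and does not in fact go the way you describe. The paper's argument is sharper and goes by contraposition: by (i) and (ii) reduce to $\pi$ supercuspidal on $\GL_n(F)$ with $n\geq 2$, and then show directly that $\sigma(\pi)^{I_F}=0$. The mechanism is: set $\phi_{r,h}=\int_{\mathrm{Frob}^r I_F}\phi_{\tau,h}\,d\tau$, identify this with the semisimple trace $\mathrm{tr}^{\mathrm{ss}}(\mathrm{Frob}^r\times h^{\vee}\mid [R\psi_\beta])$, and use Lemma 7.5 of \cite{Scholze} together with Corollary 5.6 of \cite{ScholzeGLn} (the description of $R_{I_F}R\psi_\beta$) to show that $\phi_{r,h}(\beta)$ is \emph{constant} on the basic locus. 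Then, exactly as in the proof of Corollary~\ref{LubinTateCoho}, the orbital-integral identities of Section~4 and the Jacquet--Langlands correspondence convert $\tr(\mathrm{Frob}^r\mid\sigma(\pi)^{I_F})\tr(h\mid\pi)$ into a constant times $\int_{\mathcal{D}_r}\Theta_\rho(d)\,dd$, which vanishes because for $n\geq 2$ the representation $\rho=\mathrm{JL}^{-1}(\pi)$ of $\mathcal{D}^\times$ has no $\mathcal{O}_{\mathcal{D}}^\times$-invariants. Your sketch contains none of the semisimple-trace bookkeeping, the constancy claim, or the Jacquet--Langlands/vanishing step, so (v) as written is a genuine gap; what you have is a heuristic, not a proof.
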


\begin{proof} Part (i) follows directly from the comparison with Lubin-Tate tower, and the well-known description of its cohomology. Alternatively, one can deduce it from global class-field theory and the local-global compatibility result for $\mathrm{rec}(\pi)$.

Part (ii) is part (b) of Theorem \ref{MainTheorem2}, which we have already proved.

Assertions (iii) and (iv) are Lemma VII.2.1 and Lemma VII.2.4 of \cite{HarrisTaylor}, respectively. Their proofs carry over without change, using our Theorem \ref{ConstructionGaloisRepr} instead of Theorem VII.1.9 of \cite{HarrisTaylor}.

Finally, part (v) is a corollary of our previous work. Using parts (i) and (ii), one immediately reduces to the case that $\pi$ is a supercuspidal representation of $\GL_n(F)$ for $n\geq 2$. In that case, we prove that $\sigma(\pi)^{I_F}=0$.

For this purpose, introduce for any $r\geq 1$ the function $\phi_{r,h} = \int_{\mathrm{Frob}^r I_F} \phi_{\tau,h} d\tau$; this makes sense as the $W_{F_r}$-representations involved are continuous. In fact, one easily checks that
\[
\phi_{r,h}(\beta) = \mathrm{tr}^{\mathrm{ss}}(\mathrm{Frob}^r\times h^{\vee} | [R\psi_{\beta}])\ ,
\]
where we use the semisimple trace as in \cite{ScholzeGLn}.

We claim that $\phi_{r,h}(\beta)$ is constant for all basic
\[
\beta\in \GL_n(\mathcal{O}_r)\mathrm{diag}(\varpi,1,\ldots,1)\GL_n(\mathcal{O}_r)\ .
\]
Let $R_{I_F}R\psi_{\beta}$ be the derived invariants of $R\psi_{\beta}$ under $I_F$. The cohomology groups of this complex carry an admissible action of $\mathrm{Frob}^{r\mathbb{Z}}\times \GL_n(\mathcal{O})$. Then Lemma 7.5 of \cite{Scholze} says that
\[
(1-q^r)\mathrm{tr}^{\mathrm{ss}}(\mathrm{Frob}^r\times h^{\vee} | [R\psi_{\beta}]) = \mathrm{tr}(\mathrm{Frob}^r\times h^{\vee} | [R_{I_F}R\psi_{\beta}])\ ,
\]
where $q$ is the cardinality of $\kappa$. Therefore it suffices to show that the cohomology groups of the complex $R_{I_F}R\psi_{\beta}$ are independent of $\beta$. But this follows from the description of these groups given in Corollary 5.6 of \cite{ScholzeGLn}.\footnote{At least in the case $F=\mathbb{Q}_p$, but the description generalizes without problems. Note that one may use the algebraizations constructed in Theorem \ref{Algebraization} instead of the Shimura varieties considered in \cite{ScholzeGLn}.}

Now if $\pi$ is a supercuspidal representation of $\GL_n(F)$ with base-change lift $\Pi$ to $\GL_n(F_r)$ and associated representation $\rho$ of $\mathcal{D}^{\times}$ via the Jacquet-Langlands correspondence, then as in the proof of Corollary \ref{LubinTateCoho},
\[\begin{aligned}
\tr(\mathrm{Frob}^r|\sigma(\pi)^{I_F})\tr(h|\pi) &= \int_{\mathrm{Frob}^r I_F}\tr(\tau|\sigma(\pi))\tr(h|\pi) d\tau = \int_{\mathrm{Frob}^r I_F}\tr(f_{\tau,h}|\pi) d\tau\\
&= \int_{\mathrm{Frob}^r I_F} \tr((\phi_{\tau,h},\sigma)|\Pi) d\tau = \tr((\phi_{r,h},\sigma)|\Pi)\\
&= \int_{B_r} \phi_{r,h}(\beta) \Theta_{\Pi,\sigma}(\beta) d\beta = \mathrm{const.} \int_{B_r} \Theta_{\rho}(N\beta) d\beta\\
&= \mathrm{const.} \int_{\mathcal{D}_r} \Theta_{\rho}(d) dd = 0\ ,
\end{aligned}\]
because $\rho$ has no invariants under $\mathcal{O}_\mathcal{D}^{\times}$.
\end{proof}

We axiomatize the situation.

\begin{definition} We say that a family of maps associating to any finite extension $F/\mathbb{Q}_p$ and any irreducible smooth representation $\pi$ of $\GL_n(F)$ an $n$-dimensional representation $\sigma(\pi)$ of $W_F$ is a functorial extension of class-field theory if it is natural in the sense that it commutes with isomorphisms $F\cong F^{\prime}$ and has the properties (i) through (v) of the last theorem.
\end{definition}

Our results show that the map $\pi\longmapsto \sigma(\pi)$ is a functorial extension of class-field theory.

\begin{thm}\label{FunctorialBijective} For any functorial extension of local class-field theory, the map from irreducible supercuspidal representations of $\GL_n(F)$ to $n$-dimensional representations of $W_F$ is injective with image consisting exactly of the irreducible $n$-dimensional representations of $W_F$.
\end{thm}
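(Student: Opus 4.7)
The plan is to prove Theorem \ref{FunctorialBijective} by induction on $n$, with the base case $n=1$ given by property (i). For the induction step the argument splits into three parts: injectivity on supercuspidals, irreducibility of $\sigma(\pi)$ for supercuspidal $\pi$, and surjectivity onto irreducible $n$-dimensional representations of $W_F$. The main tools are iterated cyclic base change via (iv), compatibility with parabolic induction (ii), twisting (iii), and the unramified criterion (v). A crucial representation-theoretic input is that every irreducible $n$-dimensional representation of $W_F$ with $n \geq 2$ is monomial, i.e.\ induced from a character of an open subgroup of finite index, so that its restriction to a sufficiently small $W_{F'}$ becomes reducible.

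For injectivity, let $\pi_1, \pi_2$ be supercuspidal representations of $\GL_n(F)$ with $\sigma(\pi_1) \cong \sigma(\pi_2)$. I would choose a finite solvable Galois extension $F'/F$, built as a tower of cyclic prime-degree extensions, such that $\sigma(\pi_1)|_{W_{F'}}$ is reducible. Iterating (iv), the base-change lifts $\Pi_i$ to $\GL_n(F')$ satisfy $\sigma(\Pi_1) = \sigma(\Pi_2) = \sigma(\pi_1)|_{W_{F'}}$, and neither $\Pi_i$ is supercuspidal. Writing each $\Pi_i$ as a parabolic induction of cuspidal constituents on $\GL_{n_{ij}}(F')$ with $n_{ij} < n$, property (ii) and the inductive hypothesis identify these constituents with the summands of $\sigma(\Pi_i)$. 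Hence $\Pi_1 \cong \Pi_2$, and cyclic base-change descent (Arthur--Clozel) combined with (iii) to control the Galois-character twist ambiguity then recovers $\pi_1 \cong \pi_2$.

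Irreducibility and surjectivity are treated in parallel. Suppose $\sigma(\pi) = \tau_1 \oplus \cdots \oplus \tau_t$ reducibly for supercuspidal $\pi$; by the inductive surjectivity in dimensions $<n$, write $\tau_i = \sigma(\pi_i)$ with $\pi_i$ supercuspidal on $\GL_{n_i}(F)$. By (ii), the Langlands quotient $\pi' = \boxplus_i \pi_i$ satisfies $\sigma(\pi') = \sigma(\pi)$, and since $\pi$ is supercuspidal while $\pi'$ is not, descending through solvable base change as above yields a contradiction via the inductive identification of cuspidal supports. For surjectivity, given an irreducible $n$-dimensional $\tau$, I would choose a solvable $F'/F$ over which $\tau|_{W_{F'}}$ decomposes into summands of strictly smaller dimension; by inductive surjectivity each summand is $\sigma(\pi_i')$ for a supercuspidal $\pi_i'$ on the appropriate $\GL_{n_i}(F')$; assembling these into a cuspidal $\Pi$ on $\GL_n(F')$ via the Langlands classification and descending via Arthur--Clozel produces a $\pi$ on $\GL_n(F)$ with $\sigma(\pi) = \tau$, which is necessarily supercuspidal because $\tau$ is irreducible of dimension $n$ (by the contrapositive of (ii) and the inductive hypothesis).

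The main obstacle is the requirement that iterated cyclic base change always forces a supercuspidal $\pi$ of $\GL_n(F)$ (with $n \geq 2$) to lose supercuspidality, which is the ``no supercuspidal stays supercuspidal under all base changes'' statement flagged in the introduction. Within the axiomatic framework, this combines the monomiality of irreducible $W_F$-representations with property (v) via the geometric determination of $\sigma(\pi)^{I_F}$ from \cite{ScholzeGLn}. Without this input (or the equivalent consequence of Henniart's numerical LLC, which is avoided here), the inductive descent would fail to close and both injectivity and surjectivity would remain inaccessible by purely formal manipulation of the axioms.
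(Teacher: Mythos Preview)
Your broad strategy matches the paper's, but there are gaps in two places. First, you misread the role of property (v): in this axiomatic theorem it is a \emph{hypothesis}, and the crucial fact that every supercuspidal loses supercuspidality after finitely many cyclic base-changes follows immediately from (iv) and (v) together with the trivial observation that $I_F$ acts on any $\sigma(\pi)$ through a finite quotient (so some solvable base-change makes $\sigma(\pi)$ unramified, and then (v) forces an Iwahori-fixed vector). Monomiality is irrelevant, and no geometric input from \cite{ScholzeGLn} is needed here --- that reference is used in the paper only to \emph{verify} axiom (v) for the specific correspondence constructed earlier. Second, your irreducibility argument does not close: from $\sigma(\pi)=\bigoplus\sigma(\pi_i)=\sigma(\pi')$ with $\pi'=\boxplus\pi_i$, you cannot deduce a contradiction without already knowing that $\sigma$ separates cuspidal supports at level $n$, which is part of what you are proving.

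The paper avoids this circularity by a different maneuver. It locates the \emph{last} cyclic step $F_{m-1}\subset F_m$ at which the base-change of $\pi$ is still supercuspidal; by Arthur--Clozel the next base-change then has the specific form $\Pi_1\boxplus\Pi_1^{\tau}\boxplus\cdots\boxplus\Pi_1^{\tau^{g-1}}$ with $\Pi_1$ supercuspidal on $\GL_{n/g}$ and $\Pi_1\not\cong\Pi_1^{\tau}$. Induction in dimension $n/g$ shows the $\sigma(\Pi_1)^{\tau^i}$ are irreducible and pairwise distinct, whence $\sigma(\pi)|_{W_{F_{m-1}}}$ is irreducible, and hence so is $\sigma(\pi)$. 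The same structural control is what makes surjectivity work: one builds the preimage of a given irreducible $\sigma$ by starting at the first level where $\sigma$ becomes reducible and descending one cyclic step at a time, using the inductively established \emph{uniqueness} at each stage to guarantee Galois-invariance before invoking Arthur--Clozel descent. Your sketch of ``assembling and descending'' elides exactly this uniqueness check, without which there is no reason the candidate on $\GL_n(F')$ is fixed by $\mathrm{Gal}(F'/F)$.
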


\begin{proof} First, it is clear that if there exists a functorial extension of local class-field theory, then any irreducible smooth representation of $\GL_n(F)$ has an Iwahori-fixed vector after a finite series of cyclic base-changes. In particular, supercuspidal representations become unramified after a finite series of cyclic base-changes, as they always stay (up to twist) unitarily induced from supercuspidals.

Now we prove the theorem by induction on $n$, the case $n=1$ being obvious.

First, we check that if $\pi$ is supercuspidal, then $\sigma(\pi)$ is irreducible. Choose a series of cyclic extensions of prime degree $F=F_0\subset F_1\subset \ldots \subset F_m$ such that the base-change of $\pi$ from $F$ to $F_m$ is not supercuspidal, but the base-change to $F_{m-1}$ is still supercuspidal. Replacing $F$ by $F_{m-1}$, we can assume that there is a cyclic extension of prime degree $F_1/F$ such that the base-change $\Pi$ of $\pi$ from $F$ to $F_1$ is not supercuspidal.

Let $\tau\in \mathrm{Gal}(F_1/F)\cong \mathbb{Z}/g\mathbb{Z}$ be a generator. The results of Arthur-Clozel, \cite{ArthurClozel}, Lemma 6.10, imply that $\Pi = \Pi_1\boxplus \Pi^{\tau}\boxplus \ldots \boxplus \Pi_1^{\tau^{g-1}}$ for some supercuspidal representation $\Pi_1$ with $\Pi_1\not\cong \Pi_1^{\tau}$.

By induction on $n$, we know that $\sigma(\Pi_1)$ is irreducible. Further $\sigma(\pi)|_{W_{F_1}}=\sigma(\Pi_1)\oplus\ldots\oplus\sigma(\Pi_1)^{\tau^{g-1}}$, with $\sigma(\Pi_1)\not\cong \sigma(\Pi_1)^{\tau}$, by induction, because $\Pi_1\not\cong \Pi_1^{\tau}$. This implies that $\sigma(\pi)$ is irreducible.

Now we check that the map is bijective. Choose any irreducible $n$-dimensional representation $\sigma$ of $W_F$. We choose a series of cyclic extensions of prime degree $F=F_0\subset F_1\subset \ldots \subset F_m$ such that the restriction of $\sigma$ to $W_{F_m}$ is not irreducible, but the restriction to $W_{F_{m-1}}$ is still irreducible.

In this situation, $\sigma|_{W_{F_m}} = \Sigma\oplus\ldots\oplus\Sigma^{\tau^{g-1}}$, where $\tau$ generates $\mathrm{Gal}(F_m/F_{m-1})\cong \mathbb{Z}/g\mathbb{Z}$, and $\Sigma$ is an irreducible representation of $W_{F_m}$ with $\Sigma\not\cong \Sigma^{\tau}$. By induction, there is a unique supercuspidal representation $\Pi$ of $\GL_{\frac ng}(F_m)$ with $\sigma(\Pi)=\Sigma$. Then $\Pi_m = \Pi\boxplus\ldots\boxplus\Pi^{\tau^{g-1}}$ lifts to a unique representation $\Pi_{m-1}$ of $\GL_n(F_{m-1})$, and this representation satisfies
\[
\sigma(\Pi_{m-1})|_{W_{F_m}} = \Sigma\oplus\ldots\oplus\Sigma^{\tau^{g-1}}\ ,
\]
whence $\sigma(\Pi_{m-1}) = \sigma|_{W_{F_{m-1}}}$: Because $\Sigma\not\cong \Sigma^{\tau}$, there is a unique representation of $W_{F_{m-1}}$ with restriction to $W_{F_m}$ being equal to $\Sigma\oplus\ldots\oplus\Sigma^{\tau^{g-1}}$, namely $\mathrm{Ind}_{W_{F_m}}^{W_{F_{m-1}}} \Sigma$. This discussion shows that there is a supercuspidal representation $\Pi_{m-1}$ of $\GL_n(F_{m-1})$ with $\sigma(\Pi_{m-1})=\sigma|_{W_{F_{m-1}}}$.

We claim that this representation $\Pi_{m-1}$ is unique. Indeed, the base-change $\Pi_m$ of $\Pi_{m-1}$ to $\GL_n(F_m)$ cannot be supercuspidal, as $\sigma(\Pi_m)=\sigma(\Pi_{m-1})|_{W_{F_m}}$ is not irreducible. Hence $\Pi_m=\Pi\boxplus\ldots\boxplus\Pi^{\tau^{g-1}}$ for some supercuspidal representation $\Pi$ of $\GL_{\frac ng}(F_m)$. Then
\[
\sigma(\Pi)\oplus\ldots\oplus\sigma(\Pi)^{\tau^{g-1}} = \Sigma\oplus\ldots\oplus\Sigma^{\tau^{g-1}}\ ,
\]
which implies that after replacing $\Pi$ by $\Pi^{\tau^i}$ for some $i$, we have $\sigma(\Pi)=\Sigma$. By induction on $n$, this determines $\Pi$ uniquely, hence $\Pi_m$ and finally $\Pi_{m-1}$.

Now we use descending induction on $i=m-1,m-2,\ldots,0$. Changing notation if necessary, we can assume that $i=1$. Hence we have an irreducible $n$-dimensional representation $\sigma$ of $W_F$ whose restriction $\sigma_1$ to $W_{F_1}$ is still irreducible, and we know that there exists a unique supercuspidal representation $\pi_1$ of $\GL_n(F_1)$ with $\sigma(\pi_1)=\sigma_1$. Again, $F_1/F$ is a cyclic extension of prime degree $g$, and we let $\tau$ be a generator of $\mathrm{Gal}(F_1/F)$.

Note that $\sigma_1^{\tau}\cong \sigma_1$; hence $\pi_1^{\tau}\cong \pi_1$ by the uniqueness assertion already established. The results of Arthur-Clozel, \cite{ArthurClozel}, Theorem 6.2, then say that $\pi_1$ lifts to a supercuspidal representation $\pi^{\prime}$ of $\GL_n(F)$. We know that
\[
\sigma(\pi^{\prime})|_{W_{F_1}} = \sigma(\pi_1) = \sigma|_{W_{F_1}}\ .
\]
This implies that $\sigma(\pi^{\prime})=\sigma\otimes \chi$ for some character $\chi$ of $\mathrm{Gal}(F_1/F)$, which we also consider as a character of $F^{\times}$ via local class-field theory. In particular, for $\pi=\pi^{\prime}\otimes \chi^{-1}$, we have $\sigma(\pi)=\sigma$, which shows the existence of $\pi$.

Now assume that $\pi$, $\pi^{\prime}$ are two representations with $\sigma(\pi)=\sigma(\pi^{\prime})=\sigma$. Then their respective base-changes $\pi_1$, $\pi_1^{\prime}$ to $F_1$ satisfy $\sigma(\pi_1)=\sigma(\pi_1^{\prime})=\sigma_1$. As $\sigma_1$ is irreducible, both $\pi_1$ and $\pi_1^{\prime}$ are supercuspidal, so that by induction $\pi_1\cong \pi_1^{\prime}$. Now the uniqueness of lifts, \cite{ArthurClozel}, Proposition 6.7, implies that $\pi\cong \pi^{\prime}\otimes \chi$ for some character $\chi$. But then $\sigma\cong \sigma\otimes \chi$, which implies $\chi=1$, as $\sigma_1$ is irreducible.
\end{proof}

\section{Non-galois automorphic induction}

In this section, we repeat Harris' arguments constructing certain cuspidal automorphic representations associated to some Weil group representations which are induced from a character. The crucial steps are presented in a slightly more general context.

We need the following definitions.

\begin{definition} Let $\mathbb{L}$ be a number field.

\begin{altenumerate}
\item[{\rm (i)}] A cuspidal automorphic representation $\Pi$ of $\GL_n/\mathbb{L}$ is called potentially abelian if there exists an $n$-dimensional representation $\Sigma$ of $W_\mathbb{L}$ such that for all finite places $v$ of $\mathbb{L}$, the representations $\Pi_v$ and $\Sigma|_{W_{\mathbb{L}_v}}$ satisfy
\[
\Sigma|_{W_{\mathbb{L}_v}} = \sigma(\Pi_v)\ ,
\]
and for all infinite places, they are associated via the archimedean Local Langlands Correspondence.
\item[{\rm (ii)}] An $n$-dimensional representation $\Sigma$ of $W_\mathbb{L}$ is called algebraic if for all embeddings $L\hookrightarrow \mathbb{C}$, the corresponding restriction to $W_{\mathbb{C}}\cong \mathbb{C}^{\times}$ is a direct sum of characters of the form $z\longmapsto z^p\overline{z}^q$ for some integers $p$, $q$.
\end{altenumerate}
\end{definition}

The name potentially abelian is motivated by the fact that representations of the Weil group become a direct sum of characters after restriction to a subgroup of finite index.

We need to reformulate Theorem \ref{ConstructionGaloisRepr} with a slightly different normalization. Call $\Pi_{\infty}$ regular $L$-algebraic if $\Pi_{\infty}(\frac{n-1}2)$ is regular algebraic. We note that this notion of regularity is better behaved with respect to functorialities: For example, one checks easily that it is compatible with automorphic induction, cf. the general discussion of Buzzard-Gee, \cite{BuzzardGee}. We still use the notation $\mathbb{F}$ to denote a CM field which is the compositum of a totally real field $\mathbb{F}_0$ of even degree over $\mathbb{Q}$ and an imaginary-quadratic field. Recall that we have fixed a place $x$ of $\mathbb{F}$ which is split over $\mathbb{F}_0$.

\begin{cor}\label{ConstructionGaloisRepr2} Let $\Pi$ be a cuspidal automorphic representation of $\GL_n/\mathbb{F}$ such that
\begin{altenumerate}
\item[{\rm (i)}] $\Pi^{\vee} = \Pi\circ c$;
\item[{\rm (ii)}] $\Pi_{\infty}$ is regular $L$-algebraic;
\item[{\rm (iii)}] $\Pi_x$ is square-integrable.
\end{altenumerate}
Then there exists an integer $a\geq 1$ and an $\ell$-adic representation $R(\Pi)$ of $\mathrm{Gal}(\bar{\mathbb{F}}/\mathbb{F})$ of dimension $an$ such that for all primes $v$ of $\mathbb{F}$ whose residue characteristic is different from $\ell$, we have
\[
R(\Pi)|_{W_{\mathbb{F}_v}} = a\cdot \sigma(\Pi_v)
\]
as elements of the Grothendieck group of representations of $W_{\mathbb{F}_v}$.
\end{cor}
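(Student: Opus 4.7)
The plan is to deduce Corollary \ref{ConstructionGaloisRepr2} from Theorem \ref{ConstructionGaloisRepr} by a Hecke character twist. The two statements differ exactly by a Tate twist by $\tfrac{1-n}{2}$: on the Galois side one has $\sigma(\Pi_v)=\mathrm{rec}(\Pi_v)(\tfrac{1-n}{2})$, and on the automorphic side the hypothesis ``regular $L$-algebraic'' is designed so that $\Pi\otimes|\det|^{(n-1)/2}$ is regular algebraic. So the strategy is to absorb the difference into a twist by an algebraic Hecke character of $\mathbb{F}$ plus (when necessary) a half-cyclotomic character on the Galois side.

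First I would choose an algebraic Hecke character $\chi$ of $\mathbb{F}$ such that $\Pi':=\Pi\otimes(\chi\circ\det)$ satisfies the three hypotheses of Theorem \ref{ConstructionGaloisRepr}: that is, $\chi_\infty$ shifts the infinity type of $\Pi_\infty$ to an integral one, and $\chi\cdot(\chi\circ c)=1$ so that conjugate self-duality is preserved (square-integrability at $x$ is automatic, since twisting by a character does not affect it). Such $\chi$ exist by the classical theory of algebraic Hecke characters of CM fields (Weil's construction), at least up to a residual half-integer obstruction discussed below. Then I would apply Theorem \ref{ConstructionGaloisRepr} to $\Pi'$, obtaining an integer $a\geq 1$ and a Galois representation $R(\Pi')$ with $R(\Pi')|_{W_{\mathbb{F}_v}}=a\cdot\mathrm{rec}(\Pi'_v)$. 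Setting $R(\Pi):=R(\Pi')\otimes\chi_\ell^{-1}$, where $\chi_\ell$ is the $\ell$-adic Galois character attached to $\chi$ by global class field theory, the required local compatibility $R(\Pi)|_{W_{\mathbb{F}_v}}=a\cdot\sigma(\Pi_v)$ follows from the compatibility of $\mathrm{rec}$ with character twists (Theorem \ref{Functoriality}(iii)) combined with the defining identity $\sigma=\mathrm{rec}(\tfrac{1-n}{2})$.

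The hard part will be the parity issue for $n$ even. Then the required shift of the infinity type is by a half-integer, so no algebraic Hecke character $\chi$ can simultaneously send $\Pi_\infty$ to a regular algebraic representation and satisfy $\chi\cdot(\chi\circ c)=1$. I would handle this by relaxing one of the two constraints: choose $\chi$ satisfying $\chi\cdot(\chi\circ c)=|\cdot|^{n-1}$ (such $\chi$ exist on CM fields by choosing infinity types $(p_v,q_v)$ at the complex places of $\mathbb{F}$ summing to $n-1$), so that $\Pi\otimes(\chi^{-1}\circ\det)\otimes|\det|^{(n-1)/2}$ is regular algebraic and genuinely conjugate self-dual, and then compensate on the Galois side by an additional twist by $\chi_{\mathrm{cyc}}^{(1-n)/2}$. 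The half-cyclotomic character is defined on $W_{\mathbb{F}_v}$ via local class field theory, and extends to $\mathrm{Gal}(\bar{\mathbb{F}}/\mathbb{F})$ with values in $\bar{\mathbb{Q}}_\ell^\times$ after a (non-canonical) choice of square root of $\chi_{\mathrm{cyc}}$; the non-canonicity is harmless, as $R(\Pi)$ was only determined up to twist anyway. A Chebotarev argument as in the last paragraph of the proof of Theorem \ref{ConstructionGaloisRepr} then pins down $R(\Pi)$ and delivers the equality at every finite place $v$ with $\mathrm{char}\,\kappa(v)\neq\ell$.
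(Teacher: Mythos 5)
Your overall strategy -- twisting $\Pi$ by a Hecke character of $\mathbb{F}$ so that Theorem \ref{ConstructionGaloisRepr} applies, then twisting the resulting Galois representation back -- is exactly what the paper does; the paper phrases it as finding a conjugate self-dual character $\phi$ of $\mathbb{F}^\times\backslash\mathbb{A}_\mathbb{F}^\times$ with $\phi_\infty(\tfrac{n-1}{2})$ algebraic (existence from the proof of Corollary VII.2.8 in Harris--Taylor), setting $\Pi'=\Pi\otimes\phi$, and then twisting $R(\Pi')$ by the inverse of the $\ell$-adic character attached to the \emph{algebraic} Hecke character $\phi(\tfrac{n-1}{2})$.

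However, there is an accounting error in your proposal which makes the whole ``half-cyclotomic / square root of $\chi_{\mathrm{cyc}}$'' discussion spurious. You correctly identify two half-integer shifts: $\sigma(\Pi_v)=\mathrm{rec}(\Pi_v)(\tfrac{1-n}{2})$ on the Galois side, and $\Pi\otimes|\det|^{(n-1)/2}$ regular algebraic on the automorphic side. But these two shifts \emph{add}, they do not cancel. Concretely, with $\chi$ as you choose it (algebraic, $\chi\cdot(\chi\circ c)=|\cdot|^{n-1}$) and $\Pi'=\Pi\otimes\chi^{-1}|\det|^{(n-1)/2}$, one computes
\[
\mathrm{rec}(\Pi'_v)=\mathrm{rec}(\Pi_v)\otimes\sigma(\chi_v)^{-1}\otimes|\cdot|_v^{(n-1)/2}
=\sigma(\Pi_v)\otimes\sigma(\chi_v)^{-1}\otimes|\cdot|_v^{\,n-1},
\]
so the required Galois twist is $R(\Pi)=R(\Pi')\otimes\chi_\chi\otimes\chi_{\mathrm{cyc}}^{1-n}$ with $\chi_\chi$ the $\ell$-adic character attached to $\chi$. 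The exponent of $\chi_{\mathrm{cyc}}$ is the \emph{integer} $1-n$, not $(1-n)/2$. Equivalently, in the paper's formulation: $\phi:=\chi^{-1}|\cdot|^{(n-1)/2}$ is conjugate self-dual, $\phi(\tfrac{n-1}{2})=\chi^{-1}|\cdot|^{n-1}$ is an algebraic Hecke character, and the Galois twist is by the $\ell$-adic avatar of $\phi(\tfrac{n-1}{2})^{-1}$. No square root of the cyclotomic character ever arises, which is fortunate, since such a continuous square root need not exist and taking one (even up to a harmless non-canonicity) is not a reliable construction. Once this is corrected, your argument is essentially the paper's; note also that the first paragraph, requiring an algebraic $\chi$ with $\chi\cdot(\chi\circ c)=1$ that shifts the infinity type by $(n-1)/2$, has no solution for $n>1$ -- for $n$ odd one should simply observe that $\Pi$ is already regular algebraic (the shift is integral) and for $n$ even one goes directly to the construction with $\chi\cdot(\chi\circ c)=|\cdot|^{n-1}$.
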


\begin{proof} It suffices to find a character $\phi$ of $\mathbb{F}^{\times}\backslash \mathbb{A}_{\mathbb{F}}^{\times}$ such that $\phi^{-1} = \phi\circ c$ and $\phi_{\infty}(\frac{n-1}2)$ is algebraic. We refer to the easy verification of its existence in \cite{HarrisTaylor}, proof of Corollary VII.2.8.
\end{proof}

\begin{lem} If $\Pi$ is potentially abelian, associated to $\Sigma$, and $\Pi_{\infty}$ is regular $L$-algebraic, then $\Sigma$ is algebraic.
\end{lem}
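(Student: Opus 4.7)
The plan is to treat the statement as local at each infinite place of $\mathbb{L}$ and match the regularity condition on $\Pi_\infty$ against the shape of the archimedean Langlands parameter. Fix $v\mid\infty$ and let $\Sigma_v := \Sigma|_{W_{\mathbb{L}_v}}$; by potential abelianness, $\Sigma_v$ is the parameter of $\Pi_v$ under archimedean LLC. The key input will be the identity $\rho_i + \tfrac{n-1}{2} = n-i \in \mathbb{Z}$, which is exactly what makes the twist by $(n-1)/2$ in the definition of ``$L$-algebraic'' line up with integrality of Weil parameters.

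First I would take $v$ complex, so $W_{\mathbb{L}_v} = \mathbb{C}^\times$ and $\Sigma_v = \bigoplus_{i=1}^n \chi_i$ with $\chi_i(z) = z^{p_i}\bar z^{q_i}$, $p_i - q_i \in \mathbb{Z}$ and a priori $p_i + q_i \in \mathbb{C}$; the goal is $p_i, q_i \in \mathbb{Z}$. The LLC realizes $\Pi_v$ as a subquotient of the principal series induced from $\chi_1\otimes\cdots\otimes\chi_n$, whose infinitesimal character (on the holomorphic side) is the multiset $\{p_i + \rho_i\}$ with $\rho_i = \tfrac{n+1-2i}{2}$, and similarly $\{q_i + \rho_i\}$ on the antiholomorphic side. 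The hypothesis that $\Pi_v \otimes |\det|^{(n-1)/2}$ has the infinitesimal character of an algebraic representation of $\GL_n(\mathbb{C})$ says precisely that $p_i + \rho_i + \tfrac{n-1}{2}$ and $q_i + \rho_i + \tfrac{n-1}{2}$ are integers for each $i$; combined with the identity above, this forces $p_i, q_i \in \mathbb{Z}$, which is algebraicity of $\Sigma$ at $v$.

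The real places are then reduced to the complex case by base-changing $\Pi_v$ from $\GL_n(\mathbb{R})$ to $\GL_n(\mathbb{C})$, which on the parameter side is just restriction of $\Sigma_v$ to $\mathbb{C}^\times \subset W_\mathbb{R}$; regular $L$-algebraicity is preserved under this base change, so the previous argument applies. The only delicate point I expect is bookkeeping of normalizations: one must verify that the archimedean LLC implicit in the definition of ``potentially abelian'' matches the unitary normalization $\sigma$ used at the finite places, so that the compatibility $\Pi_v \otimes |\det|^s \leftrightarrow \Sigma_v \otimes |\cdot|^s$ holds with the same $s$ on both sides. This is a standard compatibility of the archimedean LLC with twists, central characters and $L$-factors, but it is the single step where the proof genuinely leans on an external input rather than a bare calculation.
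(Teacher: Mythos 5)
Correct, and this is exactly what the paper's one-line proof (``This is a direct consequence of the archimedean Local Langlands Correspondence'') is leaving implicit: unwind the archimedean parameter at each infinite place and observe that $\rho_i$ and $\tfrac{n-1}{2}$ agree modulo $\mathbb{Z}$, which is precisely the calibration that makes the $L$-algebraic twist produce integral exponents. Do watch the bookkeeping, though: with the standard Harish--Chandra normalization the normalized principal series built from $\chi_i(z)=z^{p_i}\bar z^{q_i}$ has infinitesimal character $\{p_i\}$ rather than $\{p_i+\rho_i\}$, while an algebraic representation of $\GL_n(\mathbb{C})$ has infinitesimal character in $\rho+\mathbb{Z}^n$ rather than in $\mathbb{Z}^n$; you have inserted a $\rho$-shift on one side and omitted one on the other, and the two slips cancel, so the conclusion $p_i,q_i\in\mathbb{Z}$ (and hence the algebraicity of $\Sigma$) still comes out right.
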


\begin{proof} This is a direct consequence of the archimedean Local Langlands Correspondence.
\end{proof}

\begin{lem}\label{AlgebraicCharEllAdic} Let $\Sigma$ be an algebraic representation of $W_\mathbb{L}$ and let $\ell$ be a prime number. Fix an isomorphism $\iota: \bar{\mathbb{Q}}_{\ell}\cong \mathbb{C}$. Then there is an $\ell$-adic representation $\sigma_{\ell}$ of $\mathrm{Gal}(\bar{\mathbb{L}}/\mathbb{L})$ such that for all finite places $v$ of $\mathbb{L}$ such that $v$ does not divide $\ell$, the restrictions $\Sigma|_{W_{\mathbb{L}_v}}$ and $\sigma_{\ell}|_{W_{\mathbb{L}_v}}$ are identified via $\iota$.
\end{lem}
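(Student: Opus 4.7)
The plan is to reduce to the case of algebraic Hecke characters via Brauer's induction theorem, and then invoke the classical theorem of Weil producing an $\ell$-adic Galois character attached to each algebraic Hecke character.

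Without loss of generality I may assume $\Sigma$ is semisimple, since the local compatibility to be verified takes place in the Grothendieck group at each $v$. Any continuous finite-dimensional complex representation of $W_\mathbb{L}$ factors through a relative Weil group $W_{\mathbb{K}/\mathbb{L}}$ for some finite Galois extension $\mathbb{K}/\mathbb{L}$, which is an extension of the finite group $\mathrm{Gal}(\mathbb{K}/\mathbb{L})$ by the locally compact abelian group $C_\mathbb{K} = \mathbb{A}_\mathbb{K}^\times / \mathbb{K}^\times$. Applying Brauer's induction theorem to the finite quotient and extending characters over the abelian kernel, I would obtain an equality in the Grothendieck group of continuous finite-dimensional complex representations of $W_\mathbb{L}$,
\[
[\Sigma] = \sum_i n_i \bigl[\mathrm{Ind}_{W_{\mathbb{K}_i}}^{W_\mathbb{L}} \chi_i\bigr],
\]
where $\mathbb{K}_i/\mathbb{L}$ are finite extensions, $\chi_i$ are continuous $1$-dimensional characters of $W_{\mathbb{K}_i}$, and $n_i \in \mathbb{Z}$. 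Tracking the infinity type under induction and restriction, the algebraicity assumption on $\Sigma$ passes to each $\chi_i$, so each $\chi_i$ is an algebraic Hecke character of $\mathbb{K}_i$.

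Next, I would invoke Weil's classical theorem: each algebraic Hecke character $\chi_i$ admits, via $\iota$, an $\ell$-adic character $\chi_{i,\ell}\colon \mathrm{Gal}(\bar{\mathbb{L}}/\mathbb{K}_i) \to \bar{\mathbb{Q}}_\ell^\times$ matching $\chi_i$ at all finite places of $\mathbb{K}_i$ not lying above $\ell$. Setting
\[
\sigma_\ell := \sum_i n_i\, \mathrm{Ind}_{\mathrm{Gal}(\bar{\mathbb{L}}/\mathbb{K}_i)}^{\mathrm{Gal}(\bar{\mathbb{L}}/\mathbb{L})} \chi_{i,\ell}
\]
as a virtual $\ell$-adic representation and applying Mackey's induction-restriction formula at each finite place $v \nmid \ell$ yields the desired identity $\sigma_\ell|_{W_{\mathbb{L}_v}} = \iota \circ \Sigma|_{W_{\mathbb{L}_v}}$ in the local Grothendieck group.

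The main obstacle I expect is upgrading this a priori virtual $\sigma_\ell$ to an honest $\ell$-adic representation. To handle this I would decompose the semisimple virtual representation as $[\sigma_\ell] = \sum_j m_j [\rho_j]$ over the irreducible $\ell$-adic Galois representations $\rho_j$, with $m_j \in \mathbb{Z}$. Chebotarev density reduces the identification of the $m_j$ to comparing traces on Frobenius elements $\mathrm{Frob}_v$ at unramified $v \nmid \ell$; by the local compatibility already established, these traces match, via $\iota$, the traces of the honest decomposition $[\Sigma] = \sum_j m'_j [\Sigma_j]$ into irreducible Weil representations. Linear independence of the characters (again by Chebotarev) forces $m_j = m'_j \ge 0$, so $\sigma_\ell$ is an honest semisimple $\ell$-adic representation with the required properties.
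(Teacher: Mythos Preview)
Your overall strategy---reduce to algebraic Hecke characters and invoke Weil's construction---is precisely one of the two routes the paper sketches (``passing to a finite extension and descending back to $\mathbb{L}$''; the other route is via Langlands' work on tori as in Buzzard--Gee). The reduction step and the appeal to Weil are fine.

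The gap is in your final paragraph. You write $[\sigma_\ell] = \sum_j m_j [\rho_j]$ over irreducible $\ell$-adic Galois representations and $[\Sigma] = \sum_j m_j' [\Sigma_j]$ over irreducible complex Weil representations, and then assert $m_j = m_j'$. But these two sums are indexed by \emph{different} sets, and you have not produced any bijection $\rho_j \leftrightarrow \Sigma_j$ between them; indeed, producing such a bijection is essentially the content of the lemma. Chebotarev gives linear independence of the $\tr\rho_j$ among themselves and of the $\tr\Sigma_j$ among themselves, but it does not let you match constituents across the two categories. Concretely, nothing you have said rules out $\sigma_\ell = \rho_1 - \rho_2$ with $\rho_1,\rho_2$ distinct irreducibles whose difference of Frobenius traces happens, via $\iota$, to equal the character of the honest $\Sigma$.

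The clean fix is to avoid the virtual step entirely. By Clifford theory for the normal abelian subgroup $C_{\mathbb{K}}\subset W_{\mathbb{K}/\mathbb{L}}$, every irreducible summand of $\Sigma$ is of the form $\mathrm{Ind}_{W_{\mathbb{K}'}}^{W_{\mathbb{L}}}(\tilde\psi\otimes\rho)$ with $\mathbb{K}'/\mathbb{L}$ finite, $\tilde\psi$ an algebraic Hecke character of $\mathbb{K}'$, and $\rho$ an Artin representation (factoring through a finite quotient). The Artin representation $\rho$ already has a model over $\bar{\mathbb{Q}}_\ell$ via $\iota$, and Weil gives $\tilde\psi_\ell$; then $\mathrm{Ind}_{\mathrm{Gal}(\bar{\mathbb{L}}/\mathbb{K}')}^{\mathrm{Gal}(\bar{\mathbb{L}}/\mathbb{L})}(\tilde\psi_\ell\otimes\rho)$ is an \emph{honest} $\ell$-adic representation with the required local compatibility, and summing over the irreducible pieces of $\Sigma$ gives $\sigma_\ell$ directly. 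No Brauer induction, and no virtual bookkeeping, is needed; this is exactly the ``descent'' the paper has in mind.
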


\begin{proof} In the case of characters, this can be done by a simple direct construction, made explicit e.g. by Fargues in \cite{FarguesAbelianMotives}, page 4. The general case can be reduced to this special case by passing to a finite extension and descending back to $\mathbb{L}$. Another possibility is to use Langlands' work, \cite{LanglandsTori}, on automorphic forms on tori, as explained in Section 4 of \cite{BuzzardGee}: Note that the representation $\Sigma$ of $W_\mathbb{L}$ necessarily factors through the $L$-group of a torus.
\end{proof}

Now we are ready to state the theorem that will enable us to construct the non-Galois automorphic induction.

\begin{thm}\label{InductionPreparation} Let $\Pi$ be a cuspidal automorphic representation of $\GL_n/\mathbb{F}$ such that $\Pi^{\vee}\cong \Pi\circ c$, $\Pi_{\infty}$ is regular $L$-algebraic, and $\Pi_x$ is supercuspidal.

\begin{altenumerate}
\item[{\rm (i)}] If there exists an $n$-dimensional representation $\Sigma$ of $W_\mathbb{F}$ such that $\Pi$ and $\Sigma$ are associated at all infinite places and for all almost all finite places, then for all finite places $v$, we have
\[
\Sigma|_{W_{\mathbb{F}_v}} = \sigma(\Pi_v)\ ,
\]
i.e., $\Pi$ is potentially abelian.

\item[{\rm (ii)}] Let $\mathbb{F}^{\prime}/\mathbb{F}$ be a cyclic extension of prime degree and assume that there is only one place $x^{\prime}$ of $\mathbb{F}^{\prime}$ above $x$. Further, assume that denoting by $\Pi^{\prime}$ the base-change of $\Pi$ to $\mathbb{F}^{\prime}$, that $\Pi^{\prime}_{x^{\prime}}$ is supercuspidal, and that $\Pi^{\prime}$ is potentially abelian.

Then $\Pi$ is potentially abelian.
\end{altenumerate}
\end{thm}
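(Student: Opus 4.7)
For part (i), the strategy is an $\ell$-adic Chebotarev density argument. Since $\Pi$ satisfies the hypotheses of Corollary~\ref{ConstructionGaloisRepr2}, there exist an integer $a\geq 1$ and an $\ell$-adic Galois representation $R(\Pi)$ of $\mathrm{Gal}(\bar{\mathbb{F}}/\mathbb{F})$ with $R(\Pi)|_{W_{\mathbb{F}_v}} = a\cdot\sigma(\Pi_v)$ at every finite place $v$ of residue characteristic different from $\ell$. Since $\Pi_\infty$ is regular $L$-algebraic and $\Sigma$ matches $\Pi$ at the archimedean places, the representation $\Sigma$ is algebraic, so Lemma~\ref{AlgebraicCharEllAdic} yields an $\ell$-adic realisation $\sigma_\ell$ of $\Sigma$. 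The hypothesis of (i) gives $R(\Pi)|_{W_{\mathbb{F}_v}} = a\cdot\sigma_\ell|_{W_{\mathbb{F}_v}}$ at almost all finite $v\nmid\ell$, so Chebotarev forces the equality $R(\Pi)=a\cdot\sigma_\ell$ in the Grothendieck group of $\ell$-adic Galois representations. Restricting back to $W_{\mathbb{F}_v}$ at any finite $v\nmid\ell$ yields $\sigma(\Pi_v)=\Sigma|_{W_{\mathbb{F}_v}}$, and varying $\ell$ handles the remaining places.

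For part (ii), the plan is to construct $\Sigma$ by extending $\Sigma'$ from $W_{\mathbb{F}'}$ to $W_{\mathbb{F}}$, pin down the extension at $x$, and then reduce the rest to part (i). Since $\Pi'\circ\tau\cong \Pi'$ for $\tau$ a generator of $\mathrm{Gal}(\mathbb{F}'/\mathbb{F})$, the representations $\Sigma'^\tau$ and $\Sigma'$ agree locally at almost all finite places by the potentially-abelian hypothesis, hence $\Sigma'^\tau\cong\Sigma'$ by Chebotarev. Moreover $\Sigma'|_{W_{\mathbb{F}'_{x'}}}=\sigma(\Pi'_{x'})$ is irreducible (because $\Pi'_{x'}$ is supercuspidal), so $\Sigma'$ itself is irreducible. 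A Schur-lemma argument, valid since $[\mathbb{F}':\mathbb{F}]$ is prime, then produces an extension $\Sigma$ to $W_\mathbb{F}$; the set of such extensions is a torsor under characters of $\mathrm{Gal}(\mathbb{F}'/\mathbb{F})$. Since $\Pi_x$ is supercuspidal, $\sigma(\Pi_x)$ is itself an irreducible extension of $\Sigma'|_{W_{\mathbb{F}'_{x'}}}$; and because there is a unique place $x'$ of $\mathbb{F}'$ above $x$, the restriction map from characters of $\mathrm{Gal}(\mathbb{F}'/\mathbb{F})$ to characters of $\mathrm{Gal}(\mathbb{F}'_{x'}/\mathbb{F}_x)$ is an isomorphism. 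One may therefore normalise the global extension so that $\Sigma|_{W_{\mathbb{F}_x}}=\sigma(\Pi_x)$ on the nose.

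Compatibility at the remaining archimedean and split finite places is now automatic: $\mathbb{F}$ is totally complex, so every infinite place of $\mathbb{F}$ splits completely in $\mathbb{F}'$, making $W_{\mathbb{F}_v}=W_{\mathbb{F}'_{v'}}$, and the potentially-abelian property of $\Pi'$ immediately gives $\Sigma|_{W_{\mathbb{F}_v}}=\Sigma'|_{W_{\mathbb{F}'_{v'}}}=\sigma_\infty(\Pi_v)$; the same reasoning settles the case of a finite place that splits in $\mathbb{F}'$.

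The main obstacle is verifying compatibility at the non-split finite places $v\neq x$. By part (i) it suffices to do so at almost all such places. The plan is to use Lemma~\ref{AlgebraicCharEllAdic} to produce an $\ell$-adic realisation $\sigma_\ell$ of $\Sigma$ and compare it with the Galois representation $R(\Pi)$ from Corollary~\ref{ConstructionGaloisRepr2}. Both restrict on $\mathrm{Gal}(\bar{\mathbb{F}}/\mathbb{F}')$ to $a\cdot\sigma'_\ell$, the $\ell$-adic realisation of $\Sigma'$ — the former by the local-base-change compatibility $\sigma(\Pi_v)|_{W_{\mathbb{F}'_{v'}}}=\sigma(\Pi'_{v'})$, the latter tautologically. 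A Mackey–Clifford analysis of representations of $\mathrm{Gal}(\bar{\mathbb{F}}/\mathbb{F})$ whose restriction to the index-$[\mathbb{F}':\mathbb{F}]$ subgroup $\mathrm{Gal}(\bar{\mathbb{F}}/\mathbb{F}')$ is $a$ copies of the irreducible $\sigma'_\ell$ shows that $R(\Pi)$ must be isomorphic to $a\cdot(\sigma_\ell\otimes\chi)$ for some character $\chi$ of $\mathrm{Gal}(\mathbb{F}'/\mathbb{F})$; the normalisation at $x$ then forces $\chi$ to be trivial. Applying Chebotarev on $\mathbb{F}$ then gives $\sigma(\Pi_v)=\Sigma|_{W_{\mathbb{F}_v}}$ at all finite $v\nmid\ell$, and varying $\ell$ concludes the argument.
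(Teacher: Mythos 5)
Your proposal is correct and follows essentially the same route as the paper: for (i) you compare the $\ell$-adic realisations of $\Sigma$ (Lemma~\ref{AlgebraicCharEllAdic}) and $\Pi$ (Corollary~\ref{ConstructionGaloisRepr2}) via Chebotarev; for (ii) you extend $\Sigma'$ to $W_{\mathbb{F}}$, normalise the extension at $x$ using the uniqueness of the place $x'$, and identify the two systems of $\ell$-adic representations by the Clifford-theoretic observation that any nonnegative combination restricting to $a\cdot\sigma'_\ell$ on the prime-index subgroup is a combination of character twists of a fixed extension, whose coefficients are read off at $x$. You supply somewhat more detail than the paper does — notably the justification that $\Sigma'$ is Galois-invariant and irreducible (hence extends), and the explicit tracking of the multiplicity $a$ — but the underlying strategy and key lemmas are the same.
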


\begin{proof} For (i), there are systems of $\ell$-adic representations $\sigma_{\ell}^{1}$, $\sigma_{\ell}^{2}$ associated to both $\Sigma$ and $\Pi$, by Lemma \ref{AlgebraicCharEllAdic} and Corollary \ref{ConstructionGaloisRepr2}, respectively. These agree at almost all places by assumption, hence by Chebotarev $\sigma_{\ell}^{1} = \sigma_{\ell}^{2}$. Now one uses that $\Sigma$ and $\sigma_{\ell}^{1}$ are associated for all finite places $v$ which do not divide $\ell$, and the corresponding result for $\Pi$.

For (ii), let $\Pi^{\prime}$ be associated to $\Sigma^{\prime}$. Note that $\Sigma^{\prime}$ extends to a representation $\Sigma$ of $W_\mathbb{F}$. After twisting, we may assume that $\Pi_x$ and $\Sigma|_{W_{\mathbb{F}_x}}$ are associated: Our results on Local Langlands show that there is a unique such choice of $\Sigma$.

Now $\Sigma$ gives rise to a system of $\ell$-adic representations $\sigma_{\ell}^1$, and $\Pi$ gives rise to a second system of (virtual) $\ell$-adic representations $\sigma_{\ell}^2$. They agree after restriction to $\mathrm{Gal}(\bar{\mathbb{F}}/\mathbb{F}^{\prime})$, giving the irreducible representation $\sigma_{\ell}^{\prime}$. Assume that $\ell$ is not divisible by $x$. Then there is a unique $\mathbb{Q}$-linear combination $\sigma_{\ell}$ of $\ell$-adic representations of $\mathrm{Gal}(\bar{\mathbb{F}}/\mathbb{F})$ with nonnegative coefficients such that the restriction to $\mathrm{Gal}(\bar{\mathbb{F}}/\mathbb{F}^{\prime})$ is $\sigma_{\ell}^{\prime}$, and such that $\sigma_{\ell}$ agrees locally at $x$ with $\sigma_{\ell}^1$: The virtual representation $\sigma_{\ell}$ is a sum of twists of $\sigma_{\ell}^1$ with characters of $\mathrm{Gal}(\mathbb{F}^{\prime}/\mathbb{F})$, and the coefficients of this sum can be read off by looking at the restriction to $x$, showing that only the coefficient of $\sigma_{\ell}^1$ is nonzero.

Hence we see that $\sigma_{\ell}^1 = \sigma_{\ell}^2$ for all $\ell$ which are not divisible by $x$. As above, this gives the result.
\end{proof}

Finally, we can prove the theorem on non-Galois automorphic induction.

\begin{thm}\label{AutomorphicInduction} Let $\mathbb{F}_0^3\supset \mathbb{F}_0^2\supset \mathbb{F}_0^1$ be extensions of totally real fields of even degree over $\mathbb{Q}$, such that $\mathbb{F}_0^3/\mathbb{F}_0^1$ is a solvable Galois extension. Write $n=[\mathbb{F}_0^2:\mathbb{F}_0^1]$. Further, let $\mathbb{K}$ be an imaginary-quadratic field. We let $\mathbb{F}^i=\mathbb{F}_0^i\mathbb{K}$ for $i=1,2,3$. Further, let $x$ be place of $\mathbb{F}^1$, which is split over $\mathbb{F}_0^1$ and inert in $\mathbb{F}^3$.

Assume that $\chi$ is a character of $\mathbb{F}^{2\times}\backslash \mathbb{A}_{\mathbb{F}^2}^{\times}$ such that
\begin{altenumerate}
\item[{\rm (i)}] $\chi^{-1} = \chi \circ c$,
\item[{\rm (ii)}] For any infinite place $\tau$ of $\mathbb{F}^2$, the component $\chi_{\tau}$ is of the form $z^{p_{\tau}}\overline{z}^{-p_{\tau}}$, where $p_{\tau}\in \mathbb{Z}$ and $p_{\tau}\neq p_{\tau^{\prime}}$ whenever $\tau\neq \tau^{\prime}$,
\item[{\rm (iii)}] The stabilizer of $\chi_x\circ \mathrm{Norm}_{\mathbb{F}^3/\mathbb{F}^2}$ in $\mathrm{Gal}(\mathbb{F}^3/\mathbb{F}^1)$ is equal to $\mathrm{Gal}(\mathbb{F}^3/\mathbb{F}^2)$.
\end{altenumerate}

Then there exists a potentially abelian cuspidal automorphic representation $I_{\mathbb{F}^2}^{\mathbb{F}^1} \chi$ of $\GL_n/\mathbb{F}^1$ associated to $\Sigma = \mathrm{Ind}_{W_{\mathbb{F}^2}}^{W_{\mathbb{F}^1}} \chi$.
\end{thm}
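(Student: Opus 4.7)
The plan is to realize $I^{\mathbb{F}^1}_{\mathbb{F}^2} \chi$ as a solvable descent of an explicit isobaric sum on $\mathbb{F}^3$, and then apply Theorem \ref{InductionPreparation}(i) to conclude. Set $\chi^3 = \chi \circ \mathrm{Norm}_{\mathbb{F}^3/\mathbb{F}^2}$. It is manifestly $\mathrm{Gal}(\mathbb{F}^3/\mathbb{F}^2)$-invariant, and its $\mathrm{Gal}(\mathbb{F}^3/\mathbb{F}^1)$-orbit has size $n$: the global stabilizer is contained in the local stabilizer at $x^3$ (a place fixed by all of $\mathrm{Gal}(\mathbb{F}^3/\mathbb{F}^1)$, since $x$ is inert in $\mathbb{F}^3$), which equals $\mathrm{Gal}(\mathbb{F}^3/\mathbb{F}^2)$ by hypothesis (iii). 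Since $W_{\mathbb{F}^3}$ is normal in $W_{\mathbb{F}^1}$, Mackey gives an identification
\[
\Sigma|_{W_{\mathbb{F}^3}} = \bigoplus_{\sigma \in \mathrm{Gal}(\mathbb{F}^3/\mathbb{F}^1)/\mathrm{Gal}(\mathbb{F}^3/\mathbb{F}^2)} \chi^{3,\sigma},
\]
so $\Pi^3 := \boxplus_\sigma \chi^{3,\sigma}$ is a $\mathrm{Gal}(\mathbb{F}^3/\mathbb{F}^1)$-invariant isobaric automorphic representation of $\GL_n/\mathbb{F}^3$ associated to $\Sigma|_{W_{\mathbb{F}^3}}$.

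Next, applying Arthur-Clozel cyclic base change \cite{ArthurClozel} repeatedly along a tower witnessing the solvability of $\mathrm{Gal}(\mathbb{F}^3/\mathbb{F}^1)$ yields an isobaric automorphic representation $\Pi^1$ of $\GL_n/\mathbb{F}^1$ whose base change to $\mathbb{F}^3$ is $\Pi^3$; such a descent is unique up to twist by a Hecke character of $\mathbb{F}^1$ inflated from $\mathrm{Gal}(\mathbb{F}^3/\mathbb{F}^1)$. Locally at $x$, hypothesis (iii) implies that $\Sigma|_{W_{\mathbb{F}^3_{x^3}}}$ consists of $n$ distinct characters forming a single $\mathrm{Gal}(\mathbb{F}^3_{x^3}/\mathbb{F}^1_x)$-orbit, so any nontrivial decomposition of $\Sigma|_{W_{\mathbb{F}^1_x}}$ over $W_{\mathbb{F}^1_x}$ would yield a proper Galois-stable subset of this orbit, which is impossible; therefore $\Sigma|_{W_{\mathbb{F}^1_x}}$ is irreducible. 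The established local Langlands correspondence (Theorem \ref{MainTheorem2}(c)) then provides a supercuspidal $\pi_x$ of $\GL_n(\mathbb{F}^1_x)$ with $\sigma(\pi_x) = \Sigma|_{W_{\mathbb{F}^1_x}}$. By weak approximation, the twist can be chosen so that $\Pi^1_x \cong \pi_x$, and supercuspidality at $x$ then rules out $\Pi^1$ being a proper isobaric sum; hence $\Pi^1$ is cuspidal.

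Now I apply Theorem \ref{InductionPreparation}(i) to conclude that $\Pi^1$ is potentially abelian, associated to $\Sigma$. Conjugate self-duality $\Pi^{1\vee} \cong \Pi^1 \circ c$ follows from condition (i) combined with $\Sigma^\vee \cong \Sigma \circ c$; regular $L$-algebraicity of $\Pi^1_\infty$ follows from condition (ii), since the distinct integers $p_\tau$ make the induced Weil group representation at infinity regular $L$-algebraic via the archimedean Local Langlands Correspondence; supercuspidality of $\Pi^1_x$ is by construction; and compatibility of $\sigma(\Pi^1_v)$ with $\Sigma|_{W_{\mathbb{F}^1_v}}$ at the infinite places and at almost all finite places is routine, since the two agree after restriction to $W_{\mathbb{F}^3}$ (by the construction of $\Pi^3$) and local Langlands is compatible with cyclic base change (Theorem \ref{Functoriality}(iv) together with part (b) of Theorem \ref{MainTheorem2}). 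The main obstacle in the argument is the descent step: extending Arthur-Clozel cyclic descent to the isobaric representation $\Pi^3$, and arranging the twist so that the descent has the intended supercuspidal local component at $x$; once this is done, the application of Theorem \ref{InductionPreparation}(i) is essentially formal.
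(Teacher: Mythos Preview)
Your approach has two genuine gaps that prevent it from going through as stated.

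First, Arthur--Clozel descent is a statement about \emph{cuspidal} representations: a $\mathrm{Gal}(E/F)$-invariant cuspidal $\Pi$ on $E$ (with $E/F$ cyclic prime) descends. Your $\Pi^3$ is a non-cuspidal isobaric sum of characters. To descend it through a cyclic step $\mathbb{F}^3/\mathbb{F}^{3'}$ you must analyse how $\mathrm{Gal}(\mathbb{F}^3/\mathbb{F}^{3'})$ permutes the constituents $\chi^{3,\sigma}$: fixed ones descend as characters, while a full orbit of prime size must be replaced by an automorphic induction to a cuspidal on a smaller $\GL$. Carrying this out and keeping track of the result at each stage is exactly the content of the paper's induction on $[\mathbb{F}_0^3:\mathbb{F}_0^1]$, with its two cases $\mathbb{F}_0^4\subset\mathbb{F}_0^2$ versus $\mathbb{F}_0^4\cap\mathbb{F}_0^2=\mathbb{F}_0^1$. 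You have not avoided this work; you have hidden it in the phrase ``applying Arthur--Clozel cyclic base change repeatedly.''

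Second, and more seriously, your verification of the hypotheses of Theorem~\ref{InductionPreparation}(i) is circular. You deduce $\Pi^{1\vee}\cong\Pi^1\circ c$ from $\Sigma^\vee\cong\Sigma\circ c$, but you do not yet know that $\Pi^1$ is associated to $\Sigma$ --- that is precisely what Theorem~\ref{InductionPreparation}(i) is supposed to give you. After a cyclic descent one only knows $\Pi^{1\vee}\cong(\Pi^1\circ c)\otimes\chi_0$ for some finite-order character $\chi_0$; the paper handles this (in its second case) by finding an Artin character $\psi$ with $(\Pi^1\otimes\psi)^\vee\cong(\Pi^1\otimes\psi)\circ c$, a non-trivial step you have omitted. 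Similarly, your claim that $\sigma(\Pi^1_v)=\Sigma|_{W_{\mathbb{F}^1_v}}$ at almost all finite $v$ does not follow from agreement after restriction to $W_{\mathbb{F}^3}$: that only pins down $\sigma(\Pi^1_v)$ up to a twist by a character of the local Galois group at each $v$, and you have not shown these twists are globally trivial. The paper sidesteps this by using part~(ii) of Theorem~\ref{InductionPreparation} at the inductive step, which is designed exactly to resolve the twist ambiguity via the place $x$.
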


\begin{proof} First note that $\Pi=I_{\mathbb{F}^2}^{\mathbb{F}^1} \chi$ automatically has the properties $\Pi^{\vee}\cong \Pi\circ c$, $\Pi_{\infty}$ is regular $L$-algebraic, and $\Pi_x$ is supercuspidal: These follow from strong multiplicity $1$ and the compatibility at all places.

Now we proceed by induction on $[\mathbb{F}_0^3:\mathbb{F}_0^1]$. Choose a subextension $\mathbb{F}_0^3\supset \mathbb{F}_0^4\supset \mathbb{F}_0^1$ such that $\mathbb{F}_0^4/\mathbb{F}_0^1$ is cyclic Galois of prime degree. Write $\mathbb{F}^4 = \mathbb{F}_0^4\mathbb{K}$.

Assume first that $\mathbb{F}_0^4\subset \mathbb{F}_0^2$. Then we know that there exists $I_{\mathbb{F}^2}^{\mathbb{F}^4} \chi$. If we put $I_{\mathbb{F}^2}^{\mathbb{F}^1}\chi = \mathrm{Ind}_{\mathbb{F}^4}^{\mathbb{F}^1} I_{\mathbb{F}^2}^{\mathbb{F}^4}\chi$, where $\mathrm{Ind}$ denotes automorphic induction, \cite{ArthurClozel}, then we know that $\Pi=I_{\mathbb{F}^2}^{\mathbb{F}^1}\chi$ and $\mathrm{Ind}_{W_{\mathbb{F}^2}}^{W_{\mathbb{F}^1}} \chi$ are associated at all infinite places and almost all finite places, by the usual statements about automorphic induction. Looking at the place $x$, we see that $\Pi_x$ is supercuspidal, and hence $\Pi$ is cuspidal automorphic. Using Theorem \ref{InductionPreparation}, part (i), we are done in this case.

Now we are left with the case $\mathbb{F}_0^4\cap \mathbb{F}_0^2=\mathbb{F}_0^1$. By induction, we have a cuspidal automorphic representation $\Pi^{\prime} = I_{\mathbb{F}^2\mathbb{F}^4}^{\mathbb{F}^4} (\chi\circ \mathrm{Norm}_{\mathbb{F}^2\mathbb{F}^4/\mathbb{F}^2})$, associated to
\[
\Sigma^{\prime} = \mathrm{Ind}_{W_{\mathbb{F}^2\mathbb{F}^4}}^{W_{\mathbb{F}^4}} (\chi\circ \mathrm{Norm}_{\mathbb{F}^2\mathbb{F}^4/\mathbb{F}^2})= \Sigma|_{W_{\mathbb{F}^4}}\ .
\]
Note that our assumptions imply that $\Sigma^{\prime}_x$ is irreducible. By strong multiplicity $1$, $\Pi^{\prime}$ is invariant under $\mathrm{Gal}(\mathbb{F}^4/\mathbb{F}^1)$, hence descends to a cuspidal automorphic representation $\Pi$ of $\GL_n/\mathbb{F}^1$, unique up to twist. There is a unique choice of $\Pi$ such that $\Pi_x$ is associated to $\Sigma_x$, by our results on Local Langlands. Note that $\Pi_{\infty}$ is regular $L$-algebraic and $\Pi_x$ is supercuspidal, as $\Sigma_x$ is irreducible. Also $\Pi^{\vee}\cong (\Pi\circ c)\otimes \chi$ for some character $\chi$ of $\mathrm{Gal}(\mathbb{F}^4/\mathbb{F}^1)$. As in \cite{HarrisTaylor}, pp.241-242, one checks that there is some Artin character $\psi$ such that $\tilde{\Pi}=\Pi\otimes \psi$ satisfies $\tilde{\Pi}^{\vee}\cong \tilde{\Pi}\circ c$. Then Theorem \ref{InductionPreparation}, part (ii), shows that $\tilde{\Pi}$ is potentially abelian. Therefore $\Pi$ is potentially abelian, associated to a Weil group representation which is a twist of $\Sigma$ by a character of $\mathrm{Gal}(\mathbb{F}^4/\mathbb{F}^1)$. Looking at the place $x$, we see that it is equal to $\Sigma$.
\end{proof}

\section{Compatibility of $L$- and $\epsilon$-factors}

We return to the local setting. Let $\mathcal{A}_F$ be the free abelian group with generators given by the supercuspidal representations of $\GL_n(F)$, any $n\geq 1$, and let $\mathcal{G}_F$ be the Grothendieck group (with $\mathbb{Z}$-coefficients) of $W_F$. Recall that one can define $L$- and $\epsilon$-factors for elements of these groups, by extending linearly from the supercuspidal, resp. irreducible, representations.\footnote{A word of warning may be in order: One can send an irreducible smooth representation $\pi$ to its supercuspidal support, considered as an element of $\mathcal{A}_F$, and hence define $L(\pi,s)$, but this does not agree with the usual definition in general.} In the same way, one defines the central character $\omega_{\pi}$ for $\pi\in\mathcal{A}_F$, the determinant $\det \sigma$ for $\sigma\in \mathcal{G}_F$, the dual $\pi^{\vee}$ resp. $\sigma^{\vee}$, and the twist $\pi\otimes \chi$ resp. $\sigma\otimes\chi$ for a character $\chi$ of $F^{\times}$, using local class-field theory in the latter case.

Fix a nontrivial additive character $\psi: F\longrightarrow \mathbb{C}^{\times}$.

\begin{thm} The correspondence $\mathcal{A}_F\longrightarrow \mathcal{G}_F$ given by $\pi\longmapsto \sigma(\pi)$ has the following properties.
\begin{altenumerate}
\item[{\rm (i)}] In degree $1$, it is given by local class-field theory.
\item[{\rm (ii)}] It is compatible with twisting, i.e. $\sigma(\pi\otimes \chi)=\sigma(\pi)\otimes \chi$.
\item[{\rm (iii)}] It is compatible with duals, i.e. $\sigma(\pi^{\vee}) = \sigma(\pi)^{\vee}$.
\item[{\rm (iv)}] It is compatible with central characters, i.e. $\det \sigma(\pi)=\omega_{\pi}$.
\item[{\rm (v)}] It is compatible with $L$- and $\epsilon$-factors of pairs, i.e.
\[\begin{aligned}
L(\pi_1\times \pi_2,s) &= L(\sigma(\pi_1)\otimes \sigma(\pi_2),s)\\
\epsilon(\pi_1\times\pi_2,s,\psi) &= \epsilon(\sigma(\pi_1)\otimes \sigma(\pi_2),s,\psi)\ .
\end{aligned}\]
\end{altenumerate}
\end{thm}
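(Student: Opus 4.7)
Properties (i) and (ii) follow immediately from Theorem \ref{Functoriality}(i) and (iii) respectively, so the work is to establish (iii), (iv), and (v). For (iii) and (iv), my plan is to globalize and exploit the local-global compatibility already established in Corollary \ref{ConstructionGaloisRepr2}. Given a supercuspidal $\pi$ of $\GL_n(F)$, apply Theorem \ref{EmbLocalGlobalReprGLn} to find a cuspidal automorphic $\Pi$ of $\GL_n/\mathbb{F}$, regular $L$-algebraic and conjugate self-dual, with $\Pi_w$ an unramified twist of $\pi$; then the associated virtual $\ell$-adic representation $R(\Pi)$ satisfies $R(\Pi)|_{W_{\mathbb{F}_v}} = a\cdot \sigma(\Pi_v)$ at all finite $v\nmid \ell$. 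For (iv), the determinant $\det R(\Pi)$ corresponds via global class-field theory to a Hecke character of $\mathbb{F}^\times\backslash\mathbb{A}_{\mathbb{F}}^\times$ whose local factor at each split finite place $v$ is $\omega_{\Pi_v}^{a}$ up to the algebraic twist read off from $\xi$; matching at $w$ and dividing out that twist (using (i) for characters) yields $\det\sigma(\pi)=\omega_\pi$. For (iii), the isomorphism $\Pi^\vee\cong\Pi\circ c$ at a split pair $(w,w^c)$ translates into $\sigma(\Pi_{w^c})\cong \sigma(\Pi_w)^\vee$ on the Galois side (the cyclotomic twist coming from the Tate twist in the definition of $\sigma$ is absorbed by the local component $\psi_{u}$ of the auxiliary character $\psi$ in Theorem \ref{ConstructionGaloisRepr}). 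Taking $v=w$ gives $\sigma(\pi^\vee)=\sigma(\pi)^\vee$.

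For (v) the argument follows the pattern of Henniart and Harris--Taylor. By bilinearity of $L$- and $\epsilon$-factors on $\mathcal{A}_F$ and $\mathcal{G}_F$, it suffices to verify the identities for pairs of supercuspidals $\pi_1,\pi_2$. I would first dispose of the case where $\sigma(\pi_1)$ is induced from a character $\chi$ of a cyclic extension $W_{F'}\subset W_F$: by Theorem \ref{AutomorphicInduction} (applied with $F$ realized as $\mathbb{F}^1_x$ for a suitable CM tower) one realizes $\pi_1$ as the $x$-component of a potentially abelian cuspidal automorphic representation $I_{\mathbb{F}^2}^{\mathbb{F}^1}\chi$, and simultaneously embeds $\pi_2$ into a cuspidal $\Pi_2$ of $\GL_{n_2}/\mathbb{F}^1$ via Theorem \ref{EmbLocalGlobalReprGLn}. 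Inductivity of Rankin--Selberg $L$- and $\epsilon$-factors then equates the global pair with $L(\chi\times \mathrm{BC}_{\mathbb{F}^2/\mathbb{F}^1}\Pi_2,s)$ and its $\epsilon$-analogue; on the Galois side, inductivity of local constants (Langlands--Deligne) reduces to the same abelian expression. Matching archimedean and finite-place factors using global functional equations and the already-known split/abelian cases extracts the local identity at $x$.

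The main obstacle is the \emph{primitive} case, in which $\sigma(\pi_1)$ is not induced from any proper subgroup. Here my plan is to invoke Henniart's stability theorem: for a character $\chi$ of $F^\times$ of sufficiently large conductor relative to $\pi_1$ and $\pi_2$, the pair $\epsilon$-factor $\epsilon(\pi_1\otimes\chi\times\pi_2,s,\psi)$ depends only on $\chi$, $\omega_{\pi_1}$, and $\pi_2$ via an explicit tame formula, and the identical formula holds for $\epsilon(\sigma(\pi_1\otimes\chi)\otimes\sigma(\pi_2),s,\psi)$. After such a twist the required identity reduces to the corresponding identity with $\omega_{\pi_1}\otimes\chi$ in place of $\pi_1\otimes\chi$, a purely abelian statement covered by the induced case above together with (i) and (iv). The Grothendieck-group identity $\sigma(\pi_1\otimes\chi)=\sigma(\pi_1)\otimes\chi$ from (ii), combined with the invertibility of the stability-twist procedure (vary $\chi$ to recover $\epsilon(\pi_1\times\pi_2,s,\psi)$ from its highly ramified twists), yields (v) in general. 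Once (i)--(v) are verified, Henniart's uniqueness theorem \cite{HenniartUniqueness} identifies $\pi\mapsto\sigma(\pi)$ with the standard Local Langlands correspondence.
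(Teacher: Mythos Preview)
Your overall shape is right, but you miss the single organizing step that the paper uses and that removes most of the difficulties: Brauer induction. Since part (c) of Theorem \ref{MainTheorem2} is already established, $\pi\mapsto\sigma(\pi)$ is a bijection $\mathcal{A}_F\to\mathcal{G}_F$, and Brauer's theorem expresses every element of $\mathcal{G}_F$ as a $\mathbb{Z}$-linear combination of monomial representations $\mathrm{Ind}_{W_{F'}}^{W_F}\chi$. Because all of (iii), (iv), (v) are linear or bilinear on $\mathcal{A}_F$ and $\mathcal{G}_F$, it suffices to treat the case $\sigma(\pi_i)=\mathrm{Ind}_{W_{F'}}^{W_F}\chi_i$. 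There is no ``primitive'' case. The paper then globalizes \emph{both} such $\pi_i$ simultaneously to potentially abelian cuspidal $\Pi_i$ associated to honest irreducible Weil group representations $\Sigma_i$ (this is the embedded Lemma, using Theorem \ref{AutomorphicInduction}). With honest $\Sigma_i$ in hand, (iv) is immediate from strong multiplicity $1$: $\det\Sigma_1$ and $\omega_{\Pi_1}$ agree at almost all places, hence everywhere; (v) comes from the global functional equation for $L(\Pi_1\times\Pi_2,s)$ together with Henniart's highly-ramified-twist trick to kill the auxiliary bad places; and (iii) is read off from (v) by looking at poles.

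Your direct approach to (iii) and (iv) via $R(\Pi)$ from Corollary \ref{ConstructionGaloisRepr2} runs into a genuine obstacle: $R(\Pi)$ is only a virtual representation of dimension $an$, not $a$ copies of anything, so ``$\det R(\Pi)$'' is not well-defined, and even granting it one only gets $(\det\sigma(\pi))^a=\omega_\pi^a$, i.e.\ equality up to an $a$-th root of unity. Your argument for (iii) via $\Pi^\vee\cong\Pi\circ c$ compares the Galois representation at $w$ with that at $w^c$ and needs extra input on how complex conjugation acts on $R(\Pi)$; this is not available from the results proved in the paper. Finally, your description of the stability argument for (v) is off: one does not ``recover'' the untwisted $\epsilon$-factor from its highly ramified twists. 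Rather, one works inside a global functional equation for the pair $(\Pi_1\otimes\chi)\times\Pi_2$, choosing $\chi$ highly ramified at the auxiliary bad places (so those local $\epsilon$-factors become explicit and match on both sides by stability plus (iv)) and unramified at $w$; the global identity then isolates the local identity at $w$. This only works cleanly once you are in the potentially abelian setting, which is exactly what Brauer induction buys you.
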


\begin{proof} We have already seen parts (i) and (ii). By Brauer induction and linearity, it suffices to check all properties only for representations $\pi$ with $\sigma(\pi) = \mathrm{Ind}_{W_{F^{\prime}}}^{W_F} \chi$, where $\chi$ is a character of $W_{F^{\prime}}$. Fix two such representations $\pi_1$ and $\pi_2$.

\begin{lem} There exists a global field $\mathbb{F}$ with a place $w$ such that $F\cong \mathbb{F}_w$ and two potentially abelian cuspidal automorphic representations $\Pi_1$, $\Pi_2$ of $\GL_{n_i}/\mathbb{F}$ such that $(\Pi_i)_w$ is an unramified twist of $\pi_i$ for $i=1,2$, and for which the associated Weil group representations $\Sigma_i$ are irreducible.
\end{lem}

\begin{proof} This follows from Theorem \ref{AutomorphicInduction} after embedding everything into the global picture (Note that the Weil group representations that occur are irreducible, as they are irreducible at $x$.). That this can be done is checked e.g. in \cite{HarrisTaylor}, proof of Lemma VII.2.10, and \cite{HenniartLLC}, Section 3.
\end{proof}

Note that this implies statement (iv), since $\det \Sigma_1 = \omega_{\Pi_1}$, e.g. by strong multiplicity $1$. To prove statement (v), one uses the functional equation for $L$-functions of pairs, together with the trick of twisting with highly ramified characters, cf. Corollary 2.4 of \cite{HenniartLLC}. Finally, part (iii) follows from part (v) by looking at poles of $L$-functions.
\end{proof}

\bibliographystyle{abbrv}
\bibliography{LocalLanglands}

\end{document}